\numberwithin{equation}{section}
\newtheorem{theorem}{Theorem}[section]
\newtheorem{proposition}[theorem]{Proposition}
\newtheorem{lemma}[theorem]{Lemma}
\newtheorem{corollary}[theorem]{Corollary}
\theoremstyle{definition}
\newtheorem{definition}[theorem]{Definition}
\theoremstyle{remark}
\newtheorem{remark}[theorem]{Remark}
\newtheorem{example}[theorem]{Example}
\newcommand{\Z}{\mathbb{Z}}
\newcommand{\Q}{\mathbb{Q}}
\newcommand{\C}{\mathbb{C}}
\newcommand{\proj}{{\mathbb P}}
\newcommand{\moduli}{\mathcal{M}_{(r, a, \delta )}}
\newcommand{\cover}{\widetilde{\mathcal{M}}_{(r, a, \delta )}}
\newcommand{\SL}{{\rm SL}_{3}}
\newcommand{\PGL}{{\rm PGL}_{3}}
\newcommand{\HP}{\mathcal{O}_{{\mathbb P}^{2}}(1)}
\newcommand{\cubic}{\mathcal{O}_{{\mathbb P}^{2}}(3)}
\newcommand{\sextic}{\mathcal{O}_{{\mathbb P}^{2}}(6)}
\newcommand{\cohomology}{H^{2}(X, \mathbb{Z})}
\begin{document}

\title[]{The unirationality of the moduli spaces of 2-elementary $K3$ surfaces}
\author[]{Shouhei Ma}
\address[S. Ma]{Graduate School of Mathematical Sciences, the University of Tokyo, Tokyo 153-8914, Japan}
\email{sma@ms.u-tokyo.ac.jp}
\address[K.-I. Yoshikawa]{Department of Mathematics, Kyoto University, Kyoto 606-8502, Japan \linebreak 
Korea Institute for Advanced Study, Hoegiro 87, Dongdaemun-gu, Seoul 130-722, Korea}
\email{yosikawa@math.kyoto-u.ac.jp}
\subjclass[2000]{Primary 14J28, Secondary 14G35, 14H50}
\keywords{K3 surface, non-symplectic involution, unirationality of moduli space, orthogonal modular variety, point set in projective plane}
\maketitle 
\centerline{with an Appendix by Ken-Ichi Yoshikawa}

\begin{abstract}
We prove that the moduli spaces of $K3$ surfaces with non-symplectic involutions are unirational. 
As a by-product we describe configuration spaces of $5\leq d\leq 8$ points in ${\proj}^2$ as 
arithmetic quotients of type IV. 
\end{abstract}

\maketitle


\section{Introduction}

$K3$ surfaces with non-symplectic involutions were classified by Nikulin \cite{Ni2}, 
and Yoshikawa \cite{Yo1} showed that their moduli spaces are 
Zariski open sets of certain modular varieties of orthogonal type. 
In this paper we prove that those moduli spaces are unirational. 
This work was inspired by a recent result of Yoshikawa on the Kodaira dimensions of those spaces, 
which is presented by him in the Appendix \ref{appendix} of this paper. 
Let us begin by recalling basic definitions. 

Let $X$ be a complex $K3$ surface with an involution $\iota$. 
When $\iota$ acts nontrivially on $H^0(K_X)$, 
$\iota$ is called \textit{non-symplectic}, %
and the pair $(X, \iota)$ is called a \textit{2-elementary $K3$ surface}. 
In this case, the lattice $L_+={\cohomology}^{\iota}$ of $\iota$-invariant cycles is a hyperbolic lattice 
with 2-elementary discriminant form $D_{L_+}$. 
The \textit{main invariant} of $(X, \iota)$ is the triplet $(r, a, \delta)$ where  
$r$ is the rank of $L_+$, 
$a$ is the length of $D_{L_+}$, i.e., $D_{L_+}\simeq({\Z}/2{\Z})^a$, %
and $\delta$ is the parity of $D_{L_+}$. 
Nikulin \cite{Ni2} proved that the deformation type of $(X, \iota)$ is determined by the main invariant $(r, a, \delta)$, 
and he enumerated all main invariants of 2-elementary $K3$ surfaces, 
which are seventy-five in number. %

By the theory of period mapping, 
2-elementary $K3$ surfaces of a fixed main invariant $(r, a, \delta)$ are parametrized by 
the Hermitian symmetric domain associated to a certain lattice $L_-$ of signature $(2, 20-r)$. 
Yoshikawa \cite{Yo1}, \cite{Yo3} determined the correct monodromy group as the orthogonal group ${\rm O}(L_-)$ of $L_-$. 
Consequently, he constructed the moduli space ${\moduli}$ of those pairs $(X, \iota)$
as a Zariski open set of the modular variety defined by ${\rm O}(L_-)$.

The principal result of the present paper is the following. 

\begin{theorem}\label{main}
For every main invariant $(r, a, \delta)$ the moduli space ${\moduli}$ of 
2-elementary $K3$ surfaces of type $(r, a, \delta)$ is unirational. 
\end{theorem}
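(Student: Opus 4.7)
The plan is to treat each of the 75 main invariants $(r,a,\delta)$ in Nikulin's list separately, and for each one to exhibit an explicit rational parameter variety $\mathcal{P}_{(r,a,\delta)}$ together with a family of 2-elementary $K3$ surfaces whose classifying map to $\moduli$ is dominant. Since $\dim\moduli=20-r$, unirationality follows as soon as such a dominant rational family is produced in each case.

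The unifying geometric construction is the quotient map. If $Y=X/\iota$ and $\widetilde Y$ denotes its minimal resolution, then away from two exceptional invariants $\widetilde Y$ is a smooth rational surface, and $(X,\iota)$ is recovered from the pair $(\widetilde Y,B)$ where $B\subset\widetilde Y$ is the image of the fixed locus: a smooth curve of genus $g=(22-r-a)/2$ together with $k=(r-a)/2$ smooth rational components, lying in a linear equivalence class determined by $(r,a,\delta)$. In each case $\widetilde Y$ will be realized as a blow-up of $\proj^2$, or of a Hirzebruch surface, at a configuration of points dictated by the invariant, and $\mathcal{P}_{(r,a,\delta)}$ will be built as an open subvariety of a tower whose factors are configuration spaces of points in $\proj^2$ and projective linear systems $|D|$ on the resulting $\widetilde Y$. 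Such towers are manifestly rational; the Zariski-open condition cuts out precisely those data for which the double cover branched along $B$ is a smooth $K3$ of the prescribed main invariant. Dominance of the classifying map to $\moduli$ is then a dimension count combined with a tangent-space computation, using that infinitesimal deformations of $(X,\iota)$ are realized by infinitesimal deformations of the pair $(\widetilde Y,B)$.

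As the abstract indicates, for several invariants $\widetilde Y$ will be a blow-up of $\proj^2$ at $d$ general points with $5\leq d\leq 8$, and the construction directly identifies $\moduli$ as a birational image of the corresponding point-configuration variety; this produces the advertised by-product on configuration spaces. The two exceptional invariants $(r,a,\delta)=(10,10,0)$ and $(10,8,0)$, where the fixed locus is empty or consists of two elliptic curves and the quotient is Enriques rather than rational, must be handled separately, using the known unirationality of the moduli of Enriques surfaces together with the appropriate additional structure.

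The main obstacle is not any single step but the bookkeeping across all 75 invariants: for each one the correct rational model $\widetilde Y$, the correct branch class $[B]\in\mathrm{Pic}(\widetilde Y)$, and a sufficiently generic Zariski-open locus in $\mathcal{P}_{(r,a,\delta)}$ have to be identified, and dominance has to be verified while ensuring that no accidental specialization forces the resulting $K3$ into a smaller stratum with a different main invariant. The delicate cases are those in which $B$ must be tangent to prescribed exceptional curves on $\widetilde Y$, so that the linear system involved is not a full $|D|$ but an incidence subvariety whose rationality and dimension need to be checked by hand.
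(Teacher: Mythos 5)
Your proposal takes a genuinely different route from the paper: you attack all seventy-five invariants directly, one rational parameter variety each, whereas the paper first passes to the Galois covers $\cover$ (the quotients by $\widetilde{{\rm O}}(L_-)$ rather than ${\rm O}(L_-)$), constructs finite surjections $\cover\to\widetilde{\mathcal{M}}_{(r,a',\delta')}$ for $a'<a$ from overlattices of $L_-$ (Proposition \ref{isogeny bet discri cover}), and thereby reduces the whole problem to roughly twenty-two covers, each realized as a moduli space of labelled singular sextics. The direct approach is acknowledged in the Introduction as feasible ``for most $(r,a,\delta)$,'' so the difference is not merely one of taste; but as written your argument has a genuine gap.

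The gap is the claim that dominance of the classifying map is always ``a dimension count combined with a tangent-space computation'' on a tower of point-configuration spaces and linear systems. For the invariant $(10,10,1)$ the relevant branch curves are irreducible nine-nodal sextics (Halphen curves), and the tower fails at the first step: for nine \emph{general} points $p_1,\dots,p_9$ the blow-up $Y$ has $h^0(-2K_Y)=1$ with the unique member equal to twice the unique cubic through the points, so there is no reduced nine-nodal sextic at a general configuration. The nodal map $\widetilde{V}_9\to(\proj^2)^9$ is not dominant; its image is the Halphen locus, an incidence subvariety whose unirationality is exactly what you would then need to prove, and which your tower construction does not supply. (A similar failure of dominance occurs for $(11,11,1)$, where the paper instead parametrizes rational sextics by degree-$6$ maps $\proj^1\to\proj^2$.) The paper avoids $(10,10,1)$ entirely by a lattice-theoretic argument: the duality $\mathcal{F}({\rm O}(L)^+)\simeq\mathcal{F}({\rm O}(L^{\vee}(2))^+)$ of Proposition \ref{duality of 2-ele lattice} plus an isogeny onto it from $\widetilde{\mathcal{M}}_{(10,4,0)}$. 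Without either that device or a proof of unirationality of the Halphen locus, your argument does not close. A secondary error: you group $(10,8,0)$ with $(10,10,0)$ as cases where the quotient is an Enriques surface, but only $(10,10,0)$ has $X^{\iota}=\emptyset$; for $(10,8,0)$ the fixed locus is two elliptic curves and the quotient is a rational elliptic surface (the blow-up of $\proj^2$ at the nine base points of a cubic pencil), so it is handled by the general machinery rather than as an exception.
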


We recall that  
the 2-elementary $K3$ surfaces in $\mathcal{M}_{(1,1,1)}$ are 
double planes ramified over smooth sextics 
so that $\mathcal{M}_{(1,1,1)}$ is birational to the orbit space 
$|\mathcal{O}_{{\proj}^{2}}(6)|/{\PGL}$, which is unirational. 
This fact is a prototype of Theorem \ref{main}. 
Kond\=o \cite{Ko1} proved the rationality of $\mathcal{M}_{(10,2,0)}$ and $\mathcal{M}_{(10,10,0)}$, 
the latter being isomorphic to the moduli of Enriques surfaces. 
Shepherd-Barron \cite{S-B} practically established the rationality of $\mathcal{M}_{(5,5,1)}$ 
in the course of proving that of the moduli of genus $6$ curves. 
Matsumoto-Sasaki-Yoshida \cite{M-S-Y} constructed general members of $\mathcal{M}_{(16,6,1)}$ 
starting from six lines on ${\proj}^{2}$. 
A similar idea was used by Koike-Shiga-Takayama-Tsutsui \cite{K-S-T-T} to obtain 
general members of $\mathcal{M}_{(14,8,1)}$ 
from four bidegree $(1, 1)$ curves on ${\proj}^1 \times {\proj}^1$. 
In particular, $\mathcal{M}_{(16,6,1)}$ and $\mathcal{M}_{(14,8,1)}$ are also unirational. 

Yoshikawa studied the birational type of ${\moduli}$ in a systematic way %
by using a criterion of Gritsenko \cite{Gr} and Borcherds products. 
He found that ${\moduli}$ has Kodaira dimension $-\infty$ when $13\leq r\leq 17$ and when $r+a=22$, $r\leq 17$. 
After that he suggested to the author to study the birational type of ${\moduli}$ through a geometric approach. %
The present work grew out of this suggestion. 
After Theorem \ref{main} was proved, Yoshikawa and the author decided to write both approaches in this paper. 
Yoshikawa's work is presented in the Appendix \ref{appendix}.  
Now the Kodaira dimensions of some of ${\moduli}$ may be calculated by two methods: 
by modular forms on the moduli spaces, and by the geometry of 2-elementary $K3$ surfaces.

We will prove Theorem \ref{main} by using certain Galois covers of ${\moduli}$  
and isogenies between them.  
The strategy is as follows. 
\begin{enumerate}
  \item Let ${\cover}$ be the modular variety associated to the group $\widetilde{{\rm O}}(L_-)$ of isometries of $L_-$
           which act trivially on the discriminant form. 
           The variety ${\cover}$ is a Galois cover of ${\moduli}$. 
  \item Construct an isogeny ${\cover} \to {\widetilde{\mathcal{M}}_{(r,a',\delta')}}$ 
          when $a'<a$, $\delta=1$, and when $a'<a$, $\delta=\delta'$.   
  \item For each fixed $r$, choose a large $a$ and find a moduli interpretation of (an open set of) ${\cover}$. 
           Then prove that ${\cover}$ is unirational using that interpretation. 
           By step $(2)$ follows the unirationality of ${\widetilde{\mathcal{M}}_{(r,a',\delta')}}$ for $a'<a$. 
  \item The remaining moduli spaces $\mathcal{M}_{(r,a'',\delta'')}$ with $a''>a$, if any, are also proved to be unirational in some way.
\end{enumerate}

One of the advantages of studying the covers ${\cover}$ %
is that we have isogenies between them so that the problem is reduced to fewer modular varieties. 
Those isogenies admit geometric interpretation in terms of twisted Fourier-Mukai partners.  
By this strategy we will derive the unirationality of seventy ${\cover}$ by studying just twenty-two ${\cover}$. 
The remaining five moduli spaces ${\moduli}$, for which we do not know whether the covers ${\cover}$ are unirational, 
are treated in step $(4)$ or already settled (\cite{Ko1}). 
In step $(3)$, we often identify ${\cover}$ with the moduli of 
certain plane sextics endowed with a labeling of the singularities. 
We can attach such geometric interpretations to ${\cover}$ in a fairly uniform manner: 
this is another virtue of studying ${\cover}$. 
We shall explain a general idea of such interpretations (Section \ref{ssec:discri cover}), 
discuss few cases in detail as models (Sections \ref{sec: r <10} and \ref{sec: r=10}), 
and for other cases omit some detail. 

Let us comment on other possible approaches for Theorem \ref{main}. 
Firstly, as explained by Alexeev-Nikulin \cite{A-N}, 
2-elementary $K3$ surfaces with $r+a\leq 20$ are related to log del Pezzo surfaces of index $\leq 2$.  
Thus one might study ${\moduli}$ via the moduli of such surfaces, %
using the explicit description of log del Pezzo surfaces of index $2$ given by Nakayama \cite{Na}. %
Secondly, by using singular curves on ${\proj}^2$ and $\mathbb{F}_n$ as branches (as in this paper), 
for most $(r, a, \delta)$ we can actually find a unirational parameter space that dominates ${\moduli}$. 

In \cite{Ma}, those will be developed further to derive the rationality of 
sixty-seven ${\moduli}$. %
Hence one may establish Theorem \ref{main} also by just studying the remaining moduli spaces. 
However, the proof of rationality is delicate and ad hoc, 
so that the whole proof of unirationality would be lengthy if we do so. 
We here prefer the proof using ${\cover}$ because it is more systematic, short, and self-contained. 

We will relate the covers ${\cover}$ with $r+a=22$ and $r\geq12$ to configuration spaces of points in ${\proj}^2$. 
As a by-product we describe those spaces as arithmetic quotients of type IV.  
To be more precise, 
let $U_{d} \subset ({\proj}^{2})^{d}$ (resp. $V_{d} \subset ({\proj}^{2})^{d}$) be the variety of $d$ ordered points 
of which no three are collinear (resp. only the first three are collinear). 
Let $U_d/G$ and $V_d/G$ denote the quotient varieties for the diagonal actions of $G={\PGL}$. 
Let $L_n$ be the lattice $\langle2\rangle^2\oplus\langle-2\rangle^n$. 

\begin{theorem}\label{by-product}
Let $5\leq d\leq 8$. 
For each $1\leq n\leq8$ there exists an arithmetic group $\Gamma_n\subset{\rm O}(L_n)$   
such that one has birational period maps 
\begin{equation*}
U_d/G \dashrightarrow \mathcal{F}(\Gamma_{2d-8}),  \qquad  
V_d/G \dashrightarrow \mathcal{F}(\Gamma_{2d-9}),  
\end{equation*}
where $\mathcal{F}(\Gamma_n)$ is the modular variety associated to $\Gamma_n$. 
One has $\Gamma_n=\widetilde{{\rm O}}(L_n)$ for $1\leq n \leq 6$,  
and for $n=7, 8$ one has $\Gamma_n\supset\widetilde{{\rm O}}(L_n)$ with 
$\Gamma_n/\widetilde{{\rm O}}(L_n)\simeq\frak{S}_{n-5}$ 
where $\frak{S}_N$ is the symmetric group on $N$ letters. 
\end{theorem}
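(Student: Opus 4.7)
The plan is to construct, for each $d\in\{5,6,7,8\}$, an explicit family of 2-elementary $K3$ surfaces parametrized by $U_{d}/G$ (respectively $V_{d}/G$), whose period map realizes the Galois cover $\widetilde{\mathcal{M}}_{(r,a,\delta)}$ for an appropriate invariant in the range $r+a=22$, $r\geq 12$. This fits into step~(3) of the general strategy announced in the introduction; the two series $U_{d}$ and $V_{d}$ will be distinguished by the combinatorial type of a branch sextic.

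\textbf{Construction.} Given $(p_{1},\ldots,p_{d})$ in $U_{d}$ or $V_{d}$, I build a plane sextic $B=B(p_{1},\ldots,p_{d})$ whose labeled singularities include the $p_{i}$, and let $X\to{\proj}^{2}$ be the minimal resolution of the double cover branched along $B$, with $\iota$ the deck involution. The recipe is modelled on the $d=6$ case of \cite{M-S-Y}, where $B$ is the union of the six lines dual to the $p_{i}$, a sextic with $\binom{6}{2}=15$ nodes. For the other values of $d$, one expects $B$ to be a suitable union of lines and conics (or a singular irreducible sextic with assigned singularities) chosen so that the $p_{i}$ appear as labeled double points of $B$. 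In the $V_{d}$ setting, the collinearity of $p_{1},p_{2},p_{3}$ forces a line component of $B$ through them, which produces the alternative lattice $L_{2d-9}$ in place of $L_{2d-8}$.

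\textbf{Invariants and period map.} Each labeled $A_{1}$-singularity on $B$ contributes a $(-2)$-curve lying in $H^{2}(X,\Z)^{\iota}$, which together with the pullback of the hyperplane class generates a sublattice of $L_{+}$. A direct lattice computation then identifies the main invariant $(r,a,\delta)$ and produces an isomorphism $L_{-}\simeq L_{n}$, with $n=2d-8$ (for $U_{d}$) or $n=2d-9$ (for $V_{d}$). The ordering of $(p_{1},\ldots,p_{d})$ is equivalent to a labeling of a generating set of $D_{L_{+}}\simeq D_{L_{-}}$, i.e.\ precisely the datum killed by the passage from ${\rm O}(L_{-})$ to $\widetilde{{\rm O}}(L_{-})$. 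Hence the period map lifts canonically from ${\moduli}$ to its cover ${\cover}=\mathcal{F}(\widetilde{{\rm O}}(L_{n}))$, and since source and target have the same dimension $n$, the Torelli theorem for 2-elementary $K3$ surfaces together with a local injectivity check gives birationality.

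\textbf{Extra symmetry and main obstacle.} For $1\leq n\leq 6$, the $d$ labeled singularities of $B$ generate the full discriminant form, so $\Gamma_{n}=\widetilde{{\rm O}}(L_{n})$. For $n=7,8$, the construction forces $B$ to acquire additional unlabeled singular points (coming from residual component intersections), and permutations of these produce a residual factor $\Gamma_{n}/\widetilde{{\rm O}}(L_{n})\simeq\frak{S}_{n-5}$, which in turn enlarges the monodromy group. The main obstacle is to pin down the correct geometric recipe for $B$ in each of the eight cases and to verify by a lattice-theoretic bookkeeping that the marking given by the point-ordering coincides with a marking of $D_{L_{n}}$, and that the residual symmetry is exactly $\frak{S}_{n-5}$. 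Once these are in place, the dimension count, the application of Torelli, and the surjectivity onto $\mathcal{F}(\Gamma_{n})$ are essentially routine.
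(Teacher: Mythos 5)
Your outline coincides with the paper's actual strategy (Sections \ref{sec: r=12}--\ref{sec: r>13}): draw a labeled singular sextic from the point configuration, pass to the resolved double cover, convert the labeling into an isometry $j\colon L_+\to L_+(X,\iota)$ from a reference lattice, and push the period into ${\cover}$ or a slightly larger quotient $\mathcal{F}(\Gamma^+)$; you also correctly locate the source of the $\frak{S}_{n-5}$ factor at $n=7,8$ in residual unlabeled intersection points. The genuine gap is that the entire substance of the proof --- the eight explicit recipes and the verification, case by case, that each produces the invariant $(r,a,\delta)$ with $r+a=22$, the lattice $L_n$, a complete (or exactly $\frak{S}_{n-5}$-ambiguous) marking, and the right dimension --- is precisely what you set aside as ``the main obstacle.'' Until those recipes are exhibited and checked, nothing is proved. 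For the record, the paper's choices are: for $U_8$, two irreducible nodal cubics through $p_3,\dots,p_8$ with nodes at $p_1,p_2$, leaving the three residual points of $C_1\cap C_2$ unlabeled, whence $\Gamma_8=r_-^{-1}(\lambda\circ r_+(i(\frak{S}_3)))$; for $V_8$, a nodal cubic plus a conic plus a line, with two unlabeled nodes; for $U_7$ and $V_7$, a nodal cubic plus three lines, resp.\ a conic plus four lines, all of whose remaining nodes inherit labels from the labeled components; for $d=6,5$, arrangements of six lines $\overline{p_ip_j}$ drawn in the same plane.

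Two concrete missteps beyond the omission. First, your model for $d=6$ --- the six lines \emph{dual} to the $p_i$ --- is exactly the Matsumoto--Sasaki--Yoshida construction that the paper deliberately replaces: it uses $L_1=\overline{p_1p_2},\dots,L_6=\overline{p_6p_1}$ on the same plane (the two period maps differ by a Cremona transformation of $U_6/G$). With the dual arrangement the $p_i$ are not singular points of $B$ at all, contradicting your premise that the labeled singularities include the $p_i$, and the whole marking mechanism you describe would have to be reworked. Second, your bookkeeping assumes each labeled point is an $A_1$-singularity contributing one $(-2)$-curve; in the $d=5$ cases and in $V_6$ the distinguished points are ordinary triple points, which require the more elaborate marking of Example \ref{ex:2} (a labeled component of $X^{\iota}$ plus three $(-2)$-curves over infinitely near points), and the parity $\delta$ must then be computed separately --- the paper extracts both $(18,4,0)$ and $(18,4,1)$ from five points via two different line arrangements, and only the $\delta=1$ one has $L_-\simeq L_2$.
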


When $d\leq6$, we recover some results of Matsumoto-Sasaki-Yoshida \cite{M-S-Y}. 
They constructed a period map for $U_6$, 
and then obtained lower-dimensional period maps by degeneration. %
The novel part of Theorem \ref{by-product} is the construction of the period maps for $d=7, 8$ points. 
Also our period maps for $d\leq6$ are derived from the ones for $d=7, 8$, 
and are not identical to the ones of \cite{M-S-Y}.  %
It is a future task to study the whole boundary behavior of the period maps. %

Kond\=o, Dolgachev, and van Geemen \cite{Ko2}, \cite{D-G-K}, \cite{Ko3} 
described the spaces $U_d/G$ for $5\leq d\leq7$ as ball quotients.  
It is also known \cite{D-O} that $U_7/G$ can be described as a Siegel modular variety. 
Thus those spaces $U_d/G$ 
admit (birationally) the structure of an arithmetic quotient in more than one way: 
after suitable compactifications, they may provide examples of ``Janus-like" varieties (cf. \cite{H-W}).  %
In view of the relation with the moduli of del Pezzo surfaces, 
it would also be interesting to study the Weyl group action on $\mathcal{F}(\Gamma_{2d-8})$ 
induced by the period map.

The rest of the paper is structured as follows. 
In Section \ref{sec: preliminary} 
we review the necessary facts concerning lattices, modular varieties, and invariant theory. 
In Section \ref{sec: 2-elemen K3} 
we gather basic results on 2-elementary $K3$ surfaces 
with particular attention to the relation with singular sextic curves. 
The proof of Theorem \ref{main} will be developed from Section \ref{sec: r <10} to Section \ref{sec: r>13}.  
Theorem \ref{by-product} will be proved in Sections \ref{sec: r=12}, \ref{sec: r=13}, and \ref{sec: r>13}.  
In Section \ref{BV 3-fold} 
we deduce the unirationality of the moduli spaces of Borcea-Voisin threefolds 
as a consequence of Theorem \ref{main}. 
In the Appendix \ref{appendix} written by Yoshikawa, the approach by modular forms is presented. 

Otherwise stated, we work in the category of algebraic varieties over ${\C}$.


\section{Preliminaries}\label{sec: preliminary}


\subsection{Lattices}\label{subsec: lattice}

Let $L$ be a \textit{lattice}, i.e., a free ${\Z}$-module of finite rank 
endowed with a non-degenerate integral symmetric bilinear form $( , )$. 
The orthogonal group of $L$ is denoted by ${\rm O}(L)$. 
For an integer $n\ne0$, $L(n)$ denotes the scaled lattice $(L, n( , ))$.  
The lattice $L$ is \textit{even} if $(l, l)\in 2{\Z}$ for all $l \in L$, and \textit{odd} otherwise. 
The dual lattice $L^{\vee}={\rm Hom\/}(L, {\Z})$ of $L$ is canonically embedded in $L\otimes{\Q}$ and contains $L$. 
On the finite abelian group $D_L = L^{\vee}/L$ 
we have the ${\Q}/{\Z}$-valued bilinear form $b_L$ defined by $b_L(x+L, y+L)=(x, y)+{\Z}$. 
We denote by $\widetilde{{\rm O}}(L) \subset {\rm O}(L)$ 
the group of isometries of $L$ which act trivially on $D_L$. 
When $L$ is even, $b_L$ is induced by the quadratic form 
$q_L \colon D_L \to {\Q}/2{\Z}$, $q_L(x+L) = (x, x)+2{\Z}$, 
which is called the {\it discriminant form\/} of $L$. 
We denote by $r_L\colon{\rm O}(L)\to{\rm O}(D_L, q_L)$ the natural homomorphism. 

\begin{proposition}[\cite{Ni1}]\label{glue}
Let $\Lambda$ be an even unimodular lattice and $L$ be a primitive sublattice of $\Lambda$ 
with the orthogonal complement $M$. 
Then one has a natural isometry $\lambda\colon(D_{L}, q_{L}) \simeq (D_{M}, -q_{M})$ defined by the relation 
$x+\lambda(x) \in \Lambda, x \in D_{L}$.  
For two isometries $\gamma_L\in{\rm O}(L)$ and $\gamma_M\in{\rm O}(M)$, 
the isometry $\gamma _{L} \oplus \gamma _{M}$ of $L \oplus M$ 
extends to that of $\Lambda$ 
if and only if 
$r_L(\gamma_L) = \lambda^{-1}\circ r_M(\gamma_M)\circ\lambda$. 
\end{proposition}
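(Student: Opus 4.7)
The plan is to realize the overlattice $\Lambda$ inside $L^{\vee}\oplus M^{\vee}$ as the graph of the sought isometry $\lambda$, following Nikulin's standard gluing construction. Set $H := \Lambda/(L\oplus M)$. Since $L$ and $M$ are mutually orthogonal of complementary ranks in $\Lambda$, we have the chain
\begin{equation*}
L\oplus M \;\subset\; \Lambda \;=\; \Lambda^{\vee} \;\subset\; L^{\vee}\oplus M^{\vee},
\end{equation*}
so $H$ is a subgroup of $D_L\oplus D_M$, and the unimodularity of $\Lambda$ forces $H$ to be its own orthogonal complement in $(D_L\oplus D_M, b_L\oplus b_M)$. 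In particular $|H|^{2}=|D_L|\cdot|D_M|$.

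First I would show that the two coordinate projections $p_L\colon H\to D_L$ and $p_M\colon H\to D_M$ are both isomorphisms. Injectivity of $p_L$ uses that $L$ is primitive in $\Lambda$: if $(0,y)\in H$, then a lift lies in $M\otimes\Q\cap\Lambda = M$, using that $L\subset\Lambda$ implies $(l,0)\in\Lambda$ and hence $(0,y')\in\Lambda$ for the $M$-component of a lift. For surjectivity, given $x\in L^{\vee}$, primitivity makes $\Lambda/L$ free, so the functional $L\to\Z$ determined by $x$ extends to a $\Z$-linear functional on $\Lambda$; unimodularity represents the extension by an element $v\in\Lambda$, whose orthogonal decomposition $v=x+v_M$ yields $v_M\in M^{\vee}$ with $(x,v_M)\in H$. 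Define $\lambda:=p_M\circ p_L^{-1}$. That $\lambda$ sends $q_L$ to $-q_M$ is read off from the evenness of $\Lambda$: for $v=x+\lambda(x)\in\Lambda$ one has $(v,v)=(x,x)+(\lambda(x),\lambda(x))\in 2\Z$, so $q_L(x)+q_M(\lambda(x))=0$ in $\Q/2\Z$.

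For the second assertion, the isometry $\gamma_L\oplus\gamma_M$ of $L\oplus M$ extends to $\Lambda$ if and only if its natural action on $(L\oplus M)^{\vee}/(L\oplus M)=D_L\oplus D_M$, namely $r_L(\gamma_L)\oplus r_M(\gamma_M)$, preserves the subgroup $H$. Since $H$ is precisely the graph of $\lambda$, preservation is equivalent to the commutativity $r_M(\gamma_M)\circ\lambda=\lambda\circ r_L(\gamma_L)$, which rearranges to the stated identity $r_L(\gamma_L)=\lambda^{-1}\circ r_M(\gamma_M)\circ\lambda$.

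I do not anticipate a hard step here; the whole argument is bookkeeping with overlattices. The one place demanding care is the verification that $p_L$ (equivalently $p_M$) is surjective, where the combination of primitivity of $L$ in $\Lambda$ and unimodularity of $\Lambda$ is used in a nontrivial way. Everything else, including that $\lambda$ is an anti-isometry and the graph-preservation criterion, reduces to straightforward manipulations once the bijection $H\simeq D_L\simeq D_M$ is in hand.
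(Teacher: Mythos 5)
Your proof is correct. The paper gives no argument for this proposition --- it is quoted directly from Nikulin \cite{Ni1} --- and your construction (realizing $H=\Lambda/(L\oplus M)$ inside $D_L\oplus D_M$ as the graph of $\lambda$, reading the anti-isometry property off the evenness of $\Lambda$, and characterizing extendability of $\gamma_L\oplus\gamma_M$ as preservation of that graph) is exactly Nikulin's standard gluing argument. One small misattribution worth fixing: the identity $M\otimes\Q\cap\Lambda=M$ that you invoke for the injectivity of $p_L$ is primitivity of $M$ (automatic, since $M$ is an orthogonal complement), whereas primitivity of $L$ is what is actually needed for the injectivity of $p_M$ and for the freeness of $\Lambda/L$ in your surjectivity step.
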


A lattice $L$ is called \textit{2-elementary} if $D_L$ is 2-elementary, 
i.e., $D_L \simeq ({\Z}/2{\Z})^a$ for some $a\geq0$. 
The \textit{main invariant} of an even 2-elementary lattice $L$ is 
the quadruplet $(r_+, r_-, a, \delta)$ where 
$(r_+, r_-)$ is the signature of $L$, 
$a$ is the length of $D_L$ as above, 
and $\delta$ is defined by 
$\delta=0$ if $q_L(D_L) \subset {\Z}/2{\Z}$ and $\delta=1$ otherwise. 
By Nikulin \cite{Ni1}, the isometry class of $L$ is uniquely determined by the main invariant 
if $L$ is indefinite. 
When $L$ is hyperbolic, 
we also call the triplet $(1+r_{-}, a, \delta)$ the main invariant of $L$. 
In this paper we often use the following 2-elementary lattices with basis: 
\begin{eqnarray}
 M_n & = &\langle2\rangle\oplus\langle-2\rangle^{n-1} = \langle h, e_1, \cdots, e_{n-1}\rangle,  \label{Cremona lattice} \\
 U(2) & = &\langle u, v\rangle,  \label{U(2)} 
\end{eqnarray}
where $\{ h, e_1, \cdots, e_{n-1}\}$ are orthogonal basis with $(h, h)=2$ and $(e_i, e_i)=-2$, 
and $\{ u, v\}$ are basis with $(u, u)=(v, v)=0$ and $(u, v)=2$. 
Let
\begin{equation}\label{K3 lattice}
\Lambda_{K3} = U^{3} \oplus E_8^2 
\end{equation}
be the even unimodular lattice of signature $(3, 19)$ 
where $U$ is the hyperbolic plane (the scaling of $U(2)$ by $\frac{1}{2}$)  
and $E_8$ is the rank $8$ even negative-definitive unimodular lattice.  
The following assertion is due to Nikulin.

\begin{proposition}[\cite{Ni1}, \cite{Ni2}]\label{unique emb}
Let $L$ be an even hyperbolic 2-elementary lattice. 
If a primitive embedding $L \hookrightarrow \Lambda_{K3}$ exists, 
then it is unique up to the action of ${\rm O}(\Lambda_{K3})$. 
\end{proposition}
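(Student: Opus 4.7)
The plan is to invoke Nikulin's general framework for primitive embeddings of an even lattice into an even unimodular lattice, which reduces the classification of such embeddings (modulo the ambient orthogonal group) to the classification of the orthogonal complement together with its gluing data to $L$.

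First, given two primitive embeddings $\iota_{1},\iota_{2}\colon L\hookrightarrow\Lambda_{K3}$, set $M_{j}=\iota_{j}(L)^{\perp}$. By Proposition \ref{glue} we obtain canonical gluing isometries $\lambda_{j}\colon(D_{L},q_{L})\to(D_{M_{j}},-q_{M_{j}})$. In particular each $M_{j}$ is even 2-elementary with length $a$ equal to that of $L$ and signature $(2,20-r)$. A short check on the induced quadratic form shows that the parity $\delta_{M_{j}}$ is also determined by $(r,a,\delta)$; hence $M_{1}$ and $M_{2}$ share the same main invariant.

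Second, I would apply Nikulin's classification of indefinite even 2-elementary lattices to conclude $M_{1}\simeq M_{2}$ (if $r=20$ so that $M_{j}$ is positive definite of rank $2$, one invokes instead the small-rank definite case, which yields the same uniqueness). Fix an isometry $\phi\colon M_{1}\to M_{2}$. It remains to promote $\phi$ to an isometry of $\Lambda_{K3}$ sending $\iota_{1}(L)$ onto $\iota_{2}(L)$. By the gluing criterion in Proposition \ref{glue}, the pair $(\mathrm{id}_{L},\phi)$ extends to an isometry of $\Lambda_{K3}$ exactly when $\lambda_{2}=r(\phi)\circ\lambda_{1}$. Since this need not hold a priori, consider the discrepancy
\[
\alpha \;:=\; \lambda_{2}^{-1}\circ r(\phi)\circ\lambda_{1}\;\in\; {\rm O}(D_{L},q_{L}).
\]
Applying Nikulin's surjectivity theorem for the natural map $r_{L}\colon{\rm O}(L)\to{\rm O}(D_{L},q_{L})$ — valid here because $L$ is hyperbolic of rank $r\geq 1$, hence indefinite of the required type — choose $f\in{\rm O}(L)$ with $r_{L}(f)=\alpha^{-1}$. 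Then $(f,\phi)$ satisfies the compatibility condition of Proposition \ref{glue}, and the resulting isometry of $L\oplus M_{1}$ extends to an isometry $\Phi\in{\rm O}(\Lambda_{K3})$ carrying $\iota_{1}$ to $\iota_{2}\circ f$, which is the desired conclusion.

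The substantive content, and thus the main obstacle, lies in verifying Nikulin's surjectivity theorem in the relevant range of lattice invariants. Hyperbolic 2-elementary lattices of the seventy-five Nikulin types comfortably satisfy the sufficient rank/signature conditions, but in borderline small-rank situations it is cleaner to carry out the same surjectivity argument on the $M$-side (permissible since $M$ is indefinite as soon as $r\leq 19$); between the two sides every case admitted by the hypothesis ``a primitive embedding exists" is covered, so no further case analysis intrudes on the overall structure of the proof.
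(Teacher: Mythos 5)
The paper gives no proof of this proposition---it is quoted directly from Nikulin---so there is no internal argument to compare against; your proof is the standard derivation from Nikulin's Theorem 3.6.3 (an even \emph{indefinite} 2-elementary lattice is determined up to isometry by its main invariant, and $r_L\colon{\rm O}(L)\to{\rm O}(q_L)$ is surjective) combined with the graph-of-$\lambda$ description of the glue group, and it is essentially correct. Two small points deserve attention: the compatibility condition actually forces $r_L(f)=\alpha$ rather than $\alpha^{-1}$ (immaterial, since surjectivity supplies either element), and in the boundary case $r=20$ the complement is positive definite of rank $2$, where ``same signature and discriminant form implies isometric'' is \emph{not} a general principle for definite lattices---one needs the explicit (easy) check that the even positive definite binary lattice with the required discriminant form is unique, namely $\langle2\rangle^{2}$ for the single invariant $(20,2,1)$ that occurs; with that verified, and with the surjectivity applied on whichever of the two sides is indefinite, every case is covered as you say.
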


\subsection{Orthogonal modular varieties}\label{subsec: modular variety}

Let $L$ be a lattice of signature $(2, r_{-})$ 
and let $\Gamma \subset {\rm O}(L)$ be a finite-index subgroup. 
The group $\Gamma$ acts properly discontinuously on the complex manifold 
\begin{equation*}\label{period domain}
\Omega _{L} =  
   \{  \ {\C}\omega \in {\proj}(L\otimes{{\C}}) \ \vert \ 
           (\omega , \omega )=0,  \ (\omega , \bar{\omega})>0 \ \} . 
\end{equation*}
The domain $\Omega _{L}$ has two connected components, say $\Omega_{L}^{+}$ and $\Omega_{L}^{-}$.  
We denote by $\Gamma^{+}$ the group of those isometries in $\Gamma$ which preserve $\Omega_{L}^{+}$. 
The quotient space 
\begin{equation}\label{def: modular variety}
\mathcal{F}_{L}(\Gamma^{+}) = \Gamma^{+} \backslash \Omega_{L}^{+} 
\end{equation}
is a normal quasi-projective variety of dimension $r_-$, by \cite{B-B}, 
called the modular variety associated to $\Gamma^{+}$. 
When the lattice $L$ is understood from the context, 
we abbreviate $\mathcal{F}_{L}(\Gamma^{+})$ as  $\mathcal{F}(\Gamma^{+})$. 

\begin{proposition}\label{construct isogeny}
Let $L$ be a finite-index sublattice of a lattice $M$ of signature $(2, r_-)$. 
Then there exists a finite surjective morphism 
$\mathcal{F}(\widetilde{{\rm O}}(L)^+) \to \mathcal{F}(\widetilde{{\rm O}}(M)^+)$. 
\end{proposition}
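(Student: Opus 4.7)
The plan is to exploit the fact that the finite-index inclusion $L\subset M$ makes the period domains coincide, and then to prove the slightly counterintuitive inclusion $\widetilde{{\rm O}}(L)\subset \widetilde{{\rm O}}(M)$; the required morphism will be the induced map of arithmetic quotients.

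First, since $[M:L]<\infty$, one has $L\otimes{\R}=M\otimes{\R}$ with the same quadratic form, so $\Omega_L=\Omega_M$ as subdomains of $\proj(L\otimes{\C})$, and I fix a compatible choice of positive components $\Omega_L^+=\Omega_M^+$.

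The key step is the verification of $\widetilde{{\rm O}}(L)\subset \widetilde{{\rm O}}(M)$. The chain $L\subset M\subset M^\vee\subset L^\vee$ is available, the rightmost inclusion coming from $L\subset M$. Let $\gamma\in\widetilde{{\rm O}}(L)$, extended ${\Q}$-linearly to an isometry of $M\otimes{\Q}$ which preserves $L^\vee$. For $m\in M\subset L^\vee$ one has $\gamma(m)-m\in L\subset M$, hence $\gamma(M)\subset M$ and, applying the same to $\gamma^{-1}$, $\gamma(M)=M$; in particular $\gamma\in{\rm O}(M)$. For $x\in M^\vee\subset L^\vee$ one gets $\gamma(x)-x\in L\subset M$, so $\gamma$ acts trivially on $D_M$ and therefore belongs to $\widetilde{{\rm O}}(M)$. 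Since the $+$-condition is defined via the action on the common period domain, the same inclusion descends to the index-$\leq 2$ subgroups: $\widetilde{{\rm O}}(L)^+\subset\widetilde{{\rm O}}(M)^+$.

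To finish, I would verify finiteness of the index and conclude. Both $\widetilde{{\rm O}}(L)$ and $\widetilde{{\rm O}}(M)$ are arithmetic subgroups of the algebraic ${\Q}$-group of isometries of $L\otimes{\Q}=M\otimes{\Q}$, hence commensurable; combined with the strict inclusion above this yields $[\widetilde{{\rm O}}(M)^+:\widetilde{{\rm O}}(L)^+]<\infty$. (Alternatively, one can argue directly: $\widetilde{{\rm O}}(M)$ acts on the finite set of index-$[M:L]$ sublattices of $M$, so the stabilizer of $L$ has finite index, and inside that stabilizer $\widetilde{{\rm O}}(L)$ has finite index by the finiteness of ${\rm O}(D_L)$.) The inclusion $\widetilde{{\rm O}}(L)^+\hookrightarrow\widetilde{{\rm O}}(M)^+$ then induces a natural surjective morphism of normal quasi-projective varieties $\mathcal{F}(\widetilde{{\rm O}}(L)^+)\to\mathcal{F}(\widetilde{{\rm O}}(M)^+)$ whose fibers have cardinality bounded by that finite index, so it is a finite morphism, as required.

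The main subtlety is the direction of the inclusion in the key step: one must notice that although intuitively passing to the larger lattice $M$ should enlarge the ``tilde'' group, the opposite occurs, and the argument hinges on the sandwich $M,\,M^\vee\subset L^\vee$ rather than on the surface form of the definitions.
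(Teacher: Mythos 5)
Your proof is correct and follows essentially the same route as the paper: both rest on the chain $L\subset M\subset M^{\vee}\subset L^{\vee}$ and the observation that an isometry acting trivially on $D_L$ must preserve the overlattice $M$ and act trivially on $D_M$, giving the finite-index inclusion $\widetilde{{\rm O}}(L)\hookrightarrow\widetilde{{\rm O}}(M)$ and hence the finite morphism of quotients. You merely phrase the key step elementwise (via $\gamma(x)-x\in L$ for $x\in L^{\vee}$) where the paper phrases it via the subgroups $M/L$ and $M^{\vee}/L$ of $D_L$, and you supply the finiteness-of-index justification that the paper leaves implicit.
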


\begin{proof}
We have the sequence $L \subset M \subset M^{\vee} \subset L^{\vee}$ of inclusions. 
If we regard the finite groups $G_{1}=M/L$ and $G_{2}=M^{\vee}/L$ as subgroups of $D_{L}$, 
then we have $G_{2} = \{ x \in D_{L}, b_{L}(x, G_{1})\equiv 0 \}$ 
and the induced bilinear form $(G_{2}/G_{1}, b_{L})$ is canonically isometric to $(D_{M}, b_{M})$. 
Since the isometries in $\widetilde{{\rm O}}(L)$ act trivially on both $G_{1}$ and $G_{2}$, 
they preserve the overlattice $M$ of $L$,  
and as isometries of $M$ act trivially on $D_{M}$. 
Thus we have a finite-index embedding 
$\widetilde{{\rm O}}(L) \hookrightarrow \widetilde{{\rm O}}(M)$ of groups.  
Via the natural identification $\Omega_{L}=\Omega_{M} \subset {\proj}(L\otimes{{\C}}) = {\proj}(M\otimes{{\C}})$, 
this embedding induces a finite morphism 
$\mathcal{F}(\widetilde{{\rm O}}(L)^+) \to \mathcal{F}(\widetilde{{\rm O}}(M)^+)$. 
\end{proof}

The following proposition was used by Kond\=o \cite{Ko1} to prove the rationality of 
the moduli space of Enriques surfaces. 

\begin{proposition}\label{duality of 2-ele lattice}
Let $L$ be an even 2-elementary lattice of signature $(2, r_{-})$. 
Then the lattice $M=L^{\vee}(2)$ is 2-elementary and we have 
$\mathcal{F}({\rm O}(L)^+) \simeq \mathcal{F}({\rm O}(M)^+)$. 
\end{proposition}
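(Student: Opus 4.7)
The plan is first to verify that $M := L^{\vee}(2)$ is indeed a 2-elementary lattice of signature $(2, r_{-})$, and then to observe that ${\rm O}(L)$ and ${\rm O}(M)$ are literally the same group acting on the same period domain, so that the isomorphism of modular varieties is essentially tautological.

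For the first step, set $n = {\rm rank}(L)$. Since $L$ is even 2-elementary with $|D_{L}| = 2^{a}$, one has the chain $L \subset L^{\vee} \subset \frac{1}{2} L$ inside $L \otimes {\Q}$, the last inclusion coming from $2L^{\vee} \subset L$. The pairing $(\cdot,\cdot)_{M} = 2(\cdot,\cdot)_{L}$ takes integer values on $L^{\vee}$ because $2(x,y)_{L} = (2x, y)_{L} \in {\Z}$ for $x, y \in L^{\vee}$. To compute $M^{\vee}$ inside $L \otimes {\Q}$, note that
\begin{equation*}
  v \in M^{\vee} \iff 2(v, L^{\vee})_{L} \subset {\Z} \iff 2v \in (L^{\vee})^{\vee} = L \iff v \in \tfrac{1}{2} L.
\end{equation*}
Hence $M^{\vee} = \tfrac{1}{2}L$, and $D_{M} = \tfrac{1}{2}L / L^{\vee}$ is a subquotient of $\tfrac{1}{2}L / L \simeq L / 2L$, so it is 2-elementary of order $2^{n-a}$. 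Positive rescaling does not affect signs of squares, so $M$ has signature $(2, r_{-})$.

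The decisive observation is that both $L^{\vee}$ and $L = (L^{\vee})^{\vee}$ are intrinsically determined by $(L, (\cdot,\cdot)_{L})$. Consequently any $g \in {\rm O}(L)$, viewed as a ${\Q}$-linear automorphism of $L \otimes {\Q}$, preserves $L^{\vee}$, and since rescaling the form by $2$ does not affect which linear maps are isometries, $g$ belongs to ${\rm O}(L^{\vee}(2)) = {\rm O}(M)$. The reverse inclusion is identical, giving a canonical identification ${\rm O}(L) = {\rm O}(M)$ as subgroups of the linear automorphisms of $L \otimes {\Q} = M \otimes {\Q}$. Moreover the period domain $\Omega_{L} \subset {\proj}(L \otimes {\C})$ is cut out by $(\omega,\omega)_{L} = 0$ and $(\omega, \bar{\omega})_{L} > 0$, both preserved under scaling the form by $2$. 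Hence $\Omega_{L} = \Omega_{M}$ as complex manifolds, with matching pairs of connected components, so ${\rm O}(L)^{+} = {\rm O}(M)^{+}$.

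Combining the two steps yields $\mathcal{F}({\rm O}(L)^{+}) = {\rm O}(L)^{+} \backslash \Omega_{L}^{+} = {\rm O}(M)^{+} \backslash \Omega_{M}^{+} = \mathcal{F}({\rm O}(M)^{+})$ by construction. There is no real obstacle: everything reduces to the observation that dualizing and rescaling the form by a positive constant do not change either the isometry group or the conformal class of the period domain. The only substantive calculation is the identification $M^{\vee} = \tfrac{1}{2}L$ used to verify that $M$ is 2-elementary; note that $M$ itself may be odd when the parity $\delta$ of $L$ equals $1$, but this is harmless for the statement.
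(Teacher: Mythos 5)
Your proof is correct and follows essentially the same route as the paper's: you verify 2-elementarity via the chain $L(2)\subset M\subset M^{\vee}=\tfrac{1}{2}L(2)$ and then use the double-dual relation $L^{\vee\vee}=L$ to identify ${\rm O}(L)$ with ${\rm O}(M)$ inside ${\rm O}(L\otimes\Q)$, acting on the same period domain. You simply spell out the details (the computation of $M^{\vee}$, the order of $D_M$, and the possible oddness of $M$ when $\delta=1$) that the paper leaves implicit.
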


\begin{proof}
Since $L(2)\subset M \subset \frac{1}{2}L(2)=M^{\vee}$, 
we see that $M$ is 2-elementary. 
We have the coincidence ${\rm O}(L)={\rm O}(L^{\vee})$ in ${\rm O}(L\otimes{\Q})$ 
because of the double dual relation $L^{\vee\vee}=L$. 
Thus we have 
$\mathcal{F}_L({\rm O}(L)^+)\simeq\mathcal{F}_{L^{\vee}}({\rm O}(L^{\vee})^+)\simeq\mathcal{F}_M({\rm O}(M)^+)$.
\end{proof}


\subsection{Geometric Invariant Theory}\label{subsec: GIT}

We review some facts from Geometric Invariant Theory. 
Throughout this section 
let $X$ be a variety acted on by a reductive algebraic group $G$. 
A $G$-invariant morphism $\pi\colon X \to Y$ to a variety $Y$ is a \textit{geometric quotient} of $X$ by $G$ if 
(i) $\pi $ is surjective, 
(ii) $\mathcal{O}_{Y} \simeq (\pi _{\ast}\mathcal{O}_{X})^{G}$, 
(iii) a subset $U \subset Y$ is open if $\pi ^{-1}(U) \subset X$ is open, 
and (iv) the fibers of $\pi $ are the $G$-orbits.
We sometimes denote $Y = X/G$ and omit $\pi$.  
A geometric quotient $\pi\colon X \to Y$ enjoys the following universality: 
for every $G$-invariant morphism $f\colon X \to Z$ 
there exists a unique morphism $g\colon Y \to Z$ with $g \circ \pi = f$. 
In particular, a geometric quotient, if it exists, is unique up to isomorphism.  

Let $L$ be an ample $G$-linearized line bundle on $X$.  
A point $x \in X$ is \textit{stable} (with respect to $L$) if 
(i) the stabilizer $G_x$ is a finite group, and 
(ii) there is a $G$-invariant section $s \in H^{0}(L^{\otimes n})^{G}$ for some $n>0$ such that 
$s(x) \ne 0$ and that the action of $G$ on $X_{s} = \{ x' \in X, s(x') \ne 0 \}$ is closed. 
The open set of stable points is denoted by $X^{s}(L)$.

\begin{theorem}[\cite{GIT}]\label{thm: GIT}
Let $X, G, L$ be as above. 
Then a geometric quotient $X^{s}(L)/G$ of $X^{s}(L)$ exists and is a quasi-projective variety. 
\end{theorem}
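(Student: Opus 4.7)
The plan is to follow Mumford's classical construction: build the geometric quotient locally on the affine invariant charts $X_{s}$ that appear in the definition of stability, then patch them together and verify quasi-projectivity. By hypothesis every stable point lies in some $X_{s}=\{s\ne 0\}$ on which the $G$-action is closed, and since $L$ is ample, some power $L^{\otimes n}$ is very ample so that $X_{s}$ is affine. The collection $\{X_{s}\}$ therefore gives an affine $G$-invariant open cover of $X^{s}(L)$, and it suffices to produce a geometric quotient on each chart and glue.

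On each chart $X_{s}=\text{Spec}(A)$, the reductivity of $G$ together with Nagata's theorem implies that $A^{G}$ is finitely generated, so $Y_{s}=\text{Spec}(A^{G})$ is affine of finite type. The induced morphism $\pi_{s}\colon X_{s}\to Y_{s}$ is the categorical quotient; to upgrade it to a geometric one I would verify axioms (i)--(iv) of the definition. The first three are standard for affine reductive quotients, while (iv) uses the fact that a reductive group separates disjoint closed invariant subsets by invariants, so that two distinct closed orbits project to distinct points. The hypotheses that the action on $X_{s}$ is closed (every orbit is closed) and that stable points have finite stabilizers are exactly what is needed to make the orbit space agree with $Y_{s}$ set-theoretically.

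Next, I would patch: the universality of the affine quotient identifies the image of $X_{s}\cap X_{t}$ in both $Y_{s}$ and $Y_{t}$ with the same affine scheme, so the local quotients glue to a scheme $Y$ equipped with a global $G$-invariant surjection $X^{s}(L)\to Y$ satisfying (i)--(iv), whose universality transfers from the affine pieces. Finally, quasi-projectivity is obtained by taking sufficiently many invariant sections of $L^{\otimes N}$ for $N\gg 0$: these descend to sections of a line bundle on $Y$ that separate points and tangent directions on $X^{s}(L)/G$ (the condition encoded by stability), yielding a locally closed embedding of $Y$ into a projective space.

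The principal technical obstacle lies not in the local construction or the patching, which are formal, but in the two deep classical inputs invoked implicitly: Nagata's finite-generation theorem for invariants of reductive group actions, and the separation of disjoint closed invariant subsets by invariants for such actions. Both are substantive foundational results of GIT; granted them, the remainder is a careful gluing together with an ampleness-based projective embedding.
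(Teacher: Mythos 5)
This theorem is quoted from \cite{GIT} and the paper supplies no proof of its own, so there is nothing internal to compare against; your outline is, in substance, the classical construction of Mumford (GIT, Ch.~1, \S 4), and it is essentially correct: affine invariant charts, Nagata's finite generation, separation of disjoint closed invariant sets by invariants to pass from categorical to geometric quotient on the closed-orbit locus, gluing by universality, and an ample descended line bundle for quasi-projectivity. Two small points where the sketch is glib: the affineness of $X_s$ does not follow from ampleness of $L$ alone unless $X$ is projective (or complete), which is why Mumford builds affineness of $X_s$ into the definition of (semi)stability in the general quasi-projective case — the paper's Definition of stability omits this, so strictly speaking your first step needs that hypothesis restored; and for the projective embedding of the (generally singular) quotient one wants the descended bundle to be ample in the scheme-theoretic sense rather than to ``separate points and tangent directions.'' Neither affects the viability of the argument, which is the standard one.
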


\begin{lemma}\label{stab & fin mor}
Let $f\colon X \to Y$ be a $G$-equivariant finite morphism of $G$-varieties. 
Suppose we have an ample $G$-linearized line bundle $L$ on $Y$ such that $Y = Y^{s}(L)$. 
Then we have $X = X^{s}(f^{\ast}L)$. 
In particular, we have a geometric quotient $X/G$. 
\end{lemma}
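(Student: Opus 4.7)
The plan is to verify that $X=X^s(f^*L)$ pointwise and then invoke Theorem \ref{thm: GIT} to produce $X/G$. First I would check that $f^*L$ is a $G$-linearized ample line bundle on $X$: the linearization is inherited from $L$ via the $G$-equivariance of $f$, and ampleness is preserved under the finite morphism $f$ by standard pullback results.

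Fix $x\in X$ and set $y=f(x)\in Y=Y^s(L)$. For the stabilizer condition, equivariance forces $G_x\subset G_y$, and $G_y$ is finite by stability of $y$, so $G_x$ is finite. For the section condition, choose $s\in H^0(Y,L^{\otimes n})^G$ with $s(y)\ne 0$ such that $G$ acts on $Y_s$ with closed orbits; then $f^*s\in H^0(X,(f^*L)^{\otimes n})^G$ satisfies $(f^*s)(x)=s(y)\ne 0$, and its nonvanishing locus is $X_{f^*s}=f^{-1}(Y_s)$, which is affine as a finite cover of the affine variety $Y_s$.

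The heart of the proof is to show that every $G$-orbit in $f^{-1}(Y_s)$ is closed there. Given $x'\in f^{-1}(Y_s)$, set $C=\overline{G\cdot x'}\setminus G\cdot x'$ with the closure taken inside $f^{-1}(Y_s)$. Because $G_{x'}$ is finite, $\dim G\cdot x'=\dim G$, and since an algebraic group orbit is always open in its closure, $\dim C<\dim G$ whenever $C$ is nonempty. On the other hand, $C$ would be a nonempty closed $G$-invariant subset of the affine $G$-variety $f^{-1}(Y_s)$, so by the standard fact that a reductive group action on an affine variety admits a closed orbit in every nonempty closed invariant subset, $C$ would contain a closed orbit $G\cdot x''$. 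But any $x''\in f^{-1}(Y_s)$ has $f(x'')\in Y^s$ with $G_{f(x'')}$ finite, hence $G_{x''}$ finite, hence $\dim G\cdot x''=\dim G$, contradicting $\dim C<\dim G$. Thus $C=\emptyset$ and the orbit is closed.

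I expect this third step — the dimension-plus-closed-orbit argument for closedness of orbits in $f^{-1}(Y_s)$ — to be the main obstacle, since the earlier verifications are routine consequences of equivariance and finite-morphism formalism. Once $X=X^s(f^*L)$ is established, Theorem \ref{thm: GIT} immediately supplies the geometric quotient $X/G$ as a quasi-projective variety.
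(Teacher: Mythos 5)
Your proof is correct and follows the same route as the paper: pull back the linearization and ampleness, observe $G_x\subset G_{f(x)}$ is finite, and pull back an invariant section so that $X_{f^*s}=f^{-1}(Y_s)$. The only difference is that the paper simply asserts the $G$-action on $f^{-1}(Y_s)$ is closed, whereas you supply a valid justification (all stabilizers there are finite, so every orbit has dimension $\dim G$ and none can lie in the boundary of another's closure); this fills in a detail the paper leaves implicit.
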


\begin{proof}
Note that $f^{\ast}L$ is ample and naturally $G$-linearized. 
For every $x \in X$ the stabilizer $G_{x}$ is a subgroup of $G_{f(x)}$ and hence is finite. 
For an invariant section $s \in H^{0}(Y, L^{\otimes n})^{G}$ 
with $s(f(x)) \ne 0$ and with closed $G$-action on $Y_s$, 
we have $f^{\ast}s(x)\ne 0$ and 
the $G$-action on $X_{f^{\ast}s}=f^{-1}(Y_{s})$ is also closed. 
\end{proof}

We will apply the machinery of GIT to  
plane sextic curves (\cite{Sh1}), 
bidegree $(4, 4)$ curves on ${\proj}^{1}\times {\proj}^{1}$ (\cite{Sh2}), and 
point sets in ${\proj}^{2}$ (\cite{GIT}, \cite{D-O}).

\begin{definition}\label{def: simple singularity}
Let $C \subset S$ be a reduced curve on a smooth surface $S$. 
A singular point $p \in C$ is a \textit{simple singularity} if 
$(i)$ $p$ is a double or triple point, and 
$(ii)$ the strict transform of $C$ in the blow-up of $S$ at $p$ does not have triple point over $p$. 
\end{definition}

See \cite{B-H-P-V} II.8 for the A-D-E classification of the simple singularities. 
In this paper we will deal mainly with nodes ($A_1$-points) and 
ordinary triple points ($D_4$-points). 
In some literatures, the condition $(ii)$ above is stated in the form 
``$C$ has no consecutive triple point'' (\cite{Sh1}) or 
``$C$ has no infinitely near triple point'' (\cite{Ho}).

We consider the ${\PGL}$-action on the linear system $|\mathcal{O}_{{\proj}^{2}}(6) |$ of plane sextic curves, 
which is endowed with a natural linearized ample line bundle. 

\begin{proposition}[Shah \cite{Sh1}]\label{stab sextic}
A reduced plane sextic is ${\PGL}$-stable 
if and only if it has only simple singularities. 
\end{proposition}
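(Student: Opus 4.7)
The plan is to apply the Hilbert--Mumford numerical criterion to the $\SL$-action on the space $H^{0}(\mathcal{O}_{{\proj}^{2}}(6))$ of sextic forms. Up to conjugation, every non-trivial one-parameter subgroup $\lambda$ of $\SL$ is diagonal in some coordinate system with integer weights $(a_{0}, a_{1}, a_{2})$ satisfying $a_{0}\geq a_{1}\geq a_{2}$ and $a_{0}+a_{1}+a_{2}=0$. A sextic $F=\sum c_{ijk}\,x^{i}y^{j}z^{k}$ fails to be stable if and only if there is a choice of coordinates and such a $\lambda$ for which every monomial with $c_{ijk}\ne 0$ satisfies $ia_{0}+ja_{1}+ka_{2}\geq 0$. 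The task is to translate this numerical condition into the geometric condition of Definition \ref{def: simple singularity}.

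For the instability direction, assume $F$ is reduced and has a non-simple singularity at a point $p$, and move $p$ to $(0:0:1)$ by an element of $\PGL$. If the multiplicity $\operatorname{mult}_{p}F$ is at least $4$, then every monomial with non-zero coefficient in $F$ obeys $i+j\geq 4$, so the diagonal $\lambda$ with weights $(1,1,-2)$ gives $ia_{0}+ja_{1}+ka_{2}=3(i+j)-12\geq 0$, destabilizing $F$. If instead $p$ is a triple point whose strict transform after blowing up $p$ still carries a triple point, I would change coordinates further so that the tangent cone of $F$ at $p$ contains a distinguished line with high contact and the infinitely near triple point sits at a prescribed point of the exceptional divisor. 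A direct bookkeeping of which monomials are forced to vanish then exhibits a destabilizing 1-PS with weights such as $(2,-1,-1)$, or a suitable refinement of these.

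For the stability direction, assume $F$ is not stable and produce a non-simple singularity. Fix a destabilizing diagonal $\lambda$ with weights $a_{0}\geq a_{1}\geq a_{2}$ in some coordinates and let $q=(0:0:1)$, $\ell=\{z=0\}$. The weight inequality imposed on the monomials of $F$ translates into prescribed vanishing of $F$ along $q$, along $\ell$, and along the flag they define. I would enumerate the finitely many primitive weight triples that can destabilize some reduced sextic, and verify case by case that the resulting vanishing forces either $\operatorname{mult}_{q}F\geq 4$, or $F$ non-reduced, or a triple point at $q$ whose strict transform in the blow-up of $q$ still carries a triple point. In every case the singularity at $q$ fails to be simple.

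The delicate step is the converse: identifying the destabilizing weight vectors for degree $6$ and distinguishing combinatorially between triple points of types $D_{n}$, $E_{6}$, $E_{7}$, $E_{8}$ (all simple) and those carrying an infinitely near triple point. This triple-point analysis is the heart of Shah's original argument in \cite{Sh1} and the main technical obstacle in the proof.
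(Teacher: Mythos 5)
First, a point of calibration: the paper does not prove this proposition at all --- it is imported verbatim from Shah's paper \cite{Sh1} as a known stability criterion, so there is no in-paper argument to compare against. Your outline follows exactly the route Shah himself takes (the Hilbert--Mumford numerical criterion for hypersurfaces, with destabilizing one-parameter subgroups read off from a flag consisting of a point and a line through it), and the one computation you actually carry out is correct: for a point of multiplicity $\geq 4$ placed at $(0:0:1)$, the weights $(1,1,-2)$ give $ia_0+ja_1+ka_2=3(i+j)-12\geq 0$ on every surviving monomial, so such a sextic is indeed non-stable.

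As a self-contained proof, however, the proposal has genuine gaps, both of which you flag but neither of which you close. For the consecutive triple point, the weights you suggest are wrong: $(2,-1,-1)$ imposes $2i\geq j+k=6-i$, i.e.\ $i\geq 2$ on every monomial, so it destabilizes precisely the sextics divisible by $x^{2}$ --- the non-reduced case, not the infinitely-near-triple-point case. The correct choice is of type $(1,0,-1)$: putting the triple point at $(0:0:1)$ with tangent cone $x^{3}$ and writing $F=\sum_{d=3}^{6}f_d(x,y)z^{6-d}$, the infinitely near triple point forces $x^{2}\mid f_4$ and $x\mid f_5$, whence every monomial satisfies $i\geq k$ and has weight $i-k\geq 0$ under $\operatorname{diag}(t,1,t^{-1})$; your hedge ``or a suitable refinement'' does not substitute for this verification. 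More seriously, the entire converse --- enumerating the destabilizing weight triples for degree $6$ and checking case by case that each one forces a quadruple point, a non-reduced component, or a consecutive triple point --- is the actual content of Shah's theorem, and in your write-up it appears only as a declared intention (``I would enumerate\dots and verify case by case''). Without that enumeration the implication ``simple singularities $\Rightarrow$ stable'' is not established. So the strategy is the right one and matches the source the paper cites, but what is written is a plan for a proof rather than a proof; to make it complete you must either execute the finite case analysis or, as the paper does, cite \cite{Sh1} for it.
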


We also need a stability criterion for the ${\rm PGL}_{2}\times{\rm PGL}_{2}$-action  
on the linear system $|\mathcal{O}_{{\proj}^{1}\times{\proj}^{1}}(4, 4)|$ 
endowed with the naturally linearized $\mathcal{O}(1)$.

\begin{proposition}[Shah \cite{Sh2}]\label{stability of (4, 4) curve}
Let $C \subset {\proj}^{1}\times{\proj}^{1}$ be a reduced curve of bidegree $(4, 4)$.  
If $C$ has only nodes as singularities, 
then $C$ is ${\rm PGL}_{2}\times{\rm PGL}_{2}$-stable. 
\end{proposition}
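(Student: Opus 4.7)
The plan is to verify the Hilbert--Mumford numerical criterion directly. Every nontrivial $1$-parameter subgroup $\lambda$ of $G={\rm PGL}_{2}\times{\rm PGL}_{2}$ is conjugate, by an element of $G$, to one acting as $\operatorname{diag}(t^{a},t^{-a})$ on the homogeneous coordinates $(x_{0},x_{1})$ of the first ${\proj}^{1}$ and as $\operatorname{diag}(t^{b},t^{-b})$ on $(y_{0},y_{1})$ of the second, with $a,b\geq 0$ and $(a,b)\neq(0,0)$. Writing $F=\sum c_{ij}\,x_{0}^{i}x_{1}^{4-i}y_{0}^{j}y_{1}^{4-j}$, the $\lambda$-weight of the monomial indexed by $(i,j)$ equals $(2i-4)a+(2j-4)b$. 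Since stability is $G$-invariant, it is enough to show that for every such $\lambda$ at least one nonzero $c_{ij}$ has strictly negative weight; I would argue the contrapositive and assume all nonzero $c_{ij}$ satisfy $(2i-4)a+(2j-4)b\geq 0$.

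Three cases arise. If $b=0$ (so $a>0$), the inequality forces $i\geq 2$ whenever $c_{ij}\neq 0$, hence $x_{0}^{2}\mid F$; then the fiber $\{[0:1]\}\times{\proj}^{1}$ appears in $C$ with multiplicity $\geq 2$, contradicting reducedness. The case $a=0$, $b>0$ is symmetric. The essential case is $a,b>0$, where a direct check shows that every $(i,j)$ with $i+j\leq 2$ produces a strictly negative weight: for instance $(0,0)$ gives weight $-4(a+b)$, $(2,0)$ gives $-4b$, and $(1,1)$ gives $-2(a+b)$. Hence the hypothesis forces $c_{ij}=0$ for all $(i,j)$ with $i+j\leq 2$.

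Passing to the affine chart at $q=([0:1],[0:1])$ with local coordinates $u=x_{0}/x_{1}$ and $v=y_{0}/y_{1}$, the local equation of $C$ becomes $f(u,v)=\sum c_{ij}u^{i}v^{j}$; the vanishings just established show that $f$ has multiplicity $\geq 3$ at the origin, so $q$ is a singular point of $C$ of multiplicity at least $3$. This contradicts the assumption that $C$ has only nodes, completing the argument. I do not anticipate a serious obstacle: once the standard reduction to a diagonal $1$-parameter subgroup with $a,b\geq 0$ is in place, the rest is an elementary inspection of lattice points in the Newton polytope of bidegree $(4,4)$, the geometric content being that a diagonal $\lambda$ with positive weights has $q$ as its unique attracting fixed point, so a $\lambda$-seminegative section would have to vanish to excessive order there.
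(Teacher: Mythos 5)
Your argument is correct. Note first that the paper does not actually prove this proposition: it is quoted from Shah's work on degenerations of degree-$4$ $K3$ surfaces, where it emerges from a complete classification of stable, semistable and unstable bidegree $(4,4)$ curves. What you supply is a self-contained replacement for the one implication the paper needs, and it checks out. The reduction to a diagonal one-parameter subgroup with $a,b\geq 0$ is legitimate (the Weyl group of ${\rm SL}_2\times{\rm SL}_2$ flips the signs of $a$ and $b$ independently, and the class of reduced nodal $(4,4)$ curves is $G$-invariant, so ranging over the positive cone and over all curves in the class covers every destabilizing direction, including inverses of one-parameter subgroups); the weight $(2i-4)a+(2j-4)b$ is right; the case $b=0$ correctly forces $x_0^2\mid F$, contradicting reducedness; and in the case $a,b>0$ the inequality $(i-2)a+(j-2)b\geq 0$ indeed fails for every $(i,j)$ with $i+j\leq 2$, since then $i-2\leq 0$ and $j-2\leq 0$ with at least one strict, which kills exactly the six coefficients governing the $2$-jet of $f$ at $q$ and produces a point of multiplicity $\geq 3$, contradicting the nodal hypothesis. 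The only cosmetic point worth flagging is that one should say explicitly that stability for ${\rm PGL}_2\times{\rm PGL}_2$ and for ${\rm SL}_2\times{\rm SL}_2$ coincide here because the kernel $\mu_2\times\mu_2$ acts trivially on $H^0(\mathcal{O}(4,4))$; with that remark in place, your elementary Newton-polytope inspection is a perfectly good substitute for the citation, at the cost of not yielding Shah's finer information about what happens at worse singularities.
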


Finally we consider the diagonal action of ${\PGL}$ on the product $({\proj}^{2})^{d}$. 
Let $U_{d} \subset ({\proj}^{2})^{d}$ be the open set of ordered points $(p_{1}, \cdots, p_{d})$ 
such that no three of $\{ p_{i} \}_{i=1}^{d}$ are collinear,  
and let $V_{d} \subset ({\proj}^{2})^{d}$ be the variety of ordered points $(p_{1}, \cdots, p_{d})$ 
such that $\{ p_{1}, p_{2}, p_{3} \}$ are collinear and no other three of $\{ p_{i} \}_{i=1}^{d}$ are collinear. 

\begin{proposition}[\cite{GIT}, \cite{D-O}]\label{existence of configuration space}
For $d\geq4$ (resp. $d\geq5$) 
a geometric quotient $U_{d}/{\PGL}$ (resp. $V_{d}/{\PGL}$) exists 
and is a quasi-projective rational variety of dimension $2d-8$ (resp. $2d-9$). 
\end{proposition}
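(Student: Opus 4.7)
The plan is to derive both halves from classical GIT for ordered point configurations in $\proj^2$, following \cite{GIT} and \cite{D-O}, and then exhibit explicit rational cross-sections.

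First I would show that $U_d$ and $V_d$ are contained in the stable locus of $(\proj^2)^d$ for the diagonal $\SL$-action with the symmetric linearization $\mathcal{O}(1,\ldots,1)$. By Mumford's numerical criterion for point configurations, an ordered tuple $(p_1,\ldots,p_d)$ is stable precisely when every point of $\proj^2$ is hit by strictly fewer than $d/3$ of the $p_i$ and every line contains strictly fewer than $2d/3$ of the $p_i$. For $U_d$ with $d\geq 4$, every line carries at most two of the $p_i$, and $2<2d/3$; for $V_d$ with $d\geq 5$, the distinguished line through $p_1,p_2,p_3$ carries exactly three of the $p_i$ (if some $p_j$ with $j\geq 4$ also lay on it, then $\{p_1,p_2,p_j\}$ would be another collinear triple, contradicting the definition of $V_d$) while every other line carries at most two, and $3<2d/3$. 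Since the center $\mu_3\subset\SL$ acts trivially on $(\proj^2)^d$, Theorem \ref{thm: GIT} yields quasi-projective geometric quotients for the induced $\PGL$-action.

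The dimensions follow at once: $\dim U_d=2d$ and $\dim V_d=2d-1$ (the collinearity of $p_1,p_2,p_3$ is a single condition on $(\proj^2)^3$); subtracting $\dim\PGL=8$, which is legitimate because stable points have finite stabilizers, gives $2d-8$ and $2d-9$ respectively.

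For rationality I would exhibit explicit birational cross-sections. The group $\PGL$ acts simply transitively on ordered $4$-tuples in general linear position, so normalizing $(p_1,p_2,p_3,p_4)$ to the standard frame $([1{:}0{:}0],[0{:}1{:}0],[0{:}0{:}1],[1{:}1{:}1])$ identifies an open subset of $U_d/\PGL$ with an open subset of $(\proj^2)^{d-4}$, giving rationality. For $V_d/\PGL$, I would normalize $(p_1,p_2,p_3)$ to three fixed points on the line $\{z=0\}$ and $p_4$ to $[0{:}0{:}1]$; a direct matrix computation shows the residual stabilizer reduces to the one-dimensional torus $T=\{\mathrm{diag}(1,1,t)\}$. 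Scaling the third coordinate of $p_5$ to $1$ gives a rational section killing $T$, so an open subset of $V_d/\PGL$ becomes a Zariski-open subset of an affine space over $(\proj^2)^{d-5}$, hence rational. The main technical point is verifying that these normalizations extend over a Zariski-dense open subset of $V_d$, i.e., that the pointwise stabilizers are generically trivial and that the cross-sections meet orbits transversally; this is routine and spelled out in \cite{D-O}, Chapter I.
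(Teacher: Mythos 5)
Your proposal is correct and follows essentially the same route as the paper: stability of $U_d$ and $V_d$ via the Hilbert--Mumford numerical criterion for point configurations (the paper delegates this to \cite{D-O} Ch.~II for $U_d$ and states it for $V_d$), followed by explicit $\PGL$-cross-sections for rationality. The only cosmetic difference is in the rationality of $V_d/\PGL$: the paper normalizes four points in general position to get $V_d/\PGL\sim V_{d-4}$ for $d\geq7$ and treats $V_5$, $V_6$ separately, whereas your normalization of $(p_1,\ldots,p_4)$ plus the residual torus acting on $p_5$ handles all $d\geq5$ uniformly — both are valid.
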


\begin{proof}
For the assertion for $U_d$, see \cite{D-O} Chapter II. 
The variety $V_d$ is contained in the stable locus with respect to the ${\SL}$-linearized line bundle 
${\HP} \boxtimes \cdots \boxtimes {\HP}$ so that 
a geometric quotient exists by Theorem \ref{thm: GIT}. 
For $d\geq 7$ the rationality of $V_d/{\PGL}$ follows from the birational equivalence 
$V_{d}/{\PGL} \sim V_{d-4}$. 
The remaining $V_{5}/{\PGL}$ and $V_{6}/{\PGL}$ are also clearly rational. 
\end{proof}


\section{2-elementary $K3$ surfaces}\label{sec: 2-elemen K3}

\subsection{Basic properties}\label{basic}

We recall basic facts on 2-elementary $K3$ surfaces following \cite{Ni2} and \cite{A-N}.  
Let $(X, \iota)$ be a 2-elementary $K3$ surface, i.e., 
a pair of a complex $K3$ surface $X$ and a non-symplectic involution $\iota$ on $X$. 
The surface $X$ is always algebraic due to the presence of $\iota$. 
The invariant and anti-invariant lattices
\begin{equation}\label{inv and anti-inv lattice}
L_{\pm} = L_{\pm}(X, \iota) = \{ l \in H^{2}(X, {\Z}), \ \iota^{\ast}l=\pm l \}
\end{equation}
are even 2-elementary lattices of signature $(1, r-1)$ and $(2, 20-r)$ respectively, 
where $r$ is the rank of $L_+$. 
Note that $L_-$ is the orthogonal complement of $L_+$ in ${\cohomology}$ and hence 
we have a natural isometry $(D_{L_+}, q_{L_+}) \simeq (D_{L_-}, -q_{L_-})$. 
The main invariant $(r, a, \delta)$ of $L_+$ is also called the main invariant of $(X, \iota)$ 
and may be calculated geometrically as follows. 

\begin{proposition}[\cite{Ni2}]\label{compute main inv}
Let $(X, \iota)$ be a 2-elementary $K3$ surface of type $(r, a, \delta)$. 
Let $X^{\iota}$ be the fixed locus of $\iota$. 

$({\rm i})$ If $(r, a, \delta)=(10, 10, 0)$, then $X^{\iota} = \emptyset$. 

$({\rm ii})$ If $(r, a, \delta)=(10, 8, 0)$, then $X^{\iota}$ is a union of two elliptic curves. 

$({\rm iii})$ In other cases, $X^{\iota}$ is decomposed as 
$X^{\iota} = C^{g} \sqcup E_{1} \sqcup \cdots \sqcup E_{k}$ 
where $C^{g}$ is a genus $g$ curve and $E_{1}, \cdots, E_{k}$ are $(-2)$-curves with 
\begin{equation}\label{compute r and a}
g=11-\frac{r+a}{2}, \ \ \ \ \ \ \ k=\frac{r-a}{2}. 
\end{equation}
One has $\delta=0$ if and only if the class of $X^{\iota}$ is divisible by $2$ in $NS_X$. 
\end{proposition}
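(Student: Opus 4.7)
The plan is to deduce the stated formulas from two independent topological identities (the Lefschetz fixed-point theorem and an $\mathbb{F}_2$-Smith–Thom analysis), preceded by a complex-analytic description of the fixed locus and a lattice-theoretic classification of its possible configurations. This essentially reconstructs Nikulin's arguments in \cite{Ni2}.

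First I would establish that $X^{\iota}$ is a smooth disjoint union of compact curves. Smoothness of the fixed locus is standard for holomorphic involutions (via local averaging of a hermitian metric), and absence of isolated fixed points uses the non-symplectic hypothesis: at a fixed point $p$ the map $\iota^{\ast}$ acts as $-1$ on $H^{0}(X, K_{X}) \simeq \wedge^{2} T_{p}^{\ast} X$, so $\det d\iota_{p} = -1$, forcing the eigenvalues of $d\iota_{p}$ on $T_{p}X$ to be $\{+1, -1\}$ and hence producing a fixed tangent direction through $p$. Each component $C$ then has $[C] \in L_{+}$ and $C^{2} = 2g(C) - 2$ by adjunction, with distinct components orthogonal. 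Since $L_{+}$ has signature $(1, r-1)$, this forces exactly one of three configurations: (i) $X^{\iota} = \emptyset$; (ii) a pair of linearly equivalent elliptic curves (necessarily fibers of a common elliptic fibration); or (iii) a single component $C^{g}$ of genus $g$ together with $k$ disjoint $(-2)$-curves.

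The topological Lefschetz formula gives
\begin{equation*}
\chi(X^{\iota}) \;=\; 2 + \tr(\iota^{\ast} \mid H^{2}(X, \mathbb{Q})) \;=\; 2 + (r - (22 - r)) \;=\; 2r - 20,
\end{equation*}
which in cases (i) and (ii) forces $r = 10$; a direct enumeration of even 2-elementary hyperbolic lattices of rank 10 admitting a primitive embedding into $\Lambda_{K3}$, distinguishing whether the lattice contains the class of an elliptic fibration, pins down $(a, \delta) = (10, 0)$ and $(8, 0)$ respectively. In case (iii), Lefschetz reads $k - g = r - 11$. The second independent identity comes from $\mathbb{F}_{2}$-Smith–Thom: the gluing index $[H^{2}(X, \mathbb{Z}) : L_{+} \oplus L_{-}] = 2^{a}$ controls the $\mathbb{F}_{2}[\iota]$-module decomposition of $H^{2}(X, \mathbb{F}_{2})$, and the Smith exact sequence identifies the defect in $\dim_{\mathbb{F}_{2}} H^{\ast}(X^{\iota}, \mathbb{F}_{2}) \leq 24$ with exactly $2a$, yielding $2 + 2g + 2k = 24 - 2a$, i.e., $g + k = 11 - a$. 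Solving with the Lefschetz relation produces $g = 11 - (r+a)/2$ and $k = (r-a)/2$.

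For the $\delta$-criterion, the Hurwitz formula applied to the quotient $\pi \colon X \to X/\iota$ identifies $[X^{\iota}] = -\pi^{\ast} K_{X/\iota}$ in $NS_{X}$. The condition $\delta = 0$ on the 2-elementary form $q_{L_{+}}$ is equivalent, by the Wu-type characterization of the parity of a 2-elementary discriminant form, to 2-divisibility of this class. The main obstacle will be the precise identification of the Smith–Thom defect with $a$; this requires careful tracking of the $\mathbb{F}_{2}[\iota]$-module structure of $H^{2}(X, \mathbb{F}_{2})$ and its relation to the gluing data between $L_{+}$ and $L_{-}$, where all the arithmetic content of $a$ enters.
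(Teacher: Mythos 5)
The paper does not actually prove Proposition \ref{compute main inv}; it is quoted from Nikulin \cite{Ni2}. Measured against the classical argument, your skeleton is the right one: non-symplecticity forces $\det d\iota_p=-1$ at a fixed point, so $X^\iota$ is a disjoint union of smooth curves; the hyperbolic signature of $L_+$ constrains the configuration; topological Lefschetz gives $k-g=r-11$; a second, mod-$2$ relation gives $g+k=11-a$; and $\delta$ is read off from the characteristic element $\tfrac12[X^\iota]$ of $q_{L_+}$. Two refinements are needed in the configuration step: the signature argument alone does not exclude ``two proportional elliptic fixed curves \emph{plus} extra $(-2)$-curves,'' nor three or more elliptic components; for this you must use that $\iota$ preserves the elliptic fibration and acts on the base ${\proj}^1$ with exactly two fixed points, so that any further fixed curve would have to lie in (or meet) one of the two pointwise-fixed fibres, contradicting disjointness and connectedness of fibres.

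The genuine soft spot is the step you yourself flag: the Smith--Thom defect is \emph{not} determined by the $\mathbb{F}_2[\iota]$-module structure of $H^{*}(X,\mathbb{F}_2)$ together with the gluing index $2^a$, so ``careful tracking of the module structure'' cannot by itself close the gap. The Enriques involution is the counterexample: there $a=10$ and $H^{2}(X,{\Z})\simeq{\Z}[\iota]^{10}\oplus{\Z}_-^{2}$, so the free rank is $10$, yet $b_{*}(X^{\iota};\mathbb{F}_2)=0$ rather than $24-2\cdot 10=4$. The identity $b_{*}(X^{\iota};\mathbb{F}_2)=24-2a$ is true precisely when $X^{\iota}\neq\emptyset$, and non-emptiness must enter the Smith-theoretic argument in an essential way (degeneration of the relevant spectral sequence), which your sketch does not indicate. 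A cleaner substitute, implicit in the paper's own Proposition \ref{generate L_+} (after Kharlamov), avoids Smith theory entirely: on $Y=X/\langle\iota\rangle$ one has $f^{*}NS_Y\simeq NS_Y(2)$ of determinant $\pm2^{r}$ and of index $2^{k}$ in $L_+$, whence $2^{a}=|\det L_+|=2^{r-2k}$ and $k=(r-a)/2$ directly; combined with Lefschetz this yields \eqref{compute r and a}. Your $\delta$-criterion via $[X^{\iota}]=-\pi^{*}K_{Y}$ is correct, but the assertion that $\tfrac12[X^{\iota}]$ is the characteristic element of $q_{L_+}$ is itself a lemma to be verified, most easily on the explicit generators of $L_+$ supplied by Proposition \ref{generate L_+}.
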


Let $f\colon X\to Y=X/\langle\iota\rangle$ be the quotient morphism 
and $B=f(X^{\iota})$ be the branch curve of $f$.  
If $X^{\iota}\ne\emptyset$, 
$Y$ is a smooth rational surface and $B$ is a smooth member of $|-\!2K_Y|$. 
Following \cite{A-N}, we call such a pair $(Y, B)$ a \textit{right DPN pair}. 
The 2-elementary $K3$ surface $(X, \iota)$ is recovered from $(Y, B)$ 
as the double cover of $Y$ branched over $B$. 
In this way, 2-elementary $K3$ surfaces with non-empty fixed locus are 
in canonical correspondence with right DPN pairs. 
The invariant $(r, a)$ of $(X, \iota)$ can be read off from the topology of $B$ 
by Proposition \ref{compute main inv}. 
We also have  
\begin{equation}\label{compute r from Y}
r=\rho(Y). 
\end{equation}
For the parity $\delta$, 
if $B=\sum_{i}B_{i}$ is the irreducible decomposition of $B$, 
then we have $\delta=0$ if and only if the class 
$\sum_{i} (-1)^{n_i}[B_i]$ is divisible by $4$ in $NS_Y$ for some $n_i \in \{ 0, 1 \}$. 
The lattice $L_+(X, \iota)$ may be obtained as follows. 

\begin{proposition}\label{generate L_+}
Let $(Y, B)$ be a right DPN pair and $(X, \iota)$ be the associated 2-elementary $K3$ surface 
with the quotient morphism $f\colon X\to Y$. 
Then the invariant lattice $L_+=L_+(X, \iota)$ is generated by the sublattice $f^{\ast}NS_Y$ and 
the classes of irreducible components of $X^{\iota}$. 
\end{proposition}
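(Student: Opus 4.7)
Set $L'\subset\cohomology$ to be the sublattice generated by $f^{\ast}NS_{Y}$ and the classes $[D]$ of the irreducible components of $X^{\iota}$. Both kinds of generators are $\iota^{\ast}$-invariant, so $L'\subset L_{+}$; the plan is to prove equality by comparing the indices of $f^{\ast}NS_{Y}$ in $L'$ and in $L_{+}$.

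First, since $Y$ is smooth rational, $NS_{Y}=H^{2}(Y,\Z)$ is unimodular of rank $\rho(Y)=r=\operatorname{rank}L_{+}$, and the identity $f^{\ast}\alpha\cdot f^{\ast}\beta=2(\alpha\cdot\beta)$ gives $\det(f^{\ast}NS_{Y})=2^{r}$. Comparing with $|\det L_{+}|=|D_{L_{+}}|=2^{a}$ yields
\[
[L_{+}:f^{\ast}NS_{Y}] \;=\; 2^{(r-a)/2} \;=\; 2^{k}, \qquad k=(r-a)/2.
\]
Second, the branched-cover identity $f^{\ast}[f(D)]=2[D]$ shows that each $[D]$ has order at most $2$ modulo $f^{\ast}NS_{Y}$. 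The relation $\sum_{D}[D]=\frac{1}{2}f^{\ast}[B]=-f^{\ast}K_{Y}\in f^{\ast}NS_{Y}$ (coming from $B\in|-2K_{Y}|$) lets me eliminate the ``big'' component class $[C^{g}]$ in favor of the $(-2)$-component classes $[E_{1}],\dots,[E_{k}]$. Hence $L'/f^{\ast}NS_{Y}$ is generated by these $k$ classes, so $[L':f^{\ast}NS_{Y}]\leq 2^{k}$, with equality iff the $[E_{i}]$ are $\mathbb{F}_{2}$-linearly independent modulo $f^{\ast}NS_{Y}$.

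The main obstacle is proving this linear independence, or equivalently that no proper non-empty sub-sum $\sum_{i\in I}[f(E_{i})]$ is $2$-divisible in $NS_{Y}$. I would argue by contradiction, using the geometry of the DPN pair: each $f(E_{i})$ is a smooth rational $(-4)$-curve with $K_{Y}\cdot f(E_{i})=2$, and the $f(E_{i})$ are pairwise disjoint. A hypothetical relation $\sum_{i\in I}[f(E_{i})]=2D$ with $\emptyset\neq I\subsetneq\{1,\dots,k\}$ would produce, via the root line bundle $\mathcal{O}_{Y}(D)$, a connected double cover of $Y$ branched along $\bigsqcup_{i\in I}f(E_{i})$, whose interaction with the canonical cover $(X,\iota)$ of $(Y,B)$ can be shown to be incompatible with the primitivity of $L_{+}$ in $\Lambda_{K3}$ (Proposition~\ref{unique emb}). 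Granting this, $[L':f^{\ast}NS_{Y}]=2^{k}=[L_{+}:f^{\ast}NS_{Y}]$ and therefore $L'=L_{+}$. The degenerate types $(10,10,0)$ and $(10,8,0)$ from Proposition~\ref{compute main inv} lie outside the stated DPN framework and require separate, essentially parallel treatment.
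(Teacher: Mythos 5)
Your reduction is sound and matches the paper's route: both arguments rest on the index computation $[L_{+}:f^{\ast}NS_{Y}]=2^{(r-a)/2}$ together with the assertion that $\sum_{i}[C_{i}]\sim -f^{\ast}K_{Y}$ is the \emph{only} relation among the component classes modulo $f^{\ast}NS_{Y}$. The paper simply cites Kharlamov for that second assertion, whereas you attempt to prove it, and this is exactly where your argument has a genuine gap. The mechanism you invoke --- that a $2$-divisible proper sub-sum would be ``incompatible with the primitivity of $L_{+}$ in $\Lambda_{K3}$'' --- cannot work: $L_{+}=H^{2}(X,\Z)^{\iota}$ is the kernel of $\iota^{\ast}-1$ and is therefore \emph{automatically} primitive in $H^{2}(X,\Z)$, so there is nothing there to contradict. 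As stated, your key step is not a proof but a placeholder (``can be shown'', ``Granting this''), and it points at the wrong invariant.

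The obstruction you actually need is the simple connectedness of the $K3$ surface $X$. If $\sum_{i\in I}[B_{i}]=2D$ in $NS_{Y}$ for a proper non-empty subset $I$ of the components of $B$, the associated double cover $Y''\to Y$ branched exactly along $\bigsqcup_{i\in I}B_{i}$ pulls back, after normalization, to an \emph{unramified} double cover of $X$ (the branch divisor pulls back with even multiplicity along each $C_{i}$, $i\in I$, and is disjoint from the rest of $B$). Comparing local monodromies in $H^{1}(Y\setminus B,\Z/2)$ shows this cover is connected precisely because $I$ is proper and non-empty, contradicting $H^{1}(X,\Z/2)=0$. This is in substance Kharlamov's lemma; with it your index count closes. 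Two smaller points: the relation $\sum_{i\in I}[E_{i}]\in f^{\ast}NS_{Y}$ is equivalent to $2$-divisibility of $\sum_{i\in I}[f(E_{i})]$ in $NS_{Y}$ only because $f^{\ast}$ is injective on $NS_{Y}$ --- worth saying explicitly; and the case $(10,8,0)$ does \emph{not} lie outside the DPN framework ($B$ is then two disjoint elliptic curves, still a smooth member of $|-2K_{Y}|$), so it needs no separate treatment, only slightly different bookkeeping with no $(-2)$-components. Only $(10,10,0)$, where $X^{\iota}=\emptyset$, falls outside the hypotheses of the proposition.
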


\begin{proof}
Let $B=\sum_{i=0}^{k}B_i$ be the irreducible decomposition and let $C_i = f^{-1}(B_i)$. 
We have $X^{\iota}=\sum_{i=0}^{k}C_i$ and $C_i  \sim \frac{1}{2}f^{\ast}B_i$. 
According to Kharlamov (\cite{Kh} p.304), 
the relation $\sum_{i=0}^{k}C_i \sim -f^{\ast}K_Y$ is the only relation 
among $\{ C_i \}_{i=0}^{k}$ in $L_+/f^{\ast}NS_Y$.  
Since the lattice $f^{\ast}NS_Y \simeq NS_Y(2)$ 
is of index $2^{\frac{1}{2}(r-a)}=2^{k}$ in $L_+$, 
this proves the assertion. 
\end{proof}

\subsection{Right resolutions of plane sextics}\label{right resol}

We explain a relationship between 2-elementary $K3$ surfaces and 
plane sextics with only simple singularities. 
The topic is classical as it goes back to Horikawa \cite{Ho} and Shah \cite{Sh1}. 
Here we develop the argument in more generality in the framework of Alexeev-Nikulin \cite{A-N}. 
Recall from \cite{A-N}
that a \textit{DPN pair} is a pair $(Y, B)$ of 
a smooth rational surface $Y$ and an anti-bicanonical curve $B \in |\!-\!2K_Y|$ 
with only simple singularities.

\begin{definition}\label{def: right resol}
A \textit{right resolution} of a DPN pair $(Y_0, B_0)$ is a triplet $(Y, B, \pi)$ 
such that $(Y, B)$ is a right DPN pair and $\pi\colon Y \to Y_0$ is a birational morphism with $\pi(B)=B_0$. 
By abuse of terminology, we also call $(Y, B, \pi)$ a right resolution of $B_0$ 
when $Y_0$ is obvious from the context. 
\end{definition}

\begin{proposition}[cf. \cite{A-N}]\label{existence of right resol}
A right resolution of a DPN pair $(Y_0, B_0)$ exists and is unique up to isomorphism. 
\end{proposition}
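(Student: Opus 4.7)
The plan is to use the double-cover correspondence between DPN pairs and $2$-elementary $K3$ surfaces. For \textbf{existence}, form the double cover $f_0\colon\widetilde X_0\to Y_0$ branched along $B_0$, which exists and is unique because $B_0\sim -2K_{Y_0}$ and $\mathrm{Pic}(Y_0)$ is torsion-free. The simplicity of the singularities of $B_0$ translates, via the classical ADE correspondence, into the statement that $\widetilde X_0$ has only Du Val singularities. Let $p\colon X\to\widetilde X_0$ be the minimal resolution. Crepancy of Du Val points combined with the adjunction identity $K_{\widetilde X_0}\sim 0$ yields $K_X\sim 0$, and the projection formula together with the rationality of $Y_0$ gives $H^1(X,\mathcal O_X)=0$, so $X$ is a $K3$ surface. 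The covering involution of $f_0$ lifts to a non-symplectic involution $\iota$ on $X$, whose fixed locus $X^{\iota}$ is a smooth curve (non-empty since $B_0\neq\emptyset$). Setting $Y:=X/\iota$, the quotient $f\colon X\to Y$ realises $Y$ as a smooth surface, and the $\iota$-equivariance of $p$ produces a birational morphism $\pi\colon Y\to Y_0$; the ramification formula $K_X=f^{*}K_Y+\tfrac12 f^{*}B$ with $B:=f(X^{\iota})$ then gives $B\sim -2K_Y$, so $(Y,B,\pi)$ is a right resolution.

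For \textbf{uniqueness}, let $(Y',B',\pi')$ be any right resolution and form the double cover $g'\colon X'\to Y'$ branched along the smooth divisor $B'\in |{-}2K_{Y'}|$; the same argument shows $X'$ is a smooth $K3$. The composition $X'\to Y'\to Y_0$ is generically two-to-one with branch image $B_0$, so its Stein factorization takes the form $X'\to\widetilde X_0\to Y_0$ and realises $X'\to\widetilde X_0$ as a resolution of the Du Val singularities of $\widetilde X_0$. By the universal property of the minimal resolution, this factors as $X'\to X\xrightarrow{p}\widetilde X_0$. The morphism $X'\to X$ is birational between smooth $K3$ surfaces, hence an isomorphism because neither surface contains a $(-1)$-curve. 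This isomorphism is equivariant for the two covering involutions and therefore descends to an isomorphism $(Y',B',\pi')\simeq(Y,B,\pi)$.

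The delicate step is the Stein factorization $X'\to\widetilde X_0$, which requires identifying $\widetilde X_0$ as the unique normal double cover of $Y_0$ branched along $B_0$; this uses only the torsion-freeness of $\mathrm{Pic}(Y_0)$. Once this is in hand, everything else reduces to the minimality of the resolution of Du Val singularities together with the absence of $(-1)$-curves on $K3$ surfaces.
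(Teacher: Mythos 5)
Your proof is correct and takes essentially the same route as the paper's: both construct the normal double cover of $Y_0$ branched along $B_0$, pass to the minimal resolution to obtain the $2$-elementary $K3$ surface whose quotient gives the right resolution, and prove uniqueness via the Stein factorization of $X'\to Y'\to Y_0$. You merely spell out a few steps the paper leaves implicit (the factorization through the minimal resolution and the verification that $B\sim -2K_Y$), which is fine.
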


\begin{proof}
Let $S\to Y_0$ be the double cover branched over $B_0$. 
As $B_0$ has only simple singularities, 
$S$ is a normal surface with only A-D-E singularities (corresponding to those of $B_0$) 
and with trivial canonical divisor. 
The minimal resolution $X$ of $S$ is a $K3$ surface, 
and the covering transformation of $S \to Y_{0}$ 
induces a non-symplectic involution $\iota$ on $X$. 
If $(Y, B)$ is the right DPN pair associated to $(X, \iota)$,  
then by the universality of the quotient $X \to Y$ 
we have a birational morphism $\pi\colon Y \to Y_0$ with $\pi(B)=B_0$. 
This proves the existence. 
For any other right resolution $(Y', B', \pi')$ with the associated 2-elementary $K3$ surface $(X', \iota')$, 
let $X'\to S' \to Y_0$ be the Stein factorization of the morphism $X' \to Y' \to Y_0$. 
Then $S' \to Y_0$ is a double cover branched over $B_0$ and thus is isomorphic to $S \to Y_0$. 
It follows that $X'\to Y_0$ is isomorphic to $X\to Y_0$ and we have $(Y, B, \pi)\simeq (Y', B', \pi')$. 
\end{proof}

In \cite{A-N} right resolution is constructed explicitly as follows. 
Let 
\begin{equation}\label{right resol prcs} 
\cdots \stackrel{\pi_{i+1}}{\to} (Y_{i}, B_{i}) \stackrel{\pi_{i}}{\to} (Y_{i-1}, B_{i-1}) \stackrel{\pi_{i-1}}{\to} \cdots 
\stackrel{\pi_{1}}{\to} (Y_{0}, B_{0})  
\end{equation} 
be the successive blow-ups defined inductively by 
\begin{equation}\label{right resol prcs II}
Y_{i+1} = {\rm bl}_{\Sigma_i}Y_{i},  \ \ \ 
B_{i+1} = \widetilde{B_{i}} + \sum_{k=1}^{N} (m_{k}-2)E_{k}, 
\end{equation}
where 
$\Sigma_i = \{ p_{k} \}_{k=1}^{N}$ is the singular locus of $B_{i}$, 
$\widetilde{B_{i}}$ is the strict transform of $B_{i}$, 
$E_{k}$ is the $(-1)$-curve over $p_{k}$, 
and $m_k$ is the multiplicity of $B_{i}$ at $p_{k}$. 
Each $(Y_i, B_i)$ is also a DPN pair. 
This process will terminate and we finally obtain a right DPN pair $(Y, B)$. 

In this way, one can associate a 2-elementary $K3$ surface $(X, \iota)$ to a DPN pair $(Y_0, B_0)$ 
by taking its right resolution $(Y, B, \pi)$. 
Composing $\pi$ with the quotient map $X\to Y$, 
we have a natural generically two-to-one morphism $g\colon X\to Y_0$ branched over $B_0$. 
In this paper we will deal only with the following simple situations. 

\begin{example}\label{ex:1}
When $B_0$ has only nodes $p_1,\cdots, p_a$ as the singularities, 
then $E_i=g^{-1}(p_i)$ is a $(-2)$-curve on $X$, 
and each component of the fixed curve $X^{\iota}$ is mapped by $g$ 
birationally onto a component of $B_0$. 
By Proposition \ref{generate L_+} the lattice $L_+(X, \iota)$ is generated by 
the sublattice $g^{\ast}NS_{Y_0}\simeq M_{\rho(Y_0)}$, 
the classes of the $(-2)$-curves $E_1, \cdots, E_a$, and of the components of $X^{\iota}$. 
In particular, we have $r=\rho(Y_0)+a$. 
\end{example} 

\begin{example}\label{ex:2}
As a slight generalization, 
suppose that ${\rm Sing}(B_0)$ consists of nodes $p_1,\cdots, p_a$ and ordinary triple points $q_1,\cdots, q_d$. 
Then the curve $g^{-1}(q_j)$ is decomposed as 
$g^{-1}(q_j)=G_j+\sum_{k=1}^3E_{jk}$ such that 
$G_j$ is a rational component of $X^{\iota}$, and 
$E_{jk}$ are the $(-2)$-curves over the infinitely near points of $q_j$ given by the branches of $B_0$. 
We have $(G_j.E_{jk})=1$ and $(E_{jk}.E_{jk'})=-2\delta_{kk'}$. 
Other components of $X^{\iota}$ than $G_1,\cdots, G_d$ are mapped by $g$ 
birationally onto the components of $B_0$. 
The lattice $L_+(X, \iota)$ is generated by $g^{\ast}NS_{Y_0}$,   
the classes of the $(-2)$-curves $g^{-1}(p_i)$, $E_{jk}$, $G_j$, and of those components of $X^{\iota}$. 
In particular, we have $r=\rho(Y_0)+a+4d$. 
\end{example}

When $Y_0={\proj}^2$ or ${\proj}^1\times{\proj}^1$, 
for which $B_0$ is a sextic or a bidegree $(4, 4)$ curve respectively, 
we have the following useful property.

\begin{lemma}\label{proj mor}
Let $(Y_0, B_0)$ be a DPN pair with $Y_0$ being either ${\proj}^2$ or a smooth quadric in ${\proj}^3$. 
Let $(X, \iota)$ be the associated 2-elementary $K3$ surface with the natural projection $g:X\to Y_0$.  
Then the morphism $g\colon X\to Y_0 \subset {\proj}^d$ can be identified with the morphism 
$\phi_H\colon X \to |H|^{\vee}$ associated to the bundle $H = g^{\ast}\mathcal{O}_{Y_0}(1)$. 
\end{lemma}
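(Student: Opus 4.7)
The plan is to identify the linear system $|H|$ on $X$ with the pullback of $|\mathcal{O}_{Y_0}(1)|$ via $g$; once the pullback map on sections is an isomorphism, the morphism $\phi_H$ factors through $g$ followed by the complete embedding of $Y_0$, which is exactly the asserted identification. The main computation is $H^0(X,H)$ through the projection formula.

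First I would take the Stein factorization $X\to S\to Y_0$ of the degree-$2$ morphism $g$. Since $g$ is generically \'etale and ramified along $B_0$, the finite part $S\to Y_0$ is a double cover, while $X\to S$ contracts precisely the exceptional curves of the right-resolution process to the ADE singularities of $S$ that lie over $\mathrm{Sing}(B_0)$. Because $S$ has only rational singularities one has $(X\to S)_{\ast}\mathcal{O}_X=\mathcal{O}_S$, and the usual decomposition of a double cover gives $\pi_{\ast}\mathcal{O}_S=\mathcal{O}_{Y_0}\oplus L^{-1}$ with $L^{\otimes 2}\simeq\mathcal{O}_{Y_0}(B_0)$. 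Since $B_0\in|-2K_{Y_0}|$ and $\mathrm{Pic}(Y_0)$ is torsion free in both cases ($Y_0=\proj^2$ or $Y_0\simeq\proj^1\times\proj^1$), one has $L\simeq\mathcal{O}_{Y_0}(-K_{Y_0})$. Combining these, $g_{\ast}\mathcal{O}_X\simeq\mathcal{O}_{Y_0}\oplus\mathcal{O}_{Y_0}(K_{Y_0})$.

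By the projection formula,
$$g_{\ast}H=\mathcal{O}_{Y_0}(1)\otimes g_{\ast}\mathcal{O}_X\simeq\mathcal{O}_{Y_0}(1)\oplus\bigl(\mathcal{O}_{Y_0}(1)\otimes\mathcal{O}_{Y_0}(K_{Y_0})\bigr),$$
and therefore $H^0(X,H)\simeq H^0(Y_0,\mathcal{O}_{Y_0}(1))\oplus H^0(Y_0,\mathcal{O}_{Y_0}(1)+K_{Y_0})$. A direct check shows the second summand vanishes: for $Y_0=\proj^2$ the twist is $\mathcal{O}(-2)$, and for $Y_0\simeq\proj^1\times\proj^1$ embedded as a smooth quadric by $\mathcal{O}(1,1)$ it is $\mathcal{O}(-1,-1)$. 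Hence $g^{\ast}\colon H^0(Y_0,\mathcal{O}_{Y_0}(1))\to H^0(X,H)$ is an isomorphism; every section of $H$ is pulled back from $Y_0$, so $|H|$ is base point free and $\phi_H$ agrees with the composition $X\xrightarrow{g}Y_0\hookrightarrow|\mathcal{O}_{Y_0}(1)|^{\vee}=\proj^d$. The only delicate point to verify carefully is the identity $(X\to S)_{\ast}\mathcal{O}_X=\mathcal{O}_S$, but this is standard for a resolution of a normal surface with rational singularities, so no real obstacle arises.
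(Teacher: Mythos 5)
Your argument is correct, but it takes a different route from the paper's. The paper's proof is a two-line dimension count on $X$ itself: $H$ is nef and big, so $h^i(X,H)=0$ for $i>0$ (Kawamata--Viehweg, using $K_X=0$), and Riemann--Roch on the $K3$ gives $h^0(X,H)=2+\tfrac{1}{2}H^2$, which equals $3$ for $Y_0={\proj}^2$ (where $H^2=2$) and $4$ for the quadric (where $H^2=4$); since $g^{\ast}$ injects $H^0(Y_0,\mathcal{O}_{Y_0}(1))$ into $H^0(X,H)$ and the dimensions match, $|H|=g^{\ast}|\mathcal{O}_{Y_0}(1)|$. You instead push forward to $Y_0$: via the Stein factorization $X\to S\to Y_0$ and the cyclic-cover decomposition $g_{\ast}\mathcal{O}_X\simeq\mathcal{O}_{Y_0}\oplus\mathcal{O}_{Y_0}(K_{Y_0})$ (the identification $L\simeq\mathcal{O}_{Y_0}(-K_{Y_0})$ from torsion-freeness of ${\rm Pic}(Y_0)$ is the right way to pin this down), the projection formula reduces everything to the elementary vanishing of $H^0$ of $\mathcal{O}_{{\proj}^2}(-2)$ and $\mathcal{O}(-1,-1)$. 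Your approach trades the vanishing theorem on the $K3$ for knowledge of the double-cover structure of $g$, which the paper has already established in its Proposition on right resolutions, so both inputs are equally available; yours has the mild advantage of needing no cohomology vanishing on $X$ at all, while the paper's is shorter. One small remark: for the identity $(X\to S)_{\ast}\mathcal{O}_X=\mathcal{O}_S$ normality of $S$ already suffices (Zariski's main theorem); rational singularities are only needed if you want the higher direct images to vanish, which you do not use here.
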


\begin{proof}
The bundle $H$ is nef and big. 
Use the Riemann-Roch formula and the vanishing $h^i(H)=0$ for $i>0$ 
to see that $|H|=g^{\ast}|\mathcal{O}_{Y_0}(1)|$. 
\end{proof}

\subsection{Classification and the moduli spaces}\label{subsec: moduli}

2-elementary $K3$ surfaces were classified by Nikulin in terms of the main invariants. 

\begin{theorem}[Nikulin \cite{Ni2}]\label{classify 2-ele K3}
The deformation type of a 2-elementary $K3$ surface $(X, \iota)$ is determined by 
the main invariant $(r, a, \delta)$. 
All possible main invariants of 2-elementary $K3$ surfaces are shown on the following Figure \ref{Nikulin table} which is 
identical to the table in page 31 of \cite{A-N}. 
\begin{figure}[h]
\centerline{\includegraphics[width=10cm]{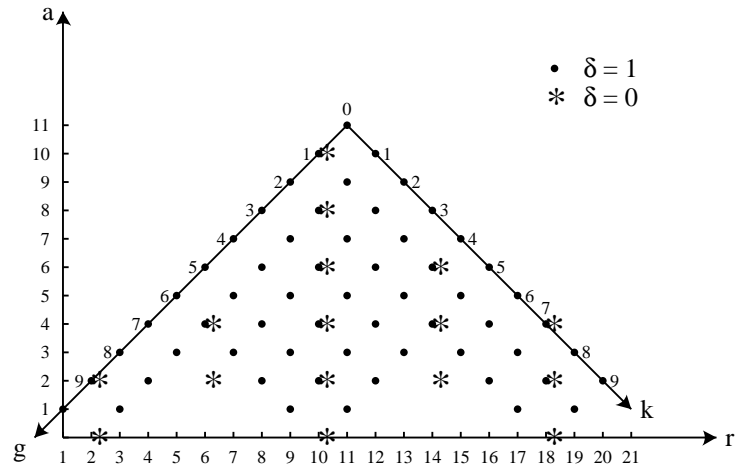}} 
\caption{Geography of main invariants $(r,a,\delta)$} 
\label{Nikulin table}
\end{figure}
\end{theorem}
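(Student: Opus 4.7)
The plan is to derive the classification from a lattice-theoretic reformulation followed by an application of the Torelli theorem; the enumeration of the seventy-five triplets is then a finite combinatorial exercise.

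First, I would note that the deformation type of $(X, \iota)$ is controlled by the isometry class of the triple $(\cohomology, (\,,\,), \iota^{\ast})$. Indeed, given two 2-elementary $K3$ surfaces $(X_1, \iota_1)$ and $(X_2, \iota_2)$ together with an isometry $\phi\colon H^2(X_1,\Z) \to H^2(X_2,\Z)$ conjugating $\iota_1^{\ast}$ to $\iota_2^{\ast}$, one can transport the invariant lattice, construct a marked family parametrised by the period domain $\Omega_{L_-}^+$, and apply the surjectivity of the period map together with the global Torelli theorem for $K3$ surfaces. Since $\Omega_{L_-}^{+}$ is connected, the resulting family of marked 2-elementary $K3$ surfaces is connected, and by equivariance of the markings this exhibits $(X_1, \iota_1)$ and $(X_2, \iota_2)$ as lying in a single deformation class.

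Next, I would reduce the isometry class of $(\cohomology, \iota^{\ast})$ to that of its $(+1)$-eigenlattice $L_+$ alone. Since $\iota^{\ast}$ is an involution acting on the even unimodular lattice $\Lambda_{K3}$, we have the orthogonal decomposition (over $\Q$) into $L_+ \oplus L_-$, and $L_-$ is recovered up to isometry as the orthogonal complement of $L_+$ in a primitive embedding $L_+ \hookrightarrow \Lambda_{K3}$. By Proposition \ref{unique emb} such a primitive embedding is unique up to ${\rm O}(\Lambda_{K3})$, and by Proposition \ref{glue} the gluing that reconstructs $\Lambda_{K3}$ from $L_+ \oplus L_-$ is also determined (up to an ambiguity absorbed by ${\rm O}(L_-)$). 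Hence the entire datum $(\cohomology, \iota^{\ast})$ is recovered from the abstract isometry class of $L_+$. Nikulin's classification of even indefinite 2-elementary lattices (recorded in Section \ref{subsec: lattice}) then tells us that this class is determined precisely by the main invariant $(r, a, \delta)$, yielding the deformation-type statement.

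For the enumeration of the seventy-five triplets, the task is to list exactly which $(r, a, \delta)$ arise as invariants of an even hyperbolic 2-elementary lattice $L_+$ of signature $(1, r-1)$ that admits a primitive embedding into $\Lambda_{K3}$. The basic constraints are $1 \leq r \leq 20$, $0 \leq a \leq \min(r, 22-r)$, $\delta \in \{0,1\}$, with $a \equiv r \pmod 2$ of course, together with the finer 2-adic existence conditions that come out of Nikulin's criteria (which in particular force $\delta = 1$ whenever $a = r$ or $a = 22-r$ in most rank ranges, and constrain $a$ modulo $8$ when $\delta = 0$). I would then verify that each admissible triple actually occurs by exhibiting an explicit $L_+$, most conveniently using the building blocks $U$, $U(2)$, $E_8$, $E_8(2)$, $\langle \pm 2\rangle$ and the lattices $M_n$ introduced in (\ref{Cremona lattice}).

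The main obstacle is not the Torelli/period-map step, which is standard, but the bookkeeping in the lattice census: one must carefully track the 2-adic obstructions to both existence of $L_+$ and to primitivity of its embedding in $\Lambda_{K3}$, and then check that the resulting list of seventy-five admissible invariants agrees with Figure \ref{Nikulin table}. This is essentially a case-by-case application of Nikulin's discriminant form calculus, and the cleanest route is to read it off from the classification of even 2-elementary lattices of signature $(2, 20-r)$ via the orthogonal complement, rather than to argue directly on $L_+$.
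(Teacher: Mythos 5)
The paper does not prove this theorem: it is quoted from Nikulin \cite{Ni2}, with the table taken from \cite{A-N}, so there is no internal argument to compare yours against. Your outline follows the standard route from the literature and its architecture is sound: the deformation type is governed by the isometry class of $({\cohomology},\iota^{\ast})$ via Torelli and connectedness of the period domain; this datum reduces to the abstract isometry class of $L_+$ by Propositions \ref{glue} and \ref{unique emb}; and Nikulin's classification of indefinite even $2$-elementary lattices shows that $L_+$ is determined by $(r,a,\delta)$.

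Two gaps remain. First, in the connectedness step the relevant parameter space is not $\Omega_{L_-}^{+}$ but its complement of the $(-2)$-Heegner divisor $\bigcup_{\lambda\in L_-,\,\lambda^2=-2}\lambda^{\perp}$ (cf.\ \eqref{def of moduli}): at a period point on this divisor the isometry $+1_{L_+}\oplus(-1_{L_-})$ cannot preserve a K\"ahler class and no involution exists, so the ``surjectivity of the period map'' you invoke is surjectivity onto this open subset only. The complement of a divisor in a connected domain is still connected, so the conclusion survives, but the existence half --- producing an $\iota$-invariant K\"ahler class and applying the strong Torelli theorem to construct $\iota$ at every such period point --- is the substantive non-lattice-theoretic input of Nikulin's proof and is only gestured at in your sketch. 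Second, the enumeration is the actual content of the second assertion, and your proposal sets it up without performing it; moreover the congruences you quote are off. For an even $2$-elementary lattice of signature $(1,r-1)$, Nikulin's condition attached to $\delta=0$ is $2-r\equiv 0\pmod 4$ (sharpened to $2-r\equiv 0\pmod 8$ in the boundary cases $a=0$ and $a=r$); this is a constraint on the signature, not on ``$a$ modulo $8$'', and $\delta=1$ is not forced by $a=r$ (witness $(2,2,0)$ and $(10,10,0)$ in Figure \ref{Nikulin table}). The frame $1\le r\le 20$, $a\le\min(r,22-r)$, $r\equiv a\pmod 2$ is correct, and reading the census off the genus of $L_-$ of signature $(2,20-r)$ is indeed the cleanest route, but until that finite check is carried out and matched against Figure \ref{Nikulin table}, the list of seventy-five invariants is asserted rather than proved.
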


A moduli space of 2-elementary $K3$ surfaces of main invariant $(r, a, \delta)$ 
is constructed as follows. 
We fix an even 2-elementary lattice $L$ of main invariant $(2, 20-r, a, \delta)$, 
which is isometric to the anti-invariant lattice of every 2-elementary $K3$ surface 
of type $(r, a, \delta)$. 
Let $\mathcal{F}({\rm O}(L)^+) = {\rm O}(L)^+\backslash\Omega_L^+$ 
be the modular variety associated to ${\rm O}(L)^+$. 
The divisor $\sum \delta^{\perp} \subset \Omega_L^+$, 
where $\delta$ are $(-2)$-vectors in $L$, 
is the inverse image of an algebraic divisor $D\subset\mathcal{F}({\rm O}(L)^+)$. 
Let ${\moduli}$ be the variety 
\begin{equation}\label{def of moduli}
{\moduli} = \mathcal{F}({\rm O}(L)^{+}) - D, 
\end{equation}
which is normal, irreducible, quasi-projective, and of dimension $20-r$. 
For a 2-elementary $K3$ surface $(X, \iota)$ of type $(r, a, \delta)$,  
we may choose an isometry $\Phi \colon L_-(X, \iota) \to L$ with $\Phi(H^{2,0}(X)) \in \Omega_{L}^{+}$. 
Then we define the period of $(X, \iota)$ by  
\begin{equation}\label{period of 2-ele K3}
\mathcal{P}(X, \iota) = [\Phi(H^{2,0}(X))] \in {\moduli}, 
\end{equation}
which is independent of the choice of $\Phi$. 

\begin{theorem}[Yoshikawa \cite{Yo1}, \cite{Yo3}]\label{thm: moduli}
The variety ${\moduli}$ is a moduli space of 2-elementary $K3$ surfaces of type $(r, a, \delta)$ 
in the following sense. 

$({\rm i})$ For a complex analytic family $(\frak{X}\to U, \iota)$ of such 2-elementary $K3$ surfaces, 
the period map $\mathcal{P}\colon U\to{\moduli}, u\mapsto\mathcal{P}(\frak{X}_u, \iota _u)$, is holomorphic. 
When the family is algebraic, $\mathcal{P}$ is a morphism of algebraic varieties. 

$({\rm i})$ Via the period mapping, the points of ${\moduli}$ are in one-to-one correspondence with 
the isomorphism classes of 2-elementary $K3$ surfaces of type $(r, a, \delta)$. 
\end{theorem}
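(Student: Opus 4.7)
The plan is to build the period map from a local system of anti-invariant cycles, verify its holomorphicity by the variation of Hodge structure formalism, check that the image avoids the $(-2)$-divisor $D$, and then establish bijectivity on isomorphism classes via Nikulin's gluing (Proposition \ref{glue}, \ref{unique emb}) together with the Torelli theorem for $K3$ surfaces.

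For part $({\rm i})$, given a family $(\mathfrak{X}\to U, \iota)$ of 2-elementary $K3$ surfaces of type $(r,a,\delta)$, the local system $R^{2}\pi_{\ast}{\mathbb Z}$ decomposes under the action of $\iota$ into invariant and anti-invariant sub-local-systems; the anti-invariant one has fibres abstractly isometric to $L$. After shrinking $U$ so that it becomes trivial, choose a marking $\Phi_{u}\colon L_{-}(\mathfrak{X}_u,\iota_u)\stackrel{\sim}{\to}L$ varying locally constantly. The classifying map $u\mapsto \Phi_{u}(H^{2,0}(\mathfrak{X}_u))$ lands in $\Omega_{L}^{+}$ (after adjusting connected components) and is holomorphic by Griffiths' general result on variations of Hodge structure. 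Changes of marking lie in ${\rm O}(L)^+$, so the composite with the quotient map $\Omega_{L}^{+}\to\mathcal{F}({\rm O}(L)^+)$ is well-defined and holomorphic on all of $U$. For algebraic families, algebraicity of the induced morphism to $\mathcal{F}({\rm O}(L)^+)$ follows from Borel's extension theorem applied to $\mathcal{F}({\rm O}(L)^+)$, which is a quasi-projective arithmetic quotient.

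Next, I verify that the image avoids $D$. If the period of $(X,\iota)$ lay on some $\delta^{\perp}$ with $\delta\in L$ a $(-2)$-vector, then the corresponding class $\tilde\delta\in L_{-}(X,\iota)$ would satisfy $(\tilde\delta,\tilde\delta)=-2$ and $\tilde\delta\perp H^{2,0}(X)$, hence $\tilde\delta\in NS_X$. By Riemann--Roch on the $K3$ surface, either $\tilde\delta$ or $-\tilde\delta$ is effective; but $\iota^{\ast}\tilde\delta=-\tilde\delta$, and since $\iota$ is an automorphism it sends effective divisors to effective divisors. This forces both $\pm\tilde\delta$ to be effective, contradicting $(\tilde\delta,\tilde\delta)<0$. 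Hence the period lies in $\mathcal{M}_{(r,a,\delta)}$.

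For part $({\rm ii})$, injectivity goes as follows. If $(X,\iota)$ and $(X',\iota')$ have the same period, choose markings $\Phi,\Phi'$ so that $\Phi(H^{2,0}(X))$ and $\Phi'(H^{2,0}(X'))$ differ by some $g\in{\rm O}(L)^{+}$. Replacing $\Phi'$ by $g^{-1}\Phi'$, the isometry $\Phi'^{-1}\Phi\colon L_{-}(X,\iota)\to L_{-}(X',\iota')$ is a Hodge isometry. By Proposition \ref{unique emb} the primitive embeddings $L_{\pm}\hookrightarrow\Lambda_{K3}$ are unique up to ${\rm O}(\Lambda_{K3})$, so by Proposition \ref{glue} (using that $L_{+}(X,\iota)\simeq L_{+}(X',\iota')$ by Nikulin's classification of 2-elementary lattices) one may glue $\Phi'^{-1}\Phi$ on $L_{-}$ with an appropriate isometry on $L_{+}$ to produce a Hodge isometry $H^{2}(X,{\mathbb Z})\to H^{2}(X',{\mathbb Z})$ intertwining $\iota^{\ast}$ and $\iota'^{\ast}$. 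Composing with a suitable element of the Weyl group and a sign $\pm1$ to send the Kähler cone of $X$ to that of $X'$ (possible because the period avoids all $(-2)$-walls), the strong Torelli theorem yields an isomorphism $X\to X'$ commuting with the involutions. For surjectivity, given $[\omega]\in\mathcal{M}_{(r,a,\delta)}$, fix the standard embedding $L\hookrightarrow\Lambda_{K3}$ from Proposition \ref{unique emb} and equip $\Lambda_{K3}\otimes{\mathbb C}$ with the Hodge structure whose $(2,0)$-part is ${\mathbb C}\omega$. The surjectivity of the period map for $K3$ surfaces produces a $K3$ surface $X$ with a marking realizing this Hodge structure; the involution $\tau$ of $\Lambda_{K3}$ which is $+1$ on $L_{+}$ and $-1$ on $L$ is a Hodge isometry, and since $[\omega]\notin D$ it preserves a chamber of the positive cone, so it is induced by an involution $\iota$ of $X$. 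The invariant lattice has main invariant $(r,a,\delta)$ by construction, and $\iota$ acts as $-1$ on $H^{2,0}(X)$, so $(X,\iota)$ is of the required type.

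The main obstacle I anticipate is the careful bookkeeping in the injectivity step: one needs to arrange that the glued Hodge isometry on $H^{2}(X,{\mathbb Z})$ genuinely intertwines the two non-symplectic involutions and simultaneously maps the Kähler cone of $X$ to that of $X'$. The first part is handled via Proposition \ref{glue} applied to the isometry that is $+1$ on $L_{+}$ (uniqueness of $L_{+}$ up to ${\rm O}(L_{+})$ together with the relation $r_{L_{+}}=\lambda^{-1}r_{L_{-}}\lambda$), while the second relies crucially on the removal of $D$, which guarantees that no $(-2)$-classes in $L_{-}$ obstruct the choice of Weyl chamber. Everything else is either routine Hodge theory or an appeal to quoted results.
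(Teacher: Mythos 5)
The paper does not prove this theorem: it is quoted from Yoshikawa \cite{Yo1}, \cite{Yo3} with no argument given, so there is no internal proof to compare against. Your blind reconstruction follows the standard lattice-polarized route that Yoshikawa's proof also takes (local system of anti-invariant cycles and Griffiths/Borel for holomorphicity and algebraicity; the Riemann--Roch argument showing the period avoids the $(-2)$-divisor $D$; gluing via Proposition \ref{glue} plus the strong Torelli theorem for injectivity; surjectivity of the $K3$ period map plus realization of the abstract involution $+1$ on $L_+$, $-1$ on $L_-$ for surjectivity), and the outline is sound.

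One step deserves to be made explicit, because it is exactly the point of Yoshikawa's identification of the ``correct monodromy group'' as the full ${\rm O}(L_-)$. In the injectivity argument you start from an \emph{arbitrary} $g\in{\rm O}(L)^{+}$ and must extend the resulting isometry $\gamma_-$ of the anti-invariant lattices to all of $H^2$; by Proposition \ref{glue} this requires an isometry $\gamma_+$ of $L_+$ with $r_{L_+}(\gamma_+)=\lambda^{-1}\circ r_{L_-}(\gamma_-)\circ\lambda$, i.e.\ it requires the surjectivity of $r_{L_+}\colon{\rm O}(L_+)\to{\rm O}(q_{L_+})$. You assert the existence of ``an appropriate isometry on $L_+$'' without naming this input; it is Nikulin's theorem that for even, indefinite, $2$-elementary lattices the map to the orthogonal group of the discriminant form is onto (with the rank-one case $(1,1,1)$ checked by hand). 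Likewise, in the surjectivity step the assertion that $\tau$ ``preserves a chamber'' should be justified by the observation that a wall $\delta^{\perp}$ contains the $\tau$-fixed part of the positive cone only if $\delta\in L_-$, which is excluded precisely because $[\omega]\notin D$; and in the Weyl-group adjustment one should note that the element $w$ carrying the $\iota'$-invariant chamber to the K\"ahler chamber automatically commutes with $\iota'^{\ast}$ by uniqueness of $w$. With these points spelled out your argument is complete and agrees with the cited proof.
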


\subsection{The discriminant covers}\label{ssec:discri cover}

Let $L$ be the lattice used in the definition \eqref{def of moduli} 
and ${\cover}$ be the modular variety 
\begin{equation}\label{def: discri cover}
{\cover} = \mathcal{F}(\widetilde{{\rm O}}(L)^{+}),  
\end{equation}
which is a Galois cover of $\mathcal{F}({\rm O}(L)^{+})$ with the Galois group ${\rm O}(D_L, q_L)$. 
We call ${\cover}$ the \textit{discriminant cover} of ${\moduli}$. 
Since $\widetilde{{\rm O}}(L)^+\ne\widetilde{{\rm O}}(L)$, 
we may identify ${\cover} = \widetilde{{\rm O}}(L)\backslash \Omega_L$. 
The next proposition is a key for our proof of Theorem \ref{main}. 

\begin{proposition}\label{isogeny bet discri cover}
Let $(r, a, \delta)$ and $(r, a', \delta')$ be main invariants of 2-elementary $K3$ surfaces. 
Assume that either $({\rm i})$ $\delta =1, a>a'$, or $({\rm ii})$ $\delta =\delta', a>a'$. 
Then one has a finite surjective morphism 
$\varphi\colon{\cover}\to\widetilde{\mathcal{M}}_{(r,a',\delta')}$. 
\end{proposition}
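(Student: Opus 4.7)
By construction $\widetilde{\mathcal{M}}_{(r,a,\delta)} = \mathcal{F}(\widetilde{{\rm O}}(L)^+)$ and $\widetilde{\mathcal{M}}_{(r,a',\delta')} = \mathcal{F}(\widetilde{{\rm O}}(M)^+)$ for even 2-elementary lattices $L, M$ of signature $(2, 20-r)$ with main invariants $(a,\delta)$ and $(a',\delta')$; by Nikulin's uniqueness theorem \cite{Ni1} these lattices are determined up to isometry. The plan is to construct a finite-index embedding $L \hookrightarrow M$ and then apply Proposition \ref{construct isogeny} to obtain the desired finite surjective morphism $\varphi$.

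Since the associated invariant lattice has rank $r$ and 2-elementary discriminant of length $a$, the parities of $r$ and $a$ agree, so $a \equiv a' \pmod{2}$, and one reduces by iteration to the case $a' = a - 2$. By Nikulin's bijection between even overlattices and isotropic subgroups of the discriminant, index-$2$ even overlattices $M_\alpha \supset L$ are parametrized by nonzero $\alpha \in D_L$ with $q_L(\alpha) = 0$, and the discriminant form of $M_\alpha$ is $(\alpha^\perp/\langle \alpha\rangle, \bar q_L)$. The problem thus reduces to finding, for every transition $(a,\delta) \to (a-2,\delta')$ permitted by hypothesis (i) or (ii), an isotropic $\alpha \in D_L$ such that the induced form on $\alpha^\perp/\langle \alpha\rangle$ has parity $\delta'$; Nikulin's uniqueness then identifies $M_\alpha$ with $M$.

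Existence of such $\alpha$ will be checked using the block decomposition of 2-elementary finite quadratic forms into the rank-one atoms $\langle \pm\tfrac{1}{2}\rangle$ (which force $\delta = 1$) and the rank-two $\Z/2\Z$-valued atoms $u$ and $v$. For $\delta = \delta' = 0$ (case (ii)), the form $q_L$ is $\Z/2\Z$-valued, and by inspection of Nikulin's table every relevant main invariant with $\delta = 0$ yields a $q_L$ containing a hyperbolic $u$-summand; an $\alpha$ chosen therein gives a $\Z/2\Z$-valued quotient, so $\delta' = 0$ is automatic. For $\delta = 1$ and $\delta' = 1$, I pick $\alpha$ inside a $u$-summand, leaving the $\langle \pm\tfrac{1}{2}\rangle$ atoms undisturbed in the quotient. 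For $\delta = 1$ and $\delta' = 0$ (covered only by case (i)), the compatibility of source and target main invariants, combined with Witt-style identities among the atoms (such as $\langle\tfrac{1}{2}\rangle^{\oplus 4} \simeq u \oplus v$), forces $q_L$ to contain, after rearrangement, an orthogonal pair $\langle \tfrac{1}{2}\rangle \oplus \langle -\tfrac{1}{2}\rangle$; its diagonal $\alpha$ satisfies $q_L(\alpha) = 0$, and the quotient loses exactly this odd-parity pair.

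The main obstacle will be the finite but intricate transition-by-transition verification against Nikulin's table (Figure \ref{Nikulin table}) that the atom structure of $q_L$ admits the required isotropic element in every permissible case. Once $L \hookrightarrow M_\alpha \simeq M$ is in place, Proposition \ref{construct isogeny} immediately delivers the morphism $\varphi$.
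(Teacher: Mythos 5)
Your proposal follows essentially the same route as the paper: realize $L'$ as the overlattice of $L$ determined by an isotropic subgroup of $(D_L,q_L)$ whose quotient form has invariant $(a',\delta')$, identify it with $L'$ via Nikulin's uniqueness, and conclude by Proposition \ref{construct isogeny}. The paper compresses the existence of the isotropic subgroup into ``calculating the discriminant form explicitly, one can find\dots'', whereas you supply the details (reduction to $a'=a-2$ and the atom-by-atom analysis); the content is the same.
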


\begin{proof}
Let $L$ and $L'$ be even 2-elementary lattices of main invariant $(2, 20-r, a, \delta)$ 
and $(2, 20-r, a', \delta')$ respectively. 
Calculating the discriminant form $(D_L, q_L)$ explicitly, 
one can find an isotropic subgroup $G \subset D_L$ such that 
the 2-elementary quadratic form $(G^{\perp}/G, q_L)$ has the invariant $(a', \delta')$. 
By the coincidence of main invariant, 
the overlattice of $L$ defined by $G$ is isometric to $L'$. 
Hence the assertion follows from Proposition \ref{construct isogeny}. 
\end{proof}

The relationship between the modular varieties is as follows. 
\begin{equation}\label{relation bet modular varieties}
\begin{CD}
{\cover}-H  @>{\varphi}>>  \widetilde{\mathcal{M}}_{(r, a', \delta')}-H'  \\ 
@VVV                                          @VVV                                     \\
{\moduli}    @.                      \mathcal{M}_{(r, a', \delta')}       
\end{CD}
\end{equation}
Here $H$ and $H'$ are appropriate Heegner divisors.

\begin{remark}\label{interpret by twisted FM partner}
When $a'=a-2$, $\varphi$ admits the following geometric interpretation. 
For an $\omega\in{\cover}$ let $(X, \iota)\in {\moduli}$ and $(X', \iota')\in\mathcal{M}_{(r,a',\delta')}$ 
be the 2-elementary $K3$ surfaces given by the images of $\omega$ and $\varphi(\omega)$ respectively. 
Then $X$ is derived equivalent to the twisted $K3$ surface $(X', \alpha')$ 
for a Brauer element $\alpha'\in{\rm Br}(X')$ of order $\leq2$. 
Indeed, we have a Hodge embedding $T_X \hookrightarrow T_{X'}$ of the transcendental lattices of index $\leq2$ 
so that the twisted derived Torelli theorem \cite{H-S} applies. 
\end{remark}

General points of ${\cover}$ may be obtained as follows (cf. \cite{Do}, \cite{A-N}). 
We fix an even hyperbolic 2-elementary lattice $L_+$ of main invariant $(r, a, \delta)$, 
a primitive embedding $L_+\subset\Lambda_{K3}$, 
and an isometry $(L_+)^{\perp}\cap\Lambda_{K3}\to L$.  
Let $(X, \iota)\in{\moduli}$ and $j\colon L_+\to L_+(X, \iota)$ be a given isometry. 
By Proposition \ref{unique emb} the isometry $j$ can be extended to an isometry 
$\Phi\colon \Lambda_{K3}\to{\cohomology}$, 
which in turn induces the isometry $\Phi|_L\colon L\to L_-(X, \iota)$. 
By Proposition \ref{glue} the isometry $\Phi|_L$ is determined from $j$ up to the action of $\widetilde{{\rm O}}(L)$. 
Then we define the period of the lattice-marked 2-elementary $K3$ surface $((X, \iota), j)$ by 
\begin{equation}\label{lifted period}
\widetilde{\mathcal{P}}((X, \iota), j) = [\Phi|_L^{-1}(H^{2,0}(X))] \in {\cover}. 
\end{equation}
If we define equivalence of two such objects $((X, \iota), j)$ and $((X', \iota'), j')$ by 
the existence of a Hodge isometry $\Psi\colon{\cohomology}\to H^2(X', {\Z})$ with $j'=\Psi\circ j$, 
then via the period mapping $\widetilde{\mathcal{P}}$ the open set of ${\cover}$ over ${\moduli}$ 
parametrizes the equivalence classes of such objects $((X, \iota), j)$. 
The assignment $((X, \iota), j)\mapsto(X, \iota)$ gives the projection ${\cover}\dashrightarrow{\moduli}$. 

This interpretation of ${\cover}$ using lattice-marked 2-elementary $K3$ surfaces is useful, but not so geometric. 
In the rest of this paper, using this interpretation intermediately, 
we will seek for more geometric interpretations for some of ${\cover}$.

Here is a general strategy. 
We define a space $U$ parametrizing certain plane sextics $B$ 
(or bidegree $(4, 4)$ curves on ${\proj}^1\times{\proj}^1$) 
which are endowed with some labeling of their singularities and components. 
The 2-elementary $K3$ surface $(X, \iota)$ associated to 
the right resolution of $B$ has main invariant $(r, a, \delta)$. 
The point is that the labeling for $B$ induces an isometry $j\colon L_+\to L_+(X, \iota)$. 
Actually, an argument as in Examples \ref{ex:1} and \ref{ex:2} 
will suggest an appropriate definition of the reference lattice $L_+$, 
and then $j$ will be obtained naturally. 
Considering the period of $((X, \iota), j)$ as defined above, 
we obtain a morphism $p\colon U\to{\cover}$. 
We will prove that $p$ descends to an open immersion $U/G\to{\cover}$ 
where $G={\PGL}$ (or ${\rm PGL}_2\times{\rm PGL}_2$). 
This amounts to showing that ${\dim}(U/G)=20-r$ and that the $p$-fibers are $G$-orbits. 
The latter property is verified using the Torelli theorem and that 
the curve $B$ with its labeling may be recovered from $((X, \iota), j)$ via Lemma \ref{proj mor}.

In this way, some of ${\cover}$ will be birationally identified with 
the moduli of certain curves with labeling. 
Such geometric interpretations vary according to ${\cover}$, and are out of single formulation. 
However, the processes by which we attach them to ${\cover}$ are largely common, as suggested above.  
Then, in order to avoid repetition, 
we will discuss such processes in detail for only few cases 
(Section \ref{ssec: (r,r,1)}). 
For other cases, we omit some detail and refer to Section \ref{ssec: (r,r,1)} as a model.

Now our geometric descriptions will imply that those ${\cover}$ are often unirational. 
With the aid of Proposition \ref{isogeny bet discri cover}, 
we will then obtain the following. 

\begin{theorem}\label{unirat disc cover}
The discriminant covers ${\cover}$ are unirational except possibly for 
$(r, a) = (10, 10),  (11, 11),  (12, 10),  (13, 9)$.
\end{theorem}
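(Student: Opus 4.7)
The plan is to execute the four-step strategy announced in the introduction. For each rank $r$, I would identify a ``source'' main invariant $(r,a_0,\delta_0)$ with $a_0$ chosen maximal subject to the existence of a workable geometric model; prove unirationality of $\widetilde{\mathcal{M}}_{(r,a_0,\delta_0)}$ directly; and then invoke Proposition~\ref{isogeny bet discri cover} to transfer unirationality to every $\widetilde{\mathcal{M}}_{(r,a',\delta')}$ with $a'<a_0$ satisfying the parity hypothesis. The four excluded pairs are exactly those that would be the natural sources but for which no such model is available.

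For each source $(r,a_0,\delta_0)$, following the recipe at the end of Section~\ref{ssec:discri cover}, I would construct a parameter space $U$ of plane sextics $B\subset\proj^2$ (or bidegree-$(4,4)$ curves on $\proj^1\times\proj^1$) with a prescribed configuration of simple singularities --- typically $a_0$ nodes plus finitely many $D_4$-points --- endowed with an ordering of the singular points and of the irreducible components. Up to a finite cover, $U$ is an open subset of a linear system fibered over an ordered configuration space, hence unirational. Propositions~\ref{stab sextic} and~\ref{stability of (4, 4) curve}, together with Lemma~\ref{stab & fin mor}, then produce a unirational geometric quotient $U/G$ for $G=\PGL$ or $\mathrm{PGL}_2\times\mathrm{PGL}_2$.

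The labelings on $B\in U$ are designed so that the explicit descriptions of $L_+(X,\iota)$ given in Examples~\ref{ex:1}--\ref{ex:2} yield a canonical isometry $j\colon L_+\to L_+(X,\iota)$ from a fixed abstract model lattice of main invariant $(r,a_0,\delta_0)$, where $(X,\iota)$ is the 2-elementary $K3$ attached to the right resolution of $B$ via Proposition~\ref{existence of right resol}. The period formula~\eqref{lifted period} then defines a morphism $p\colon U\to\widetilde{\mathcal{M}}_{(r,a_0,\delta_0)}$. I would next show that $p$ descends to an open immersion $U/G\hookrightarrow\widetilde{\mathcal{M}}_{(r,a_0,\delta_0)}$: the source and target both have dimension $20-r$, so it suffices to check that the fibers of $p$ are $G$-orbits. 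This follows from the Torelli theorem for $K3$ surfaces, which produces a Hodge isometry of ${\cohomology}$ given equality of periods, combined with Lemma~\ref{proj mor}, which identifies $\phi_H$ (for the polarization $H=g^{\ast}\mathcal{O}_{Y_0}(1)$ singled out by $j$) with the projection $g\colon X\to Y_0$ onto the right DPN base, so that $B$ together with its labeling is canonically recovered from $((X,\iota),j)$.

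The main obstacle is selecting the singular configuration on $B$ so that three requirements hold simultaneously: the associated 2-elementary $K3$ realizes exactly $(r,a_0,\delta_0)$ (no accidental jump in Picard rank pushing the invariant off the target cell of Nikulin's table); the labeling is rich enough to rigidify the fibers of $p$; and $U$ still has the expected dimension $20-r+\dim G$. Running this balance case by case for $r<10$, $r=10,11,12,13$, and $r>13$, one finds that the rightmost cells of the diagram that admit such a sextic or $(4,4)$-curve model yield sources whose isogeny-descendants cover all of Nikulin's table apart from the four pairs $(r,a)=(10,10), (11,11), (12,10), (13,9)$; these are precisely the invariants at which the required $a$ is too large relative to $r$ for any such curve model to leave a positive-dimensional family with matching main invariant.
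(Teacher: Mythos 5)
Your outline reproduces the paper's strategy faithfully: pick sources $(r,a_0,\delta_0)$ with large $a_0$, realize $\widetilde{\mathcal{M}}_{(r,a_0,\delta_0)}$ as a quotient $U/G$ of a unirational parameter space of labeled sextics or $(4,4)$-curves via Examples \ref{ex:1}--\ref{ex:2}, Lemma \ref{proj mor}, and the Torelli theorem, and then propagate unirationality downward by Proposition \ref{isogeny bet discri cover}. One small correction to the setup: because the isogeny with a $\delta=0$ source only reaches $\delta'=0$ targets, a single source per rank does not suffice in general --- the paper needs both $(10,8,0)$ and $(10,8,1)$, and both $(18,4,0)$ and $(18,4,1)$, for instance. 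That is recoverable within your framework.

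The genuine gap is your diagnosis of the four excluded pairs, which is not a cosmetic point: it is the place where one must understand exactly how the method fails, and your explanation (``$a$ is too large relative to $r$ for any curve model to leave a positive-dimensional family with matching main invariant'') is wrong for all four. For $(12,10,1)$ and $(13,9,1)$ perfectly good curve models exist with the correct main invariant and the correct dimension (two transverse nodal cubics through six common points; a nodal cubic, a conic and a line): the obstruction is that three (resp.\ two) of the nodes admit no canonical ordering, so the period map only descends to the quotient $\mathcal{F}_{L_-}(\Gamma^+)$ of the cover by $\frak{S}_3$ (resp.\ $\frak{S}_2$), and one obtains unirationality of $\mathcal{M}_{(r,a,\delta)}$ but not of $\cover$. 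For $(10,10,1)$ and $(11,11,1)$ the models are the nine-nodal (Halphen) and ten-nodal (Coble) sextics; here the labeled space $\widetilde{V}_{r-1}$ exists and maps to the cover, but its unirationality is open because the nodal map $\widetilde{V}_9\to(\proj^2)^9$ is not dominant, so the fibration argument of Proposition \ref{rationality of V_<10} breaks; the paper salvages only $\mathcal{M}_{(10,10,1)}$ (by a lattice-duality argument) and $\mathcal{M}_{(11,11,1)}$ (since unmarked rational sextics are parametrized by degree-$6$ maps $\proj^1\to\proj^2$). Finally $(10,10,0)$ has $X^{\iota}=\emptyset$, so there is no branch curve at all. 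Without these case-specific mechanisms your plan would neither predict the correct exceptional set nor supply the substitute arguments in step $(4)$, and the bulk of the actual proof --- the rank-by-rank constructions of Sections \ref{sec: r <10}--\ref{sec: r>13} --- remains to be carried out.
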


Sometimes our interpretations of ${\cover}$ using sextics are translated into yet another geometric interpretations, 
such as configuration spaces of points in ${\proj}^2$.


\section{The case $r\leq9$}\label{sec: r <10}

In this section we prove that ${\cover}$ are unirational for $r\leq9$.  
We first prove in Section \ref{ssec: (r,r,1)} the unirationality of $\widetilde{\mathcal{M}}_{(r,r,1)}$ 
with $r\leq9$ using the Severi varieties of nodal plane sextics. 
These cases are model for the subsequent sections and hence discussed in detail. 
From Proposition \ref{isogeny bet discri cover} and Figure \ref{Nikulin table} 
follows the unirationality of ${\cover}$ with $r\leq9$ and $(r, a, \delta)\ne(2, 2, 0)$. 
In Section \ref{ssec: (2,2,0)} we treat $\widetilde{\mathcal{M}}_{(2,2,0)}$.


\subsection{$\widetilde{\mathcal{M}}_{(r,r,1)}$ and the Severi varieties of nodal sextics}\label{ssec: (r,r,1)}

For $r\leq11$ let $V_{r-1}\subset|{\sextic}|$ be the variety of irreducible plane sextics  
with $r-1$ nodes and with no other singularity. 
The variety $V_{r-1}$, known as a \textit{Severi variety}, is  
smooth, of dimension $28-r$, and irreducible (\cite{Ha}). 
By endowing the sextics with markings of the nodes, 
we have the following $\frak{S}_{r-1}$-cover of $V_{r-1}$: 
\begin{equation}\label{marked Severi}
\widetilde{V}_{r-1} = 
   \{ \ (C, p_{1}, \cdots, p_{r-1})\in V_{r-1}\times ({\proj}^{2})^{r-1}, \ \ {\rm Sing}(C)= \{ p_{i} \} _{i=1}^{r-1} \: \: \} . 
\end{equation}
By Lemma \ref{stab & fin mor} and Proposition \ref{stab sextic} 
we have a geometric quotient $\widetilde{V}_{r-1}/{\PGL}$.  

\begin{proposition}\label{rationality of V_<10}
For $r\leq9$ the variety $\widetilde{V}_{r-1}$ is rational. 
In particular, the quotient $\widetilde{V}_{r-1}/{\PGL}$ is a unirational variety of dimension $20-r$. 
\end{proposition}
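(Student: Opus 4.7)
The plan is to exhibit $\widetilde{V}_{r-1}$ birationally as a projective bundle over an open subset of $({\proj}^{2})^{r-1}$ via the forgetful map
$\pi \colon \widetilde{V}_{r-1} \to ({\proj}^{2})^{r-1}$, $(C, p_{1}, \ldots, p_{r-1}) \mapsto (p_{1}, \ldots, p_{r-1})$.
For $\mathbf{p} = (p_{1}, \ldots, p_{r-1})$ in general position, the fiber $\pi^{-1}(\mathbf{p})$ is an open subset of the linear subsystem $\Sigma(\mathbf{p}) \subset |{\sextic}|$ cut out by the condition that the sextic be singular at each $p_{i}$.

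First I would verify that for a general $\mathbf{p}$ the $3(r-1)$ conditions encoding these nodes are linearly independent on the $28$-dimensional space $H^{0}({\proj}^{2}, {\sextic})$. Since $r-1 \leq 8$ we have $3(r-1) \leq 24$, so the count $\dim \Sigma(\mathbf{p}) = 27 - 3(r-1) = 30 - 3r$ is numerically feasible. Concretely the claim reduces to surjectivity of the $2$-jet map $H^{0}({\sextic}) \to \bigoplus_{i=1}^{r-1} \mathcal{O}_{{\proj}^{2},p_{i}}/\mathfrak{m}_{p_{i}}^{2}$ for general $\mathbf{p}$, which is a routine general-position argument as sextics easily interpolate arbitrary $2$-jets at up to eight general points.

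Next I would invoke Harris's irreducibility theorem for Severi varieties of plane nodal curves \cite{Ha}: the variety $V_{r-1}$ is irreducible of dimension $28 - r$, which via the additivity $2(r-1) + (30 - 3r) = 28 - r$ forces $\pi$ to be dominant and the general element of $\Sigma(\mathbf{p})$ to be an irreducible sextic whose only singularities are honest nodes at $p_{1}, \ldots, p_{r-1}$. After shrinking to a suitable open $W \subset ({\proj}^{2})^{r-1}$, the restriction $\pi|_{\pi^{-1}(W)}$ is then the complement of a proper closed subset inside a projective bundle of relative dimension $30 - 3r$, cut out in $W \times |{\sextic}|$ by linear equations varying algebraically in $\mathbf{p}$. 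Any such projective bundle is generically trivial over the rational base $W$, so $\widetilde{V}_{r-1}$ is birational to an open subset of $({\proj}^{2})^{r-1} \times {\proj}^{30-3r}$ and hence is rational.

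For the unirationality of the quotient, by Lemma \ref{stab & fin mor} and Proposition \ref{stab sextic} the ${\PGL}$-action on $\widetilde{V}_{r-1}$ admits a geometric quotient with finite stabilizers, so $\dim(\widetilde{V}_{r-1}/{\PGL}) = (28-r) - 8 = 20 - r$. Being dominated by the rational variety $\widetilde{V}_{r-1}$, the quotient is unirational. The principal obstacle is the joint verification of (a) independence of the nodal conditions at general points and (b) the fact that the general element of $\Sigma(\mathbf{p})$ is genuinely an irreducible sextic with exactly $r-1$ nodes and no further singularities; both issues are subsumed by Harris's theorem applied to the small values $r - 1 \leq 8$ considered here.
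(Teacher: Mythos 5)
Your overall strategy --- fibering $\widetilde{V}_{r-1}$ over $({\proj}^{2})^{r-1}$ by the nodal map and recognizing it birationally as a projective bundle, then deducing unirationality of the quotient from Lemma \ref{stab & fin mor} and Proposition \ref{stab sextic} --- is the same as the paper's, and the first and last steps are fine. The gap is in the dominance of $\pi$. You claim that Harris's theorem ($V_{r-1}$ irreducible of dimension $28-r$) together with the additivity $2(r-1)+(30-3r)=28-r$ ``forces'' $\pi$ to be dominant and the general member of $\Sigma(\mathbf{p})$ to be an irreducible sextic with nodes exactly at $\mathbf{p}$. That deduction is not valid: the identical numerology holds for $r=10$ (namely $2\cdot 9+0=18=28-10$), and yet the nodal map $\widetilde{V}_{9}\to({\proj}^{2})^{9}$ is \emph{not} dominant --- the paper points this out in Section \ref{ssec: (10,10,1)}, citing Coolidge. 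The reason the count fails to force dominance is that the image of $\pi$ may lie entirely inside the locus of configurations where the $3(r-1)$ nodal conditions become dependent; over such configurations the fibers of $\pi$ are larger than $30-3r$, and the dimensions still add up to $28-r$ without $\pi$ being dominant. (For $9$ general points the unique sextic singular at all of them is the double cubic, a non-reduced curve, so no Halphen sextic has its nodes in general position.) Neither Harris's irreducibility statement nor the generic independence of the jet conditions tells you that, for \emph{general} $\mathbf{p}$, the system $\Sigma(\mathbf{p})$ actually contains an irreducible sextic whose only singularities are nodes at $p_{1},\dots,p_{r-1}$; that existence statement is the entire content of dominance and must be proved separately.

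The paper supplies exactly this missing ingredient: for general $\mathbf{p}$ it identifies $\kappa^{-1}(\mathbf{p})$ with an open subset of the anti-bicanonical system $|\!-\!2K_{Y}|$ on the blow-up $Y$ of ${\proj}^{2}$ at $p_{1},\dots,p_{r-1}$. Since $r-1\leq 8$ and the points are general, $Y$ is a del Pezzo surface, so ${\dim}|\!-\!2K_{Y}|\geq 3$ and a general member is a smooth irreducible curve meeting each exceptional curve transversally in two points; its image in ${\proj}^{2}$ is therefore an irreducible sextic with nodes exactly at the $p_{i}$, which gives nonemptiness of $\kappa^{-1}(\mathbf{p})$ and hence dominance. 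This is precisely where the hypothesis $r\leq 9$ enters, and it is the step your argument leaves unproved; with it inserted, the rest of your write-up goes through.
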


\begin{proof}
We consider the nodal map 
\begin{equation}\label{nodal map}
\kappa : \widetilde{V}_{r-1} \to ({\proj}^{2})^{r-1}, \ \ \ (C, p_{1}, \cdots, p_{r-1}) \mapsto (p_{1}, \cdots, p_{r-1}). 
\end{equation}
For a general $\mathbf{p} = (p_{1}, \cdots, p_{r-1})$ 
the fiber $\kappa^{-1}(\mathbf{p})$ may be identified with an open set of $|\!-\!2K_Y|$ 
where $Y$ is the blow-up of ${\proj}^{2}$ at $\{ p_{i} \} _{i=1}^{r-1}$. 
Since $Y$ is a del Pezzo surface, 
we have ${\dim} |\!-\!2K_Y| \geq 3$ so that $\kappa$ is dominant. 
As $\kappa^{-1}(\mathbf{p})$ is an open set of a linear subspace of $|\mathcal{O}_{{\proj}^{2}}(6)|$,  
we see that $\widetilde{V}_{r-1}$ is birationally equivalent to the projective bundle associated to a locally free sheaf 
on an open set of $({\proj}^{2})^{r-1}$. 
\end{proof}

We shall construct a period map $\tilde{p}\colon\widetilde{V}_{r-1}\to\widetilde{\mathcal{M}}_{(r,r,1)}$ for $r\leq11$. 
For a sextic with labeling $(C, \mathbf{p})=(C, p_1, \cdots, p_{r-1})$ in $\widetilde{V}_{r-1}$, 
let $(X, \iota)$ be the 2-elementary $K3$ surface associated to the right resolution of $C$, 
and $g\colon X \to {\proj}^2$ be the natural projection branched over $C$. 
The quotient $X/\langle\iota\rangle$ is the blow-up of ${\proj}^2$ at $p_1,\cdots, p_{r-1}$. 
On $X$ we have the line bundle $H=g^{\ast}{\HP}$ and the $(-2)$-curves $E_i=g^{-1}(p_i)$. 
Let $M_r=\langle h, e_1, \cdots , e_{r-1}\rangle$ be the lattice defined in \eqref{Cremona lattice}.  
By Example \ref{ex:1}, the classes of $H$ and $E_1, \dots, E_{r-1}$ define an isometry of lattices 
$j\colon M_r \to L_+(X, \iota)$ by $h\mapsto[H]$ and $e_i\mapsto[E_i]$. 
We thus associate a lattice-marked 2-elementary $K3$ surface $((X, \iota), j)$ to $(C, \mathbf{p})$. 
Fixing a primitive embedding $M_r\hookrightarrow\Lambda_{K3}$ and 
considering the period of $((X, \iota), j)$ as defined in \eqref{lifted period}, 
we then obtain a point $\tilde{p}(C, \mathbf{p})$ in $\widetilde{\mathcal{M}}_{(r, r, 1)}$.

\begin{proposition}\label{Torelli r<12}
Let $r\leq11$. 
Two sextics with labeling $(C, \mathbf{p}), (C', \mathbf{p}') \in \widetilde{V}_{r-1}$ are ${\PGL}$-equivalent 
if and only if $\tilde{p}(C, \mathbf{p}) = \tilde{p}(C', \mathbf{p}')$. 
\end{proposition}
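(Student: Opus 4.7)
The plan is to handle the two directions separately. The \emph{only if} direction is formal: a $\PGL$-automorphism $\gamma$ sending $(C, \mathbf{p})$ to $(C', \mathbf{p}')$ lifts uniquely to an isomorphism of the right resolutions conjugating $\iota$ to $\iota'$ and sending $H \mapsto H'$ and $E_i \mapsto E_i'$; the induced Hodge isometry identifies the lattice markings $j$ and $j'$ on $L_+$, which via Proposition \ref{glue} and the definition \eqref{lifted period} forces $\tilde{p}(C, \mathbf{p}) = \tilde{p}(C', \mathbf{p}')$.

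For the substantive \emph{if} direction, I would first promote the period equality to a Hodge isometry. By the discussion preceding the proposition, the equality $\tilde{p}(C, \mathbf{p}) = \tilde{p}(C', \mathbf{p}')$ means that the lattice-marked pairs $((X, \iota), j)$ and $((X', \iota'), j')$ are equivalent, so there exists a Hodge isometry $\Psi\colon \cohomology \to H^2(X', \mathbb{Z})$ with $j' = \Psi \circ j$. The key observation is that $\Psi$ already maps an ample class of $X$ to an ample class of $X'$: for sufficiently small $\epsilon > 0$ the class $[H] - \epsilon \sum_{i} [E_i]$ is the pullback via the finite double cover $X \to Y$ of the ample class $\pi^{\ast}\mathcal{O}_{{\proj}^2}(1) - \epsilon \sum_{i} F_i$ on $Y = {\rm bl}_{\mathbf{p}} {\proj}^2$, and $\Psi$ sends it to the ample class $[H'] - \epsilon \sum_{i} [E_i']$ on $X'$ by the same reasoning. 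The strong Torelli theorem then provides an isomorphism $\phi\colon X' \to X$ inducing $\Psi$, and $\phi$ commutes with the involutions because $\Psi$ respects the $\pm 1$-eigenspace decomposition arising from the lattice marking.

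Finally, I would extract the desired element of $\PGL$. By Lemma \ref{proj mor} the projections $g$ and $g'$ coincide with the complete linear system morphisms associated to $H$ and $H'$ respectively; since $\phi^{\ast}H' = H$, the isomorphism $\phi$ descends to a linear isomorphism $\gamma\colon {\proj}^2 \to {\proj}^2$ satisfying $g \circ \phi = \gamma \circ g'$. Comparing branch loci yields $\gamma(C') = C$, and the relation $\phi^{\ast}[E_i'] = [E_i]$ forces $\gamma(p_i') = p_i$ for every $i$, producing the required $\PGL$-equivalence. The main hurdle is the ample-cone condition in the Torelli theorem, which is resolved here by exhibiting an explicit ample class on $X$ that $\Psi$ visibly carries to an ample class on $X'$; once this hypothesis is verified, the remaining steps are formal consequences of strong Torelli and Lemma \ref{proj mor}.
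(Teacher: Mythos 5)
Your proposal is correct and follows essentially the same route as the paper: extract a Hodge isometry from the period equality, exhibit an ample class it preserves, invoke the strong Torelli theorem, and descend to ${\proj}^2$ via Lemma \ref{proj mor} to recover the sextic and the labelled nodes. The only (cosmetic) difference is the choice of ample class --- you use $[H]-\epsilon\sum_i[E_i]$ for small $\epsilon>0$, while the paper uses the integral class $4H-\sum_{i=1}^{r-1}E_i$ --- and both verifications are sound.
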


\begin{proof}
It suffices to prove the ``if'' part. 
Let $X, j, H,\cdots$ (resp. $X', j', H',\cdots$) be the objects 
constructed from $(C, \mathbf{p})$ (resp. $(C', \mathbf{p}')$) as above. 
If $\tilde{p}(C, \mathbf{p}) = \tilde{p}(C', \mathbf{p}')$, 
we have a Hodge isometry $\Phi\colon H^2(X', {\Z}) \to {\cohomology}$ with $j=\Phi\circ j'$. 
This equality means that $\Phi([H'])=[H]$ and $\Phi([E_{i}'])=[E_{i}]$. 
Since $\Phi$ maps the ample class $4H'-\sum_{i=1}^{r-1}E_{i}'$ to the ample class $4H-\sum_{i=1}^{r-1}E_{i}$, 
by the strong Torelli theorem there exists an isomorphism $\varphi\colon X\to X'$ with $\varphi^{\ast}=\Phi$. 
Then we have $\varphi(E_{i})=E_{i}'$ and $\varphi^{\ast}H' \simeq H$. 
By Lemma \ref{proj mor} we obtain an automorphism $\psi\colon{\proj}^2\to{\proj}^2$ 
with $g'\circ\varphi=\psi \circ g$. 
Since $p_i=g(E_i)$ and $p_i'=g'(E_i')$, we have $\psi(p_i)=p_i'$. 
Since $C$ and $C'$ are respectively the branches of $g$ and $g'$, we also have $\psi(C)=C'$. 
\end{proof}

\begin{theorem}\label{period map r<12}
Let $r\leq11$. 
The period map $\tilde{p}\colon\widetilde{V}_{r-1} \to \widetilde{\mathcal{M}}_{(r,r,1)}$ is a morphism of varieties 
and induces an open immersion $\widetilde{V}_{r-1}/{\PGL} \to \widetilde{\mathcal{M}}_{(r,r,1)}$. 
\end{theorem}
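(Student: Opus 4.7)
The plan is to realize $\tilde{p}$ as an algebraic morphism via Theorem~\ref{thm: moduli}, descend it to an injective map on the geometric quotient using Proposition~\ref{Torelli r<12}, and then upgrade injectivity together with a dimension count to an open immersion by Zariski's main theorem.

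First I would construct a $\PGL$-equivariant algebraic family of lattice-marked $2$-elementary $K3$ surfaces over $\widetilde{V}_{r-1}$. Let $\mathcal{C}\subset\widetilde{V}_{r-1}\times\proj^2$ be the universal sextic and let $\sigma_1,\dots,\sigma_{r-1}$ be the disjoint sections of $\widetilde{V}_{r-1}\times\proj^2\to\widetilde{V}_{r-1}$ picking out the labelled nodes. Blowing up along these sections and taking the double cover of the resulting family branched over the smooth strict transform of $\mathcal{C}$ produces a smooth family $(\mathfrak{X},\iota)\to\widetilde{V}_{r-1}$ of $2$-elementary $K3$ surfaces of type $(r,r,1)$, as in Example~\ref{ex:1}. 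The pullback of $\mathcal{O}_{\proj^2}(1)$ and the $r-1$ exceptional $(-2)$-curves are $\iota$-invariant algebraic divisor classes on the total space, and together they give a globally defined algebraic lattice marking $j\colon M_r\to L_+(\mathfrak{X}_t,\iota_t)$ in the sense of Section~\ref{ssec:discri cover}. Combining this datum with a fixed primitive embedding $M_r\hookrightarrow\Lambda_{K3}$, Theorem~\ref{thm: moduli}(i) produces the morphism $\tilde{p}\colon\widetilde{V}_{r-1}\to\widetilde{\mathcal{M}}_{(r,r,1)}$ of algebraic varieties.

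By construction $\tilde{p}$ is $\PGL$-invariant, and since $\widetilde{V}_{r-1}$ lies in the stable locus, Lemma~\ref{stab & fin mor} and Proposition~\ref{stab sextic} furnish a geometric quotient $\widetilde{V}_{r-1}/\PGL$ through which $\tilde{p}$ factors as a morphism $\bar{p}\colon\widetilde{V}_{r-1}/\PGL\to\widetilde{\mathcal{M}}_{(r,r,1)}$. Proposition~\ref{Torelli r<12} says exactly that the fibers of $\tilde{p}$ are single $\PGL$-orbits, so $\bar{p}$ is injective on closed points, hence quasi-finite.

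To conclude the open-immersion assertion I would combine a dimension count with Zariski's main theorem. The $\PGL$-action on the stable locus has finite stabilizers, so $\dim(\widetilde{V}_{r-1}/\PGL)=(28-r)-8=20-r=\dim\widetilde{\mathcal{M}}_{(r,r,1)}$, and both sides are irreducible, the source by Harris's irreducibility of $V_{r-1}$ \cite{Ha} and the target by connectedness of $\Omega_L^+$. An injective morphism between irreducible varieties of the same dimension is dominant, so $\bar{p}$ is a dominant quasi-finite morphism onto a normal variety. Zariski's main theorem then factors $\bar{p}$ as an open immersion followed by a finite morphism, the finite part being forced to be birational by the generic injectivity of $\bar{p}$; a finite birational morphism onto a normal variety is an isomorphism, so $\bar{p}$ itself is an open immersion. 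The one delicate point in this plan is the globality of the marking $j$ over $\widetilde{V}_{r-1}$: the classes of $\mathcal{O}_{\proj^2}(1)$ and the exceptional divisors must genuinely live in the algebraic relative Picard sheaf of the family, which is what lets Theorem~\ref{thm: moduli}(i) yield an algebraic, not merely holomorphic, lift of the period map to the discriminant cover.
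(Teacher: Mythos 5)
Your proposal follows essentially the same route as the paper's proof: the identical family construction (blow up along the node sections, take the double cover branched over the strict transform of the universal sextic), Proposition \ref{Torelli r<12} for injectivity of the descended map, and the dimension count plus Zariski's Main Theorem to promote an injective dominant morphism onto the normal irreducible target to an open immersion. The only place your citation does not quite carry the weight is the algebraicity of the lift to the discriminant cover: Theorem \ref{thm: moduli}(i) concerns only the period map into $\mathcal{M}_{(r,r,1)}$, whereas the paper shows that the global marking trivializes the local system $\mathcal{L}_+=(R^2\pi_{\ast}\Z)^{\iota}$ and hence forces the monodromy of $\mathcal{L}_-$ into $\widetilde{{\rm O}}(L_r)$, so that the lifted period map is locally liftable and then algebraic by Borel's extension theorem \cite{Bo} --- precisely the point you flag as delicate, so this is a matter of attribution rather than a gap in the idea.
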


\begin{proof}
We repeat the above construction for families. 
Let $\widetilde{\mathcal{C}}_{r-1} \subset \widetilde{V}_{r-1} \times {\proj}^2$ be 
the universal marked nodal sextic over $\widetilde{V}_{r-1}$ 
(which may be obtained from the universal sextic over $V_{r-1}$). 
We have the sections $s_i\colon\widetilde{V}_{r-1} \to \widetilde{\mathcal{C}}_{r-1}$ defined by 
$(C, \mathbf{p}) \mapsto ((C, \mathbf{p}), p_i)$ where $\mathbf{p}=(p_1, \cdots, p_{r-1})$. 
There is an open set $\widetilde{V} \subset \widetilde{V}_{r-1}$ such that the divisor 
$\widetilde{\mathcal{C}} = \widetilde{\mathcal{C}}_{r-1}\vert_{\widetilde{V}}$ 
of $\widetilde{V}\times {\proj}^2$ is linearly equivalent to $\pi_2^{\ast}{\sextic}$ 
where $\pi_2\colon\widetilde{V}\times {\proj}^2\to{\proj}^2$ is the projection. 
We denote $W_i = s_{i}(\widetilde{V})$. 
Let $\mathcal{Y}$ be the blow-up of $\widetilde{V} \times {\proj}^2$ along $\mathop{\bigcup}_{i=1}^{r-1} W_i$ 
and $\mathcal{D}_i \subset \mathcal{Y}$ be the exceptional divisor over $W_i$. 
Since the strict transform $\mathcal{B} \subset \mathcal{Y}$ of $\widetilde{\mathcal{C}}$  
is a smooth divisor linearly equivalent to 
$\pi_{2}^{\ast}{\sextic}-2 \sum_{i=1}^{r-1}\mathcal{D}_i$, 
we may take a double cover $f\colon\mathcal{X}\to\mathcal{Y}$ branched over $\mathcal{B}$. 
The natural projection $\pi\colon\mathcal{X}\to\widetilde{V}$ is a family of $K3$ surfaces. 
Let $\iota$ be the covering transformation of $f$ and 
$\mathcal{L}_+$ be the local system $(R^2\pi_{\ast}{\Z})^{\iota}$ over $\widetilde{V}$. 
Then the divisors $\{ f^{-1}(\mathcal{D}_i) \} _i$ and the pullback of $\pi_{2}^{\ast}{\HP}$ 
define a trivialization $\mathcal{L}_+ \to M_r \times \widetilde{V}$. 
This means that the monodromy group of the local system 
$\mathcal{L}_- = (\mathcal{L}_+)^{\perp}\cap R^2\pi_{\ast}{\Z}$ 
is contained in $\widetilde{{\rm O}}(L_r)$ where $L_r=(M_r)^{\perp}\cap\Lambda_{K3}$. 
Considering the local system $\mathcal{L}_-$, 
we see that the period map 
$\tilde{p}|_{\widetilde{V}}\colon\widetilde{V}\to\widetilde{\mathcal{M}}_{(r,r,1)}$
is a locally liftable holomorphic map. 
By Borel's extension theorem \cite{Bo}, 
$\tilde{p}|_{\widetilde{V}}$ is a morphism of algebraic varieties. 
This implies that $\tilde{p}$ is a morphism of varieties. 
By the ${\PGL}$-invariance $\tilde{p}$ induces a morphism 
$\widetilde{\mathcal{P}}\colon\widetilde{V}_{r-1}/{\PGL}\to\widetilde{\mathcal{M}}_{(r,r,1)}$. 
Proposition \ref{Torelli r<12} implies the injectivity of $\widetilde{\mathcal{P}}$. 
Then $\widetilde{\mathcal{P}}$ is dominant because    
we have $\dim (\widetilde{V}_{r-1}/{\PGL}) = 20-r$ and 
$\widetilde{\mathcal{M}}_{(r,r,1)}$ is irreducible. 
Thus $\widetilde{\mathcal{P}}$ is an open immersion by the Zariski's Main Theorem. 
\end{proof}

\begin{corollary}\label{unirat for r<10}
If $r\leq9$ and $(r, a, \delta)\ne (2, 2, 0)$, then ${\cover}$ is unirational. 
\end{corollary}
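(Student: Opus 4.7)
The plan is to combine two previously established results: the unirationality of $\widetilde{\mathcal{M}}_{(r,r,1)}$ coming from the Severi-variety construction, and the isogeny of Proposition \ref{isogeny bet discri cover}, which transports unirationality down the Nikulin table to all smaller values of $a$.

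First I would observe that Theorem \ref{period map r<12} gives an open immersion $\widetilde{V}_{r-1}/{\PGL}\hookrightarrow\widetilde{\mathcal{M}}_{(r,r,1)}$ for $r\leq 11$, while Proposition \ref{rationality of V_<10} shows that $\widetilde{V}_{r-1}$ is rational for $r\leq 9$. Rationality of $\widetilde{V}_{r-1}$ implies unirationality of the ${\PGL}$-quotient, and the open immersion then promotes this to unirationality of $\widetilde{\mathcal{M}}_{(r,r,1)}$ itself, for every admissible $(r,r,1)$ with $r\leq 9$.

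Next I would invoke Proposition \ref{isogeny bet discri cover}(i) with source invariant $(r,r,1)$: because $\delta=1$, that proposition produces a finite surjective morphism $\widetilde{\mathcal{M}}_{(r,r,1)}\to\widetilde{\mathcal{M}}_{(r,a',\delta')}$ for every admissible $(r,a',\delta')$ with $r\leq 9$ and $a'<r$, with arbitrary parity $\delta'$. Since a finite surjective image of a unirational variety is unirational, each such $\widetilde{\mathcal{M}}_{(r,a',\delta')}$ is unirational.

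What remains to check is the book-keeping: that every admissible main invariant $(r,a,\delta)$ with $r\leq 9$ and $(r,a,\delta)\ne (2,2,0)$ is indeed caught by one of the two steps above. The only invariants not of the form $(r,r,1)$ and not having $a<r$ would be those with $a=r$ and $\delta=0$. Inspecting Figure \ref{Nikulin table} in the range $r\leq 9$, the unique admissible invariant of shape $(r,r,0)$ is the excluded $(2,2,0)$, so every other admissible type either coincides with $(r,r,1)$ or has $a<r$. The main obstacle in this argument is not geometric but purely combinatorial, namely this verification against the Nikulin table; once it is made, the corollary is a formal consequence of Theorem \ref{period map r<12}, Proposition \ref{rationality of V_<10}, and Proposition \ref{isogeny bet discri cover}.
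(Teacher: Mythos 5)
Your proposal is correct and follows exactly the paper's own argument: unirationality of $\widetilde{\mathcal{M}}_{(r,r,1)}$ from Proposition \ref{rationality of V_<10} and Theorem \ref{period map r<12}, then descent via Proposition \ref{isogeny bet discri cover} and the check against Figure \ref{Nikulin table}. Your explicit bookkeeping (that $a\leq r$ always holds for rank reasons and that $(2,2,0)$ is the only admissible $(r,r,0)$ with $r\leq 9$) is a correct elaboration of what the paper leaves implicit.
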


\begin{proof}
By Proposition \ref{rationality of V_<10} and Theorem \ref{period map r<12}, 
$\widetilde{\mathcal{M}}_{(r, r, 1)}$ is unirational for $r\leq9$. 
Then the assertion follows from Proposition \ref{isogeny bet discri cover} and Figure \ref{Nikulin table}. 
\end{proof}

\begin{remark}\label{after Morrison-Saito}
Morrison-Saito \cite{M-S} constructed an open immersion $V_{r-1}/{\PGL} \to \mathcal{F}(\Gamma_r)$ 
for a certain arithmetic group $\Gamma_r\subset{\rm O}(L_r)^+$. 
Our idea to relate $\widetilde{\mathcal{M}}_{(r,r,1)}$ with $\widetilde{V}_{r-1}$  
was inspired by their argument. 
\end{remark}

\begin{remark}\label{rationality of mkd Severi}
In fact, $\widetilde{V}_{r-1}/{\PGL}$ is rational when $2\leq r\leq 9$. 
For $r\geq5$ this may be seen by fixing first four nodes in general position. 
For $r\leq 4$ we need invariant-theoretic techniques. 
In the rest of the paper, one would find that several ${\cover}$ are rational as well.  %
\end{remark}


\subsection{$\widetilde{\mathcal{M}}_{(2,2,0)}$ and bidegree $(4, 4)$ curves}\label{ssec: (2,2,0)}

Let $Q = {\proj}^1\times{\proj}^1$ be a smooth quadric embedded in ${\proj}^3$. 
The group $G = {\rm PGL}_2\times{\rm PGL}_2$ acts naturally on $Q$. 
Let $U \subset |\mathcal{O}_{Q}(4, 4)|$ be the open set of smooth bidegree $(4, 4)$ curves. 
By Proposition \ref{stability of (4, 4) curve} we have a geometric quotient $U/G$ 
as an affine unirational variety of dimension $18$.

For a curve $C\in U$  
let $(X, \iota)$ be the 2-elementary $K3$ surface associated to the right DPN pair $(Q, C)$ 
and $f\colon X\to Q$ be the quotient morphism. 
The lattice $L_+(X, \iota)$ is equal to $f^{\ast}NS_Q$ by Proposition \ref{generate L_+}, 
and thus isometric to the lattice $U(2)$. 
In fact, using the basis $\{ u, v\}$ of $U(2)$ defined in \eqref{U(2)}, 
we have an isometry $j\colon U(2) \to L_+(X, \iota)$ by 
$u\mapsto [f^{\ast}\mathcal{O}_{Q}(1, 0)]$ and $v\mapsto [f^{\ast}\mathcal{O}_{Q}(0, 1)]$. 
Here it is important to distinguish the two rulings of $Q$. 
In this way, we obtain a lattice-marked 2-elementary $K3$ surface $((X, \iota), j)$ from $C$. 
We then obtain a point $\tilde{p}(C)$ in $\widetilde{\mathcal{M}}_{(2,2,0)}$ 
as the period of $((X, \iota), j)$ as before.

In this construction, one may recover the morphism $f\colon X\to Q$ (and hence its branch $C$) 
from the class $j(u+v)$ by Lemma \ref{proj mor}. 
By using $f$, the two rulings $|\mathcal{O}_Q(1, 0)|$, $|\mathcal{O}_Q(0, 1)|$ of $Q$ 
may be respectively recovered from the elliptic fibrations on $X$ 
given by the classes $j(u)$, $j(v)$. 

\begin{theorem}\label{period map (2,2,0)}
The period map $\tilde{p}\colon U\to\widetilde{\mathcal{M}}_{(2,2,0)}$ is a morphism of varieties 
and induces an open immersion $U/G\to\widetilde{\mathcal{M}}_{(2,2,0)}$. 
In particular, $\widetilde{\mathcal{M}}_{(2,2,0)}$ is unirational. 
\end{theorem}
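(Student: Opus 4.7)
The plan is to adapt the proof of Theorem~\ref{period map r<12} almost verbatim to the present setting, replacing $\proj^{2}$ and nodal sextics by $Q=\proj^{1}\times\proj^{1}$ and smooth bidegree $(4,4)$ curves, and $\PGL$ by $G={\rm PGL}_{2}\times{\rm PGL}_{2}$.

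To prove $\tilde{p}$ is a morphism of algebraic varieties, I would form the universal bidegree $(4,4)$ divisor $\mathcal{C}\subset U\times Q$, which is smooth over $U$, and take the double cover $f\colon\mathcal{X}\to U\times Q$ branched along $\mathcal{C}$. The projection $\pi\colon\mathcal{X}\to U$ is then an algebraic family of $K3$ surfaces with a fiberwise non-symplectic involution $\iota$, and the classes $[f^{\ast}\mathcal{O}_{Q}(1,0)]$ and $[f^{\ast}\mathcal{O}_{Q}(0,1)]$ trivialize the invariant local system $\mathcal{L}_{+}=(R^{2}\pi_{\ast}{\Z})^{\iota}$ as $U(2)\times U$. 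Hence the monodromy on $\mathcal{L}_{-}=\mathcal{L}_{+}^{\perp}\cap R^{2}\pi_{\ast}{\Z}$ lies in $\widetilde{{\rm O}}(L)$ for $L=U(2)^{\perp}\cap\Lambda_{K3}$, which as in the proof of Theorem~\ref{period map r<12} gives a locally liftable holomorphic period map $U\to\widetilde{\mathcal{M}}_{(2,2,0)}$ agreeing with $\tilde{p}$; Borel's extension theorem then promotes it to a morphism of algebraic varieties.

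Next I would establish the Torelli-type statement that $\tilde{p}(C)=\tilde{p}(C')$ forces $C$ and $C'$ to be $G$-equivalent. Equality of periods produces a Hodge isometry $\Phi\colon H^{2}(X',{\Z})\to{\cohomology}$ with $j=\Phi\circ j'$, matching $j'(u)$ to $j(u)$ and $j'(v)$ to $j(v)$ individually; since $\Phi$ sends the ample class $j'(u+v)=[f'^{\ast}\mathcal{O}_{Q}(1,1)]$ to $j(u+v)$, strong Torelli lifts it to an isomorphism $\varphi\colon X\to X'$. By Lemma~\ref{proj mor}, both $g\colon X\to Q$ and $g'\circ\varphi\colon X\to Q'$ are realizations of $\phi_{j(u+v)}$, so they differ by a linear automorphism $\psi$ of the target quadric; because $\varphi^{\ast}$ preserves the classes $u$ and $v$ separately, $\psi$ preserves each ruling and therefore lies in $G$, and necessarily sends $C$ to $C'$.

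Finally, $G$-invariance of $\tilde{p}$ together with the Torelli step yields an injective morphism $\widetilde{\mathcal{P}}\colon U/G\to\widetilde{\mathcal{M}}_{(2,2,0)}$; the dimension count
\begin{equation*}
\dim(U/G)=24-6=18=20-2=\dim\widetilde{\mathcal{M}}_{(2,2,0)}
\end{equation*}
together with irreducibility of the target makes $\widetilde{\mathcal{P}}$ dominant, and Zariski's Main Theorem upgrades it to an open immersion. Unirationality is then automatic since $U$ is open in a projective space. The main subtlety, which I expect to require the most care, is the separation of the two rulings of $Q$ in the Torelli step: the ruling-swap $u\leftrightarrow v$ lies in ${\rm O}(U(2))\setminus\widetilde{{\rm O}}(U(2))$ (it acts nontrivially on $D_{U(2)}\simeq({\Z}/2{\Z})^{2}$), so at the level of the ${\rm O}$-quotient $\mathcal{M}_{(2,2,0)}$ one would only recover equivalence by ${\rm Aut}(Q)=G\rtimes{\Z}/2$ rather than by $G$; it is precisely the passage to the discriminant cover $\widetilde{\mathcal{M}}_{(2,2,0)}$ that encodes the ordering of the two rulings in the marking $j$ and thereby makes $G$ the right equivalence group.
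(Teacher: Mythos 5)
Your proposal is correct and follows essentially the same route as the paper, which itself just says to repeat the argument of Proposition \ref{Torelli r<12} and Theorem \ref{period map r<12} with the two rulings of $Q$ kept distinct, recovering $f$, $C$, and the rulings from $j(u+v)$, $j(u)$, $j(v)$ and applying strong Torelli with the ample class $j(u+v)$. Your closing observation about the ruling-swap lying outside $\widetilde{{\rm O}}(U(2))$ is exactly the point the paper emphasizes about why $G$ (rather than ${\rm Aut}(Q)$) is the correct equivalence group on the discriminant cover.
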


\begin{proof}
Basically one may apply a similar argument as for 
Proposition \ref{Torelli r<12} and Theorem \ref{period map r<12}. 
In the present case, one should note that $G$ is the group of 
automorphisms of $Q$ preserving the two rulings respectively. 
This ensures the $G$-invariance of $\tilde{p}$ 
for its definition involves the distinction of the two rulings. 
The recovery of the morphisms $f$, the curves $C$, and the two rulings of $Q$ as explained above 
implies the injectivity of the induced morphism $U/G\to\widetilde{\mathcal{M}}_{(2,2,0)}$. 
Here one may apply the strong Torelli theorem by using the ample classes $j(u+v)$. 
\end{proof}


\section{The case $r=10$}\label{sec: r=10}

In this section we prove that $\mathcal{M}_{(10,a,\delta)}$ are unirational. 
Kond\=o \cite{Ko1} proved the rationality of $\mathcal{M}_{(10,10,0)}$, the moduli of Enriques surfaces, 
and of $\mathcal{M}_{(10,2,0)}$. 
We study the remaining moduli spaces. 
In Sections \ref{ssec: (10,8,0)} and \ref{ssec: (10,8,1)} we prove the unirationality of 
$\widetilde{\mathcal{M}}_{(10,8,0)}$ and $\widetilde{\mathcal{M}}_{(10,8,1)}$ respectively, 
which implies that $\widetilde{\mathcal{M}}_{(10,a,\delta)}$ are unirational for $a\leq8$. 
The unirationality of $\mathcal{M}_{(10,10,1)}$ is proved in Section \ref{ssec: (10,10,1)}.


\subsection{$\widetilde{\mathcal{M}}_{(10,8,0)}$ and cubic pairs}\label{ssec: (10,8,0)}

Let $U \subset |{\sextic}|\times({\proj}^2)^8$ be the space of pointed sextics 
$(C_1+C_2, \mathbf{p})=(C_1+C_2, p_1,\cdots, p_8)$ 
such that $C_1$ and $C_2$ are smooth cubics transverse to each other 
and that $p_1,\cdots, p_8$ are distinct points contained in $C_1 \cap C_2$. 
The variety $U$ is unirational. 
Indeed, if we denote by 
$V\subset|{\cubic}|\times({\proj}^2)^8$ the locus of $(C, p_1,\cdots, p_8)$ such that $\{ p_i \}_{i=1}^{8}\subset C$, 
then $U$ is dominated by the fiber product $V \mathop{\times}_{({\proj}^2)^8} V$. 
As the projection $V \to ({\proj}^2)^8$ is dominant with a general fiber being a line in $|{\cubic}|$, 
the variety $V \mathop{\times}_{({\proj}^2)^{8}} V$ is rational,  
and so $U$ is unirational. 
By Proposition \ref{stab sextic} and Lemma \ref{stab & fin mor}, 
the natural projection $U\to|{\sextic}|$ shows that 
we have a geometric quotient $U/{\PGL}$ as a unirational variety of dimension $10$.

For a pointed sextic $(C_1+C_2, \mathbf{p}) \in U$ 
we denote by $p_9$ the ninth intersection point of $C_1$ and $C_2$. 
This gives a complete labeling of the nodes of $C_1+C_2$. 
Let $(X, \iota)$ be the 2-elementary $K3$ surface associated to $C_1+C_2$ 
and $g\colon X\to {\proj}^2$ be the natural projection branched over $C_1+C_2$. 
The quotient $X/\langle\iota\rangle$ is the blow-up of ${\proj}^2$ at $p_1,\cdots, p_9$, 
and is a rational elliptic surface. 
We have the decomposition $X^{\iota}=F_1+F_2$ such that $g(F_i)=C_i$.  
By Example \ref{ex:1}, the lattice $L_+(X, \iota)$ is generated by the classes of 
the bundle $H=g^{\ast}{\HP}$, the $(-2)$-curves $E_i=g^{-1}(p_i)$ for $i\leq9$, 
and the elliptic curves $F_1\sim F_2$. 
This suggests to define a reference lattice $L_+$ as follows. 
Let $M_{10}=\langle h, e_1,\cdots, e_9\rangle$ be the lattice defined in \eqref{Cremona lattice} 
and $v\in M_{10}^{\vee}$ be the vector defined by $2v=3h-\sum_{i=1}^{9}e_i$. 
The even overlattice $L_+=\langle M_{10}, v\rangle$ is 2-elementary of main invariant $(10, 8, 0)$. 
Then we have a natural isometry $j\colon L_+\to L_+(X, \iota)$ by sending 
$h\mapsto[H], e_i\mapsto[E_i]$, and $v\mapsto[F_j]$. 
Therefore we obtain a point $\tilde{p}(C_1+C_2, \mathbf{p})$ in $\widetilde{\mathcal{M}}_{(10,8,0)}$ 
as the period of $((X, \iota), j)$ as before.

As in Section \ref{ssec: (r,r,1)}, 
one may recover the morphism $g\colon X\to{\proj}^2$ from the class $j(h)$ by Lemma \ref{proj mor}, 
the points $p_i=g(E_i)$ from the classes $j(e_i)$, 
and the sextic $C_1+C_2$ from $g$ as the branch locus. 
Also one has the ample class $j(h+v)$ on $X$ defined in terms of $j$. 
Hence one may proceed as Section \ref{ssec: (r,r,1)} to see the following.

\begin{theorem}\label{period map (10,8,0)}
The period map $\tilde{p}\colon U\to\widetilde{\mathcal{M}}_{(10,8,0)}$ is a morphism of varieties 
and descends to an open immersion $U/{\PGL}\to\widetilde{\mathcal{M}}_{(10,8,0)}$. 
\end{theorem}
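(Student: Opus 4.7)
The plan is to follow the pattern of Proposition~\ref{Torelli r<12} and Theorem~\ref{period map r<12}: exhibit a lattice-marked family of $2$-elementary $K3$ surfaces over $U$ to make $\tilde{p}$ algebraic, use strong Torelli to show that the fibers of the induced map on $U/{\PGL}$ are ${\PGL}$-orbits, and conclude by a dimension count and Zariski's Main Theorem.

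For algebraicity, on $U$ there are eight tautological sections $s_1,\ldots,s_8\colon U\to U\times{\proj}^2$ marking the recorded nodes, together with a ninth algebraic section $s_9$ sending $(C_1+C_2,\mathbf{p})$ to the residual base point of the pencil $\langle C_1,C_2\rangle$. On a suitable Zariski-open $U'\subset U$ one blows up $U'\times{\proj}^2$ along $\bigcup_{i=1}^9 s_i(U')$ and takes the double cover branched over the ($2$-divisible) strict transform of the universal sextic; this yields a family $\pi\colon\mathcal{X}\to U'$ of $K3$ surfaces with non-symplectic involution $\iota$. The classes of $H=g^*\HP$, of the nine exceptional divisors $E_i$, and of $\tfrac12(3[H]-\sum_i[E_i])$ trivialize the local system $(R^2\pi_*\Z)^\iota$ as the reference lattice $L_+$ of Section~\ref{ssec: (10,8,0)}; the third class equals $[F_1]=[F_2]$ in cohomology and is therefore globally integral without having to order the two cubic components. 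Hence the monodromy on $\mathcal{L}_-=L_+^\perp\cap R^2\pi_*\Z$ lies in $\widetilde{{\rm O}}(L_-)$, the map $\tilde{p}|_{U'}$ is locally liftable holomorphic, and Borel's extension theorem~\cite{Bo} promotes it to a morphism of algebraic varieties.

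For injectivity of the induced $\widetilde{\mathcal{P}}\colon U/{\PGL}\to\widetilde{\mathcal{M}}_{(10,8,0)}$, suppose $\tilde{p}(C_1+C_2,\mathbf{p})=\tilde{p}(C_1'+C_2',\mathbf{p}')$; a Hodge isometry $\Phi\colon H^2(X',\Z)\to H^2(X,\Z)$ with $j=\Phi\circ j'$ then exists. An appropriate ample class in the image of $j$, for instance $j(4h+v-\sum_i e_i)=4[H]+[F_1]-\sum_i[E_i]$, is preserved by $\Phi$, so strong Torelli yields $\varphi\colon X\to X'$ with $\varphi^*=\Phi$. Lemma~\ref{proj mor} applied to $\varphi^*[H']=[H]$ furnishes $\psi\in{\PGL}$ with $g'\circ\varphi=\psi\circ g$; then $\varphi(E_i)=E_i'$ forces $\psi(p_i)=p_i'$ for $i=1,\ldots,9$, while matching branch loci yields $\psi(C_1+C_2)=C_1'+C_2'$.

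Finally, $\dim(U/{\PGL})=18-8=10=\dim\widetilde{\mathcal{M}}_{(10,8,0)}$, and the target is irreducible, so $\widetilde{\mathcal{P}}$ is dominant as well as injective; Zariski's Main Theorem then delivers the open immersion. The main obstacle I foresee is the bookkeeping of the lattice marking $j$ across the family: one must ensure that the glue class $\tfrac12(3[H]-\sum_i[E_i])$ is an integral cohomology class on each fiber and monodromy-invariant over $U'$, which is guaranteed precisely by the identity $[F_1]=[F_2]$ and obviates any passage to an \'etale cover of $U$ that orders the two cubic components.
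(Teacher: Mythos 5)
Your proof is correct and follows essentially the same route as the paper, which simply instantiates the model argument of Section \ref{ssec: (r,r,1)} (family construction plus Borel's extension theorem for algebraicity, strong Torelli applied to an ample class in the image of $j$ for injectivity, then a dimension count and Zariski's Main Theorem); your observation that $[F_1]=[F_2]$ makes the glue class $\tfrac12(3[H]-\sum_i[E_i])$ integral and monodromy-invariant without ordering the two cubics is precisely the point the paper leaves implicit. The only cosmetic difference is your choice of ample class $j(4h+v-\sum_i e_i)$ where the paper uses $j(h+v)$; both are ample, so either works.
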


\begin{corollary}\label{unirat (10,<10,0)}
If $a\leq8$, then $\widetilde{\mathcal{M}}_{(10,a,0)}$ is unirational. 
\end{corollary}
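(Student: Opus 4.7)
The plan is to combine the two preceding results in a direct way. First, Theorem \ref{period map (10,8,0)} provides an open immersion $U/\PGL \hookrightarrow \widetilde{\mathcal{M}}_{(10,8,0)}$, and since both varieties have dimension $20-r = 10$, this open immersion is dense. As $U$ is unirational and the projection $U \to U/\PGL$ is dominant, the quotient $U/\PGL$ is unirational, and hence so is $\widetilde{\mathcal{M}}_{(10,8,0)}$. This settles the anchor case $a=8$.

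For the remaining values $a' < 8$ such that $(10, a', 0)$ appears in Figure \ref{Nikulin table} as a valid main invariant, I would apply case (ii) of Proposition \ref{isogeny bet discri cover} (the hypothesis $\delta = \delta' = 0$ with $a = 8 > a'$ is satisfied) to obtain a finite surjective morphism
\[
\widetilde{\mathcal{M}}_{(10,8,0)} \twoheadrightarrow \widetilde{\mathcal{M}}_{(10,a',0)}.
\]
Since the image of a unirational variety under a dominant rational map is again unirational (compose $\proj^N \dashrightarrow \widetilde{\mathcal{M}}_{(10,8,0)}$ with this surjection), each $\widetilde{\mathcal{M}}_{(10,a',0)}$ is unirational.

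There is essentially no obstacle here: the geometric content sits entirely in Theorem \ref{period map (10,8,0)} (parametrising members of $\widetilde{\mathcal{M}}_{(10,8,0)}$ by transverse pairs of smooth cubics together with eight of their nine intersection points) and in the lattice-theoretic construction of the discriminant isogenies in Proposition \ref{isogeny bet discri cover}. The only routine verification left is to consult Figure \ref{Nikulin table} and observe that every main invariant of the form $(10, a', 0)$ with $a' \leq 8$ does occur, so that the isogeny from the anchor $(10,8,0)$ reaches each of them.
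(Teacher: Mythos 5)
Your argument is correct and is exactly the one the paper intends: the anchor case $a=8$ follows from Theorem \ref{period map (10,8,0)} together with the unirationality of $U$ and the dimension count, and the cases $a'<8$ follow by applying case (ii) of Proposition \ref{isogeny bet discri cover} and checking Figure \ref{Nikulin table}, just as in the proof of Corollary \ref{unirat for r<10}. No issues.
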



\subsection{$\widetilde{\mathcal{M}}_{(10,8,1)}$ and bidegree $(3, 2)$ curves}\label{ssec: (10,8,1)}

Let $Q={\proj}^1\times{\proj}^1$ be a smooth quadric in ${\proj}^3$ and 
let $G ={\rm PGL}_2\times{\rm PGL}_2$. 
Let $U\subset|\mathcal{O}_{Q}(4, 4)|\times Q^{8}$ be the variety of 
pointed bidegree $(4, 4)$ curves $(C+D, \mathbf{p})=(C+D, p_1,\cdots, p_8)$ such that 
$(i)$ $C$ is smooth of bidegree $(3, 2)$, 
$(ii)$ $D$ is smooth of bidegree $(1, 2)$ and transverse to $C$, and 
$(iii)$ $C\cap D = \{ p_1,\cdots, p_8 \}$. 
The space $U$ is an $\frak{S}_8$-cover of an open set of $|\mathcal{O}_{Q}(3, 2)| \times |\mathcal{O}_{Q}(1, 2)|$. 
By Proposition \ref{stability of (4, 4) curve} and Lemma \ref{stab & fin mor}, 
we have a geometric quotient $U/G$ as a $10$-dimensional variety.  


\begin{lemma}\label{unirat parameter space (10,8,1)}
The variety $U$ is rational. 
\end{lemma}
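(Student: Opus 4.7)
The plan is to realize $U$ birationally as a projective bundle over a rational base, following the strategy of Proposition \ref{rationality of V_<10}. Specifically, I would project $U$ onto $|\mathcal{O}_{Q}(1,2)|$ via $(C+D, \mathbf{p}) \mapsto D$, identify the fibers as linear subsystems of $|\mathcal{O}_{Q}(3,2)|$, and assemble these into a globally trivial bundle picture.

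First, a smooth $(1,2)$-curve $D \subset Q$ is rational: since $(1,2) \cdot (0,1) = 1$, the projection $p_{2} \colon Q \to \mathbb{P}^{1}$ to the second factor restricts to an isomorphism $D \to \mathbb{P}^{1}$. Consequently the universal smooth $(1,2)$-curve $\mathcal{D}^{\circ} \to |\mathcal{O}_{Q}(1,2)|^{\circ} \simeq \mathbb{P}^{5}$ is a trivial $\mathbb{P}^{1}$-bundle via $(D, x) \mapsto (D, p_{2}(x))$. Taking the eightfold fiber product and removing the diagonals, the variety $B$ parametrizing ordered pairs $(D, \mathbf{p})$ with $D$ smooth and $\mathbf{p} = (p_{1}, \ldots, p_{8})$ distinct points on $D$ is birational to $\mathbb{P}^{5} \times (\mathbb{P}^{1})^{8}$, hence rational.

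Next I would analyze the fiber of $U \to B$ over $(D, \mathbf{p})$. The adjunction sequence gives $0 \to H^{0}(\mathcal{O}_{Q}(2,0)) \to H^{0}(\mathcal{O}_{Q}(3,2)) \to H^{0}(\mathcal{O}_{D}(3,2)) \to 0$ (exact because $H^{1}(\mathcal{O}_{Q}(2,0))=0$), with the three dimensions $3, 12, 9$. Since $\mathcal{O}_{D}(3,2)$ has degree $(3,2) \cdot (1,2) = 8$ on $D \cong \mathbb{P}^{1}$, the eight evaluation conditions at distinct points $p_{1}, \ldots, p_{8}$ are linearly independent, so the subsystem of $|\mathcal{O}_{Q}(3,2)|$ of curves through $\{p_{1}, \ldots, p_{8}\}$ is a $\mathbb{P}^{3}$. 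As $(D, \mathbf{p})$ varies over $B$ these $\mathbb{P}^{3}$'s globalize to a Zariski-locally trivial $\mathbb{P}^{3}$-bundle $\tilde{U} \to B$, namely the projectivization of the kernel of a surjective morphism of vector bundles of constant rank on $B$.

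The variety $\tilde{U}$ is therefore rational as a Zariski-locally trivial projective bundle over a rational base. Finally, $U \subset \tilde{U}$ is the open subvariety defined by the smoothness of $C$ and $D$, the transversality of $C \cap D$, and the equality $C \cap D = \{p_{1}, \ldots, p_{8}\}$ (rather than mere containment), so $U$ is rational as well. The one technical point is the Zariski-local triviality of $\tilde{U} \to B$, which follows from the constancy of the relevant cohomology ranks in families together with Grauert's theorem; once this is in hand, the argument reduces to a straightforward bundle computation over the rational base $B$.
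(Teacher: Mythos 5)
Your proof is correct and takes essentially the same route as the paper's: both fiber $U$ over the space of pairs $(D, \mathbf{p})$, realize that base as (birationally) the eighth fiber power of a trivial ${\proj}^1$-bundle over $|\mathcal{O}_{Q}(1,2)|$, and identify the fibers of $U$ over it as open subsets of linear subspaces of $|\mathcal{O}_{Q}(3,2)|$ via the vanishing of $H^1(\mathcal{O}_{Q}(2,0))$. Your trivialization of $D\simeq{\proj}^1$ by the second projection and the explicit rank computation for the evaluation map are only minor variants of the paper's use of $(0,1)$-curves as sections and its appeal to dominance of the restriction map.
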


\begin{proof}
Let $V$ be the linear system $|\mathcal{O}_{Q}(1, 2)|$ and 
$\mathcal{X} \subset V \times Q$ be the universal curve over $V$. 
The projection $\pi_1\colon\mathcal{X}\to V$ is birationally equivalent to the natural projection ${\proj}^{1}\times V \to V$ 
for bidegree $(0, 1)$ curves on $Q$ give sections of $\pi_1$. 
This implies that the fiber product 
$\mathcal{Y}=\mathcal{X} \mathop{\times}_{V}\mathcal{X} \cdots \mathop{\times}_{V}\mathcal{X}$ 
($8$ times) is rational. 
We have a morphism $\pi_2\colon U\to \mathcal{Y}$ defined by $(C+D, \mathbf{p}) \mapsto (D, \mathbf{p})$.  
Then $\pi_2$ is dominant. 
Indeed, for every smooth $D \in V$ 
the restriction map $|\mathcal{O}_{Q}(3, 2)| \dashrightarrow |\mathcal{O}_{D}(8)|$ is dominant 
by the vanishing of $H^1(\mathcal{O}_{Q}(2, 0))$. 
Since a general $\pi_2$-fiber is an open set of a linear subspace of $|\mathcal{O}_{Q}(3, 2)|$, 
this proves the rationality of $U$. 
\end{proof}

For a curve with labeling $(C+D, \mathbf{p})\in U$, 
let $(X, \iota)$ be the 2-elementary $K3$ surface associated to the DPN pair $(Q, C+D)$ 
and $g\colon X\to Q$ be the natural projection branched over $C+D$. 
The fixed curve $X^{\iota}$ is decomposed as 
$X^{\iota}=F_1+F_2$ such that $g(F_1)=C$ and $g(F_2)=D$. 
In this case, a reference lattice $L_+$ should be defined as follows. 
Let $M$ be the lattice $U(2)\oplus\langle-2\rangle^8 = \langle u, v, e_1,\cdots, e_8\rangle$ 
where $\{ u, v\}$ is the basis of $U(2)$ defined in $(\ref{U(2)})$ and 
$\{ e_1,\cdots, e_8\}$ is a natural basis of $\langle-2\rangle^8$. 
Let $f_1, f_2 \in M^{\vee}$ be the vectors defined by 
$2f_1=3u+2v-\sum_{i=1}^{8}e_i$ and $2f_2=u+2v-\sum_{i=1}^{8}e_i$. 
The overlattice $L_+\!=\!\langle M, f_1, f_2 \rangle$ 
is even and 2-elementary of main invariant $(10, 8, 1)$. 
Then, by Example \ref{ex:1}, we have a natural isometry $j\colon L_+\to L_+(X, \iota)$ by sending 
$u\mapsto[g^{\ast}\mathcal{O}_Q(1, 0)]$, $v\mapsto[g^{\ast}\mathcal{O}_Q(0, 1)]$, 
$e_i\mapsto[g^{-1}(p_i)]$, and $f_j\mapsto[F_j]$. 
In this way we associate to $(C+D, \mathbf{p})$ 
a lattice-marked 2-elementary $K3$ surface $((X, \iota), j)$, 
and hence a point $\tilde{p}(C+D, \mathbf{p})$ in $\widetilde{\mathcal{M}}_{(10,8,1)}$.

As in Section \ref{ssec: (2,2,0)}, 
the morphism $g\colon X\to Q$, the curve $C+D$, and the two rulings of $Q$ are recovered from $j$. 
The points $p_i$ are recovered from the classes $j(e_i)$. 
Therefore we have

\begin{theorem}\label{open immersion (10,8,1)}
The period map $\tilde{p}\colon U\to\widetilde{\mathcal{M}}_{(10,8,1)}$ is a morphism of varieties 
and descends to an open immersion $U/G \to \widetilde{\mathcal{M}}_{(10,8,1)}$. 
\end{theorem}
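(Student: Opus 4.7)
The plan is to follow the template established in Theorems \ref{period map r<12}, \ref{period map (2,2,0)}, and \ref{period map (10,8,0)}, with special attention to the asymmetry between the two rulings of $Q$ encoded in the reference lattice $L_+$.

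First, I would show $\tilde{p}$ is algebraic by a family-theoretic construction. Over (an open subset of) $U$ form the universal curve $\mathcal{C}\subset U\times Q$ and the disjoint sections $W_i=s_i(U)$ with $s_i(C+D,\mathbf{p})=((C+D,\mathbf{p}),p_i)$. Blow up $U\times Q$ along $\bigcup_{i=1}^{8}W_i$ to get $\mathcal{Y}$, and take the double cover $f\colon\mathcal{X}\to\mathcal{Y}$ branched along the strict transform of $\mathcal{C}$; this is a smooth family $\pi\colon\mathcal{X}\to U$ of $K3$ surfaces with involution $\iota$. The classes of $f^{\ast}\pi_2^{\ast}\mathcal{O}_Q(1,0)$, $f^{\ast}\pi_2^{\ast}\mathcal{O}_Q(0,1)$, the eight exceptional divisors $f^{-1}(\mathcal{D}_i)$, and the two irreducible components of the relative fixed locus $\mathcal{X}^{\iota}$ globally trivialize the sublocal system $(R^2\pi_*\mathbb{Z})^{\iota}\simeq L_+\times U$. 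Consequently the monodromy of the orthogonal local system lies in $\widetilde{{\rm O}}(L_-)^{+}$, and Borel's extension theorem (as in Theorem \ref{period map r<12}) yields that $\tilde{p}$ is a morphism of algebraic varieties.

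Second, $G$-invariance: every $(g_1,g_2)\in{\rm PGL}_2\times{\rm PGL}_2$ preserves each ruling of $Q$ and carries a marked pointed curve $(C+D,\mathbf{p})$ to a $G$-translate with an isomorphic lattice-marked 2-elementary $K3$ surface. Thus $\tilde{p}$ descends to a morphism $\widetilde{\mathcal{P}}\colon U/G\to\widetilde{\mathcal{M}}_{(10,8,1)}$.

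Third, injectivity. Suppose $\tilde{p}(C+D,\mathbf{p})=\tilde{p}(C'+D',\mathbf{p}')$. This provides a Hodge isometry $\Phi\colon H^2(X',\mathbb{Z})\to H^2(X,\mathbb{Z})$ with $j=\Phi\circ j'$. Fix a lattice-theoretic ample class in $L_+$, for instance $k\cdot j(u+v)-\sum_{i=1}^{8}j(e_i)$ for $k$ sufficiently large (this pairs positively with both components of $X^{\iota}$ and with each $(-2)$-curve over a node of $C+D$). By the strong Torelli theorem one obtains an isomorphism $\varphi\colon X\to X'$ inducing $\Phi$. Since $\varphi^{\ast}[g'^{\ast}\mathcal{O}_Q(1,1)]=j(u+v)$ and this class defines the morphism to $Q$ by Lemma \ref{proj mor}, there is $\psi\in\aut(Q)$ with $g'\circ\varphi=\psi\circ g$. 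Because $\Phi$ preserves $j(u)$ and $j(v)$ individually (these are distinguished inside $L_+$ by the relation $u=f_1-f_2$ coming from the asymmetric definition $2f_1=3u+2v-\sum e_i$ and $2f_2=u+2v-\sum e_i$), the automorphism $\psi$ must preserve each ruling, hence $\psi\in G$. From $p_i=g(E_i)=g(j(e_i))$ and $C=g(F_1)$, $D=g(F_2)$ we get $\psi(p_i)=p_i'$ and $\psi(C)=C'$, $\psi(D)=D'$, so $(C+D,\mathbf{p})$ and $(C'+D',\mathbf{p}')$ are $G$-equivalent.

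Finally, a dimension count gives $\dim U=\dim|\mathcal{O}_Q(3,2)|+\dim|\mathcal{O}_Q(1,2)|=11+5=16$, hence $\dim(U/G)=16-6=10=\dim\widetilde{\mathcal{M}}_{(10,8,1)}$. Since $\widetilde{\mathcal{M}}_{(10,8,1)}$ is irreducible, injectivity plus equal dimension forces $\widetilde{\mathcal{P}}$ to be dominant, and Zariski's Main Theorem then yields an open immersion. I expect the main obstacle to be the last step of the injectivity argument: one must verify that the Torelli-recovered $\psi$ lies in $G$ rather than in the larger $\aut(Q)$ that swaps rulings. This is precisely where the asymmetric choice $f_1\ne f_2$ in the definition of $L_+$ (reflecting the distinct bidegrees $(3,2)$ and $(1,2)$ of $C$ and $D$) is essential; the rest of the argument is a straightforward adaptation of the models in Sections \ref{ssec: (r,r,1)} and \ref{ssec: (2,2,0)}.
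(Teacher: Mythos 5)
Your proposal is correct and follows exactly the route the paper intends: it instantiates the family-theoretic algebraicity argument of Theorem \ref{period map r<12}, the ruling-preserving Torelli/recovery argument of Theorem \ref{period map (2,2,0)}, and the dimension-count-plus-Zariski's-Main-Theorem conclusion, which is precisely what the paper's one-line proof (``as in Section \ref{ssec: (2,2,0)} \dots Therefore we have'') delegates to those model sections. The only cosmetic remark is that the parenthetical about distinguishing $j(u)$ from $j(v)$ via $u=f_1-f_2$ is not needed, since $u$ and $v$ are already recorded separately by the marking $j$ (and in any case $\psi$ must preserve the rulings because it sends the bidegree $(3,2)$ curve $C$ to $C'$).
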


\begin{corollary}\label{unirat (10,<10,1)}
If $a\leq8$, then $\widetilde{\mathcal{M}}_{(10,a,1)}$ is unirational. 
\end{corollary}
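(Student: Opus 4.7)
The plan is to bootstrap from the single case $a=8$ to all smaller $a$ via the isogenies supplied by Proposition \ref{isogeny bet discri cover}. First I would observe that $\widetilde{\mathcal{M}}_{(10,8,1)}$ itself is unirational: Theorem \ref{open immersion (10,8,1)} provides an open immersion $U/G \hookrightarrow \widetilde{\mathcal{M}}_{(10,8,1)}$, and Lemma \ref{unirat parameter space (10,8,1)} shows that the parameter space $U$ is rational. Consequently the geometric quotient $U/G$ is unirational, and so is $\widetilde{\mathcal{M}}_{(10,8,1)}$.

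Next, I would fix any $a < 8$ such that $(10,a,1)$ appears in Figure \ref{Nikulin table} as a legitimate main invariant of a 2-elementary $K3$ surface. Since the source invariant $(10,8,1)$ has $\delta = 1$, case (i) of Proposition \ref{isogeny bet discri cover} (with $a=8>a'=a$ and any $\delta'$) produces a finite surjective morphism
\begin{equation*}
\varphi \colon \widetilde{\mathcal{M}}_{(10,8,1)} \longrightarrow \widetilde{\mathcal{M}}_{(10,a,1)}.
\end{equation*}
Because unirationality is preserved under dominant rational maps, the surjectivity of $\varphi$ together with the first step yields the unirationality of $\widetilde{\mathcal{M}}_{(10,a,1)}$.

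There is no real obstacle to be overcome, only a bookkeeping check: one must confirm from Figure \ref{Nikulin table} that the triples $(10,a,1)$ with $a < 8$ to which the corollary refers are indeed realized as main invariants (otherwise the statement is vacuous for those values). This mirrors exactly the structure of the proof of Corollary \ref{unirat (10,<10,0)}, with Theorem \ref{period map (10,8,0)} replaced by Theorem \ref{open immersion (10,8,1)} and case (ii) of Proposition \ref{isogeny bet discri cover} replaced by case (i). I would therefore write the proof as a two-line deduction referring back to these two ingredients.
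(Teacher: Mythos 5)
Your proposal is correct and is exactly the argument the paper intends: unirationality of $\widetilde{\mathcal{M}}_{(10,8,1)}$ follows from Lemma \ref{unirat parameter space (10,8,1)} and Theorem \ref{open immersion (10,8,1)}, and the finite surjections of Proposition \ref{isogeny bet discri cover} (applicable since $\delta=1$) carry this down to all admissible $(10,a,1)$ with $a<8$, in perfect parallel with Corollary \ref{unirat (10,<10,0)}.
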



\subsection{The unirationality of $\mathcal{M}_{(10,10,1)}$}\label{ssec: (10,10,1)}

By Theorem \ref{period map r<12}, general members of $\mathcal{M}_{(10,10,1)}$ 
are obtained from \textit{Halphen curves}, irreducible nine-nodal sextics. 
However, since the nodal map $\widetilde{V}_9 \to ({\proj}^{2})^9$ 
for Halphen curves is not dominant (see \cite{Coo} p.389--p.391), 
our proof of Proposition \ref{rationality of V_<10} does not apply to $\widetilde{V}_9$. 
Here we instead prove the unirationality of $\mathcal{M}_{(10,10,1)}$ 
using the description as a modular variety.

\begin{theorem}\label{unirat (10,10,1)}
The moduli space $\mathcal{M}_{(10, 10, 1)}$ is unirational. 
\end{theorem}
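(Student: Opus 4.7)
My plan is to apply the period map of Theorem \ref{period map r<12} and then use Halphen's classical theory of pencils to handle the nine-nodal Severi variety directly. Theorem \ref{period map r<12} with $r = 10$ provides an open immersion $\widetilde{V}_9/\PGL \hookrightarrow \widetilde{\mathcal{M}}_{(10,10,1)}$, and since $\widetilde{\mathcal{M}}_{(10,10,1)} \to \mathcal{M}_{(10,10,1)}$ is a finite Galois cover, it suffices to prove that $\widetilde{V}_9$ is unirational. The obstruction noted just before Theorem \ref{unirat (10,10,1)} is that the nodal map on $\widetilde{V}_9$ is not dominant; its image lies in the Halphen locus of special nine-point configurations.

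To circumvent this, I would parametrize via Halphen's description. A generic nine-nodal sextic has nodes $p_1,\ldots,p_9$ lying on a unique smooth plane cubic $C_0$ satisfying the Halphen index-$2$ condition $p_1+\cdots+p_9 - 3H|_{C_0} \in {\rm Pic}^{0}(C_0)[2]$. Under this condition the anticanonical system $|-2K_Y|$ on $Y = {\rm Bl}_{p_1,\ldots,p_9}{\proj}^2$ becomes a pencil whose generic member is a smooth curve of genus $1$, descending to an irreducible nine-nodal plane sextic. Conversely, every general point of $\widetilde{V}_9$ arises in this way.

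Let $\mathcal{H} = \{(C_0, p_1,\ldots,p_9)\}$ denote the Halphen locus. The forgetful map $\mathcal{H} \to \mathcal{H}_0 = \{(C_0, p_1,\ldots,p_8)\}$ is a $4$-to-$1$ finite cover, since $p_9$ is determined up to a choice in ${\rm Pic}^{0}(C_0)[2] \simeq (\Z/2)^2$. The target $\mathcal{H}_0$ is birational to a ${\proj}^1$-bundle over an open set of $({\proj}^2)^8$, because eight general points lie on a $\proj^1$-pencil of plane cubics; hence $\mathcal{H}_0$ is rational of dimension $17$ and $\mathcal{H}$ is unirational. Taking the ${\proj}^1$-bundle $\mathcal{P}\to\mathcal{H}$ whose fiber at $(C_0, p_1,\ldots, p_9)$ is the Halphen pencil $|-2K_Y|$ produces an unirational $18$-dimensional variety, and the map sending a pencil member to its plane image gives a rational map $\mathcal{P} \dashrightarrow \widetilde{V}_9$ which I claim is dominant.

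The main obstacle is precisely verifying this dominance, i.e., that for a generic Halphen configuration the generic member of $|-2K_Y|$ is an irreducible, reduced, nine-nodal sextic---not the non-reduced member $2\widetilde{C_0}$ (where $\widetilde{C_0}$ is the strict transform of the cubic, representing $-K_Y$) and not any reducible or further-degenerate fiber. This is classical Halphen theory, but must be articulated with care to ensure the image of $\mathcal{P}$ is Zariski-dense in $\widetilde{V}_9$. Once that is in hand, the chain $\mathcal{P}$ unirational $\Rightarrow \widetilde{V}_9$ unirational $\Rightarrow \widetilde{\mathcal{M}}_{(10,10,1)}$ unirational $\Rightarrow \mathcal{M}_{(10,10,1)}$ unirational completes the proof.
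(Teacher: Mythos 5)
Your overall strategy (reduce to the unirationality of $\widetilde{V}_9$ via Theorem \ref{period map r<12}, then parametrize nine-nodal sextics by Halphen pencils of index $2$) is a genuinely different route from the paper, which avoids $\widetilde{V}_9$ altogether: the paper's proof is purely lattice-theoretic, combining the duality $L\mapsto L^{\vee}(2)$ of Proposition \ref{duality of 2-ele lattice} with the isogeny construction of Proposition \ref{construct isogeny} to exhibit $\mathcal{F}({\rm O}(L_1)^+)$ as dominated by $\widetilde{\mathcal{M}}_{(10,4,0)}$, whose unirationality is already available from Corollary \ref{unirat (10,<10,0)}. However, your argument has a genuine gap, and it is not the one you flag. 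The step you call ``the main obstacle'' --- that the generic member of an index-$2$ Halphen pencil is an irreducible nine-nodal sextic and that every general point of $\widetilde{V}_9$ arises this way --- is indeed classical and can be carried out. The real problem is the sentence ``$\mathcal{H}_0$ is rational of dimension $17$ and $\mathcal{H}$ is unirational.'' A finite cover of a rational variety need not be unirational (every curve of positive genus is a finite cover of ${\proj}^1$), so the rationality of $\mathcal{H}_0$ gives you nothing about $\mathcal{H}$.

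Moreover, the cover $\mathcal{H}\to\mathcal{H}_0$ is demonstrably nontrivial. First, the sheet corresponding to the trivial class in ${\rm Pic}^0(C_0)[2]$ must be discarded: there the nine points are the base points of a cubic pencil and the resulting sextics are unions of two cubics, landing in the $(10,8,0)$ stratum rather than in $V_9$. What remains is the degree-$3$ cover given by the nonzero $2$-torsion. Restrict it to a fiber of the ${\proj}^1$-bundle $\mathcal{H}_0\to({\proj}^2)^8$, i.e., to the pencil of cubics through eight general points: writing the associated rational elliptic surface in Weierstrass form $y^2=x^3+a(t)x+b(t)$ with $\deg a=4$, $\deg b=6$, the nonzero $2$-torsion multisection is the trisection $\{x^3+a(t)x+b(t)=0\}$ of ${\proj}^1$, simply branched over the $12$ zeros of the discriminant, hence of genus $4$ by Riemann--Hurwitz. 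So $\mathcal{H}$ fibers over $({\proj}^2)^8$ with generic fiber a genus-$4$ curve, and its unirationality is precisely the kind of statement that needs a real argument --- this is essentially the difficulty the paper sidesteps (compare the remark following Theorem \ref{unirat (10,10,1)}, which instead suggests proving the unirationality of $V_9$ by parametrizing degree-$6$ genus-$1$ morphisms to ${\proj}^2$ via a relative Poincar\'e bundle). Until you supply such an argument for $\mathcal{H}$, the proof is incomplete.
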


\begin{proof}
Recall that $\mathcal{M}_{(10,10,1)}$ is an open set of the arithmetic quotient $\mathcal{F}({\rm O}(L_1)^+)$ 
for the lattice $L_1 = U \oplus \langle 2 \rangle \oplus \langle -2 \rangle \oplus E_8(2)$. 
By Proposition \ref{duality of 2-ele lattice} we have an isomorphism 
$\mathcal{F}({\rm O}(L_1)^+) \simeq \mathcal{F}({\rm O}(L_2)^+)$ 
for the odd lattice $L_2 = U(2)\oplus\langle1\rangle\oplus\langle-1\rangle\oplus E_8$. 
Let $L_3$ be the lattice $U(2)^2\oplus E_8$ and 
$\{ u, v \}$ be the basis of its second summand $U(2)$ as defined in $(\ref{U(2)})$. 
Then $L_2$ is isometric to the overlattice $\langle L_3, \frac{1}{2}(u+v) \rangle $ of $L_3$. 
Thus $\mathcal{F}(\widetilde{{\rm O}}(L_2)^+)$ is dominated by $\mathcal{F}(\widetilde{{\rm O}}(L_3)^+)$ 
by Proposition \ref{construct isogeny}. 
The variety $\mathcal{F}(\widetilde{{\rm O}}(L_3)^+) = \widetilde{\mathcal{M}}_{(10,4,0)}$ is unirational 
by Corollary \ref{unirat (10,<10,0)}. 
Hence $\mathcal{F}({\rm O}(L_{1})^{+})$ is unirational. 
\end{proof}


\begin{remark}
Alternatively, considering morphisms to ${\proj}^2$ of genus $1$ and degree $6$, 
one can prove that $V_9$ is unirational using e.g., 
the relative Poincar\'e bundle for a rational elliptic surface with a section. 
\end{remark}


\section{The case $r=11$}\label{sec: r=11}

In this section we prove that $\mathcal{M}_{(11,11,1)}$ is unirational (Section \ref{ssec: (11,11,1)}) 
and that the covers $\widetilde{\mathcal{M}}_{(11,a,\delta)}$ are unirational for $a\leq 9$ (Section \ref{ssec: (11,9,1)}).


\subsection{$\mathcal{M}_{(11,11,1)}$ and Coble curves}\label{ssec: (11,11,1)}

Let $\widetilde{V}_{10}$ be the variety defined in $(\ref{marked Severi})$. 
By Theorem \ref{period map r<12} we have an open immersion 
$\widetilde{V}_{10}/{\PGL} \to \widetilde{\mathcal{M}}_{(11, 11, 1)}$ 
and hence a dominant morphism $\mathcal{P}\colon\widetilde{V}_{10}/{\PGL}\to\mathcal{M}_{(11,11,1)}$. 
Clearly, $\mathcal{P}$ descends to a morphism $V_{10}/{\PGL}\to\mathcal{M}_{(11,11,1)}$. 
The Severi variety $V_{10}$ is dense in the variety of rational plane sextics (cf. \cite{Ha}). 
As the latter is dominated by the variety of morphisms ${\proj}^1 \to {\proj}^2$ of degree $6$, 
which is obviously rational, we have the following. 

\begin{theorem}\label{unirat Coble}
The moduli space $\mathcal{M}_{(11,11,1)}$ is unirational. 
\end{theorem}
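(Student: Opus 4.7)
The plan is to combine the period map of Theorem \ref{period map r<12} applied to $r=11$ with a classical rational parametrization of rational plane sextics, in order to exhibit a dominant rational map from a rational variety onto $\mathcal{M}_{(11,11,1)}$.

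First I would invoke Theorem \ref{period map r<12} with $r=11$ to obtain an open immersion $\widetilde{V}_{10}/{\PGL}\hookrightarrow\widetilde{\mathcal{M}}_{(11,11,1)}$. Composing with the finite Galois cover $\widetilde{\mathcal{M}}_{(11,11,1)}\to\mathcal{M}_{(11,11,1)}$ gives a dominant morphism whose fibers are invariant under permutation of the ten nodal markings, so it descends to a dominant rational map $V_{10}/{\PGL}\dashrightarrow\mathcal{M}_{(11,11,1)}$. Hence it suffices to prove that $V_{10}$ itself is unirational.

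Next I would observe that an irreducible ten-nodal plane sextic has geometric genus $\binom{5}{2}-10=0$, so its normalization is $\proj^1$ and it arises as the image of a birational morphism $\proj^1\to\proj^2$ of degree $6$. The variety of such morphisms, parametrized up to common scaling by triples of binary sextics, is an open subset of $\proj(H^0(\mathcal{O}_{\proj^1}(6))^{\oplus 3})\simeq\proj^{20}$, which is manifestly rational. Sending a generic such morphism to its image yields a dominant rational map from this rational variety onto $V_{10}$ (since generically the only singularities are ten nodes), so $V_{10}$ is unirational. Combined with the previous paragraph, this gives a dominant rational map from a rational variety to $\mathcal{M}_{(11,11,1)}$, which is precisely unirationality.

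The only genuinely delicate point in this outline is verifying that a generic degree-$6$ morphism $\proj^1\to\proj^2$ has image with exactly ten nodes and no worse singularity, so that the map from $\proj^{20}$ really dominates $V_{10}$ rather than some boundary stratum of rational sextics with a cusp, tacnode, or infinitely near node. This is handled by a classical dimension count in Severi-variety theory (each more complicated singularity imposes strictly more conditions than a pair of nodes), for which \cite{Ha} is the standard reference; matching dimensions $\dim V_{10} = 27-10 = 17 = 20-\dim\aut(\proj^1)$ on both sides also confirms dominance.
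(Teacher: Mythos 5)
Your proposal is correct and follows essentially the same route as the paper: descend the period map of Theorem \ref{period map r<12} to $V_{10}/{\PGL}$, then observe that $V_{10}$ is dominated by the (rational) space of degree-$6$ morphisms ${\proj}^1\to{\proj}^2$, citing \cite{Ha} for the fact that ten-nodal sextics are dense among rational sextics. Your treatment of the ``delicate point'' (that a generic such morphism has a ten-nodal image) is slightly more explicit than the paper's one-line appeal to density, but the argument is the same.
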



\subsection{$\widetilde{\mathcal{M}}_{(11,9,1)}$ and degenerated cubic pairs}\label{ssec: (11,9,1)}

Let $U \subset |{\sextic}|\times({\proj}^2)^8$ be the variety of pointed sextics 
$(C_1+C_2, \mathbf{p})=(C_1+C_2, p_1,\cdots, p_8)$ such that 
$C_1$ is a smooth cubic, 
that $C_2$ is an irreducible cubic with a node and transverse to $C_1$, 
and that $p_1,\cdots, p_8$ are distinct points contained in $C_1 \cap C_2$. 
Letting $p_9$ be the remaining intersection point of $C_1$ and $C_2$, 
and $p_{10}$ be the node of $C_2$, 
we have the complete labeling $(p_1,\cdots, p_{10})$ of the nodes of $C_1+C_2$. 
As in Section \ref{ssec: (10,8,0)}, 
we have a geometric quotient $U/{\PGL}$ as a $9$-dimensional variety.

\begin{lemma}\label{unirat paramet (11,9,1)}
The variety $U$ is unirational.  
\end{lemma}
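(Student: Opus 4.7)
The plan is to mimic the unirationality argument for the analogous variety $U$ in Section \ref{ssec: (10,8,0)}, modified to reflect the asymmetry between the smooth cubic $C_1$ and the nodal cubic $C_2$. I would split the parameter data by introducing the two incidence varieties
\begin{equation*}
V_1 = \{(C,p_1,\ldots,p_8)\in|{\cubic}|\times({\proj}^2)^8 \mid p_i\in C\},
\end{equation*}
\begin{equation*}
V_2 = \{(C,q,p_1,\ldots,p_8)\in|{\cubic}|\times{\proj}^2\times({\proj}^2)^8 \mid C \text{ irreducible, singular at } q,\ p_i\in C\},
\end{equation*}
and noting that the assignment $((C_1,\mathbf{p}),(C_2,q,\mathbf{p}))\mapsto(C_1+C_2,\mathbf{p})$ defines a dominant rational map $V_1\times_{({\proj}^2)^8}V_2\dashrightarrow U$, well defined on the open locus where $C_1$ is smooth and transverse to $C_2$. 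It therefore suffices to exhibit the fiber product on the left as unirational.

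Exactly as in Section \ref{ssec: (10,8,0)}, the projection $V_1\to({\proj}^2)^8$ is dominant with general fiber the pencil of cubics through $8$ general points, a ${\proj}^1$; hence $V_1$ is birational to $({\proj}^2)^8\times{\proj}^1$, and the fiber product above is birational to ${\proj}^1\times V_2$. The question thus reduces to the unirationality of $V_2$, which I would establish via the other projection $V_2\to W$ onto the space $W$ of pairs $(C,q)$ with $C$ an irreducible cubic singular at $q$. Being singular at a fixed $q$ is a codimension-$3$ linear condition on $|{\cubic}|$, so $W$ is open in a ${\proj}^6$-bundle over ${\proj}^2$ and is rational. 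The normalization $\widetilde{\mathcal{C}}$ of the universal nodal cubic is a ${\proj}^1$-bundle over $W$ that becomes Zariski-locally trivial after the degree-$2$ étale cover $W'\to W$ labeling the two branches at the node; since $W'$ is itself rational (it is birationally a ${\proj}^5$-bundle over the rational incidence variety of pointed lines in ${\proj}^2$), the $8$-fold fiber product of this trivialized ${\proj}^1$-bundle over $W'$ is rational and dominates $V_2$.

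I do not foresee a serious obstacle: the whole argument is a close cousin of the one in Section \ref{ssec: (10,8,0)}, and the only new ingredient is the rationality of the moduli $W$ of nodal cubics with marked node, which is a one-line linear-algebra observation. The only minor technicality is the passage to the étale double cover $W'$ in order to trivialize the branch bundle, which is harmless since we only need unirationality.
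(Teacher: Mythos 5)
Your argument is correct, but it diverges from the paper's at the key step. Both proofs start identically: fiber $U$ over the space of pointed nodal cubics $(C_2,\mathbf{p})$, the fiber being (an open set of) the pencil of cubics through the eight points, so that everything reduces to the unirationality of the $8$-fold fiber product of the universal nodal cubic over its base. At that point the paper uses a one-line homogeneity argument: every irreducible nodal cubic is ${\PGL}$-equivalent to a fixed one $C$, so the fiber product $\mathcal{X}$ is the image of ${\PGL}\times C^8$, which is rational because $C$ is a rational curve; hence $\mathcal{X}$ is unirational. You instead mark the node, observe that the base $W$ of node-marked irreducible cubics is rational (an open set in a ${\proj}^6$-bundle over ${\proj}^2$), and rationalize the fibers via the normalization, a ${\proj}^1$-fibration that acquires a tautological section --- hence Zariski-local triviality --- after the degree-$2$ branch-labeling cover $W'$. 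Your route is longer and must address the triviality of the ${\proj}^1$-fibration and the rationality of $W'$, but it is more robust: it does not use the special fact that irreducible nodal cubics form a single ${\PGL}$-orbit, and it exhibits an explicit rational variety $({\proj}^1)^8\times W'$ dominating the pointed-cubic space with degree $2$, rather than the high-degree cover ${\PGL}\times C^8\to\mathcal{X}$. Both arguments yield only unirationality, which is all the lemma asserts. The two points worth making explicit in your write-up are (i) that eight general points on an irreducible nodal cubic impose independent conditions on cubics, so the fiber of $V_1\to({\proj}^2)^8$ over a general point in the image of $V_2$ really is a pencil, and (ii) that you work away from the cuspidal locus, where $W'\to W$ degenerates; both are immediate.
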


\begin{proof}
Let $V$ denote the variety of irreducible cubics with nodes 
and $\mathcal{C} \subset V \times {\proj}^2$ be the universal curve over $V$. 
Let $\mathcal{X} = \mathcal{C} \mathop{\times}_{V}\mathcal{C} \cdots \mathop{\times}_{V}\mathcal{C}$ ($8$ times). 
We have a morphism $\pi\colon U \to\mathcal{X}$ defined by $(C_1+C_2, \mathbf{p})\mapsto(C_2, \mathbf{p})$. 
A general $\pi$-fiber is an open set of a line in $|{\cubic}|$, 
namely the linear system $|\!-\!K_Y|$ for the blow-up $Y$ of ${\proj}^2$ at $\{ p_i \} _{i=1}^{8}$. 
Therefore $U$ is birational to $\mathcal{X}\times {\proj}^1$. 
Take a nodal cubic $[C] \in V$. 
Since ${\PGL}\cdot [C] = V$, 
we have ${\PGL}\cdot (C)^8 = \mathcal{X}$ and hence $\mathcal{X}$ is unirational. 
\end{proof}

For a pointed sextic $(C_1+C_2, \mathbf{p})\in U$, 
the 2-elementary $K3$ surface $(X, \iota)$ associated to $C_1+C_2$ has main invariant $(11, 9, 1)$. 
As before, the above labeling of the nodes induces a natural isometry 
$j\colon L_+\to L_+(X, \iota)$ from a reference lattice $L_+$, 
and this defines a morphism $\tilde{p}\colon U\to\widetilde{\mathcal{M}}_{(11,9,1)}$. 
Then we see the following.

\begin{theorem}\label{open immersion (11,7,1)}
The period map $\tilde{p}$ descends to an open immersion $U/{\PGL}\to\widetilde{\mathcal{M}}_{(11,9,1)}$. 
\end{theorem}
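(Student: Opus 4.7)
The plan is to adapt the template of Sections~\ref{ssec: (r,r,1)} and \ref{ssec: (10,8,0)} to the present setting, with the key new feature being the \emph{asymmetry} between the node $p_{10}$ of $C_2$ and the nine transverse intersection nodes $p_1,\dots,p_9$. The sextic $C_1+C_2$ has ten nodes in total, while the fixed locus decomposes as $X^{\iota}=F_1+F_2$, where $F_1$ is the elliptic curve lying birationally over the smooth cubic $C_1$ and $F_2$ is the rational $(-2)$-curve lying over the nodal cubic $C_2$; this matches $g=1$, $k=1$ in Proposition~\ref{compute main inv}.

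First I would set up the reference lattice. Let $M_{11}=\langle h,e_1,\dots,e_{10}\rangle$ as in \eqref{Cremona lattice}, and define $v\in M_{11}^{\vee}$ by $2v=3h-e_1-\cdots-e_9$, intentionally omitting $e_{10}$. A direct computation analogous to the one behind Section~\ref{ssec: (10,8,0)} shows that the even overlattice $L_+=\langle M_{11},v\rangle$ is $2$-elementary of main invariant $(11,9,1)$: indeed $[L_+:M_{11}]=2$ gives $|D_{L_+}|=2^{11-2}=2^9$, while the survival of $e_{10}/2$ in $D_{L_+}$ with $q_{L_+}(e_{10}/2)=-1/2$ forces $\delta=1$. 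Fix a primitive embedding $L_+\hookrightarrow\Lambda_{K3}$. For $(C_1+C_2,\mathbf{p})\in U$ with associated $(X,\iota)$ and natural projection $g\colon X\to{\proj}^2$, a calculation in the blow-up $Y$ of ${\proj}^2$ at $p_1,\dots,p_{10}$ (using that $C_1$ passes smoothly through $p_1,\dots,p_9$ and not $p_{10}$, while $C_2$ passes smoothly through $p_1,\dots,p_9$ and has its node at $p_{10}$) yields $F_1\sim\tfrac{1}{2}g^{*}(3H-E_1-\cdots-E_9)$. By Example~\ref{ex:1} and Proposition~\ref{generate L_+}, this gives an isometry $j\colon L_+\to L_+(X,\iota)$ via $h\mapsto[g^{*}\mathcal{O}(1)]$, $e_i\mapsto[g^{-1}(p_i)]$, $v\mapsto[F_1]$. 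The lattice-marked period \eqref{lifted period} defines $\tilde p\colon U\to\widetilde{\mathcal{M}}_{(11,9,1)}$.

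The verifications that $\tilde p$ is an algebraic morphism and $\PGL$-invariant run exactly parallel to the proof of Theorem~\ref{period map r<12}: one constructs the universal double-cover family over $U$, trivializes $\mathcal{L}_+$ by the marking data so that the monodromy on $\mathcal{L}_-$ lies in $\widetilde{{\rm O}}(L)^+$, and invokes Borel's extension theorem. For injectivity of the induced morphism $U/\PGL\to\widetilde{\mathcal{M}}_{(11,9,1)}$, suppose two objects have the same period. The resulting Hodge isometry $\Phi$ with $j'=\Phi\circ j$ preserves the class $j(4h-e_1-\cdots-e_{10})$, which pairs positively with each $e_i$, with $F_1$, and with $F_2=-f^{*}K_Y-F_1$, and is therefore ample for generic members. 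The strong Torelli theorem produces $\varphi\colon X\to X'$ with $\varphi^{*}=\Phi$; Lemma~\ref{proj mor} applied to $j(h)$ then yields $\psi\in\PGL$ with $g'\circ\varphi=\psi\circ g$, whence $\psi(p_i)=p_i'$ for $i=1,\dots,10$ and $\psi(C_1+C_2)=C_1'+C_2'$. The crucial point is that $v\cdot e_i=1$ for $i\le9$ while $v\cdot e_{10}=0$: this lattice-theoretic asymmetry pins down $p_{10}$ as the node of $C_2$ alone, forcing $\psi(C_1)=C_1'$ rather than $C_2'$ and thereby recovering the full labeled data of $U$. Combined with $\dim(U/\PGL)=9=20-11=\dim\widetilde{\mathcal{M}}_{(11,9,1)}$ and irreducibility of the target, Zariski's Main Theorem yields the open immersion.

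The main obstacle is the careful design of $L_+$ so that the vector $v$ records the asymmetric role of $p_{10}$ as the node of $C_2$ alone; once this is set up correctly, the isometry $j$ encodes the entire combinatorial structure of the marked sextic and Torelli recovers it faithfully, so the remainder of the proof proceeds in direct parallel with the preceding subsections.
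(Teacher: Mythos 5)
Your proposal is correct and follows exactly the route the paper intends: the paper gives no written proof here beyond ``as before,'' deferring to the model arguments of Sections~\ref{ssec: (r,r,1)} and~\ref{ssec: (10,8,0)}, and your explicit reference lattice $L_+=\langle M_{11},v\rangle$ with $2v=3h-e_1-\cdots-e_9$ (so $v\mapsto[F_1]$, $v^2=0$, $|D_{L_+}|=2^9$, $q_{L_+}(e_{10}/2)=-1/2$ giving $(11,9,1)$) is precisely the implicit construction, with the subsequent Borel/Torelli/Zariski steps matching the template of Theorem~\ref{period map r<12}. The only minor remark is that distinguishing $C_1$ from $C_2$ is already forced geometrically (one is smooth, the other nodal), so the lattice asymmetry $v\cdot e_{10}=0$ is consistent with, rather than strictly needed for, the recovery of the labeled data.
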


\begin{corollary}\label{unirat (11,a<9,1)}
For $a\leq9$ the covers $\widetilde{\mathcal{M}}_{(11, a, \delta)}$ are unirational. 
\end{corollary}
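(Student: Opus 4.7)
The plan is to chain the two results just established with the isogeny Proposition \ref{isogeny bet discri cover}, exactly as in Corollary \ref{unirat for r<10} and Corollary \ref{unirat (10,<10,1)}. First I would combine Lemma \ref{unirat paramet (11,9,1)} with Theorem \ref{open immersion (11,7,1)}: the former says the parameter space $U$ is unirational, the latter produces an open immersion $U/{\PGL} \hookrightarrow \widetilde{\mathcal{M}}_{(11,9,1)}$. Composing a dominant rational map from projective space to $U$ with the quotient $U \to U/{\PGL}$ and the open immersion yields a dominant rational map from projective space to $\widetilde{\mathcal{M}}_{(11,9,1)}$, so $\widetilde{\mathcal{M}}_{(11,9,1)}$ is unirational.

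Next, for every main invariant $(11, a, \delta)$ with $a < 9$, I would apply Proposition \ref{isogeny bet discri cover} in the form of case (i), with source triple $(r, a, \delta) = (11, 9, 1)$ (so $\delta = 1$ is automatic) and target triple $(r, a', \delta') = (11, a, \delta)$ with $a' = a < 9$. This produces a finite surjective morphism
\[
\widetilde{\mathcal{M}}_{(11,9,1)} \longrightarrow \widetilde{\mathcal{M}}_{(11, a, \delta)}.
\]
Since the image of a unirational variety under a dominant morphism is unirational, the target is unirational as well.

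Finally I would read off Figure \ref{Nikulin table} to confirm that the triples covered above exhaust all admissible main invariants $(11, a, \delta)$ with $a \leq 9$, so that no case is left behind. I do not anticipate any real obstacle: the argument is a purely formal consequence of the geometric construction of $\widetilde{\mathcal{M}}_{(11,9,1)}$ given in this subsection together with the lattice-theoretic isogeny machinery of Section \ref{ssec:discri cover}, and it mirrors verbatim the pattern already deployed in Corollaries \ref{unirat for r<10}, \ref{unirat (10,<10,0)}, and \ref{unirat (10,<10,1)}.
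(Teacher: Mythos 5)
Your proposal is correct and is exactly the argument the paper intends: combine Lemma \ref{unirat paramet (11,9,1)} and Theorem \ref{open immersion (11,7,1)} to get unirationality of $\widetilde{\mathcal{M}}_{(11,9,1)}$, then push it down to all $\widetilde{\mathcal{M}}_{(11,a,\delta)}$ with $a<9$ via Proposition \ref{isogeny bet discri cover} and Figure \ref{Nikulin table} (which, since $\delta=1$ is forced when $r=11$, leaves no cases unaccounted for). This mirrors the paper's own pattern in Corollaries \ref{unirat for r<10} and \ref{unirat (10,<10,1)} verbatim.
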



\section{The case $r=12$}\label{sec: r=12}

In this section we study the case $r=12$. 
In Section \ref{ssec: (12,10,1)} we construct a birational map from the configuration space of 
eight general points in ${\proj}^2$ to a certain cover of $\mathcal{M}_{(12,10,1)}$, 
which in particular implies that $\mathcal{M}_{(12,10,1)}$ is unirational. 
In Section \ref{ssec: (12,8,1)} we prove that 
the covers $\widetilde{\mathcal{M}}_{(12,a,\delta)}$ for $a\leq8$ are unirational.


\subsection{$\mathcal{M}_{(12,10,1)}$ and eight general points in ${\proj}^2$}\label{ssec: (12,10,1)}

We begin by preparing lattices and an arithmetic group. 
Let $M_{12}=\langle h, e_1,\cdots, e_{11} \rangle$ be the lattice defined in $(\ref{Cremona lattice})$. 
Let $f_1, f_2 \in M_{12}^{\vee}$ be the vectors defined by 
$2f_i = 3h-2e_i-\sum_{j=3}^{11}e_j$,  $i=1, 2$. 
Then the overlattice $L_+ = \langle M_{12}, f_1, f_2 \rangle$   
is even and 2-elementary of main invariant $(12, 10, 1)$. 
We fix a primitive embedding $L_+ \subset \Lambda_{K3}$, which exists by Table \ref{Nikulin table}, 
and set $L_- = (L_+)^{\perp} \cap \Lambda_{K3}$. 
The lattice $L_-$ is isometric to $\langle2\rangle^2\oplus\langle-2\rangle^8$. 
We let the symmetric group $\frak{S}_3$ act on the set $\{ e_9, e_{10}, e_{11}\}$ by permutation, 
and on the set $\{ h, e_1,\cdots, e_8\}$ trivially. 
This defines an action $i\colon\frak{S}_3\to{\rm O}(L_+)$ of $\frak{S}_3$ on the lattice $L_+$. 
Let $r_{\pm}\colon{\rm O}(L_{\pm})\to{\rm O}(D_{L_{\pm}})$ be the natural homomorphisms 
and $\lambda\colon{\rm O}(D_{L_+})\simeq{\rm O}(D_{L_-})$ be 
the isomorphism induced by the relation $L_-=(L_+)^{\perp}$. 
Then we define a subgroup of ${\rm O}(L_-)$ by 
$\Gamma = r_-^{-1}(\lambda \circ r_+(i(\frak{S}_3)))$. 
By Proposition \ref{glue} an isometry $\gamma$ of $L_-$ is contained in $\Gamma$ if and only if 
there exists a $\sigma\in\frak{S}_3$ such that $i(\sigma)\oplus\gamma$ extends to an isometry of $\Lambda_{K3}$. 
We have $\widetilde{{\rm O}}(L_-)\subset \Gamma$ with 
$\Gamma^+/\widetilde{{\rm O}}(L_-)^+ \simeq \frak{S}_3$. 
Hence the modular variety $\mathcal{F}_{L_-}(\Gamma^{+})$ is a quotient of 
$\widetilde{\mathcal{M}}_{(12,10,1)}$ by $\frak{S}_3$.  
The moduli space $\mathcal{M}_{(12,10,1)}$ is dominated by $\mathcal{F}_{L_-}(\Gamma^+)$.

\begin{figure}[h]
\centerline{\includegraphics[width=6.5cm]{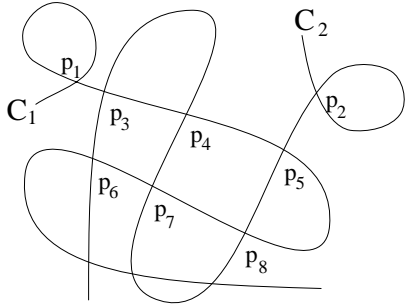}} 
\caption{Sextic curve for $(r, a, \delta)=(12, 10, 1)$} 
\label{r12a10}
\end{figure}

We shall define a parameter space. 
First we note that for seven general points $q_1,\cdots, q_7$ in ${\proj}^2$ 
there uniquely exists an irreducible nodal cubic $C$ passing $q_1,\cdots, q_7$ with ${\rm Sing}(C)=q_1$. 
This may be seen by an intersection calculation and a dimension counting. 
More constructively, 
the blow-up $Y$ of ${\proj}^2$ at $q_1,\cdots, q_7$ is a quadric del Pezzo surface 
which has the Geisser involution $\iota$. 
If $E\subset Y$ is the $(-1)$-curve over $q_1$, 
then the image of the curve $\iota(E)$ in ${\proj}^2$ is the desired cubic. 
Now let $U \subset ({\proj}^2)^8$ be the open set of eight distinct points 
$\mathbf{p}=(p_1, \cdots, p_8)$ such that 
there exist irreducible nodal cubics $C_1, C_2$ which pass $p_3,\dots, p_8$ with 
${\rm Sing}(C_i)=p_i$ and which are transverse to each other. 
The finite morphism $U\to|{\sextic}|$, $\mathbf{p}\mapsto C_1+C_2$, shows that 
we have a geometric quotient $U/{\PGL}$ as an $8$-dimensional variety,  
which is rational by Proposition \ref{existence of configuration space}.

For a $\mathbf{p}=(p_1, \cdots, p_8) \in U$ 
the associated sextic $C_1+C_2$ is endowed with the partial labeling $(p_1, \cdots, p_8)$ of its nodes. 
The remaining  three nodes $\mathcal{S}=C_1\cap C_2\backslash \{ p_i\}_{i=3}^{8}$ are not marked. 
We \textit{temporarily} choose a bijection $\mathcal{S}\simeq \{9, 10, 11\}$ 
and accordingly denote $\mathcal{S} = \{p_9, p_{10}, p_{11}\}$. 
Then let $(X, \iota)$ be the 2-elementary $K3$ surface associated to $C_1+C_2$. 
If $g\colon X\to{\proj}^2$ is the natural projection branched over $C_1+C_2$, 
we have an isometry $j\colon L_+\to L_+(X, \iota)$ defined by 
$h\mapsto[g^{\ast}{\HP}]$, 
$e_i\mapsto[g^{-1}(p_i)]$ for $i\leq11$, 
and $f_j\mapsto[F_j]$ where $F_j$ is the component of $X^{\iota}$ with $g(F_j)=C_j$. 
Then the period of $((X, \iota), j)$ is determined 
as a point in $\widetilde{\mathcal{M}}_{(12,10,1)}$. 
We consider the image of that point in $\mathcal{F}_{L_-}(\Gamma^+)$, 
and denote it by $\mathcal{P}(\mathbf{p})\in\mathcal{F}_{L_-}(\Gamma^+)$.

\begin{theorem}\label{period map (12,10,1)}
The map $\mathcal{P}\colon U\to\mathcal{F}_{L_-}(\Gamma^+)$ is well-defined. 
It is a morphism of varieties and induces an open immersion $U/{\PGL} \to \mathcal{F}_{L_-}(\Gamma^+)$.  
\end{theorem}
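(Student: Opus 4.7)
The plan is to follow the template of the proofs of Theorems \ref{period map r<12} and \ref{period map (10,8,0)}, carefully tracking the $\frak{S}_3$-ambiguity coming from the unlabeled nodes $\mathcal{S}=\{p_9,p_{10},p_{11}\}$. First I would verify that $\mathcal{P}$ is well-defined. Two bijections $\mathcal{S}\simeq\{9,10,11\}$ differ by some $\sigma\in\frak{S}_3$ and produce isometries $j$ and $j\circ i(\sigma)$ from $L_+$ to $L_+(X,\iota)$. By Proposition \ref{glue}, if $\Phi\colon\Lambda_{K3}\to H^2(X,\Z)$ extends $j$, then $\Phi\circ(i(\sigma)\oplus\gamma)$ extends $j\circ i(\sigma)$ for any $\gamma\in{\rm O}(L_-)$ with $r_-(\gamma)=\lambda\circ r_+(i(\sigma))$; such a $\gamma$ exists in $\Gamma$ by the very definition of $\Gamma$. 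Hence the two lifted periods in $\Omega_{L_-}$ differ by an element of $\Gamma^+$ and project to the same point of $\mathcal{F}_{L_-}(\Gamma^+)$.

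To show $\mathcal{P}$ is a morphism of varieties, I would pass to the $\frak{S}_3$-cover $\widetilde{U}\to U$ over which all eleven nodes carry global sections. Over $\widetilde{U}$, the family-theoretic construction used in the proof of Theorem \ref{period map r<12}, combined with Borel's extension theorem, yields a morphism $\widetilde{U}\to\widetilde{\mathcal{M}}_{(12,10,1)}$. Its composition with the quotient $\widetilde{\mathcal{M}}_{(12,10,1)}\to\mathcal{F}_{L_-}(\Gamma^+)$ is $\frak{S}_3$-invariant by the first step, so it descends to $\mathcal{P}\colon U\to\mathcal{F}_{L_-}(\Gamma^+)$. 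This $\mathcal{P}$ is visibly $\PGL$-invariant, hence factors through $\overline{\mathcal{P}}\colon U/\PGL\to\mathcal{F}_{L_-}(\Gamma^+)$.

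For injectivity of $\overline{\mathcal{P}}$, suppose $\mathcal{P}(\mathbf{p})=\mathcal{P}(\mathbf{p}')$. Lifting to $\widetilde{\mathcal{M}}_{(12,10,1)}$ and unwinding the definition of $\Gamma$, we obtain a Hodge isometry $\Phi\colon H^2(X',\Z)\to H^2(X,\Z)$ with $\Phi\circ j'=j\circ i(\sigma)$ for some $\sigma\in\frak{S}_3$. Re-choosing the bijection $\mathcal{S}'\simeq\{9,10,11\}$ by $\sigma^{-1}$, we may assume $\Phi\circ j'=j$. Composing with appropriate Weyl reflections in $(-2)$-classes orthogonal to the image of $j$, we may further arrange that $\Phi$ identifies the K\"ahler cones; equivalently, $\Phi$ maps a suitable $\frak{S}_3$-invariant ample class of the form $j(Nh-\sum_{i=1}^{11}e_i)$ with $N\gg 0$ to itself. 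The strong Torelli theorem then produces $\varphi\colon X\to X'$ with $\varphi^{\ast}=\Phi$. Applying Lemma \ref{proj mor} to the class $j(h)$ yields $\psi\in\PGL$ with $g'\circ\varphi=\psi\circ g$, so $\psi(p_i)=p_i'$ for $i=1,\ldots,8$ and $\psi(C_1+C_2)=C_1'+C_2'$, i.e., $\mathbf{p}$ and $\mathbf{p}'$ lie in the same $\PGL$-orbit.

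To conclude, since $\dim(U/\PGL)=16-8=8=\dim\mathcal{F}_{L_-}(\Gamma^+)$ and the target is irreducible, $\overline{\mathcal{P}}$ is dominant, and Zariski's Main Theorem upgrades it to an open immersion. The principal obstacle I anticipate is the Torelli step: one must exhibit an $\frak{S}_3$-invariant ample class lying in $j(L_+\otimes\Q)$ and verify that the Hodge isometry $\Phi$ can be modified by Weyl reflections so as to preserve it, without destroying the matching $\Phi\circ j'=j$ produced by the definition of $\Gamma$. Once this is arranged, the combinatorial bookkeeping runs parallel to Section \ref{ssec: (r,r,1)}, and the dimension and irreducibility counts close out the argument.
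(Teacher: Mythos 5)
Your proposal follows the paper's proof essentially verbatim: well-definedness via Proposition \ref{glue} and the definition of $\Gamma$, descent to $U/{\PGL}$, injectivity via the strong Torelli theorem together with Lemma \ref{proj mor}, and the dimension count plus Zariski's Main Theorem. The one obstacle you flag at the end is not actually there: since $Nh-\sum_{i=1}^{11}e_i$ is $\frak{S}_3$-invariant, the relation $\Phi\circ j=j'\circ i(\sigma)$ already forces $\Phi\bigl(j(Nh-\sum_i e_i)\bigr)=j'(Nh-\sum_i e_i)$, and both classes are ample for $N\gg0$ (being pullbacks of ample classes on the respective blow-ups under the finite covering maps), so the strong Torelli theorem applies directly with no Weyl-reflection adjustment, exactly as in Proposition \ref{Torelli r<12}.
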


\begin{proof}
For the first assertion 
it suffices to show that $\mathcal{P}(\mathbf{p})$ is independent of 
the choice of a labeling $\mathcal{S} = \{p_9, p_{10}, p_{11}\}$. 
For another labeling $\mathcal{S} = \{p_9', p_{10}', p_{11}'\}$ 
we have $p_{\sigma(i)}=p_i'$ for a $\sigma\in\frak{S}_3$, $9\leq i\leq11$. 
Then the isometry $j'\colon L_+\to L_+(X, \iota)$ associated to $(p_9', p_{10}', p_{11}')$ 
is given by $j'=j\circ i(\sigma)$. 
If $\Phi, \Phi'\colon\Lambda_{K3}\to{\cohomology}$ are extensions of $j$ and $j'$ respectively, 
then $\Phi|_{L_-}$ is $\Gamma$-equivalent to $\Phi'|_{L_-}$. 

The map $\mathcal{P}$ is obviously ${\PGL}$-invariant. 
Conversely, suppose that $\mathcal{P}(\mathbf{p})=\mathcal{P}(\mathbf{p}')$ 
for two $\mathbf{p}, \mathbf{p}'\in U$.  
We choose labelings of the three nodes for $\mathbf{p}$ and $\mathbf{p}'$ respectively, 
and let $(X, j)$ and $(X', j')$ be the associated marked $K3$ surfaces. 
Then the equality $\mathcal{P}(\mathbf{p})=\mathcal{P}(\mathbf{p}')$ means that 
we have a Hodge isometry $\Phi\colon{\cohomology}\to H^2(X', {\Z})$ 
with $\Phi\circ j=j'\circ i(\sigma)$ for some $\sigma\in\frak{S}_3$. 
In particular, we have $\Phi(j(h))=j'(h)$, $\Phi(j(f_j))=j'(f_j)$, and $\Phi(j(e_i))=j'(e_i)$ for $i\leq8$. 
As before, 
we deduce that $\mathbf{p}$ and $\mathbf{p}'$ are ${\PGL}$-equivalent. 
This concludes the proof. 
\end{proof}

\begin{corollary}
The variety $\mathcal{F}_{L_-}(\Gamma^+)$ is rational. 
Hence $\mathcal{M}_{(12,10,1)}$ is unirational. 
\end{corollary}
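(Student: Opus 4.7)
The plan is essentially to assemble pieces that have already been put in place. Almost all of the serious work has been done in Theorem \ref{period map (12,10,1)} and in the earlier setup of $U$; the present corollary is a direct bookkeeping consequence, so I anticipate no genuine obstacle.

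First I would invoke Theorem \ref{period map (12,10,1)}, which produces an open immersion $U/{\PGL}\hookrightarrow\mathcal{F}_{L_-}(\Gamma^+)$. The source has dimension $8$ by the construction of $U$ (confirmed via the finite map $U\to|{\sextic}|$ sending $\mathbf{p}\mapsto C_1+C_2$), while the target has dimension $20-r=8$ by \eqref{def: modular variety}. Since both varieties are irreducible of the same dimension, this open immersion identifies $\mathcal{F}_{L_-}(\Gamma^+)$ birationally with $U/{\PGL}$.

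Next I would appeal to the rationality of $U/{\PGL}$, which was already recorded just after the definition of $U$: the inclusion $U\subset U_8$ exhibits $U/{\PGL}$ as an open subvariety of the rational configuration space $U_8/{\PGL}$ furnished by Proposition \ref{existence of configuration space}. Combining this with the birational equivalence from the previous step, one concludes that $\mathcal{F}_{L_-}(\Gamma^+)$ is rational.

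Finally, to deduce the unirationality of $\mathcal{M}_{(12,10,1)}$ itself, I would recall from the paragraphs preceding the definition of $U$ that $\mathcal{F}_{L_-}(\Gamma^+)$ is the quotient of $\widetilde{\mathcal{M}}_{(12,10,1)}$ by $\frak{S}_3$, and that $\mathcal{M}_{(12,10,1)}$ is in particular dominated by $\mathcal{F}_{L_-}(\Gamma^+)$. Since any variety dominated by a rational variety is unirational, this completes the proof. The only nontrivial ingredient is the period-theoretic Theorem \ref{period map (12,10,1)}; the present statement is purely its packaging.
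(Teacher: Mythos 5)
Your proposal is correct and follows the paper's intended argument exactly: the open immersion of Theorem \ref{period map (12,10,1)} between irreducible $8$-dimensional varieties gives a birational identification of $\mathcal{F}_{L_-}(\Gamma^+)$ with the rational quotient $U/{\PGL}$, and unirationality of $\mathcal{M}_{(12,10,1)}$ then follows since it is dominated by $\mathcal{F}_{L_-}(\Gamma^+)$. The paper leaves this as an immediate consequence, and your bookkeeping matches its setup step for step.
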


\begin{remark}\label{mkd dP of deg 1}
The space $U/{\PGL}$ is birationally identified with the moduli of \textit{marked} del Pezzo surfaces of degree $1$. 
It would be interesting to study the rational action of the Weyl group on 
$\mathcal{F}_{L_-}(\Gamma^+)$ induced by the above immersion. 
Kond\=o \cite{Ko1.5} described the moduli of del Pezzo surfaces of degree $1$ as a ball quotient. 
\end{remark}


\subsection{The unirationality of $\widetilde{\mathcal{M}}_{(12,8,1)}$}\label{ssec: (12,8,1)}

Let $U \subset |{\cubic}|\times ({\proj}^2)^8$ be the locus of cubics with points 
$(C, \mathbf{p})=(C, p_1, \cdots, p_8)$ such that 
$(i)$ $p_1, \cdots, p_8$ are distinct, 
$(ii)$ $C$ is smooth and passes $\{ p_i \} _{i\ne6}$, 
$(iii)$ $p_1, \cdots, p_6$ lie on a smooth conic $Q$, 
$(iv)$ $p_6, p_7, p_8$ lie on a line $L$, and 
$(v)$ $C+Q+L$ has only nodes as singularities. 
The sextic $C+Q+L$ is uniquely determined by $(C, \mathbf{p})$. 
By setting 
$p_9 = L \cap C \backslash \{ p_7, p_8 \}$,  
$p_{10} = L \cap Q \backslash p_6$,  
and $p_{11} = Q \cap C \backslash \{ p_i \}_{i=1}^{5}$, 
we have a complete marking of the nodes of $C+Q+L$. 
For the proof of unirationality it is convenient to reduce sextics with labelings to such cubics with points, 
and consider the space $U$ of the latters. 

\begin{figure}[h]
\centerline{\includegraphics[width=6.5cm]{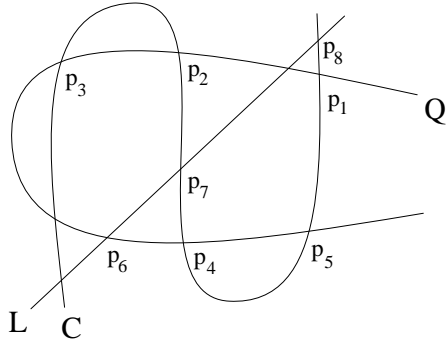}} 
\caption{Sextic curve for $(r, a, \delta)=(12, 8, 1)$} 
\label{r12a8}
\end{figure}

\begin{lemma}\label{unirat paramet (12,8,1)}
The variety $U$ is unirational. 
\end{lemma}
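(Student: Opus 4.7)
The plan is to exhibit $U$ as birational to a tower of projective bundles over a rational base, yielding rationality (and hence unirationality). I would build the data $(C,Q,L,p_1,\dots,p_8)$ in four layers, beginning with the collinear triple on $L$ and ending with the choice of the cubic $C$.

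First I would take $W_1\subset({\proj}^2)^3$ to be the locus of distinct collinear triples $(p_6,p_7,p_8)$; sending such a triple to its spanning line realizes $W_1$ birationally as an open subset of a $({\proj}^1)^3$-bundle over the dual plane $({\proj}^2)^\vee$, so $W_1$ is rational of dimension $5$. Over $W_1$ I would form $W_2\to W_1$ whose fibre over $(p_6,p_7,p_8)$ is the open set of smooth conics $Q$ passing through $p_6$. Conics through a given point form a ${\proj}^4$, so $W_2$ is birationally a ${\proj}^4$-bundle over $W_1$ and rational of dimension $9$.

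Next, the universal conic $\mathcal{Q}\to W_2$ carries the tautological section $(p_6,p_7,p_8,Q)\mapsto p_6$; projection from $p_6$ birationally trivializes each smooth $Q$ to ${\proj}^1$, so the fivefold fibre product $\mathcal{Q}\times_{W_2}\cdots\times_{W_2}\mathcal{Q}$ is birational to $W_2\times({\proj}^1)^5$. Its open locus $W_3$ where $p_1,\dots,p_5$ are distinct from each other and from $p_6,p_7,p_8$ is rational of dimension $14$. Finally, at a generic point of $W_3$ the seven points $p_1,\dots,p_5,p_7,p_8$ impose independent conditions on cubics, so the net of cubics passing through them is a ${\proj}^2$ inside $|{\cubic}|$. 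Hence $U$ is birational to a ${\proj}^2$-bundle over $W_3$ and rational of dimension $16$, matching the expected $\dim{\PGL}+\dim\widetilde{\mathcal{M}}_{(12,8,1)}=8+8$.

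The step requiring the most care is showing that this tower actually dominates $U$: the open conditions defining $U$---smoothness of $C$, smoothness of $Q$, the nodal-only constraint on the sextic $C+Q+L$, and distinctness of all eleven nodes---must hold on a nonempty Zariski-open subset. Smoothness of the generic cubic $C$ in the final net follows from Bertini applied to the base locus of nine points (the seven prescribed plus two residual), smoothness of $Q$ is open in $W_2$ by construction, and the remaining incidence and transversality conditions each cut out a proper closed subset at some layer of the tower. A routine but careful dimension count shows none of these closed loci fills up the tower, so the resulting rational map lands densely in $U$ and the argument concludes.
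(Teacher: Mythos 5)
Your proof is correct, and it shares the paper's main structural idea---realizing $U$ birationally as the projectivization of a vector bundle (the net of cubics through the seven points $\{p_i\}_{i\ne6}$) over the configuration space of the eight points---but it handles that configuration space by a genuinely different and in fact stronger argument. The paper identifies the image of $U\to(\proj^2)^8$ with the fiber product $V\times_{\proj^2}W$ (where $V$ is the locus of six points on a conic and $W$ that of three collinear points, glued along $p_6$), reduces to the unirationality of $V$, and covers $V$ by the rational variety $\PGL\times(S\times_{\proj^1}S)$, where $S$ is the blow-up of $\proj^2$ at four fixed general points with its conic-pencil fibration $S\to\proj^1$; this route only yields unirationality of the base. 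Your tower (a line with three marked points, then the $\proj^4$ of conics through $p_6$, then five points on the universal conic trivialized by projection from the tautological section $p_6$) exhibits the same configuration space as an iterated Zariski-locally trivial projective bundle over a rational base, so you obtain rationality of $U$ outright, which of course implies the unirationality claimed; your dimension count $5+4+5+2=16=\dim\PGL+\dim\widetilde{\mathcal{M}}_{(12,8,1)}$ matches the paper's. The genericity checks you flag (smoothness of the general cubic in the net, pairwise transversality of $C$, $Q$, $L$, distinctness of the eleven nodes) are exactly the ones the paper also leaves implicit when it asserts that $V\times_{\proj^2}W$ is birational to the image of $U\to(\proj^2)^8$. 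One cosmetic slip: the base locus of the net of cubics through seven general points consists of just those seven points, not nine; since only $p_6$ is constrained by the conic and the line, the seven points $\{p_i\}_{i\ne6}$ are in general position for a generic point of your tower, and Bertini then gives smoothness of the general member as you intend.
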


\begin{proof}
Let $V \subset ({\proj}^2)^6$ be the locus of six points $(p_1, \cdots, p_6)$ lying on some conic
and $W \subset ({\proj}^2)^3$ be the locus of three collinear points $(q_1, q_2, q_3)$. 
The fiber product $V \mathop{\times}_{{\proj}^2} W$ over ${\proj}^2= \{ p_6 \in {\proj}^2 \} =\{ q_1 \in {\proj}^2 \}$ 
is birational to the image of the projection $U\to({\proj}^2)^8$, $(C, \mathbf{p}) \mapsto \mathbf{p}$. 
As a general fiber of the projection $U\to V\mathop{\times}_{{\proj}^2}W$ is an open set of a plane in $|{\cubic}|$, 
it suffices to prove the unirationality of $V \mathop{\times}_{{\proj}^2} W$, 
which is easily reduced to that of $V$. 
Let $p_1, \cdots, p_4 \in {\proj}^2$ be four general points and  
$S$ be the blow-up of ${\proj}^2$ at $\{ p_i \}_{i=1}^{4}$.  
The conic pencil determined by $\{ p_i \}_{i=1}^{4}$ defines a morphism $S \to {\proj}^1$. 
We have a birational map ${\PGL} \times (S\mathop{\times}_{{\proj}^1} S) \dashrightarrow V$. 
Then the existence of sections of $S \to {\proj}^1$ implies the rationality of $S\mathop{\times}_{{\proj}^1} S$. 
\end{proof}

For a $(C, \mathbf{p})\in U$, 
the 2-elementary $K3$ surface $(X, \iota)$ associated to 
the sextic $C+Q+L$ has main invariant $(12, 8, 1)$. 
As before, our labeling for $C+Q+L$ 
will induce an isometry $j\colon L_+ \to L_+(X, \iota)$ 
from an appropriate reference lattice $L_+$. 
This defines a morphism $\tilde{p}\colon U\to\widetilde{\mathcal{M}}_{(12,8,1)}$, 
and we have the following.

\begin{theorem}\label{open immersion}
The period map $\tilde{p}$ descends to an open immersion 
$U/{\PGL}\to\widetilde{\mathcal{M}}_{(12,8,1)}$ from a geometric quotient $U/{\PGL}$. 
\end{theorem}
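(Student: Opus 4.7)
The plan is to follow closely the template developed in Section \ref{ssec: (r,r,1)} and already adapted for the main invariants $(10,8,0)$, $(10,8,1)$, $(11,9,1)$, and $(12,10,1)$; the only real change is the combinatorics of the reference lattice. Existence of the geometric quotient $U/{\PGL}$ is immediate: condition (v) forces $C+Q+L$ to have only nodes, so Proposition \ref{stab sextic} gives ${\PGL}$-stability of the sextic, and the natural finite map $(C,\mathbf{p})\mapsto C+Q+L$ from $U$ to $|{\sextic}|$ combined with Lemma \ref{stab & fin mor} yields $U/{\PGL}$ as a variety of dimension $8 = 20 - r$.

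For the reference lattice, take $M_{12} = \langle h, e_1, \ldots, e_{11}\rangle$ as in \eqref{Cremona lattice} and let $f_C, f_Q, f_L \in M_{12}^\vee$ be defined by
\begin{align*}
2f_C &= 3h - e_1 - e_2 - e_3 - e_4 - e_5 - e_7 - e_8 - e_9 - e_{11},\\
2f_Q &= 2h - e_1 - e_2 - e_3 - e_4 - e_5 - e_6 - e_{10} - e_{11},\\
2f_L &= h - e_6 - e_7 - e_8 - e_9 - e_{10},
\end{align*}
so that $f_C+f_Q+f_L \in M_{12}$ and $[\langle M_{12}, f_C, f_Q, f_L\rangle : M_{12}] = 4$. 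A direct discriminant-form calculation shows that $L_+ = \langle M_{12}, f_C, f_Q, f_L\rangle$ is even 2-elementary of main invariant $(12,8,1)$; for instance the vector $\tfrac{1}{2}(h + e_1 + e_6)$ lies in $L_+^\vee$ with $q$-value $-\tfrac{1}{2}$, certifying $\delta = 1$. By Example \ref{ex:1} and Proposition \ref{generate L_+} the labeling on $(C,\mathbf{p})$ then determines an isometry $j\colon L_+ \to L_+(X,\iota)$ via $h\mapsto[g^\ast{\HP}]$, $e_i\mapsto[g^{-1}(p_i)]$, and $f_?\mapsto[F_?]$, and one sets $\tilde{p}(C,\mathbf{p})$ equal to the period of $((X,\iota), j)$.

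That $\tilde{p}$ is a morphism of algebraic varieties is proved by repeating the family construction of Theorem \ref{period map r<12}: build a universal pointed sextic over an open set of $U$ with sections cutting out the eleven nodes (the seven free points together with $p_9, p_{10}, p_{11}$ determined algebraically as residual intersections of pairs among $C, Q, L$), blow up these sections, take the double cover branched over the strict transform of the universal sextic, and apply Borel's extension theorem to the resulting locally liftable period map. For injectivity, an equality $\tilde{p}(C,\mathbf{p}) = \tilde{p}(C',\mathbf{p}')$ gives a Hodge isometry $\Phi\colon H^2(X',\Z)\to{\cohomology}$ with $\Phi\circ j' = j$; an appropriate ample class in $L_+$ is preserved by $\Phi$, so the strong Torelli theorem supplies an isomorphism $\varphi\colon X\to X'$ inducing $\Phi$. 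Applying Lemma \ref{proj mor} to $j(h)$ then yields $\psi\in{\PGL}$ with $g'\circ\varphi=\psi\circ g$, and because each $j(e_i)$ and each of the three classes $j(f_C), j(f_Q), j(f_L)$ is individually preserved, $\psi$ sends $p_i\mapsto p_i'$ and maps each of $C, Q, L$ to the corresponding primed component.

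With ${\PGL}$-invariance of $\tilde{p}$ being evident, the induced morphism $U/{\PGL} \to \widetilde{\mathcal{M}}_{(12,8,1)}$ is injective; equality of dimensions and irreducibility of the target force dominance, and Zariski's Main Theorem promotes the map to an open immersion. The main obstacle is the finite but slightly fiddly discriminant-form verification that $L_+$ has invariant $(12,8,1)$ and admits a primitive embedding into $\Lambda_{K3}$ compatible with the one coming from the geometric marking (the new feature here, absent from the earlier cases, is the need to distinguish three components of $X^\iota$ rather than two, which forces the overlattice to be of index $4$ rather than $2$); once this bookkeeping is in place, the rest reduces to a mechanical repetition of Section \ref{ssec: (r,r,1)}.
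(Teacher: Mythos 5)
Your proposal is correct and follows exactly the route the paper intends: the paper omits the proof of this theorem and defers to the model argument of Section \ref{ssec: (r,r,1)}, and your write-up is that argument with the reference lattice made explicit. Your lattice bookkeeping checks out — the three relations $2f_C$, $2f_Q$, $2f_L$ are precisely $g^{\ast}C=2F_C+\sum E_i$, etc., they satisfy $f_C+f_Q+f_L\in M_{12}$ so the overlattice has index $4=2^{(r-a)/2}$ consistently with Proposition \ref{generate L_+}, the resulting invariant is $(12,8,1)$, and the injectivity and Borel/Zariski steps are the same as in Proposition \ref{Torelli r<12} and Theorem \ref{period map r<12}.
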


\begin{corollary}\label{unirat (12, a<10)}
For $a\leq8$ the covers $\widetilde{\mathcal{M}}_{(12,a,\delta)}$ are unirational. 
\end{corollary}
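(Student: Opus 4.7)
The plan is to bootstrap from the single case $(r,a,\delta)=(12,8,1)$ already treated in Theorem \ref{open immersion}, and then spread unirationality to all smaller values of $a$ via the isogenies of Proposition \ref{isogeny bet discri cover}. This mirrors the pattern used for Corollaries \ref{unirat for r<10}, \ref{unirat (10,<10,0)}, \ref{unirat (10,<10,1)}, and \ref{unirat (11,a<9,1)} in the preceding sections.

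First I would combine Lemma \ref{unirat paramet (12,8,1)} with Theorem \ref{open immersion}: the open immersion $U/{\PGL}\to\widetilde{\mathcal{M}}_{(12,8,1)}$ has unirational source, so $\widetilde{\mathcal{M}}_{(12,8,1)}$ is itself unirational. This settles the case $a=8$, $\delta=1$.

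To cover the remaining main invariants $(12,a,\delta)$ with $a\leq 8$, I would apply Proposition \ref{isogeny bet discri cover} with starting data $(r,a,\delta)=(12,8,1)$. Since the source has $\delta=1$, hypothesis (i) of that proposition is in force, and it supplies a finite surjective morphism
\[
\varphi\colon\widetilde{\mathcal{M}}_{(12,8,1)}\longrightarrow\widetilde{\mathcal{M}}_{(12,a',\delta')}
\]
for every admissible invariant with $a'<8$ and \emph{arbitrary} $\delta'\in\{0,1\}$. Any variety that admits a surjective morphism from a unirational variety is unirational (compose the dominant rational map from projective space with $\varphi$), so each target $\widetilde{\mathcal{M}}_{(12,a',\delta')}$ is unirational. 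If the invariant $(12,8,0)$ also occurs in Figure \ref{Nikulin table}, it is handled in exactly the same way by taking $a'=8$, $\delta'=0$ in hypothesis (i).

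Finally I would inspect Figure \ref{Nikulin table} to verify that every pair $(a,\delta)$ with $a\leq 8$ admitted as a main invariant with $r=12$ appears as a target of some $\varphi$ above; this check is automatic because the flexibility of $\delta'$ in case (i) of Proposition \ref{isogeny bet discri cover} imposes no parity restriction on the target. There is no genuine obstacle: the content is entirely in Theorem \ref{open immersion}, which was the technical heart of the subsection, and the present corollary is a formal consequence of combining it with the general isogeny construction.
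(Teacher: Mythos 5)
Your proposal is correct and is exactly the paper's intended argument: Lemma \ref{unirat paramet (12,8,1)} together with the open immersion $U/{\PGL}\to\widetilde{\mathcal{M}}_{(12,8,1)}$ of Section \ref{ssec: (12,8,1)} gives the unirationality of $\widetilde{\mathcal{M}}_{(12,8,1)}$, and Proposition \ref{isogeny bet discri cover}(i) then propagates it to every admissible $(12,a',\delta')$ with $a'<8$. One small caveat: your fallback for a hypothetical invariant $(12,8,0)$ would not actually work, since hypothesis (i) requires $a>a'$ strictly, but this is moot because $\delta=0$ forces $r\equiv 2\pmod 4$, so $(12,8,0)$ does not occur in Figure \ref{Nikulin table}.
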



\section{The case $r=13$}\label{sec: r=13}

In this section we study the case $r=13$. 
In Section \ref{ssec: (13,9,1)} we 
construct a birational map from 
a configuration space of eight special points in ${\proj}^2$ 
to a certain cover of $\mathcal{M}_{(13,9,1)}$ 
in a similar way as Section \ref{ssec: (12,10,1)}. 
In Section \ref{ssec: (13,7,1)} we prove that 
the covers $\widetilde{\mathcal{M}}_{(13,a,\delta)}$ are unirational for $a\leq7$.


\subsection{$\mathcal{M}_{(13,9,1)}$ and eight special points in ${\proj}^2$}\label{ssec: (13,9,1)}

Let $M_{13}=\langle h, e_1, \cdots, e_{12} \rangle$ be the lattice defined in $(\ref{Cremona lattice})$. 
We define the vectors $f_1, f_2, f_3 \in M_{13}^{\vee}$ by 
$2f_3 = 3h-2e_1-\sum_{i=3}^{11}e_i$, 
$2(f_1+f_2) = 3h-2(e_2+e_{12})-\sum_{i=3}^{11}e_i$, 
and $2f_2 = 2h-(e_2+e_{12})-\sum_{i=5}^{10}e_i$. 
The overlattice $L_+=\langle M_{13}, f_1, f_2, f_3\rangle$   
is 2-elementary of main invariant $(13, 9, 1)$. 
We let $\frak{S}_2$ act on $L_+$ by the permutation on $\{ e_9, e_{10} \}$.  
We fix a primitive embedding $L_+ \subset \Lambda_{K3}$ and set 
$L_-=(L_+)^{\perp}\cap\Lambda_{K3}$. 
The lattice $L_-$ is isometric to $\langle2\rangle^2\oplus\langle-2\rangle^7$. 
Then let $\Gamma \subset{\rm O}(L_-)$ be the group  
$r_-^{-1}(\lambda \circ r_+(\frak{S}_2))$,  
where $r_{\pm}\colon{\rm O}(L_{\pm})\to{\rm O}(D_{L_{\pm}})$ and 
$\lambda\colon{\rm O}(D_{L_+})\to{\rm O}(D_{L_-})$ 
are defined as in Section \ref{ssec: (12,10,1)}. 
The arithmetic quotient $\mathcal{F}_{L_-}(\Gamma^+)$ is 
a quotient of $\widetilde{\mathcal{M}}_{(13,9,1)}$ by $\frak{S}_2$,   
and dominates $\mathcal{M}_{(13,9,1)}$. 

Let $V \subset ({\proj}^2)^8$ be the codimension $1$ locus of 
eight distinct points $\mathbf{p}=(p_1, \dots, p_8)$ such that 
$(i)$ there exists an irreducible nodal cubic $C$ passing $\{ p_i \}_{i\ne2}$ with ${\rm Sing}(C)=p_1$, 
$(ii)$ $p_2$ lies on the line $L=\overline{p_3p_4}$,  
$(iii)$ there exists a smooth conic $Q$ passing $\{ p_2 \} \cup \{ p_i \}_{i=5}^{8}$, and 
$(iv)$ the sextic $C+Q+L$ has only nodes as singularities. 
We shall denote $p_{11} = L\cap C \backslash \{ p_3, p_4 \}$ and $p_{12} = L \cap Q \backslash p_2$. 
In this way we obtain from $\mathbf{p}$ the sextic $C+Q+L$ and the partial labeling 
$(p_1,\cdots, p_8, p_{11}, p_{12})$ of its nodes. 
The remaining two nodes $\mathcal{S}=Q\cap C\backslash \{ p_i\}_{i=5}^{8}$ are not naturally marked. 
We have a geometric quotient $V/{\PGL}$ as a $7$-dimensional variety, 
which is rational by Proposition \ref{existence of configuration space}. 

\begin{figure}[h]
\centerline{\includegraphics[width=6.5cm]{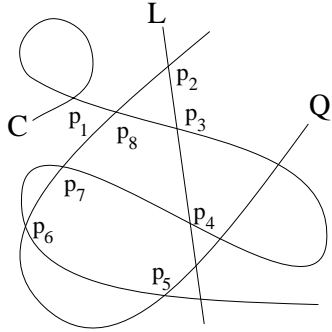}} 
\caption{Sextic curve for $(r, a, \delta)=(13, 9, 1)$} 
\label{r13a9}
\end{figure}

For a $\mathbf{p}\in V$, 
let $(X, \iota)$ be the 2-elementary $K3$ surface associated to the sextic $C+Q+L$. 
A temporary choice of a labeling $\mathcal{S}=\{ p_9, p_{10}\}$ induces a natural isometry $j\colon L_+\to L_+(X, \iota)$,  
which defines a point in $\widetilde{\mathcal{M}}_{(13,9,1)}$ as the period of $((X, \iota), j)$. 
Considering the image in $\mathcal{F}_{L_-}(\Gamma^+)$ of the period of $((X, \iota), j)$, 
we obtain a well-defined morphism $\mathcal{P}\colon V \to \mathcal{F}_{L_-}(\Gamma^+)$ 
as in Section \ref{ssec: (12,10,1)}. 
Then we have the following. 

\begin{theorem}\label{open immersion}
The period map $\mathcal{P}$ descends to an open immersion $V/{\PGL}\to\mathcal{F}_{L_-}(\Gamma^+)$. 
In particular, $\mathcal{F}_{L_-}(\Gamma^{+})$ is rational and $\mathcal{M}_{(13,9,1)}$ is unirational. 
\end{theorem}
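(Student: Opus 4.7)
The plan mirrors Theorem~\ref{period map (12,10,1)}: construct the lattice-marking $j$ from the pointed sextic $C+Q+L$, verify independence of the $\frak{S}_2$-choice of $\{p_9,p_{10}\}$, prove the morphism property via Borel's extension theorem, show injectivity on ${\PGL}$-orbits using the strong Torelli theorem, and conclude via Zariski's Main Theorem. The glue vectors $f_1,f_2,f_3\in M_{13}^{\vee}$ are dictated by the geometry: up to the factor $\frac{1}{2}$, they equal the classes of the proper transforms of $L$, $Q$, $C$ on the blow-up of ${\proj}^2$ at the twelve nodes $p_1,\ldots,p_{12}$. By Example~\ref{ex:1}, the assignment $h\mapsto[g^{\ast}\HP]$, $e_i\mapsto[g^{-1}(p_i)]$, $f_k\mapsto[F_k]$, with $F_1,F_2,F_3$ the components of $X^{\iota}$ lying over $L$, $Q$, $C$ respectively, defines an isometry $j\colon L_+\to L_+(X,\iota)$; the relation $f_1+f_2+f_3\in M_{13}$ reflects $F_1+F_2+F_3\sim -f^{\ast}K_{{\proj}^2}$ and forces the index $[L_+:M_{13}]=4$, so that $D_{L_+}$ has the expected length $a=9$.

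Well-definedness modulo the $\frak{S}_2$-choice reduces, by Proposition~\ref{glue}, to the observation that replacing $j$ by $j\circ i(\sigma)$ for the nontrivial $\sigma\in\frak{S}_2$ produces a lift to $\Lambda_{K3}$ that differs on $L_-$ from the original by an isometry with discriminant image $\lambda\circ r_+(i(\sigma))$, which belongs to the image defining $\Gamma$; the two periods therefore agree in $\mathcal{F}_{L_-}(\Gamma^+)$. The morphism property is established exactly as in Theorem~\ref{period map r<12}, after passing to a double \'etale cover of an open $\widetilde{V}\subset V$ on which $\{p_9,p_{10}\}$ is ordered: one builds the family of $K3$ surfaces as a double cover of the blow-up of $\widetilde{V}\times{\proj}^2$ along the twelve node sections, trivializes the $L_+$-part of the local system using the classes of $H$, the $E_i$, and the $F_k$, and invokes Borel's extension theorem, descending at the end back to $V$. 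For injectivity on ${\PGL}$-orbits, a coincidence $\mathcal{P}(\mathbf{p})=\mathcal{P}(\mathbf{p}')$ yields a Hodge isometry $\Phi$ with $\Phi\circ j=j'\circ i(\sigma)$ for some $\sigma\in\frak{S}_2$; for $N\gg 0$ the class $Nj(h)-\sum_{i=1}^{12}j(e_i)$ is ample on $X$ and sent to the analogous ample class on $X'$, so the strong Torelli theorem produces $\varphi\colon X\to X'$ with $\varphi^{\ast}=\Phi$. Then Lemma~\ref{proj mor} applied to $j(h)$ gives a $\psi\in{\PGL}$ with $g'\circ\varphi=\psi\circ g$, and $\psi$ carries $p_i\mapsto p_i'$ for $i\in\{1,\ldots,8,11,12\}$ and exchanges $\{p_9,p_{10}\}$ according to $\sigma$, completing ${\PGL}$-equivalence.

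The induced morphism $V/{\PGL}\to\mathcal{F}_{L_-}(\Gamma^+)$ is thus an injective morphism between irreducible $7$-dimensional varieties, hence an open immersion by Zariski's Main Theorem. Rationality of $V/{\PGL}$ follows in the spirit of Proposition~\ref{existence of configuration space}: after normalizing four points such as $(p_1,p_5,p_6,p_7)$ to general position by ${\PGL}$, the residual data $(p_2,p_3,p_4,p_8)$ subject to $p_2\in\overline{p_3p_4}$ is visibly rational. Since $\widetilde{{\rm O}}(L_-)\subset\Gamma\subset{\rm O}(L_-)$, the variety $\mathcal{F}_{L_-}(\Gamma^+)$ is a finite quotient of $\widetilde{\mathcal{M}}_{(13,9,1)}$ and covers $\mathcal{M}_{(13,9,1)}$, so the rationality of $\mathcal{F}_{L_-}(\Gamma^+)$ will yield the unirationality of $\mathcal{M}_{(13,9,1)}$. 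The most delicate step is the injectivity argument: one must track how the $\frak{S}_2$-ambiguity in labelling $\{p_9,p_{10}\}$ interacts with the Torelli-produced isomorphism, verifying that the reconstructed $\psi\in{\PGL}$ realizes $\sigma$ as a genuine permutation of the unlabelled pair rather than creating an obstruction to ${\PGL}$-equivalence.
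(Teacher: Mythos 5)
Your proposal is correct and follows essentially the same route as the paper, which gives no separate proof of this theorem and instead defers to the detailed arguments of Sections \ref{ssec: (r,r,1)} and \ref{ssec: (12,10,1)}. Your identification of $2f_1,2f_2,2f_3$ with the strict transforms of $L$, $Q$, $C$, and your use of the ample class $Nj(h)-\sum_i j(e_i)$ with $N\gg0$ (a necessary precaution here, since $4H-\sum E_i$ is \emph{not} ample in this configuration: the line $L$ passes through five of the twelve nodes, so its half-pullback $F_1$ meets $4H-\sum E_i$ negatively), correctly fill in the details the paper leaves implicit.
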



\subsection{$\widetilde{\mathcal{M}}_{(13,7,1)}$ and pointed cubics}\label{ssec: (13,7,1)}

Let $U \subset |{\cubic}| \times ({\proj}^2)^6$ be the space of 
pointed cubics $(C, \mathbf{p}) = (C, p_{1+}, p_{1-}, \cdots, p_{3-})$ such that 
$(i)$ $C$ is smooth, 
$(ii)$ $p_{1+}, \cdots, p_{3-}$ are distinct points on $C$, and 
$(iii)$ if we denote $L_i=\overline{p_{i+}p_{i-}}$, the sextic $C+\sum_iL_i$ has only nodes as singularities. 
The variety $U$ is rational, 
for the natural projection $U\to({\proj}^2)^6$ 
is birational to the projectivization of a vector bundle on an open set. 
For a pointed cubic $(C, \mathbf{p})\in U$ we set 
$p_i=L_i\cap C\backslash \{ p_{i+}, p_{i-} \}$ and $q_i= L_j\cap L_k$ where $\{ i, j, k\}=\{1, 2, 3\}$. 
Thus we associate to $(C, \mathbf{p})$ 
the nodal sextic $C+\sum_iL_i$ with the labeling $(p_{\mu}, q_i)_{\mu, i}$ of its nodes. 
As before, 
from these we will obtain a lattice-marked 2-elementary $K3$ surface $((X, \iota), j)$ of type $(13, 7, 1)$. 
This defines a morphism $\tilde{p}\colon U\to\widetilde{\mathcal{M}}_{(13,7,1)}$, 
and we have the following. 

\begin{theorem}\label{open immersion (13,7,1)}
The period map $\tilde{p}$ descends to an open immersion $U/{\PGL}\to\widetilde{\mathcal{M}}_{(13,7,1)}$ 
from a geometric quotient $U/{\PGL}$. 
\end{theorem}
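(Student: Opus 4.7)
The plan is to follow the template of Section~\ref{ssec: (r,r,1)}, refined through Section~\ref{ssec: (13,9,1)}: construct a reference lattice $L_+$ of main invariant $(13,7,1)$, build a canonical isometry $j\colon L_+\to L_+(X,\iota)$ from the labelling, and show that the resulting period map descends to $U/{\PGL}$ with injective fibres via the Torelli theorem.

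First the topological bookkeeping. The sextic $B=C+L_1+L_2+L_3$ has exactly twelve nodes: the six $p_{i\pm}$, the three residual intersections $p_i=(L_i\cap C)\setminus\{p_{i+},p_{i-}\}$, and the three pairwise line intersections $q_i=L_j\cap L_k$. By Example~\ref{ex:1} the associated 2-elementary $K3$ surface $(X,\iota)$ has $r=\rho(\proj^2)+12=13$, and Proposition~\ref{compute main inv} identifies $X^\iota$ as the disjoint union of a genus-$1$ curve $F_0$ over $C$ and three $(-2)$-curves $F_1,F_2,F_3$ over $L_1,L_2,L_3$, consistent with $(g,k)=(1,3)$.

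Next I would construct $L_+$ as the even overlattice of $M_{13}=\langle h,e_\nu\rangle$ (with $\nu$ running over the twelve nodes) obtained by adjoining the glue vectors
$\tilde f_0=\tfrac12(3h-\sum_{\nu\in S_C}e_\nu)$ and $\tilde f_i=\tfrac12(h-\sum_{\nu\in S_{L_i}}e_\nu)$ for $i=1,2,3$,
where $S_C$ and $S_{L_i}$ are the node sets of $C$ and $L_i$ (nine and five nodes respectively); the recipe comes from $F_\alpha\sim\tfrac12 g^\ast[\tilde B_\alpha]$ on the right resolution, as in the proof of Proposition~\ref{generate L_+}. A direct computation in $D_{M_{13}}=(\Z/2)^{13}$ shows that the unique relation among the four $\tilde f_\alpha$ is $\sum_\alpha\tilde f_\alpha=0$, so $[L_+\!:\!M_{13}]=8$ and the length of $L_+$ is $a=13-2\cdot 3=7$. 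For the parity, Proposition~\ref{compute main inv}(iii) and Kharlamov's relation give $[X^\iota]=3h-\sum_\nu e_\nu$, and one checks that $[X^\iota]/2\equiv \tilde f_0+\tfrac12(e_{q_1}+e_{q_2}+e_{q_3})\pmod{M_{13}}$: since $\tfrac12(e_{q_1}+e_{q_2}+e_{q_3})$ does not lie in the glue subspace $L_+/M_{13}\subset D_{M_{13}}$ (an $\mathbb{F}_2$-linear check), $[X^\iota]$ is not $2$-divisible in $L_+$ and hence $\delta=1$.

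Fixing a primitive embedding $L_+\hookrightarrow\Lambda_{K3}$ (Proposition~\ref{unique emb}), the labelling supplies an isometry $j\colon L_+\to L_+(X,\iota)$ sending $h\mapsto[g^\ast\HP]$, $e_\nu\mapsto[g^{-1}(\nu)]$, $\tilde f_\alpha\mapsto[F_\alpha]$, surjective by Proposition~\ref{generate L_+}; the period of $((X,\iota),j)$ defines $\tilde p(C,\mathbf p)\in\widetilde{\mathcal{M}}_{(13,7,1)}$, which is $\PGL$-invariant by construction. For injectivity, if $\tilde p(C,\mathbf p)=\tilde p(C',\mathbf p')$ then the resulting Hodge isometry $\Phi\colon H^2(X',\Z)\to H^2(X,\Z)$ satisfies $j=\Phi\circ j'$ and preserves the ample class $4h-\sum_\nu e_\nu$; the strong Torelli theorem yields $\varphi\colon X\to X'$, Lemma~\ref{proj mor} applied to $j(h)$ recovers the projection $g\colon X\to\proj^2$ and the branch sextic $B$, $j(e_\nu)$ recovers the marked nodes, and $j(\tilde f_\alpha)$ recovers the branch components---hence the chord labels $L_i$ as well as the endpoint pairings $\{p_{i+},p_{i-}\}\subset L_i\cap C$. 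Algebraicity follows from running the construction in families and invoking Borel's extension theorem as in Theorem~\ref{period map r<12}, and the dimension equality $\dim U/{\PGL}=15-8=7=20-r$ together with irreducibility of the target upgrades injectivity to an open immersion by Zariski's Main Theorem. The main technical novelty is the discriminant-form bookkeeping establishing $(r,a,\delta)=(13,7,1)$; once that is in place the remainder parallels Theorem~\ref{open immersion (11,7,1)} almost verbatim.
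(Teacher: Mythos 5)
Your proposal follows exactly the template the paper intends for this theorem (the paper gives no separate proof, deferring to the model arguments of Sections~\ref{ssec:discri cover} and~\ref{ssec: (r,r,1)}), and the lattice bookkeeping is correct: the sextic $C+\sum_i L_i$ has twelve nodes, the glue vectors $\tilde f_0,\dots,\tilde f_3$ satisfy only Kharlamov's relation, and the resulting overlattice has main invariant $(13,7,1)$; the dimension count $15-8=7=20-r$ and the Zariski Main Theorem step are as in Theorem~\ref{period map r<12}.

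One concrete slip: the class $4h-\sum_\nu e_\nu$ is \emph{not} ample here. Its image pairs to $4-5=-1$ with each $(-2)$-curve $F_i=j(\tilde f_i)$ lying over the line $L_i$, since $L_i$ carries five of the twelve nodes (unlike the irreducible-sextic case of Proposition~\ref{Torelli r<12}, where $4H-\sum E_i$ does work). This does not affect the argument in substance --- replace it by $j(nh-\sum_\nu e_\nu)$ for $n$ large, i.e.\ the pullback under the finite double cover $X\to Y$ of an ample class on the blow-up $Y$ of ${\proj}^2$ at the twelve nodes, or simply observe that $\Phi$ matches all the geometric generators of $L_+(X',\iota')$ with those of $L_+(X,\iota)$ and hence carries some ample class to an ample class --- but as written the appeal to the strong Torelli theorem would fail. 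With that repair the proof is the paper's.
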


\begin{corollary}\label{unirat (13, a<9)}
The covers $\widetilde{\mathcal{M}}_{(13,a,\delta)}$ for $a\leq7$ are unirational. 
\end{corollary}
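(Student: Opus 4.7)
The plan is to deduce the corollary from Theorem \ref{open immersion (13,7,1)} together with Proposition \ref{isogeny bet discri cover}, exactly as in Corollaries \ref{unirat (11,a<9,1)} and \ref{unirat (12, a<10)}.

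First I would observe that since the parameter space $U$ is rational (as noted in the preamble to Theorem \ref{open immersion (13,7,1)}), the geometric quotient $U/{\PGL}$ is unirational. The open immersion $U/{\PGL} \hookrightarrow \widetilde{\mathcal{M}}_{(13,7,1)}$ furnished by Theorem \ref{open immersion (13,7,1)} then shows that $\widetilde{\mathcal{M}}_{(13,7,1)}$ itself is unirational, which settles the case $(a,\delta) = (7,1)$.

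For each remaining admissible main invariant $(13, a, \delta)$ in Figure \ref{Nikulin table} with $a < 7$, I would invoke Proposition \ref{isogeny bet discri cover} with source $(13, 7, 1)$: since the source parity equals $1$ and $a < 7$, case (i) of that proposition applies with no restriction on the target parity $\delta$, yielding a finite surjective morphism $\widetilde{\mathcal{M}}_{(13,7,1)} \to \widetilde{\mathcal{M}}_{(13,a,\delta)}$. Because the dominant image of a unirational variety is unirational, the corollary follows. There is no substantive obstacle: the entire content has been packaged in Theorem \ref{open immersion (13,7,1)} and Proposition \ref{isogeny bet discri cover}, and the only bookkeeping is to confirm from Figure \ref{Nikulin table} that every invariant $(13,a,\delta)$ with $a \leq 7$ is reached by one of the two steps above, which is immediate from the permissive form of case (i) in Proposition \ref{isogeny bet discri cover}.
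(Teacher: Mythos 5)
Your proposal is correct and follows exactly the route the paper intends: unirationality of $\widetilde{\mathcal{M}}_{(13,7,1)}$ from the rational parameter space $U$ via the open immersion of Theorem \ref{open immersion (13,7,1)} (dense since the target is irreducible of the same dimension $7$), followed by the finite surjections of Proposition \ref{isogeny bet discri cover}, case (i), onto all $\widetilde{\mathcal{M}}_{(13,a,\delta)}$ with $a<7$ appearing in Figure \ref{Nikulin table}. This matches the pattern the paper uses explicitly in Corollary \ref{unirat for r<10}, so there is nothing to add.
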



\section{The case $r\geq 14$}\label{sec: r>13} 

Let $U_d, V_d \subset ({\proj}^{2})^d$ be the loci defined in Section \ref{subsec: GIT}. 
By Proposition \ref{existence of configuration space}, when $d\geq5$,  
we have geometric quotients $U_d/{\PGL}$ and $V_d/{\PGL}$ 
as rational varieties of dimension $2d-8$ and $2d-9$ respectively. 
In this section we prove the following. 

\begin{theorem}\label{main  r>13}
One has birational period maps 
$U_d/{\PGL}\dashrightarrow \widetilde{\mathcal{M}}_{(28-2d,2d-6,\delta)}$ and  
$V_d/{\PGL}\dashrightarrow \widetilde{\mathcal{M}}_{(29-2d,2d-7,1)}$ 
for $5\leq d\leq7$ . 
\end{theorem}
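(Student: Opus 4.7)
The plan is to follow the template developed in Sections~\ref{sec: r <10}--\ref{sec: r=13}: for each $d\in\{5,6,7\}$ I would associate to an ordered tuple $\mathbf{p}\in U_d$ (resp.\ $V_d$) a branch curve $B_{\mathbf{p}}$---either a plane sextic or a bidegree $(4,4)$ curve on a smooth quadric---whose right DPN resolution yields a 2-elementary $K3$ surface $(X,\iota)$ of main invariant $(28-2d,2d-6,\delta)$ (resp.\ $(29-2d,2d-7,1)$), and whose labeled irreducible components and singularities, read off from the ordering of $\mathbf{p}$, produce a natural isometry $j\colon L_+\to L_+(X,\iota)$ from a reference 2-elementary hyperbolic lattice. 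Composing with the period, this defines the map $\tilde{p}\colon U_d\to\widetilde{\mathcal{M}}_{(28-2d,2d-6,\delta)}$.

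The concrete $B_{\mathbf{p}}$ that I would use are: for $d=7$, following Koike--Shiga--Takayama--Tsutsui \cite{K-S-T-T}, the union of four bidegree $(1,1)$ curves on $\mathbb{P}^1\times\mathbb{P}^1$ determined by $\mathbf{p}$; for $d=6$, following Matsumoto--Sasaki--Yoshida \cite{M-S-Y}, the union of the six lines dual to $p_1,\dots,p_6$ in the dual projective plane, which carries exactly $\binom{6}{2}=15$ nodes under the no-three-collinear hypothesis; and for $d=5$, a similar combination of lines on a suitable rational surface together with auxiliary rational components realizing the $(18,4,\delta)$-invariant. Once $B_{\mathbf{p}}$ is in hand, Proposition~\ref{generate L_+} and the patterns of Examples~\ref{ex:1}--\ref{ex:2} identify the generators of $L_+(X,\iota)$ with the components of $X^\iota$, the $(-2)$-curves lying over the nodes, and the pullback of the hyperplane class, giving the isometry $j$. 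The $V_d$ versions are handled by the same template, an additional marked collinearity encoding the alignment of $p_1,p_2,p_3$.

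The remaining verifications are exact analogues of those in Sections~\ref{sec: r <10}--\ref{sec: r=13}. That $\tilde{p}$ is a morphism of algebraic varieties follows from the construction of an associated family of $K3$ surfaces and Borel's extension theorem, as in the proof of Theorem~\ref{period map r<12}. Its $G$-invariance, with $G={\PGL}$ or ${\rm PGL}_2\times{\rm PGL}_2$, is built into the construction. Injectivity of the induced morphism $U_d/G\to\widetilde{\mathcal{M}}_{(28-2d,2d-6,\delta)}$ is established, as in Proposition~\ref{Torelli r<12} and the analogous statements of Sections~\ref{sec: r=10}--\ref{sec: r=13}, by reconstructing the morphism $X\to Y_0$ from the ample class supplied by $j$ via Lemma~\ref{proj mor}, recovering $B_{\mathbf{p}}$ as its branch, and reading off the labeling from $j$; thus $\mathbf{p}$ is determined up to $G$-action. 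A dimension count $\dim(U_d/G)=2d-8=\dim\widetilde{\mathcal{M}}_{(28-2d,2d-6,\delta)}$ (and similarly for $V_d$) gives dominance, and Zariski's Main Theorem promotes the injective morphism to an open immersion, yielding the asserted birational period map.

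The main obstacle is to exhibit, uniformly across $d=5,6,7$, a branch curve $B_{\mathbf{p}}$ with the correct combinatorics of rational components: the invariant $(28-2d,2d-6,\delta)$ requires $k+1=18-2d$ rational components in $X^\iota$, which for $d=5$ is eight and cannot be supplied by a union of at most six plane curves transversely intersecting on $\mathbb{P}^2$. One must therefore either replace $\mathbb{P}^2$ by an ambient surface such as a Hirzebruch surface $\mathbb{F}_n$ (as alluded to in the introduction) or derive the $d=5,6$ cases from the $d=7$ case by a careful specialization---the route the authors explicitly announce when they remark that \emph{our period maps for $d\leq6$ are derived from the ones for $d=7,8$}. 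The real technical content of the theorem therefore lies in managing these degenerations so that the lattice marking $j$ passes to the limit as an isometry onto $L_+(X,\iota)$ and the target main invariant is the claimed one; once that is settled, the Torelli, dimension, and Zariski arguments carry over verbatim from Sections~\ref{sec: r=12} and~\ref{sec: r=13}.
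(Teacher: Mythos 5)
Your proposal correctly reproduces the paper's general template (labeled branch curve $\to$ lattice-marked $2$-elementary $K3$ surface $\to$ period map, followed by Torelli, a dimension count, and Zariski's Main Theorem), but the entire content of this particular theorem lies in exhibiting the explicit branch curve $B_{\mathbf p}$ for each of the six cases $(d,U_d)$, $(d,V_d)$ with $5\leq d\leq7$, and there your proposal has genuine gaps. For $d=7$ you propose the union of four bidegree $(1,1)$ curves on ${\proj}^1\times{\proj}^1$ ``determined by $\mathbf p$,'' but you give no recipe producing four such curves from seven points of ${\proj}^2$, and none is canonical; at best this route parametrizes $\widetilde{\mathcal{M}}_{(14,8,1)}$ by the space of four $(1,1)$-curves modulo ${\rm PGL}_2\times{\rm PGL}_2$, which is not the statement to be proved (the domain must be $U_7/{\PGL}$, as this also feeds Theorem \ref{by-product}). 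The paper instead takes the unique irreducible nodal cubic $C$ through $p_1,\dots,p_7$ with node at $p_7$, together with the three lines $L_i=\overline{p_ip_{i+3}}$, so that $C+\sum_iL_i$ is a plane sextic with $13$ labeled nodes realizing $(14,8,1)$.

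For $d=5$ you correctly observe that the eight rational components of $X^{\iota}$ required by $(18,4,\delta)$ cannot arise from a nodal union of at most six plane curves, but the fixes you propose (an ambient Hirzebruch surface, or an unexecuted specialization from $d=7$ whose details you yourself flag as ``the real technical content'' and leave open) are not what resolves this. The mechanism is already in your toolkit: allow ordinary triple points and invoke Example \ref{ex:2}, by which each triple point adds $4$ to $r$ and contributes one extra rational central component $G_j$ to $X^{\iota}$. Concretely, for $U_5$ the paper draws six lines through the five points with ordinary triple points at $p_4,p_5$, giving $r=1+9+8=18$ and $k=7$, hence $a=4$; moreover it exhibits two such configurations realizing $\delta=0$ and $\delta=1$ respectively, a parity computation your proposal never addresses. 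The $V_d$ constructions (a conic through five of the points plus four lines for $V_7$; six lines with one, respectively three, ordinary triple points for $V_6$, $V_5$) are likewise explicit in the paper and entirely absent from your sketch. Only your $d=6$, $U_6$ suggestion is essentially workable, and even there the paper deliberately draws the six lines $\overline{p_ip_{i+1}}$ on the same plane rather than passing to the dual plane; the two choices differ by a Cremona involution of $U_6/{\PGL}$, as the paper remarks. Since the soft arguments (Borel extension, Torelli via Lemma \ref{proj mor}, dimension count) do carry over once the configurations are fixed, the missing constructions are exactly what your proof would need to supply.
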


By Proposition \ref{isogeny bet discri cover} and Figure \ref{Nikulin table} we have the following corollary, 
which completes the proof of Theorem \ref{main}. 

\begin{corollary}
The covers ${\cover}$ are unirational for $r\geq14$. 
\end{corollary}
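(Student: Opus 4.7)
The plan is to treat the three values $d = 5, 6, 7$ in parallel, following the systematic methodology established in Sections \ref{sec: r <10}--\ref{sec: r=13}. For each such $d$ I will construct a canonical reduced plane sextic $B(\mathbf{p})$, equipped with a labeling of its irreducible components and singular points, from a tuple $\mathbf{p}\in U_d$ (respectively $\mathbf{p}\in V_d$). Passing to the right resolution of $B(\mathbf{p})$ via Proposition \ref{existence of right resol} then produces a lattice-marked 2-elementary $K3$ surface $((X, \iota), j)$ whose main invariant I will verify to be $(28-2d,\,2d-6,\,\delta)$ (respectively $(29-2d,\,2d-7,\,1)$). The period of $((X,\iota),j)$ supplies the desired map $\tilde p$.

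For the concrete construction of $B(\mathbf{p})$, the prototype is the six-line configuration of Matsumoto--Sasaki--Yoshida: when $d = 6$, one takes the union of the six lines dual to $p_1, \dots, p_6$. This sextic has $\binom{6}{2}=15$ transverse nodes, and a direct computation using Propositions \ref{compute main inv} and \ref{generate L_+} together with Example \ref{ex:1} confirms that the associated 2-elementary $K3$ surface has main invariant $(16, 6, 1)$. For $d=5$ and $d=7$, one supplements the configuration of dual lines with additional canonically defined curves---for instance, the unique conic through the five points when $d=5$, or a selection of the pairwise lines $\overline{p_ip_j}$ of the seven points for $d = 7$---to obtain a reduced sextic whose right resolution yields a 2-elementary $K3$ surface of the predicted main invariant, as computed via Examples \ref{ex:1}--\ref{ex:2}. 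For the $V_d$ cases, one incorporates the distinguished line $L = \overline{p_1 p_2 p_3}$ as one of the labeled components of $B(\mathbf{p})$, and the same pattern yields invariant $(29-2d,\,2d-7,\,1)$.

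With $B(\mathbf{p})$ and its labeling in hand, I will define a reference 2-elementary lattice $L_+$ of the required rank $r = 28 - 2d$ (respectively $29-2d$) together with a natural isometry $j\colon L_+ \to L_+(X, \iota)$, mirroring the construction in Sections \ref{ssec: (10,8,0)}--\ref{ssec: (13,9,1)}: the generators of $L_+$ consist of the pullback of the hyperplane class, the $(-2)$-classes $e_i$ corresponding to the labeled nodes of $B(\mathbf{p})$, and appropriate $\frac{1}{2}$-integral vectors in $L_+^{\vee}$ encoding the divisibility relations satisfied by the components of the fixed locus $X^\iota$. Fixing a primitive embedding $L_+ \subset \Lambda_{K3}$ and passing to the period of $((X,\iota), j)$ as in (\ref{lifted period}) yields a holomorphic map $\tilde p\colon U_d \to \widetilde{\mathcal{M}}_{(28-2d,\,2d-6,\,\delta)}$, algebraic by Borel's extension theorem as in the proof of Theorem \ref{period map r<12}. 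The map is ${\PGL}$-invariant, and injectivity on ${\PGL}$-orbits follows from the strong Torelli theorem in the manner of Proposition \ref{Torelli r<12}: one recovers the morphism $g\colon X \to \proj^2$ from the ample class $j(h)$ via Lemma \ref{proj mor}, hence recovers the branch sextic $B(\mathbf{p})$, its labeled components, and the marked points $p_i = g(E_i)$. Since $\dim U_d/{\PGL} = 2d - 8 = 20 - (28-2d)$ and the target is irreducible, Zariski's Main Theorem promotes $\tilde p$ to an open immersion $U_d/{\PGL} \hookrightarrow \widetilde{\mathcal{M}}_{(28-2d,\,2d-6,\,\delta)}$, which is birational since the source is rational and the dimensions agree.

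The principal obstacle is the case-by-case specification of $B(\mathbf{p})$ for $d = 5$ and $d = 7$: the auxiliary curves completing the dual-line configuration to degree $6$ must be chosen ${\PGL}$-equivariantly, and one must verify that the resulting singularity configuration (nodes together with possibly ordinary triple points) yields precisely the predicted $(r, a, \delta)$ via a careful application of Propositions \ref{compute main inv} and \ref{generate L_+} together with Examples \ref{ex:1}--\ref{ex:2}. Once this verification is carried through for each $d$, the remainder of the argument is a mechanical adaptation of the proofs in Sections \ref{ssec: (r,r,1)}, \ref{ssec: (10,8,0)}, and \ref{ssec: (12,10,1)}.
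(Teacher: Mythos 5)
Your overall framework is the paper's: from a point configuration draw a labeled sextic, pass to its right resolution to obtain a lattice-marked 2-elementary $K3$ surface, and upgrade the resulting period map to an open immersion by the Torelli theorem, a dimension count, and Zariski's Main Theorem. The gap lies in the concrete sextics for $d=5$ and $d=7$, which are exactly the novel content of Section \ref{sec: r>13} and which your proposal either gets wrong or defers. For $d=7$ you suggest ``a selection of the pairwise lines $\overline{p_ip_j}$''; this cannot work: for a union of six lines whose singularities are $t$ ordinary triple points and $15-3t$ nodes, Examples \ref{ex:1} and \ref{ex:2} give $r=1+(15-3t)+4t=16+t\geq16$, so no configuration of lines alone realizes $r=14$. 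The paper instead takes the unique irreducible nodal cubic through $p_1,\dots,p_7$ with node at $p_7$ (existence and uniqueness being a separate point, cf.\ Section \ref{ssec: (12,10,1)}) together with the three lines $\overline{p_ip_{i+3}}$; this has $13$ nodes and yields $(14,8,1)$, and a nonlinear component is unavoidable. For $d=5$ your suggestion superimposes the five \emph{dual} lines, which live in the dual plane, on the conic through the five points, which lives in the original plane, so it does not define a plane sextic at all; and in any case one must realize \emph{both} parities of $(18,4,\delta)$ from five general points, which forces two distinct six-line configurations (each with two triple points), the parity $\delta=0$ versus $\delta=1$ being detected through Proposition \ref{generate L_+}. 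Declaring this ``the principal obstacle'' and leaving it unresolved omits precisely the part of the argument that is not mechanical.

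Two further remarks. First, the period maps only treat the invariants with $r+a=22$; to conclude the Corollary for every $(r,a,\delta)$ with $r\geq14$ on Figure \ref{Nikulin table} you must still descend to the smaller values of $a$ by the isogenies of Proposition \ref{isogeny bet discri cover}, a step your write-up never invokes. Second, your six dual lines for $d=6$ are the Matsumoto--Sasaki--Yoshida configuration; the invariant computation $(16,6,1)$ is correct, but the paper deliberately draws the lines $\overline{p_ip_{i+1}}$ in the same plane as the points (the two choices differ by a Cremona transformation of $U_6/{\PGL}$, cf.\ the Remark in Section \ref{ssec: (16,6,1)}), which is what allows all seven configurations for $5\leq d\leq7$ to be obtained from one another by degeneration and treated uniformly.
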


Our constructions of the period maps are similar to those for eight points 
(Sections \ref{ssec: (12,10,1)} and \ref{ssec: (13,9,1)}): 
we draw a sextic from a given point set, 
label its singularities in a natural way, 
and then associate a lattice-marked 2-elementary $K3$ surface. 
Unlike the eight point cases, 
our labelings for $d\leq7$ leave no ambiguity, 
and so we obtain points in $\widetilde{\mathcal{M}}_{(r,22-r,\delta)}$. 
Actually, these period maps may be derived from the ones for eight points by degeneration: 
as we specialize a configuration of points, the resulting sextic gets more degenerate, 
and the period goes to a Heegner divisor. 

Theorem \ref{main  r>13} for $U_6$ was first found by Matsumoto-Sasaki-Yoshida \cite{M-S-Y}. 
Considering degeneration, 
they essentially obtained the assertion also for $V_6$, $U_5$, $V_5$ with $\delta=1$. 
The novelty of Theorem \ref{main  r>13} is the constructions for $d=7$. 
But even for $d\leq6$, our period maps differ from the ones in \cite{M-S-Y}. 
Specifically, from a given point set we draw lines on the same plane, 
while in \cite{M-S-Y} the point set is regarded as a set of lines on the dual plane. 
Our argument as explained in Section \ref{ssec:discri cover} 
makes it easier to derive the monodromy groups, 
which were found by direct calculations in \cite{M-S-Y}. 



\subsection{$\widetilde{\mathcal{M}}_{(14,8,1)}$ and seven general points in ${\proj}^2$}\label{ssec: (14,8,1)}

Let $U\subset({\proj}^2)^7$ be the open set of seven distinct points 
$\mathbf{p}=(p_1, \cdots, p_7)$ such that 
$(i)$ there exists an irreducible nodal cubic $C$ passing $p_1, \cdots, p_7$ with ${\rm Sing}(C)=p_7$ and 
$(ii)$ if we denote $L_i=\overline{p_{i}p_{i+3}}$ for $i\leq3$, 
the sextic $C+\sum_iL_i$ has only nodes as singularities. 
We put $q_i=L_i \cap C \backslash \{ p_{i}, p_{i+3} \}$ and $q_{ij}=L_i \cap L_j$. 
We thus obtain from $\mathbf{p}$ the nodal sextic $C+\sum_iL_i$ and 
the complete labeling $(p_i, q_{\mu})_{i, \mu}$ of its nodes. 
The components of $C+\sum_iL_i$ are also labelled obviously. 
Taking the right resolution of $C+\sum_iL_i$ and using these labelings, 
we obtain a lattice-marked 2-elementary $K3$ surface $((X, \iota), j)$ of type $(14, 8, 1)$ as before. 
This defines a morphism $\tilde{p}\colon U\to\widetilde{\mathcal{M}}_{(14,8,1)}$,  
and we will see the following.

\begin{theorem}\label{period map (14,8,1)}
The period map $\tilde{p}$ descends to an open immersion $U/{\PGL}\to\widetilde{\mathcal{M}}_{(14,8,1)}$ 
from a geometric quotient $U/{\PGL}$. 
\end{theorem}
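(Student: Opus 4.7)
The plan is to follow the template developed in the proofs of Theorem \ref{period map r<12} and Theorem \ref{period map (12,10,1)}, specialized to the configuration at hand. First, the geometric quotient $U/{\PGL}$ exists by Lemma \ref{stab & fin mor} applied to the finite morphism $U\to|\sextic|$, $\mathbf{p}\mapsto C+\sum_i L_i$; its image lies in the stable locus by Proposition \ref{stab sextic} since the sextic has only nodes. The sextic has exactly thirteen nodes $\mathcal{N}=\{p_1,\ldots,p_7,q_1,q_2,q_3,q_{12},q_{13},q_{23}\}$, which together with the four labeled components $C,L_1,L_2,L_3$ give a complete labeling of its singular locus and irreducible components. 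A parameter count yields $\dim U/{\PGL}=14-8=6=\dim\widetilde{\mathcal{M}}_{(14,8,1)}$.

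Next I would construct a reference lattice with marking. Let $M=\langle h,e_\nu\rangle_{\nu\in\mathcal{N}}$ be the rank-$14$ lattice with $(h,h)=2$, $(e_\nu,e_\nu)=-2$ and orthogonal basis, and let $f_C,f_{L_1},f_{L_2},f_{L_3}\in M\otimes\Q$ be defined by $2f_C=3h-2e_{p_7}-\sum_{i=1}^{6}e_{p_i}-\sum_{i=1}^{3}e_{q_i}$ and $2f_{L_i}=h-e_{p_i}-e_{p_{i+3}}-e_{q_i}-e_{q_{ij}}-e_{q_{ik}}$ with $\{i,j,k\}=\{1,2,3\}$, mirroring the strict transforms of the four components of $C+\sum_i L_i$ on the blowup of $\mathbb{P}^2$ at $\mathcal{N}$. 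The overlattice $L_+=\langle M,f_C,f_{L_1},f_{L_2},f_{L_3}\rangle$ is an even $2$-elementary lattice of rank $14$ and of index $2^3$ over $M$ (the only relation being $f_C+f_{L_1}+f_{L_2}+f_{L_3}\in M$); a direct computation of $D_{L_+}$ confirms main invariant $(14,8,1)$. By Proposition \ref{generate L_+} and Example \ref{ex:1}, for each $\mathbf{p}\in U$ the labeling induces a canonical isometry $j\colon L_+\to L_+(X,\iota)$ by $h\mapsto[g^{\ast}\HP]$, $e_\nu\mapsto[g^{-1}(\nu)]$, $f_C\mapsto[F_C]$, $f_{L_i}\mapsto[F_{L_i}]$, where $(X,\iota)$ is associated to the right resolution of $C+\sum_i L_i$, $g\colon X\to\mathbb{P}^2$ is the natural projection, and $F_C,F_{L_i}$ are the components of $X^{\iota}$ over $C,L_i$. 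Fixing a primitive embedding $L_+\subset\Lambda_{K3}$ and taking periods as in \eqref{lifted period} produces the candidate period map $\tilde{p}\colon U\to\widetilde{\mathcal{M}}_{(14,8,1)}$.

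Algebraicity of $\tilde{p}$ follows verbatim from the family argument in the proof of Theorem \ref{period map r<12}: build the universal family of right resolutions over $U$, trivialize the local system $\mathcal{L}_+=(R^2\pi_\ast\Z)^{\iota}$ using the marking, deduce that the monodromy on $\mathcal{L}_-=(\mathcal{L}_+)^\perp$ is contained in $\widetilde{{\rm O}}(L_-)$, then invoke Borel's extension theorem. For injectivity, suppose $\tilde{p}(\mathbf{p})=\tilde{p}(\mathbf{p}')$. Then there is a Hodge isometry $\Phi\colon H^2(X,\Z)\to H^2(X',\Z)$ with $\Phi\circ j=j'$. Since $j(h-\epsilon\sum_\nu e_\nu)$ is ample on $X$ for small $\epsilon\in\Q_{>0}$ and is preserved by $\Phi$, the strong Torelli theorem yields $\varphi\colon X\to X'$ with $\varphi^{\ast}=\Phi$. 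Lemma \ref{proj mor} applied to the class $j(h)$ then produces $\psi\in\aut(\mathbb{P}^2)$ with $g'\circ\varphi=\psi\circ g$. The equalities $\Phi(j(e_\nu))=j'(e_\nu)$ and $\Phi(j(f_C))=j'(f_C)$, $\Phi(j(f_{L_i}))=j'(f_{L_i})$ force $\psi$ to respect both the labeled nodes and the labeled components of the branch sextic, so $\psi(\mathbf{p})=\mathbf{p}'$. The induced injection $U/{\PGL}\to\widetilde{\mathcal{M}}_{(14,8,1)}$ between irreducible varieties of dimension $6$ is dominant, and Zariski's Main Theorem upgrades it to an open immersion.

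The principal obstacle is the explicit verification that $(D_{L_+},q_{L_+})$ has invariant $(8,1)$, especially the parity $\delta=1$: one must exhibit an element $x\in D_{L_+}$ with $q_{L_+}(x)\not\in\Z/2\Z$, which amounts to a tedious but straightforward calculation using the generators $f_C,f_{L_i}$ and the single $\Z$-linear relation among their doublings. A secondary check is that the class $j(h-\epsilon\sum_\nu e_\nu)$ lies in the Kähler cone of $X$ for generic $\mathbf{p}\in U$, which can be verified by noting that it is the pullback to $X$ of an ample class on $X/\iota$.
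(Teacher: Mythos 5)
Your proposal is correct and follows essentially the same route as the paper, which for this theorem only sketches the argument by reference to the model proofs of Proposition \ref{Torelli r<12}, Theorem \ref{period map r<12}, and Section \ref{ssec: (12,10,1)}; your explicit reference lattice $L_+$, marking $j$, family/monodromy argument, Torelli-plus-Lemma \ref{proj mor} injectivity, and dimension count via Zariski's Main Theorem are exactly what the paper intends. The one computation you flag as remaining (that $L_+$ has invariant $(8,1)$) can in fact be read off geometrically from Proposition \ref{compute main inv} applied to the fixed locus of $(X,\iota)$, since $L_+\simeq L_+(X,\iota)$.
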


\begin{figure}[h]
\centerline{\includegraphics[width=6.5cm]{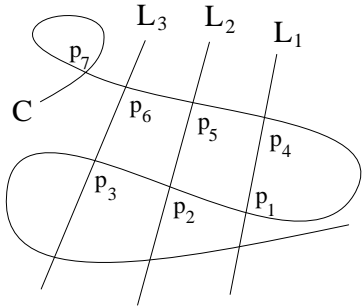}} 
\caption{Sextic curve for $(r, a, \delta)=(14, 8, 1)$} 
\label{r14}
\end{figure}

In the next section 
we degenerate the points $p_5, p_6, p_7$ to collinear position. 
This forces the cubic $C$ to degenerate to the union of a conic and a line.


\subsection{$\widetilde{\mathcal{M}}_{(15,7,1)}$ and seven special points in ${\proj}^2$}\label{subsec: (15, 7, 1)}

Let $V\subset({\proj}^2)^7$ be the codimension $1$ locus of 
seven distinct points $\mathbf{p} =(p_1, \cdots, p_7)$ such that 
$(i)$ $p_5, p_6, p_7$ lie on a line $L_0$, 
$(ii)$ $p_1,\cdots, p_4, p_7$ lie on a smooth conic $Q$, and 
$(iii)$ if we put $L_i=\overline{p_{i}p_{i+3}}$ for $1\leq i\leq3$, 
the sextic $Q+ \sum_{i=0}^{3}L_i$ has only nodes as singularities. 
We set $q_0=L_0\cap Q\backslash p_7$, 
$q_i=L_i\cap Q\backslash p_i$ for $i=2, 3$, 
and $q_{ij}=L_i\cap L_j$ when $q_{ij}\ne p_k$ for some $k$. 
In this way we obtain from $\mathbf{p}$ 
the sextic $Q+ \sum_iL_i$, 
the labeling $(p_i, q_{\mu})_{i, \mu}$ of its nodes, 
and also the obvious labeling of its components. 
As before, from these 
we obtain a lattice-marked 2-elementary $K3$ surface of type $(15, 7, 1)$. 
This defines a morphism $\tilde{p}\colon V\to\widetilde{\mathcal{M}}_{(15,7,1)}$, 
and we have the following. 

\begin{theorem}\label{period map (15,7,1)}
The period map $\tilde{p}$ 
descends to an open immersion $V/{\PGL}\to\widetilde{\mathcal{M}}_{(15,7,1)}$ 
from a geometric quotient $V/{\PGL}$. 
\end{theorem}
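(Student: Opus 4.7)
The plan is to follow the template of Theorem \ref{period map r<12} and Theorem \ref{period map (14,8,1)}, specialised to the codimension-one configuration here. First I would verify the existence of the geometric quotient $V/{\PGL}$. Every sextic $Q+\sum_{i=0}^{3}L_{i}$ arising from $\mathbf{p}\in V$ has only nodes as singularities, so the natural ${\PGL}$-equivariant morphism $V\to|{\sextic}|$, $\mathbf{p}\mapsto Q+\sum_{i=0}^{3}L_{i}$, factors through the stable locus by Proposition \ref{stab sextic}; since this morphism is quasi-finite, Lemma \ref{stab & fin mor} provides $V/{\PGL}$ as a quasi-projective variety. A dimension count gives $\dim V/{\PGL}=13-8=5=20-15=\dim\widetilde{\mathcal{M}}_{(15,7,1)}$.

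Next I would promote $\tilde{p}$ to a morphism of algebraic varieties by a family-theoretic argument. Over $V$ the universal sextic $\mathcal{C}\subset V\times{\proj}^{2}$, cut out by the incidence conditions defining $Q,L_{0},L_{1},L_{2},L_{3}$, admits a simultaneous right resolution, yielding a smooth family $\pi\colon\mathcal{X}\to V$ of 2-elementary $K3$ surfaces of type $(15,7,1)$ with involution $\iota$. The ordering of $p_{1},\ldots,p_{7}$, together with the distinguished roles of the conic $Q$ (through $p_{1},\ldots,p_{4},p_{7}$), the collinearity line $L_{0}$ (through $p_{5},p_{6},p_{7}$), and the three connecting lines $L_{i}=\overline{p_{i}p_{i+3}}$, trivialises the invariant local system $\mathcal{L}_{+}=(R^{2}\pi_{*}\mathbb{Z})^{\iota}$ against a reference lattice $L_{+}$ of main invariant $(15,7,1)$, so that the monodromy on $\mathcal{L}_{-}=\mathcal{L}_{+}^{\perp}\cap R^{2}\pi_{*}\mathbb{Z}$ lies in $\widetilde{{\rm O}}(L_{-})$. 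Local liftability and Borel's extension theorem \cite{Bo} then show that $\tilde{p}$ is algebraic; its ${\PGL}$-invariance is built into the construction.

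For injectivity of the induced map $V/{\PGL}\to\widetilde{\mathcal{M}}_{(15,7,1)}$, suppose $\tilde{p}(\mathbf{p})=\tilde{p}(\mathbf{p}')$. Then there is a Hodge isometry $\Phi\colon H^{2}(X,\mathbb{Z})\to H^{2}(X',\mathbb{Z})$ with $j'=\Phi\circ j$, and applying the strong Torelli theorem to a suitable ample class built from $j(h)$ and the exceptional classes produces an isomorphism $\varphi\colon X\to X'$ inducing $\Phi$. Lemma \ref{proj mor} identifies the quotient morphism $g\colon X\to{\proj}^{2}$ branched over $Q+\sum L_{i}$ with $\phi_{H}$ for $H=j(h)$, so $\varphi$ descends to $\psi\in{\PGL}$ carrying $Q+\sum L_{i}$ to its primed counterpart. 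Because $j$ records the class of each $(-2)$-curve $g^{-1}(p_{i})$ and of each component of $X^{\iota}$, the descended $\psi$ sends $p_{i}$ to $p_{i}'$ and matches each named component to its counterpart, giving the desired ${\PGL}$-equivalence. Dominance follows from the dimension coincidence together with irreducibility of $\widetilde{\mathcal{M}}_{(15,7,1)}$, and Zariski's Main Theorem then upgrades the bijective birational morphism onto its image to an open immersion.

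The hard part will be setting up the reference lattice $L_{+}$ so that each of the five components $Q,L_{0},L_{1},L_{2},L_{3}$ is pinned to a distinct class (preventing any ambient symmetry of $\widetilde{{\rm O}}(L_{+})$ from shuffling them), and so that the labeling carried by $\mathbf{p}\in V$ actually defines an isometry $L_{+}\to L_{+}(X,\iota)$ via the recipe of Example \ref{ex:2}; this bookkeeping is precisely what makes the distinctions conic-versus-line and $L_{0}$-versus-$L_{i}$ visible in $\Phi$ and forces the matching of all labeled nodes in the preceding step.
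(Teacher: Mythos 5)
Your proposal is correct and follows essentially the same route as the paper, which proves this theorem only by reference to the model arguments of Sections \ref{ssec: (r,r,1)} and \ref{ssec: (2,2,0)} (stability plus Lemma \ref{stab & fin mor} for the geometric quotient, a simultaneous right resolution and Borel's extension theorem for algebraicity of $\tilde{p}$, and the strong Torelli theorem plus Lemma \ref{proj mor} for injectivity, with the dimension count $\dim V/{\PGL}=13-8=5=20-15$ giving dominance). The only cosmetic slip is that the sextic $Q+\sum_{i=0}^{3}L_i$ has only nodes, so the marking is supplied by Example \ref{ex:1} rather than Example \ref{ex:2}; the $14$ nodes indeed give $r=1+14=15$.
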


In the next section we degenerate $p_7$ on $\overline{p_1p_2}$. 
Then $p_7$ is determined as $\overline{p_1p_2}\cap\overline{p_5p_6}$, 
so that the parameters are reduced to six points. (We make renumbering).  

\begin{figure}[h]
\centerline{\includegraphics[width=6.5cm]{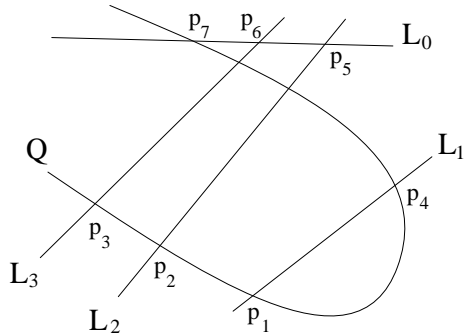}} 
\caption{Sextic curve for $(r, a, \delta)=(15, 7, 1)$} 
\label{r15}
\end{figure}


\subsection{$\widetilde{\mathcal{M}}_{(16,6,1)}$ and six general points in ${\proj}^2$}\label{ssec: (16,6,1)}

Let $U\subset({\proj}^2)^6$ be the open set of six distinct points 
$\mathbf{p}=(p_1,\cdots, p_6)$ such that if we draw six lines by  
$L_1=\overline{p_1p_2},\cdots, L_5=\overline{p_5p_6}$, and $L_6=\overline{p_6p_1}$, 
then the sextic $\sum_iL_i$ has only nodes as singularities. 
Since the nodes of $\sum_iL_i$ are the intersections of the lines $L_i$, 
the labeling $(L_1,\cdots, L_6)$ of the lines induces that of the nodes, 
e.g., by setting $p_{ij}=L_i\cap L_j$. 
Hence from $\mathbf{p}$ we obtain the sextic $\sum_iL_i$ 
with a labeling of its nodes and components. 
This defines a lattice-marked 2-elementary $K3$ surface of type $(16, 6, 1)$. 
Thus we obtain a morphism $\tilde{p}\colon U\to\widetilde{\mathcal{M}}_{(16,6,1)}$, 
and see the following. 

\begin{theorem}\label{period map for (16,6,1)}
The period map $\tilde{p}$ descends to an open immersion 
$\widetilde{\mathcal{P}}\colon U/{\PGL}\to\widetilde{\mathcal{M}}_{(16,6,1)}$ 
from a geometric quotient $U/{\PGL}$. 
\end{theorem}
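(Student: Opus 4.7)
The plan is to follow closely the template established in Sections \ref{ssec: (r,r,1)}, \ref{ssec: (10,8,0)}, and the earlier subsections of Section \ref{sec: r>13}, adapting them to the six-line configuration. First I would record the dimension match: $\dim U = 12$, $\dim {\PGL} = 8$, and $\dim \widetilde{\mathcal{M}}_{(16,6,1)} = 20 - 16 = 4$, so the generic fibers of $\tilde{p}$ have the right size to be ${\PGL}$-orbits. Existence of the geometric quotient $U/{\PGL}$ follows from Lemma \ref{stab & fin mor} applied to the finite ${\PGL}$-equivariant morphism $U \to |{\sextic}|$, $\mathbf{p} \mapsto \sum_{i=1}^{6}L_i$, whose image consists of sextics with only nodal (hence simple) singularities and thus lies in the stable locus by Proposition \ref{stab sextic}.

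Next I would set up the reference lattice. The sextic $\sum L_i$ has $15 = \binom{6}{2}$ nodes and six smooth rational components, so by Example \ref{ex:1} the invariant lattice $L_+(X,\iota)$ of the associated 2-elementary $K3$ surface $(X,\iota)$ is generated over $\Q$ by $g^{\ast}\HP$, the classes of the $(-2)$-curves over the 15 nodes, and the six components $F_1,\dots,F_6$ of $X^\iota$, the six relations $F_i \sim \tfrac{1}{2} f^{\ast} L_i$ accounting for the index $2^{(r-a)/2} = 2^{5}$. I would choose a reference 2-elementary lattice $L_+$ of main invariant $(16,6,1)$ with an explicit basis modeled on these classes (together with a chosen marking of the six lines and fifteen nodes) and fix a primitive embedding $L_+ \hookrightarrow \Lambda_{K3}$; the configuration data $(L_1,\dots,L_6)$ then induces a canonical isometry $j\colon L_+ \to L_+(X,\iota)$, and $\tilde{p}(\mathbf{p})$ is defined to be the period of $((X,\iota),j)$ in $\widetilde{\mathcal{M}}_{(16,6,1)}$.

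To show $\tilde{p}$ is a morphism of algebraic varieties I would run the family construction exactly as in the proof of Theorem \ref{period map r<12}: over an open subset of $U$ take the universal sextic, blow up the fifteen sections of nodes, form the simultaneous double cover, and use the marked exceptional divisors together with the pullback of $\HP$ to trivialize $(R^{2}\pi_{\ast}\Z)^{\iota}$. This shows the monodromy on the anti-invariant local system lies in $\widetilde{{\rm O}}(L_{-})$, so $\tilde{p}$ is locally liftable and is algebraic by Borel's extension theorem. For ${\PGL}$-invariance and injectivity of the induced map $\widetilde{\mathcal{P}}\colon U/{\PGL}\to\widetilde{\mathcal{M}}_{(16,6,1)}$ one argues as in Proposition \ref{Torelli r<12}: equality of periods yields a Hodge isometry $\Phi$ with $j' = \Phi \circ j$; since $j(h)$ is the class of an ample bundle $H = g^{\ast}\HP$ (this is the class $4h - \sum$ no, more accurately a small perturbation thereof as in Section \ref{ssec: (r,r,1)}), the strong Torelli theorem gives $\varphi\colon X \to X'$ inducing via Lemma \ref{proj mor} an element $\psi \in {\PGL}$ with $g' \circ \varphi = \psi \circ g$; and the preservation of the marking of $X^\iota$-components under $\Phi$ forces $\psi(L_i) = L_i'$, whence $\psi(p_i) = p_i'$ since $p_i = L_{i-1} \cap L_i$. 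Finally, injectivity plus algebraicity plus equality of dimensions plus irreducibility of the target force dominance, and Zariski's Main Theorem completes the proof that $\widetilde{\mathcal{P}}$ is an open immersion.

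The main obstacle I foresee is the bookkeeping in the lattice construction: one must exhibit $L_+$ concretely with a basis adapted to the six lines and fifteen nodes, verify that the glue vectors corresponding to $F_i \sim \tfrac{1}{2}f^{\ast}L_i$ yield an even 2-elementary lattice with the correct discriminant invariant $(6,1)$, and check that the $\frak{S}_6$-symmetry permuting the six lines acts by isometries of $L_+$ preserving the embedding into $\Lambda_{K3}$ (so that the period is genuinely well-defined on the quotient by ${\PGL}$, the labelings being absorbed into the equivalence of lattice-markings). Once this 2-elementary arithmetic is in place, all remaining steps are routine parallels of the earlier sections.
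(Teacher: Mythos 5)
Your proposal is correct and follows essentially the same route the paper intends: the paper gives no separate proof here but explicitly defers to the model arguments of Section \ref{ssec: (r,r,1)} (geometric quotient via Lemma \ref{stab & fin mor} and Proposition \ref{stab sextic}, reference overlattice of $M_{16}$ via Example \ref{ex:1}, algebraicity via the family construction and Borel extension, injectivity via strong Torelli and Lemma \ref{proj mor}, then dimension count and Zariski's Main Theorem), which is exactly what you carry out. Two small remarks: the $\frak{S}_6$-equivariance check in your last paragraph is superfluous, since the lines are canonically labelled by the ordering of $\mathbf{p}$ and ${\PGL}$ preserves that labelling (this is precisely why, unlike the $d=8$ cases, one lands in $\widetilde{\mathcal{M}}_{(16,6,1)}$ itself with no further symmetric-group quotient); and, as you half-note, the ample class must be $Nh-\sum_j e_j$ for $N$ large rather than $4h-\sum_j e_j$, because the strict transforms of the $L_i$ pass through five of the fifteen nodes.
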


\begin{remark}
If we identify ${\proj}^2\simeq|{\HP}|$, 
the assignment $\mathbf{p}\mapsto(L_1,\cdots, L_6)$ induces 
a Cremona transformation $w$ of $U/{\PGL}$. 
The period map of \cite{M-S-Y} is written as $\widetilde{\mathcal{P}} \circ w^{-1}$. 
One sees that $w^2$ is the cyclic permutation $(654321)$ on $U/{\PGL}$. 
\end{remark}
 
\begin{figure}[h]
\centerline{\includegraphics[width=6.4cm]{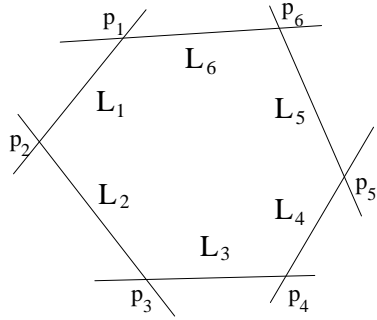}} 
\caption{Sextic curve for $(r, a, \delta)=(16, 6, 1)$} 
\label{r16}
\end{figure}


\subsection{$\widetilde{\mathcal{M}}_{(17,5,1)}$ and six special points in ${\proj}^2$}\label{ssec: (17,5,1)}

Let $V\subset ({\proj}^2)^6$ be the codimension $1$ locus of six distinct points 
$\mathbf{p}=(p_1,\cdots, p_6)$ such that 
$(i)$ $p_3, p_4, p_6$ are collinear, and  
$(ii)$ if we draw lines by $L_1=\overline{p_1p_2},\cdots, L_5=\overline{p_5p_6}$, and $L_6=\overline{p_6p_1}$, 
then any singularity of the sextic $\sum_iL_i$ other than $p_6$ is a node. 
The point $p_6$ is an ordinary triple point of $\sum_iL_i$. 
As in Section \ref{ssec: (16,6,1)}, 
we obtain a labeling of the nodes of $\sum_iL_i$ from the obvious one of the lines $L_i$. 
Denoting by $q_i$ the infinitely near point of $p_6$ given by $L_i$ for $i=3, 5, 6$, 
we also obtain a labeling of the branches of $\sum_iL_i$ at $p_6$. 
The 2-elementary $K3$ surface $(X, \iota)$ associated to the sextic $\sum_iL_i$ 
has main invariant $(17, 5, 1)$. 
Here we encounter a triple point, 
but we can proceed as before referring to Example \ref{ex:2}: 
if $g\colon X\to{\proj}^2$ is the natural projection branched over $\sum_iL_i$, 
the curve $g^{-1}(p_6)$ over $p_6$ consists of four labelled $(-2)$-curves, 
namely the $(-2)$-curves over $q_i$ and a component of $X^{\iota}$. 
Together with the above labeling for the nodes and the lines, 
this induces an isometry $j\colon L_+\to L_+(X, \iota)$ from a reference lattice $L_+$. 
Thus we obtain a morphism $\tilde{p}\colon V\to\widetilde{\mathcal{M}}_{(17,5,1)}$, 
and see the following.

\begin{theorem}\label{period map (17,5,1)}
The period map $\tilde{p}$ descends to an open immersion $V/{\PGL}\to\widetilde{\mathcal{M}}_{(17,5,1)}$ 
from a geometric quotient $V/{\PGL}$. 
\end{theorem}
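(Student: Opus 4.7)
The plan is to mirror the template of Section~\ref{ssec: (16,6,1)}, adapted to the presence of an ordinary triple point at $p_6$ via Example~\ref{ex:2}. The labeling of the six lines $L_1,\dots,L_6$ induces a canonical labeling of every singularity of $\sum_i L_i$: twelve nodes (the five labelled points $p_1,\dots,p_5$ together with the seven remaining intersections $L_i \cap L_j$ that do not lie on a triple of concurrent lines) and one ordinary triple point at $p_6$ whose three branches are labelled by $L_3, L_5, L_6$. Taking the right resolution $(Y,B,\pi)$ of $({\proj}^2, \sum_i L_i)$ and combining Proposition~\ref{generate L_+} with Examples~\ref{ex:1}--\ref{ex:2}, one sees that $L_+(X,\iota)$ is generated by the class of $H=g^{\ast}\HP$, the twelve $(-2)$-curves over the nodes, the rational component $G$ of $X^{\iota}$ lying over $p_6$, the three $(-2)$-curves over the infinitely near points $q_3,q_5,q_6$, and the proper transforms of the six lines inside $X^{\iota}$. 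The resulting rank $1+12+4=17$ matches $r=17$, and the labeling transfers tautologically to an isometry $j\colon L_+ \to L_+(X,\iota)$ from a reference lattice $L_+$ of main invariant $(17,5,1)$ built to have generators in bijection with these geometric classes. Fixing once and for all a primitive embedding $L_+\subset\Lambda_{K3}$, the period of $((X,\iota),j)$ then defines the point $\tilde{p}(\mathbf{p})\in\widetilde{\mathcal{M}}_{(17,5,1)}$.

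${\PGL}$-invariance of $\tilde{p}$ is immediate, since the diagonal ${\PGL}$-action on $V$ is induced by the ambient action on ${\proj}^2$ and conjugates $g$, $H$, and every marked curve on $X$ consistently. Since both nodes and $D_4$-points are simple singularities, Proposition~\ref{stab sextic} gives that $\sum_i L_i$ is ${\PGL}$-stable, and the finite morphism $V \to |\sextic|^s$, $\mathbf{p}\mapsto \sum_i L_i$, combined with Lemma~\ref{stab & fin mor}, produces a geometric quotient $V/{\PGL}$ of dimension $12-1-8=3=20-r$. For injectivity of the induced map I would run the Torelli-type argument of Proposition~\ref{Torelli r<12} essentially verbatim: an equality $\tilde{p}(\mathbf{p})=\tilde{p}(\mathbf{p}')$ produces, via Proposition~\ref{glue}, a Hodge isometry $\Phi$ compatible with the lattice markings, and the strong Torelli theorem applied to an ample class lying in the $j$-image then yields an isomorphism $\varphi\colon X \to X'$ inducing $\Phi$. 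Because $\Phi([H'])=[H]$, Lemma~\ref{proj mor} descends $\varphi$ to a $\psi\in{\PGL}$ satisfying $g'\circ\varphi = \psi\circ g$; preservation of the twelve marked node $(-2)$-curves, of $[G]$, and of the three $(-2)$-curves over $q_3,q_5,q_6$ then forces $\psi$ to send each labelled line $L_i$ to $L_i'$ and hence each $p_i$ to $p_i'$.

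Finally, $V/{\PGL} \to \widetilde{\mathcal{M}}_{(17,5,1)}$ is an injective morphism between irreducible varieties of equal dimension and so is dominant; Zariski's Main Theorem upgrades it to an open immersion. That $\tilde{p}$ is algebraic rather than merely analytic follows exactly as in Theorem~\ref{period map r<12}, by a local-liftability argument on a family of right resolutions over $V$ combined with Borel's extension theorem. I expect the main obstacle to be the lattice bookkeeping: one must write down $L_+$ as a specific overlattice of the span of the above generators, verify that its discriminant form has invariant $(a,\delta)=(5,1)$, and confirm in families that the monodromy of the local system $(\mathcal{L}_+)^{\perp}\cap R^2\pi_{\ast}{\Z}$ lies exactly in $\widetilde{{\rm O}}(L_-)$ rather than some strictly larger arithmetic subgroup. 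The most delicate point is the interplay between $G$ and the three $(-2)$-curves over the infinitely near points of $p_6$; the intersection relations from Example~\ref{ex:2} together with the divisibility constraints imposed by the right-resolution recipe~\eqref{right resol prcs II} must be reconciled with the parity $\delta=1$, which I would carry out by a direct intersection-matrix computation on the span of the listed generators.
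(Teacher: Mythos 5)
Your proposal is correct and follows essentially the same route as the paper, which for this case simply defers to the model arguments of Sections \ref{ssec: (r,r,1)} and \ref{ssec: (16,6,1)} together with Example \ref{ex:2}; your singularity count (twelve nodes plus one $D_4$-point), the rank check $1+12+4=17$, the dimension count $12-1-8=3=20-r$, and the Torelli/recovery step all match what the paper intends. The only point you flag as delicate, the verification that $\delta=1$, is likewise left implicit in the paper (unlike the $(18,4)$ cases, where both parities occur and the paper computes it explicitly).
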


\begin{figure}[h]
\centerline{\includegraphics[width=6.5cm]{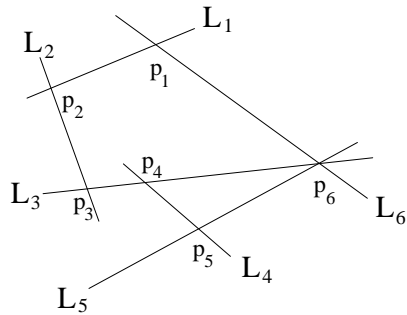}} 
\caption{Sextic curve for $(r, a, \delta)=(17, 5, 1)$} 
\label{r17}
\end{figure}

Degenerating $p_2, p_4, p_5$ to collinear position produces 
a period map for $\widetilde{\mathcal{M}}_{(18,4,0)}$ (Section \ref{ssec: (18,4,0)}), 
while degenerating $p_4$ to $p_3$ produces 
that for $\widetilde{\mathcal{M}}_{(18,4,1)}$ (Section \ref{ssec: (18,4,1)}).


\subsection{$\widetilde{\mathcal{M}}_{(18,4,0)}$ and five general points in ${\proj}^2$}\label{ssec: (18,4,0)}

Let $U\subset ({\proj}^2)^{5}$ be the open set of 
five distinct points $\mathbf{p}=(p_1,\cdots, p_5)$ such that 
no three of $p_1,\cdots, p_5$ other than $\{ p_1, p_2, p_3\}$ are collinear. 
For a $\mathbf{p}\in U$ we draw six lines by 
$L_i=\overline{p_ip_4}$ for $1\leq i\leq3$ and $L_i=\overline{p_{i-3}p_5}$ for $4\leq i\leq6$. 
Then the sextic $\sum_{i=1}^6L_i$ has ordinary triple points at $p_4$ and $p_5$, 
nodes at $L_i\cap L_j$ for $i\leq3$ and $j\geq4$, and no other singularity. 
The obvious labeling of the lines $L_i$ induces that of 
the nodes and the branches at the triple points of $\sum_iL_i$. 
The 2-elementary $K3$ surface $(X, \iota)$ associated to $\sum_iL_i$ 
has invariant $(r, a)=(18, 4)$. 
We have to identify its parity $\delta$. 
Let $(Y, B, \pi)$ be the right resolution of $\sum_iL_i$. 
We have the decomposition $B=\sum_{i=0}^7B_i$ such that 
$\pi(B_i)=L_i$ for $1\leq i\leq6$ and $\pi(B_0)=p_5$, $\pi(B_7)=p_4$. 
One checks that the divisor $(\sum_{i=0}^3B_i)-(\sum_{i=4}^7B_i)$ is in $4NS_Y$. 
Hence $(X, \iota)$ has parity $\delta=0$. 
Using our labeling for $\sum_iL_i$, 
we will obtain a morphism $\tilde{p}\colon U\to\widetilde{\mathcal{M}}_{(18,4,0)}$. 
Then we have the following.

\begin{theorem}\label{period map (18,4 0)}
The period map $\tilde{p}$ descends to an open immersion 
$U/{\PGL}\to\widetilde{\mathcal{M}}_{(18,4,0)}$ from a geometric quotient $U/{\PGL}$. 
\end{theorem}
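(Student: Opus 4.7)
The plan is to follow the template already developed in Sections \ref{ssec: (r,r,1)}, \ref{ssec: (16,6,1)} and especially \ref{ssec: (17,5,1)}, which is the immediate precursor. First I verify that a geometric quotient $U/{\PGL}$ exists: for any $\mathbf{p}\in U$ the sextic $\sum_{i=1}^{6}L_i$ has only nine nodes and two ordinary triple points, which are simple singularities, so by Proposition \ref{stab sextic} it is ${\PGL}$-stable in $|{\sextic}|$. The natural morphism $U \to |{\sextic}|$, $\mathbf{p} \mapsto \sum L_i$, is finite onto its image (the unordered set of lines is recoverable from the sextic, and $U$ is an open subset of the preimage), so Lemma \ref{stab & fin mor} yields a geometric quotient $U/{\PGL}$ of dimension $10-8=2$, matching $\dim \widetilde{\mathcal{M}}_{(18,4,0)} = 20-r = 2$.

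Next I make the reference lattice and the marking precise. Following Examples \ref{ex:1} and \ref{ex:2}, for the 2-elementary $K3$ surface $(X,\iota)$ associated to $\sum L_i$ and the natural projection $g\colon X \to {\proj}^2$, the lattice $L_+(X,\iota)$ is generated by $[g^{\ast}{\HP}]$, the nine $(-2)$-curves over the labelled nodes $L_i \cap L_j$, and, for each of the two labelled triple points $p_4, p_5$, the three labelled $(-2)$-curves over its infinitely near points together with the corresponding rational component of $X^{\iota}$; the six components of $X^{\iota}$ that map birationally onto the $L_i$ are already among these via the computation in Example \ref{ex:2}. The main invariant is $(18,4,\delta)$ with $\delta=0$, the value of $\delta$ being precisely the divisibility computation already carried out in the statement. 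This presentation, together with the labelling $(L_1,\dots,L_6,p_4,p_5)$ intrinsic to $\mathbf{p}$, gives a canonical isometry $j\colon L_+ \to L_+(X,\iota)$ from an abstractly defined reference lattice $L_+$. Fixing a primitive embedding $L_+ \hookrightarrow \Lambda_{K3}$ and setting $L=(L_+)^{\perp}\cap \Lambda_{K3}$, one obtains $\tilde{p}(\mathbf{p})\in\widetilde{\mathcal{M}}_{(18,4,0)}$ as the period of $((X,\iota),j)$ via \eqref{lifted period}. That $\tilde{p}$ is a morphism of algebraic varieties is proved exactly as in Theorem \ref{period map r<12}: one builds a family of right resolutions and double covers over $U$, the labelled classes trivialize $\mathcal{L}_+=(R^2\pi_{\ast}\Z)^{\iota}$ so that the monodromy on $\mathcal{L}_-$ factors through $\widetilde{\rm O}(L)$, and Borel's extension theorem \cite{Bo} converts the resulting locally liftable holomorphic map into a morphism.

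The descent to $U/{\PGL}$ is automatic because the whole construction depends only on the projective configuration. For injectivity, assume $\tilde{p}(\mathbf{p})=\tilde{p}(\mathbf{p}')$ and let $(X,j)$, $(X',j')$ be the associated marked objects. Then there is a Hodge isometry $\Phi\colon H^{2}(X',\Z)\to{\cohomology}$ with $\Phi\circ j' = j$. A suitable positive combination of $j(h)$ with the classes of the rational components of $X^{\iota}$ and of the exceptional $(-2)$-curves is an ample class, which is preserved by $\Phi$, so by the strong Torelli theorem $\Phi$ is induced by an isomorphism $\varphi\colon X'\to X$. Applying Lemma \ref{proj mor} to $j(h)=[g^{\ast}{\HP}]$ gives $\psi\in{\PGL}$ with $g\circ\varphi=\psi\circ g'$; since $j$ and $j'$ carry the labels of the individual lines $L_i$ and of the two triple points, $\psi$ sends $L_i'$ to $L_i$ and $p_k'$ to $p_k$ for $k=4,5$. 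The remaining marked points are recovered as the intersections $L_i\cap L_{i+3}$ for $i=1,2,3$ that are not triple points, so $\psi\cdot\mathbf{p}'=\mathbf{p}$. Hence the induced map $\widetilde{\mathcal{P}}\colon U/{\PGL}\to\widetilde{\mathcal{M}}_{(18,4,0)}$ is an injective morphism of irreducible varieties of the same dimension $2$, hence dominant, and Zariski's Main Theorem finishes the proof that it is an open immersion.

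The main obstacle is the Torelli step, and within it the care needed to separate the two triple points and the six lines in the marking. Concretely, the challenge is to confirm that the isometry $j\colon L_+\to L_+(X,\iota)$ encodes the labels of the lines and of the two triple points as distinguished sublattice generators, rather than only up to the symmetry group of $L_+$; otherwise the injectivity argument would recover an ordered configuration only up to an ambiguity and we would obtain a map not to $\widetilde{\mathcal{M}}_{(18,4,0)}$ but to a quotient by a finite group, as happens in Sections \ref{ssec: (12,10,1)} and \ref{ssec: (13,9,1)}. The fact that here there is no such ambiguity, parallel to the $d=6$ case of Section \ref{ssec: (17,5,1)}, is what makes the period map land in the full discriminant cover.
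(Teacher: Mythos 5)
Your overall route is the one the paper itself intends: the text of Section \ref{ssec: (18,4,0)} leaves the proof to the reader, deferring to the models of Sections \ref{ssec: (r,r,1)} and \ref{ssec: (17,5,1)}, and your treatment of stability via Proposition \ref{stab sextic} and Lemma \ref{stab & fin mor}, the family/Borel argument, the Torelli and recovery step, and the dimension count plus Zariski's Main Theorem reproduces those models faithfully. You are also right that the labelling here is complete, so that, unlike in Sections \ref{ssec: (12,10,1)} and \ref{ssec: (13,9,1)}, no enlarged group $\Gamma$ is needed and the period lands in the full discriminant cover.

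There is, however, one concrete error in your description of the marking. You assert that the six components $F_1,\dots,F_6$ of $X^{\iota}$ mapping birationally onto the lines $L_i$ are ``already among'' the classes generated by $[g^{\ast}{\HP}]$, the nine nodal $(-2)$-curves, and the eight classes over the two triple points. This contradicts Example \ref{ex:2}, which lists those components as separate generators, and it is false: the eighteen classes you name generate only the sublattice $f^{\ast}NS_Y+{\Z}G_1+{\Z}G_2$, which has length $14$ and index $2^{5}$ in $L_{+}(X,\iota)$, whose length is $4$. The classes $[F_i]$ are merely half-integral over this sublattice (for instance $2[F_1]=j(h)-\sum_{p\in L_1\cap{\rm Sing}}[g^{-1}(p)]-\cdots$), and adjoining them is precisely what cuts the length from $14$ down to $4$. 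If $L_+$ were taken to be the smaller lattice, $j(L_+)$ would not be primitive in ${\cohomology}$, Proposition \ref{glue} would not apply to the pair $(L_+,L_-)$, and the period would not be a point of $\widetilde{\mathcal{M}}_{(18,4,0)}$. The repair is routine --- define the reference lattice as the even overlattice obtained by adjoining the six half-integral vectors corresponding to the $F_i$, exactly as in the explicit constructions of Sections \ref{ssec: (10,8,0)} and \ref{ssec: (12,10,1)} --- after which the remainder of your argument, including the ample-class and recovery steps, goes through unchanged.
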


\begin{figure}[h]
\centerline{\includegraphics[width=6.5cm]{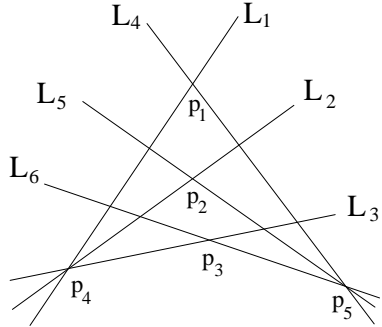}} 
\caption{Sextic curve for $(r, a, \delta)=(18, 4, 0)$} 
\label{r18_0}
\end{figure}


\subsection{$\widetilde{\mathcal{M}}_{(18,4,1)}$ and five general points in ${\proj}^2$}\label{ssec: (18,4,1)}

Let $U_5 \subset({\proj}^2)^5$ be the open set defined in  Section \ref{subsec: GIT}. 
To a point $\mathbf{p}=(p_1,\cdots, p_5)$ in $U_5$ 
we associate six lines by  
$L_1=\overline{p_2p_3}$,  
$L_i=\overline{p_1p_{i+2}}$ for $i=2, 3$,  
$L_i=\overline{p_ip_{i-2}}$ for $i=4, 5$, and   
$L_6=\overline{p_4p_5}$. 
The sextic $\sum_iL_i$ has ordinary triple points at $p_4$ and $p_5$. 
Any other singularity of $\sum_iL_i$ is a node. 
The 2-elementary $K3$ surface $(X, \iota)$ associated to $\sum_iL_i$ 
has invariant $(r, a)=(18, 4)$. 
In order to determine its parity $\delta$, 
let $g\colon X\to{\proj}^2$ be the natural projection branched over $\sum_iL_i$, 
and let $E_{ij}$ be the $(-2)$-curves $g^{-1}(L_i\cap L_j)$ for $i, j\leq3$. 
Then the ${\Q}$-divisor $D=\frac{1}{2}(E_{12}+E_{23}+E_{31})$ is in $L_+(X, \iota)^{\vee}$ 
by Proposition \ref{generate L_+}. 
Since $(D. D)=-\frac{3}{2}$, $(X, \iota)$ has parity $\delta=1$. 
Using the obvious labeling of the lines $L_i$,  
we obtain a morphism $\tilde{p}\colon U_5\to\widetilde{\mathcal{M}}_{(18,4,1)}$ as before. 
Then we see the following. 

\begin{theorem}\label{period map (18,4,1)}
The period map $\tilde{p}$ 
descends to an open immersion $U_5/{\PGL} \to \widetilde{\mathcal{M}}_{(18,4,1)}$. 
\end{theorem}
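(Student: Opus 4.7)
The plan is to follow the pattern established in the preceding sections, most closely the $(18,4,0)$ case of Section \ref{ssec: (18,4,0)}, with the main non-trivial input being a Torelli-type reconstruction from the lattice marking.

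First, I would promote the pointwise construction to a family: over $U_5$, the rule $\mathbf{p}\mapsto\sum_{i=1}^6 L_i$ gives an algebraic family of sextics with prescribed labeling of components and of the nodes/triple points. Applying the right resolution process \eqref{right resol prcs}--\eqref{right resol prcs II} in the family and taking the double cover branched over the resulting smooth divisor produces a smooth family $\pi\colon\mathcal{X}\to U_5$ of $K3$ surfaces with involution. The labeling of lines and of infinitely-near points at $p_4,p_5$, together with the pull-back of $\mathcal{O}_{{\proj}^2}(1)$, trivializes the local system of invariant cycles $(R^2\pi_*{\Z})^{\iota}$ via a fixed reference isometry $L_+\simeq L_+(\mathcal{X}_{\mathbf{p}},\iota_{\mathbf{p}})$ (constructed as in Example \ref{ex:2}, with parity $\delta=1$ as already verified in the statement). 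Hence the monodromy of the anti-invariant local system lies in $\widetilde{{\rm O}}(L_-)$, which shows that $\tilde{p}$ is locally liftable and holomorphic; Borel's extension theorem \cite{Bo} then upgrades it to a morphism of algebraic varieties, exactly as in the proof of Theorem \ref{period map r<12}.

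Second, the ${\PGL}$-invariance is built into the construction, so $\tilde{p}$ descends to a morphism $\widetilde{\mathcal{P}}\colon U_5/{\PGL}\to\widetilde{\mathcal{M}}_{(18,4,1)}$ between irreducible varieties of the same dimension $2d-8 = 2 = 20-r$. Hence once injectivity of $\widetilde{\mathcal{P}}$ is established, dominance is automatic, and Zariski's Main Theorem yields the open immersion conclusion.

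The main obstacle is therefore the injectivity of $\widetilde{\mathcal{P}}$, which I would address by a Torelli argument in the spirit of Proposition \ref{Torelli r<12}. Given $\mathbf{p},\mathbf{p}'\in U_5$ with $\widetilde{\mathcal{P}}([\mathbf{p}])=\widetilde{\mathcal{P}}([\mathbf{p}'])$, one obtains a Hodge isometry $\Phi\colon{\cohomology}\to H^2(X',{\Z})$ with $\Phi\circ j = j'$. The reference lattice $L_+$ contains the class $j(H)=[g^*\mathcal{O}_{{\proj}^2}(1)]$; combining it with a small negative multiple of the classes of the exceptional $(-2)$-curves and the fixed curve $G_j$ over the triple points as in Example \ref{ex:2}, I can produce a class in $L_+$ whose image under $j$ is ample on $X$ and whose image under $j'$ is ample on $X'$. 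Strong Torelli then gives an isomorphism $\varphi\colon X\to X'$ inducing $\Phi$, and Lemma \ref{proj mor} applied to $H=j(H)$ yields a projective automorphism $\psi\in{\PGL}$ carrying the branch sextic $\sum L_i$ of $g$ onto that of $g'$, component-by-component according to the labeling. Finally, the five points are recovered from the labeled sextic by purely incidence-theoretic formulas: $p_4$ and $p_5$ are the two triple points (distinguished as the one lying on $L_2,L_4,L_6$ and the one on $L_3,L_5,L_6$ respectively), while $p_1=L_2\cap L_3$, $p_2=L_1\cap L_4$, $p_3=L_1\cap L_5$. Hence $\psi(p_i)=p_i'$ for all $i$, so $\mathbf{p}$ and $\mathbf{p}'$ are ${\PGL}$-equivalent, completing the proof.
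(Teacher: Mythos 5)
Your proof is correct and follows essentially the same route the paper itself prescribes for this case: the family/monodromy/Borel argument of Theorem \ref{period map r<12} for algebraicity of $\tilde{p}$, and the strong-Torelli-plus-reconstruction argument of Proposition \ref{Torelli r<12}, adapted to the two ordinary triple points via Example \ref{ex:2} and Lemma \ref{proj mor}. Your incidence formulas for recovering the five points from the labelled lines ($p_4\in L_2\cap L_4\cap L_6$, $p_5\in L_3\cap L_5\cap L_6$, $p_1=L_2\cap L_3$, $p_2=L_1\cap L_4$, $p_3=L_1\cap L_5$) are exactly right, so nothing is missing.
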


\begin{figure}[h]
\centerline{\includegraphics[width=6.4cm]{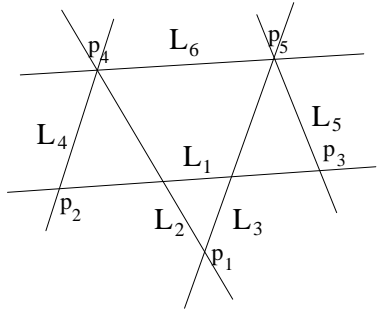}} 
\caption{Sextic curve for $(r, a, \delta)=(18, 4, 1)$} 
\label{r18_1}
\end{figure}


\subsection{$\widetilde{\mathcal{M}}_{(19,3,1)}$ and five special points in ${\proj}^2$}\label{subsec: (19, 3, 1)}

Let $V_5\subset({\proj}^2)^5$ be the codimension $1$ locus defined in Section \ref{subsec: GIT}. 
Given a point $\mathbf{p}=(p_1, \cdots, p_5)$ in $V_5$, 
for which $p_1, p_2, p_3$ are collinear, 
we define six lines in the same way as Section \ref{ssec: (18,4,1)}:  
$L_1=\overline{p_2p_3}$,  
$L_i=\overline{p_1p_{i+2}}$ for $i=2, 3$,  
$L_i=\overline{p_ip_{i-2}}$ for $i=4, 5$, and   
$L_6=\overline{p_4p_5}$. 
Then the points $p_1, p_4, p_5$ are ordinary triple points of the sextic $\sum_iL_i$, 
and any other singularity of $\sum_iL_i$ is a node. 
As before, 
by taking the right resolution of the sextic $\sum_iL_i$ and 
using the labeling $(L_1,\cdots, L_6)$ of the lines, 
we obtain a lattice-marked 2-elementary $K3$ surface of type $(19, 3, 1)$. 
This defines a morphism $\tilde{p}\colon V_5\to\widetilde{\mathcal{M}}_{(19,3,1)}$. 
Then we have the following. 

\begin{theorem}\label{period map (19,3,1)}
The period map $\tilde{p}$ 
descends to an open immersion $V_5/{\PGL} \to \widetilde{\mathcal{M}}_{(19,3,1)}$. 
\end{theorem}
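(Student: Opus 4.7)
The plan is to follow the template of Sections \ref{ssec: (r,r,1)}, \ref{ssec: (17,5,1)}, and \ref{ssec: (18,4,1)}, now adapted to three ordinary triple points. First I would verify that the 2-elementary $K3$ surface $(X, \iota)$ arising from the right resolution of $B=\sum_{i=1}^6 L_i$ has main invariant $(19,3,1)$. Since $B$ has three ordinary triple points ($p_1, p_4, p_5$) and six remaining nodes, Example \ref{ex:2} gives $r=\rho({\proj}^2)+6+4\cdot 3=19$, and the value $a=3$ is then forced by Figure \ref{Nikulin table}. For the parity $\delta=1$ I would imitate Section \ref{ssec: (18,4,1)}: three of the six simple nodes (namely $L_1\cap L_6$, $L_2\cap L_5$, $L_3\cap L_4$) give $(-2)$-curves $E_a, E_b, E_c$ on $X$ which are pairwise disjoint, and by Proposition \ref{generate L_+} the ${\Q}$-divisor $\tfrac12(E_a+E_b+E_c)\in L_+(X,\iota)^\vee$ has odd square $-\tfrac32$, ruling out $\delta=0$.

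Next, from the labeling $(L_1,\ldots,L_6)$ together with the natural labeling of the three branches at each triple point (by the lines meeting there), an adaptation of Examples \ref{ex:1} and \ref{ex:2} constructs a canonical isometry $j\colon L_+\to L_+(X,\iota)$ for a suitable reference lattice $L_+$ of main invariant $(19,3,1)$: its generators correspond to $g^*\HP$, the six $(-2)$-curves over the simple nodes, the four labelled $(-2)$-classes $G_j$ and $E_{j1}, E_{j2}, E_{j3}$ above each triple point $q_j$, and the six components of $X^\iota$ lying over the $L_i$. After fixing a primitive embedding $L_+\hookrightarrow\Lambda_{K3}$, the period of $((X,\iota),j)$ defines $\tilde{p}(\mathbf{p})\in\widetilde{\mathcal{M}}_{(19,3,1)}$. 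The morphism property of $\tilde{p}$ follows by constructing the evident universal family over $V_5$, trivializing $(R^2\pi_*\Z)^\iota$ by means of the marked classes, and invoking Borel's extension theorem, exactly as in the proof of Theorem \ref{period map r<12}.

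The ${\PGL}$-invariance is built into the construction. For injectivity of the induced map $V_5/{\PGL}\to\widetilde{\mathcal{M}}_{(19,3,1)}$, I would run the Torelli argument of Proposition \ref{Torelli r<12}: an equality $\tilde{p}(\mathbf{p})=\tilde{p}(\mathbf{p}')$ yields a Hodge isometry $\Phi\colon {\cohomology}\to H^2(X',\Z)$ with $j'=\Phi\circ j$; a suitable positive combination of $j(h)$ with the labelled exceptional classes is $\Phi$-preserved and ample, so the strong Torelli theorem furnishes $\varphi\colon X\to X'$ with $\varphi^*=\Phi$. Lemma \ref{proj mor} then produces $\psi\in{\PGL}$ with $g'\circ\varphi=\psi\circ g$; because $\Phi$ preserves the classes of the components above each $L_i$, one has $\psi(L_i)=L_i'$, and the labeling of the lines recovers that of the points via $p_1=L_1\cap L_2\cap L_3$, $p_2=L_1\cap L_4$, $p_3=L_1\cap L_5$, $p_4=L_2\cap L_4\cap L_6$, $p_5=L_3\cap L_5\cap L_6$, so $\psi(p_i)=p_i'$.

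Since $V_5/{\PGL}$ and $\widetilde{\mathcal{M}}_{(19,3,1)}$ both have dimension $1$ and the latter is irreducible, the injective morphism is dominant, hence an open immersion by Zariski's Main Theorem. The principal obstacle I expect is the careful identification of $\delta=1$ together with the bookkeeping of the four labelled $(-2)$-classes above each triple point when setting up $L_+$; once those are in place, the rest follows the pattern laid out in the earlier sections.
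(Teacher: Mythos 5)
Your overall strategy is exactly the one the paper intends: it gives no separate proof for this theorem, deferring to the template of Sections \ref{ssec: (r,r,1)}, \ref{ssec: (17,5,1)}, and \ref{ssec: (18,4,1)}, which you reproduce faithfully (construction of $j$ from the labeling via Examples \ref{ex:1} and \ref{ex:2}, Borel extension for algebraicity, Torelli plus Lemma \ref{proj mor} for injectivity, and the dimension count $\dim V_5/{\PGL}=1=20-19$ followed by Zariski's Main Theorem).

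However, your verification of $\delta=1$ contains a step that fails as written. For $\tfrac12(E_a+E_b+E_c)$ to lie in $L_+(X,\iota)^\vee$ you need $(E_a+E_b+E_c,F)\in2\Z$ for every irreducible component $F$ of $X^{\iota}$, since by Proposition \ref{generate L_+} these components generate $L_+$ together with the pullback of $NS$ of the right resolution; and $(E,F_i)=1$ whenever the node under $E$ lies on $L_i$, because $f^{\ast}\widetilde{L_i}=2F_i$. Your three nodes $L_1\cap L_6$, $L_2\cap L_5$, $L_3\cap L_4$ form a perfect matching on the six lines, so each $F_i$ meets exactly one of $E_a,E_b,E_c$ and the pairing is odd; hence $\tfrac12(E_a+E_b+E_c)\notin L_+(X,\iota)^\vee$ and the computation collapses. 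What makes the analogous step work in Section \ref{ssec: (18,4,1)} is that the three nodes there form a \emph{triangle}, each relevant line containing exactly two of them. Here the six nodes of $\sum_iL_i$ are $L_1\cap L_4=p_2$, $L_1\cap L_5=p_3$, $L_1\cap L_6$, $L_2\cap L_5$, $L_3\cap L_4$, $L_4\cap L_5$, and the unique triangle among them is $\{L_1\cap L_4,\,L_1\cap L_5,\,L_4\cap L_5\}$; with this choice $F_1,F_4,F_5$ each pair to $2$ and all other components to $0$, so the half-sum lies in $L_+(X,\iota)^\vee$ with square $-\tfrac32$, giving $\delta=1$. A second, smaller slip: $a=3$ is not forced by Figure \ref{Nikulin table} alone, since $(19,1,1)$ is also an admissible main invariant; rather, all nine components of $X^{\iota}$ are rational, so $g=0$ in Proposition \ref{compute main inv}, whence $r+a=22$ and $a=3$. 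With these two corrections your argument goes through and coincides with the proof the paper has in mind.
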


\begin{figure}[h]
\centerline{\includegraphics[width=6.5cm]{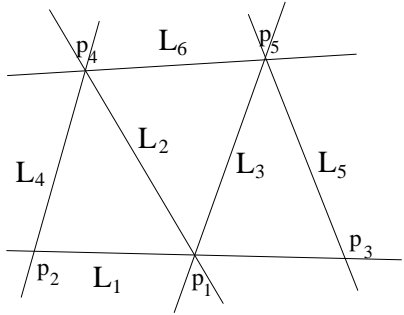}} 
\caption{Sextic curve for $(r, a, \delta)=(19, 3, 1)$} 
\label{r19}
\end{figure}


\section{Moduli of Borcea-Voisin threefolds}\label{BV 3-fold}

The unirationality of ${\moduli}$ implies that of the moduli of Borcea-Voisin threefolds. 
Let $(X, \iota)$ be a 2-elementary $K3$ surface and $E$ be an elliptic curve. 
The involution $(\iota, -1_E)$ of $X\times E$ extends to an involution $j$ of 
the blow-up $ \widetilde{X\times E}$ of  $X\times E$ along the fixed curve of $(\iota, -1_E)$. 
The quotient $Z = \widetilde{X\times E}/\langle j \rangle$ is a smooth Calabi-Yau threefold (\cite{Vo}, \cite{Borcea}). 
The projection $\widetilde{X\times E} \to X$ (resp. $\widetilde{X\times E} \to E$) induces a fibration 
$\pi_1\colon Z\to Y=X/\langle \iota \rangle$ (resp. $\pi_2\colon Z\to E/\langle -1_{E} \rangle$) 
with constant $E$-fiber (resp. $X$-fiber), 
whose discriminant locus is the branch locus of the quotient morphism 
$X \to Y$ (resp. $E \to E/\langle -1_E \rangle$). 
Following \cite{Yo2}, we call the triplet $(Z, \pi_1, \pi_2)$ 
the \textit{Borcea-Voisin threefold} associated to $(X, \iota)$ and $E$. 
Two Borcea-Voisin threefolds are isomorphic 
if and only if the corresponding 2-elementary $K3$ surfaces and 
elliptic curves are respectively isomorphic (\cite{Yo2}). 
The data $(\pi_1, \pi_2)$ may be regarded as a kind of polarization of $Z$, 
as the following remark shows.

\begin{lemma}\label{isom BV}
Let $(Z, \pi_1, \pi_2)$, $(Z', \pi_1', \pi_2')$ be Borcea-Voisin threefolds, 
and let $\Lambda$ (resp. $\Lambda'$) be the primitive closure of $\pi_1^{\ast}{\rm Pic}Y$ in ${\rm Pic}Z$ 
(resp. $(\pi_1')^{\ast}{\rm Pic}Y'$ in ${\rm Pic}Z'$). 
Then we have $(Z, \pi_1, \pi_2)\simeq (Z', \pi_1', \pi_2')$ if and only if 
we have $(Z, \Lambda) \simeq (Z', \Lambda')$. 
\end{lemma}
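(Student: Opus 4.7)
The "only if" direction is immediate, since any isomorphism $(Z,\pi_1,\pi_2)\simeq(Z',\pi_1',\pi_2')$ pulls $(\pi_1')^{\ast}{\rm Pic}(Y')$ back to $\pi_1^{\ast}{\rm Pic}(Y)$, and hence sends $\Lambda'$ to $\Lambda$. For the converse, the plan is to fix an isomorphism $f\colon Z\to Z'$ with $f^{\ast}\Lambda'=\Lambda$ and promote it to an isomorphism of the Borcea--Voisin data.

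The key step is to recover $\pi_1$ from $\Lambda$ by a numerical characterization: an irreducible curve $C\subset Z$ is contained in a fiber of $\pi_1$ if and only if $C\cdot D=0$ for every $D\in\Lambda$. If $\pi_1(C)$ is a point, then $C\cdot\pi_1^{\ast}H=0$ for every $H\in{\rm Pic}(Y)$ by the projection formula, hence also for every $D\in\Lambda$ by definition of the primitive closure. Conversely, taking $H$ ample on $Y$, one has $\pi_1^{\ast}H\cdot C=H\cdot(\pi_1)_{\ast}C>0$ whenever $\pi_1(C)$ is a curve. Since $f$ preserves $\Lambda$, it carries fiber components of $\pi_1$ to those of $\pi_1'$. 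The fibers of $\pi_1$ are connected (the generic fiber is the elliptic curve $E$, so the Stein factorization of $\pi_1$ is trivial), so $\pi_1'\circ f$ factors set-theoretically through $\pi_1$ and the resulting map is a morphism $\bar{f}\colon Y\to Y'$ because $Y$ is normal. Applying the same argument to $f^{-1}$ shows that $\bar{f}$ is an isomorphism.

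It remains to promote $\bar{f}$ to an isomorphism of Borcea--Voisin triples. The discriminant locus of $\pi_1$ coincides with the branch curve $B\subset Y$ of the double cover $X\to Y$, so $\bar{f}(B)=B'\subset Y'$. Since $Y$ is a smooth rational surface, ${\rm Pic}(Y)$ is torsion-free and the double cover of $Y$ branched along $B$ is unique; hence $\bar{f}$ lifts to an isomorphism $(X,\iota)\simeq(X',\iota')$ of $2$-elementary $K3$ surfaces. Restricting $f$ to a general $\pi_1$-fiber furnishes an isomorphism $E\simeq E'$. The classification result recalled immediately before the lemma---that a Borcea--Voisin threefold is determined as a triple by the pair $((X,\iota),E)$---then gives $(Z,\pi_1,\pi_2)\simeq(Z',\pi_1',\pi_2')$.

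The main obstacle is the numerical recovery of $\pi_1$ in the second paragraph. Once the fibration structure of $\pi_1$ is seen to be intrinsic to the pair $(Z,\Lambda)$, the remaining steps are essentially bookkeeping that trades the combinatorial data $\Lambda$ against the geometric data $(\pi_1,\pi_2)$ via the explicit description of $Z$ as a blow-up quotient.
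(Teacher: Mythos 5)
Your proof is correct, and the endgame (recovering $(X,\iota)$ and $E$ from the descended isomorphism of the bases and the discriminant loci, then invoking the cited uniqueness of Borcea--Voisin threefolds) matches the paper's. Where you genuinely diverge is in the central step of descending $f$ to the base: the paper picks a very ample $H$ on $Y$ with $\pi_1^{\ast}H\simeq f^{\ast}(\pi_1')^{\ast}H'$ (possible because $\pi_1^{\ast}{\rm Pic}\,Y$ and $f^{\ast}(\pi_1')^{\ast}{\rm Pic}\,Y'$ are both of finite index in $\Lambda$), identifies $|H|\simeq|\pi_1^{\ast}H|\simeq|(\pi_1')^{\ast}H'|\simeq|H'|$, and reads off a morphism $g\colon Y'\to Y$ from the associated projective morphisms. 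You instead characterize the $\pi_1$-vertical curves numerically as exactly those orthogonal to $\Lambda$, and then descend via connectedness of fibers and the rigidity lemma. Both routes are sound; yours makes it transparent that the fibration $\pi_1$ is intrinsic to the pair $(Z,\Lambda)$, while the paper's is shorter and stays inside linear-system bookkeeping. One caveat in your third paragraph: the justification ``$Y$ is a smooth rational surface, so ${\rm Pic}\,Y$ is torsion-free and the branched double cover is unique'' does not cover the case $(r,a,\delta)=(10,10,0)$, where $X^{\iota}=\emptyset$, $B=\emptyset$, and $Y$ is an Enriques surface with $2$-torsion in ${\rm Pic}\,Y$; there one recovers $X\to Y$ instead as the canonical (universal) $K3$ cover, so the conclusion still holds, but the stated reason needs this patch. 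The paper's own wording is equally terse at this point.
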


\begin{proof}
It suffices to prove the ``if'' part. 
Let $f\colon Z\to Z'$ be an isomorphism with $f^{\ast}\Lambda'=\Lambda$. 
There exist a very ample line bundle $H$ on $Y$ and a line bundle $H'$ on $Y'$ with 
$\pi_1^{\ast}H \simeq f^{\ast}(\pi_1')^{\ast}H'$. 
Since $|H| \simeq |\pi_1^{\ast}H| \simeq |(\pi_1')^{\ast}H'| \simeq |H'|$, 
we see that $H'$ is base point free. 
Via the projective morphisms $Z\to|\pi_1^{\ast}H|^{\vee}$ and $Z' \to|(\pi_1')^{\ast}H'|^{\vee}$, 
we obtain a morphism $g\colon Y'\to Y$ with $g \circ \pi_1' = \pi_1 \circ f^{-1}$. 
One checks that $g$ is bijective and hence is isomorphic. 
Considering the fibers and the discriminant loci of $\pi_1$ and $\pi_1'$, 
we obtain $E\simeq E'$ and $(X, \iota)\simeq(X', \iota')$. 
\end{proof}

The main invariant of a Borcea-Voisin threefold is defined as that of the associated 2-elementary $K3$ surface. 
Obviously, two Borcea-Voisin threefolds are deformation equivalent if and only if they have the same main invariant. 
Let $X(1)={\rm SL}_{2}({\Z}) \backslash {\mathbb H}$ be the moduli space of elliptic curves.  

\begin{theorem}[\cite{Yo2}]\label{moduli BV}
The variety ${\moduli}\times X(1)$ is a coarse moduli space of Borcea-Voisin threefolds of main invariant $(r, a, \delta)$. 
\end{theorem}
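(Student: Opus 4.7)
My plan is to reduce the coarse moduli assertion for Borcea--Voisin threefolds to the coarse moduli properties of the two constituents of the construction: 2-elementary $K3$ surfaces (Theorem \ref{thm: moduli}) and elliptic curves (the modular curve $X(1)$).

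First I would establish the bijection on closed points. Combining Lemma \ref{isom BV} with the isomorphism criterion cited from \cite{Yo2}, two Borcea--Voisin threefolds $(Z,\pi_1,\pi_2)$ and $(Z',\pi_1',\pi_2')$ are isomorphic as triplets exactly when $(X,\iota)\simeq(X',\iota')$ and $E\simeq E'$. Since Theorem \ref{thm: moduli} puts isomorphism classes of 2-elementary $K3$ surfaces of type $(r,a,\delta)$ in bijection with the points of $\moduli$, and classically $X(1)$ parametrizes elliptic curves up to isomorphism, the assignment $(Z,\pi_1,\pi_2)\mapsto(\mathcal{P}(X,\iota),j(E))$ produces a canonical bijection between isomorphism classes and points of $\moduli\times X(1)$.

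Next I would verify the universal property for families. Given an algebraic family $(\mathfrak{Z}\to S,\Pi_1,\Pi_2)$ of Borcea--Voisin threefolds of type $(r,a,\delta)$, the fibration $\Pi_2$ presents $\mathfrak{Z}$ over its target as a $K3$-fibration with locally constant branch and monodromy data, from which one extracts a smooth family $\mathfrak{X}\to S$ of 2-elementary $K3$ surfaces, the involution being globalized via the covering transformation naturally attached to $\Pi_2$. Dually, $\Pi_1$ yields a family of elliptic curves $\mathcal{E}\to S$ from its smooth $E$-fibers. The period map of Theorem \ref{thm: moduli} applied to $(\mathfrak{X},\iota)$, together with the $j$-invariant of $\mathcal{E}$, then provides morphisms $S\to\moduli$ and $S\to X(1)$, whose product $S\to\moduli\times X(1)$ realizes the classifying map of the family and agrees on closed points with the bijection of the previous paragraph. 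Universality and uniqueness of factorization then pass to the product from the corresponding properties of the two factors.

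The main difficulty I anticipate lies in this second step, namely making the decomposition of a family of Borcea--Voisin threefolds into its $K3$-component and its elliptic component functorially and algebraically rigorous. One must check that the data $(\Pi_1,\Pi_2)$ behave flatly over $S$ with fibers isomorphic to the reference fibrations, and that the resulting families $\mathfrak{X}\to S$ and $\mathcal{E}\to S$ are algebraic in the sense required to invoke Theorem \ref{thm: moduli}. Once this geometric extraction is secured---locally on $S$ via Stein factorization of $\Pi_1$ and $\Pi_2$, and globalized by the uniqueness asserted in Lemma \ref{isom BV}---the remainder of the argument follows formally from the coarse moduli properties of $\moduli$ and $X(1)$.
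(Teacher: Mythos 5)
The paper does not prove this statement: it is quoted verbatim from Yoshikawa's paper \cite{Yo2} (note the citation in the theorem header), and no argument is given in the text. So there is no in-paper proof to compare yours against; I can only assess your outline on its own terms.

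Your overall reduction is the right one, and the pointwise part is essentially complete: Lemma \ref{isom BV} together with the isomorphism criterion of \cite{Yo2} gives that $(Z,\pi_1,\pi_2)\simeq(Z',\pi_1',\pi_2')$ iff $(X,\iota)\simeq(X',\iota')$ and $E\simeq E'$, and then Theorem \ref{thm: moduli} and the classical theory of $X(1)$ give the bijection between isomorphism classes and points of ${\moduli}\times X(1)$. You also correctly match the two projections to the two factors ($\pi_2$ has constant $X$-fiber, $\pi_1$ constant $E$-fiber). However, the step you yourself flag as the main difficulty is where the actual content of the theorem lies, and your proposal does not close it. First, you never define the moduli functor: a ``family of Borcea--Voisin threefolds'' must be specified (e.g.\ as a smooth family $\mathfrak{Z}\to S$ together with the two fibration structures, or equivalently with the lattice datum $\Lambda$ of Lemma \ref{isom BV}), and without this the coarse-moduli assertion has no precise meaning. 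Second, the extraction of an algebraic family $(\mathfrak{X}\to S,\iota)$ from $(\mathfrak{Z},\Pi_1,\Pi_2)$ is not achieved by ``Stein factorization of $\Pi_1$ and $\Pi_2$'': the fibers of $\Pi_2$ away from the discriminant are copies of $X$ without a distinguished involution, and $\iota$ has to be recovered either as the local monodromy of $\Pi_2$ around the discriminant or by reconstructing $X$ as the double cover of the base $\mathcal{Y}$ of $\Pi_1$ branched along the discriminant of $\Pi_1$; this needs to be done in families and shown to be algebraic before Theorem \ref{thm: moduli}(i) can be invoked. Third, your closing claim that universality ``passes to the product from the corresponding properties of the two factors'' is not automatic: given a natural transformation from the Borcea--Voisin functor to $\hom(-,N)$, one must construct the factoring morphism ${\moduli}\times X(1)\to N$, which requires evaluating the transformation on product families with one factor constant and then gluing; this deserves at least a sentence of justification. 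None of these gaps suggests the approach fails---it is the expected proof---but as written the proposal is an outline whose hardest steps remain open.
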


By Theorem \ref{main} we have the following. 

\begin{theorem}\label{unirat moduli BV}
The moduli spaces of Borcea-Voisin threefolds are unirational. 
\end{theorem}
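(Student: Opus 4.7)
The plan is to reduce the statement directly to Theorem \ref{main} and the classical fact that the $j$-line is rational. By Theorem \ref{moduli BV}, the moduli space of Borcea-Voisin threefolds of main invariant $(r,a,\delta)$ is the product ${\moduli}\times X(1)$. Since deformation type of a Borcea-Voisin threefold is controlled by its main invariant, it suffices to prove that ${\moduli}\times X(1)$ is unirational for every admissible $(r,a,\delta)$.

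First I would invoke Theorem \ref{main} to obtain, for each main invariant $(r,a,\delta)$, a dominant rational map $\psi\colon{\proj}^N\dashrightarrow{\moduli}$ from some projective space of dimension $N=20-r$. Second, I would use the classical $j$-invariant isomorphism $X(1)\simeq\mathbb{A}^1$, which shows that $X(1)$ is rational. Taking the product of these two maps yields a dominant rational map
\begin{equation*}
\psi\times\mathrm{id}\colon {\proj}^N\times\mathbb{A}^1 \dashrightarrow {\moduli}\times X(1),
\end{equation*}
and since ${\proj}^N\times\mathbb{A}^1$ is itself rational (birational to ${\proj}^{N+1}$), the target ${\moduli}\times X(1)$ is unirational. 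Combining with Theorem \ref{moduli BV}, this gives the unirationality of the moduli space of Borcea-Voisin threefolds.

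There is no real obstacle here: all the substantive content is already packaged into Theorem \ref{main} (the unirationality of every ${\moduli}$), and the remaining input — rationality of the $j$-line together with the fact that a product of a unirational variety and a rational variety is unirational — is entirely formal. The only thing to verify is that Theorem \ref{moduli BV} indeed applies uniformly to every main invariant on Nikulin's list (Figure \ref{Nikulin table}), so that no case falls outside the scope of the moduli interpretation as a product.
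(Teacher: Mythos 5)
Your proposal is correct and matches the paper's argument, which simply cites Theorem \ref{main} together with the product description ${\moduli}\times X(1)$ from Theorem \ref{moduli BV}; the rationality of $X(1)$ and the formal fact about products are left implicit there but are exactly what you supply. No issues.
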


\noindent
\textbf{Acknowledgement.}
I would like to express my gratitude to my advisor, Professor Ken-Ichi Yoshikawa, 
for suggesting this problem and for his help and encouragement. 
I am further indebted to him for generously writing his work in the appendix. 
I am grateful to Dr. S. Okawa for helpful discussion on invariant theory. 
I also thank Professors S. Kond\=o, Y. Miyaoka, S. Mukai, and T. Terasoma for their comments. 
Finally, I would like to thank a referee for helpful suggestions for improving and simplifying the exposition. 
This work was supported by Grant-in-Aid for JSPS fellows [21-978].

%
%


\appendix
\begin{Large}
\section{}\label{appendix}
\end{Large}

\vskip 5pt

\begin{center}
by  \ \ \  \begin{large}Ken-Ichi  Yoshikawa\end{large}\footnote{Research partially supported by the Grants-in-Aid for 
Scientific Research (B) 19340016, JSPS}  
\end{center}

\vskip 12pt

\par
In this note, we give a proof of the following result using automorphic forms. 

\begin{theorem}\label{theorem:Kodairadim}
The moduli space ${\mathcal M}_{(r,a,\delta)}$ has Kodaira dimension
$-\infty$ if either $13\leq r\leq17$ or $r+a=22$, $r\leq17$.
\end{theorem}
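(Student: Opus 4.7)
The plan is to combine a criterion of Gritsenko for the Kodaira dimension of orthogonal modular varieties with Borcherds' construction of automorphic forms via singular theta lifts. Recall that ${\moduli}$ is an open subset of $\mathcal{F}_{L_-}({\rm O}(L_-)^+)$, where $L_-$ has signature $(2,20-r)$, and that on a smooth toroidal compactification $\overline{\mathcal{F}}$ the canonical class is ${\mathbb Q}$-linearly equivalent to $n\lambda - R - D_{\infty}$, with $n=20-r$ the dimension, $\lambda$ the automorphic (Hodge) line bundle, $R$ the reflective ramification divisor coming from $(-2)$- and $(-4)$-vectors of $L_-$, and $D_{\infty}$ an effective boundary contribution. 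Gritsenko's criterion \cite{Gr} asserts that if one exhibits a non-trivial holomorphic automorphic form of weight $w<n$ (for a finite-index subgroup of ${\rm O}(L_-)^+$, possibly with character) whose zero divisor is supported on $R$, then no pluricanonical form on $\overline{\mathcal{F}}$ exists and hence $\kappa({\moduli})=-\infty$.

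To produce such forms, the plan is to invoke Borcherds' theorem on singular theta lifts: every weakly holomorphic vector-valued modular form $f$ of weight $(r-18)/2$ for the Weil representation on $(D_{L_-},q_{L_-})$ with integral principal part lifts to a meromorphic automorphic form $\Psi_f$ on $\Omega^+_{L_-}$, whose divisor is a prescribed integral combination of Heegner divisors and whose weight equals half the constant Fourier coefficient of $f$. For each main invariant in the theorem the target is to construct an input $f$ whose lift $\Psi_f$ is holomorphic, of weight strictly smaller than $n=20-r$, and with divisor contained in the reflective locus $R$. A systematic source is the quasi-pullback of Borcherds' form $\Phi_{12}$ on the even unimodular lattice ${\rm II}_{2,26}$: once a primitive embedding $L_-\hookrightarrow{\rm II}_{2,26}$ is chosen with orthogonal complement $K$, one obtains a Borcherds product on $\Omega^+_{L_-}$ of weight $12+\tfrac{1}{2}\#\{v\in K:(v,v)=-2\}$ whose divisor is precisely the $(-2)$-reflective divisor, and the task is to choose embeddings that minimize this weight.

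The main obstacle will be the case-by-case numerical and discriminant-form verification. Since $n=20-r$ lies between $3$ and $7$ in the theorem's ranges, the target weight is severely constrained, and for many cases one must construct vector-valued inputs adapted to the 2-elementary discriminant form $(D_{L_-},q_{L_-})\simeq(({\mathbb Z}/2{\mathbb Z})^a,q)$ rather than relying on $\Phi_{12}$ alone; Jacobi forms of index one and lattice theta series should supply the candidates. The two ranges of $(r,a,\delta)$ in the statement are precisely those for which the 2-elementary structure of $L_-$ renders the reflective divisor large and the discriminant group small enough that this construction closes up; for $r\le 12$ the dimension $n$ grows and the same strategy is no longer expected to yield a form of the required small weight. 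Once a reflective Borcherds product of weight less than $n$ is exhibited in each of the roughly thirty cases, Gritsenko's criterion yields $\kappa({\moduli})=-\infty$ uniformly.
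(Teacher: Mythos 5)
Your overall architecture --- Gritsenko's criterion combined with Borcherds products on $\Omega_{L_-}^{+}$ --- is the same as the paper's, but you have inverted the key numerical hypothesis of the criterion, and this inversion derails the rest of the plan. Gritsenko's criterion for $\kappa=-\infty$ (Theorem \ref{theorem:Gritsenko}) requires an automorphic form $F_k$ of weight $k\geq\nu n$, i.e.\ weight \emph{at least} $\nu$ times the dimension, whose divisor is bounded \emph{above} by $\nu R$ (so that $\nu R-{\rm div}(F_k)$ is effective), with strictness in at least one of the two conditions. The mechanism is that a pluricanonical form $\omega$ produces a form $F_{nm}$ of weight $nm$ vanishing to order at least $m$ on $R$, whence $F_{nm}^{\nu}/F_k^{m}$ is holomorphic of weight $-m(k-\nu n)\leq 0$, hence constant, a contradiction. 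A form of weight $w<n$ vanishing along $R$, as you propose, is instead the hypothesis of the Gritsenko--Hulek--Sankaran low-weight cusp form trick and points toward the \emph{opposite} conclusion (general type); it yields no contradiction with the existence of pluricanonical forms, since $F_{nm}/F_w^{m}$ then has positive weight. Consequently your stated goal of minimizing the weight of a quasi-pullback of $\Phi_{12}$, and your heuristic that the ranges $13\leq r\leq17$ and $r+a=22$ are those where a form of ``required small weight'' exists, are aimed in the wrong direction.

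What the paper actually does is produce reflective forms of \emph{large} weight with precisely controlled divisors. For $13\leq r\leq17$ it invokes Yoshikawa's form $\Psi_{L_-}$ of weight $(r-6)(2^g+1)$ with divisor $D'_{L_{-}}+(2^g+1)D''_{L_{-}}$ and takes $\nu=2^g+1$, so that $k-\nu n=2\nu(r-13)\geq0$; note that the multiplicity $2^g+1$ along $D''_{L_{-}}$ is exactly why $\nu>1$ is needed --- ``divisor supported on $R$'' is not sufficient, the multiplicities must be dominated by $\nu R$. For $r+a=22$, $r\leq17$, it constructs a new Borcherds lift $\Xi_{L_-}$ from an explicit vector-valued modular form built out of $\eta$-quotients and theta series adapted to the $2$-elementary discriminant form, of weight $-m^2-9m+124$ with $m=8+\sigma_-$ and divisor equal to twice the $(-4)$-Heegner divisor, and checks $k-n=-m^2-8m+114>0$ there, together with the containment of that Heegner divisor in the ramification locus via $(-4)$-reflections. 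If you correct the direction of the inequality in the criterion, your toolbox (Borcherds lifts of vector-valued inputs adapted to $(D_{L_-},q_{L_-})$) is the right one, but the case-by-case work then consists of verifying $k\geq\nu n$ and the divisor bound, not of minimizing weights.
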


This is a consequence of the following criterion due to Gritsenko \cite{Gr} 
(the idea first appeared in \cite{GHS08}).

\begin{theorem}[Gritsenko]\label{theorem:Gritsenko} 
Let $L$ be a lattice of signature $(2,n)$ with $n\geq3$ and 
$\Gamma\subset{\rm O}(L)^+$ be a subgroup of finite index. 
Following \cite{GHS07}, let $R\subset\Omega_L^+$ denote the ramification divisor of the projection
$\pi\colon\Omega_L^+\to{\mathcal F}_L(\Gamma)$. 
Suppose we have an integer $\nu\geq0$ and an automorphic form $F_k$ 
on $\Omega_L^+$ for $\Gamma$ of weight $k$ such that $k\geq\nu n$ 
and that $\nu R-{\rm div}(F_k)$ is an effective divisor.  
If $k>\nu n$ or $\nu R-{\rm div}(F_k)\not=0$, then 
$$
\kappa({\mathcal F}_L(\Gamma))=-\infty.
$$
\end{theorem}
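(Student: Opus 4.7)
The plan is to argue by contradiction: I will assume $\kappa(\mathcal{F}_L(\Gamma)) \geq 0$ and use $F_k$ to produce a holomorphic automorphic form of impossibly low weight, contradicting Koecher's principle. First I would invoke the standard pluricanonical/modular dictionary for orthogonal modular varieties: since the canonical bundle of $\Omega_L^+$ is isomorphic to $\lambda^{\otimes n}$, where $\lambda$ is the tautological line bundle whose $\Gamma$-invariant sections are modular forms of weight $1$, any section $\omega \in H^0(\overline{Y}, mK_{\overline{Y}})$ on a smooth projective model $\overline{Y}$ of $\mathcal{F}_L(\Gamma)$ pulls back via $\pi$ to a nonzero modular form $\Phi$ on $\Omega_L^+$ for $\Gamma$ of weight $mn$ vanishing to order $\geq m$ along $R$ (so as to absorb the ramification of $\pi$) and with appropriate vanishing along the toroidal boundary. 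If $\kappa(\mathcal{F}_L(\Gamma)) \geq 0$, I fix such data: an integer $N \geq 1$ and a nonzero modular form $\Phi$ of weight $Nn$ with $\operatorname{div}(\Phi) \geq NR$.

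Next I would combine $\Phi$ with the hypothesized form $F_k$ by setting
\[
G := \Phi^{\nu}/F_k^{N},
\]
a meromorphic $\Gamma$-automorphic form of weight $N(\nu n - k) \leq 0$. The holomorphy of $G$ on $\Omega_L^+$ will follow from the effectiveness of $\nu R - \operatorname{div}(F_k)$ in two steps: along $R$, the numerator vanishes to order $\geq N\nu$ while the denominator vanishes to order $\leq N\nu$; away from $R$, the form $F_k$ has no zeroes at all, so $1/F_k^N$ is regular there. Hence $G$ is a bona fide holomorphic automorphic form for $\Gamma$ of weight $N(\nu n - k)$ on $\Omega_L^+$.

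Finally I would apply Koecher's principle: since $n \geq 3$, holomorphic $\Gamma$-automorphic forms on $\Omega_L^+$ of negative weight vanish identically, while those of weight zero reduce to constants. In the case $k > \nu n$ the weight of $G$ is strictly negative, so $G \equiv 0$, which combined with $F_k \not\equiv 0$ forces $\Phi \equiv 0$, a contradiction. In the remaining case $k = \nu n$ with $\nu R - \operatorname{div}(F_k) \neq 0$, the form $G$ has weight zero and is therefore constant; however, on any irreducible component $D \subset R$ where $\operatorname{ord}_D(F_k) < \nu$ we obtain $\operatorname{ord}_D(G) > 0$, so that constant must be zero and once again $\Phi \equiv 0$, a contradiction.

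The principal obstacle I foresee is the careful justification of the first step: the inclusion $H^0(\overline{Y}, mK_{\overline{Y}}) \hookrightarrow M_{mn}(\Gamma)$ with the precise vanishing conditions along $R$ and at the toroidal boundary. One must know that the non-reflection singularities of $\mathcal{F}_L(\Gamma)$ are canonical (so pluricanonical forms lift without extra vanishing conditions) and that each toroidal boundary divisor contributes only a first-order vanishing condition---results going back to Tai and Mumford and developed in detail for orthogonal modular varieties by Gritsenko--Hulek--Sankaran. These arguments are valid for $n \geq 3$, possibly after passing to a neat finite-index subgroup and averaging. Once this dictionary is in place, the division trick $\Phi^{\nu}/F_k^{N}$ delivers the contradiction almost immediately.
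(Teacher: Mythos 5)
Your argument is correct and is essentially the paper's own proof (which follows Gritsenko's Theorem 1.5, replacing $F_{nm}/F_k^m$ by $F_{nm}^{\nu}/F_k^{m}$): pull back a pluricanonical section to a weight-$mn$ form vanishing to order $\geq m$ along $R$, divide by $F_k^{m}$ after raising to the $\nu$-th power, and apply Koecher's principle to the resulting holomorphic form of non-positive weight. The only cosmetic difference is that you frame it as a contradiction from the outset and dwell a little longer on the dictionary between pluricanonical sections and modular forms, which the paper simply cites from \cite{GHS07}.
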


\begin{proof}
When $\nu=1$, the result is exactly \cite[Th.\,1.5]{Gr}. 
When $\nu>1$, the same proof works after replacing
$F_{nm}/F_k^m$ by $F_{nm}^{\nu}/F_{k}^{m}$ in the proof of
\cite[Th.\,1.5]{Gr}.
For the convenience of the reader, we give some detail. 
Assume $\omega\in H^{0}({\mathcal F}_{L}(\Gamma),mK_{{\mathcal F}_{L}(\Gamma)})$, $m>0$.
Regard $\Omega_{L}^{+}$ as a tube domain of ${\C}^{n}$. 
Then $\pi^{*}\omega=F_{nm}(z)\,(dz_{1}\wedge\ldots\wedge dz_{n})^{\otimes m}$,
where $F_{nm}(z)$ is a non-zero automorphic form on $\Omega_L^+$ for $\Gamma$ of weight $mn$. 
Since $\omega$ is holomorphic on ${\mathcal F}_L(\Gamma)$, 
$F_{nm}$ must vanish on $R$ at least of order $m$ (cf. \cite{GHS07}). 
Hence ${\rm div}(F_{nm})-mR\geq0$. 
Then $F_{nm}^{\nu}/F_{k}^{m}$ is an automorphic form for $\Gamma$ of weight 
$-m(k-\nu n)\leq0$ with effective divisor
$$
{\rm div}(F_{nm}^{\nu}/F_{k}^{m})\geq m(\nu R-{\rm div}(F_{k}))\geq0.
$$
Since $n\geq3$, $F_{nm}^{\nu}/F_k^m$ must be a constant. Hence
$k=\nu n$ and $\nu R={\rm div}(F_k)$, which contradicts the assumption.
\end{proof}

As an application of his criterion, 
Gritsenko gives several examples of orthogonal modular varieties with 
Kodaira dimension $-\infty$. See \cite{Gr} for those examples.
We thank Professor V.A. Gritsenko, whose lecture in the conference 
``Moduli and Discrete Groups'' at RIMS, Kyoto (2009) inspired this note and 
who kindly showed his paper \cite{Gr} when we wrote this note.

\subsection{The case $13\leq r\leq17$}

\begin{theorem}
\label{theorem:case1}
If $13\leq r\leq 17$, then $\kappa({\mathcal M}_{(r,a,\delta)})=-\infty$.
\end{theorem}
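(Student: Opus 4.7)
The plan is to apply Gritsenko's criterion (Theorem \ref{theorem:Gritsenko}) to $\Gamma={\rm O}(L_-)^+$ acting on $\Omega_{L_-}^+$, where $L_-$ is the 2-elementary lattice of signature $(2,n)$ attached to $(r,a,\delta)$ with $n=20-r\in\{3,4,5,6,7\}$. Since $\mathcal{M}_{(r,a,\delta)}$ is a Zariski open subset of $\mathcal{F}_{L_-}({\rm O}(L_-)^+)$ with the same Kodaira dimension (birational invariance), it suffices to show $\kappa(\mathcal{F}_{L_-}({\rm O}(L_-)^+))=-\infty$.

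First I would describe the ramification divisor $R$ of $\pi\colon\Omega_{L_-}^+\to\mathcal{F}_{L_-}({\rm O}(L_-)^+)$. By \cite{GHS07}, the divisorial components of $R$ are the hyperplanes $\delta^\perp$ for primitive $\delta\in L_-$ whose reflection lies in ${\rm O}(L_-)$. For an even 2-elementary lattice these vectors have norm $-2$, or, in the $\delta=1$ cases, norm $-4$ with $\delta/2\in L_-^\vee$. Accordingly $R=R_{-2}+R_{-4}$, the sum of the reflective $(-2)$- and (possibly) $(-4)$-Heegner divisors.

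Second I would construct $F_k$ via the Borcherds theta lift: a weakly holomorphic vector-valued modular form $f$ of weight $1-n/2$ for the Weil representation $\rho_{L_-}$, with integral principal part, lifts to an automorphic form $\Psi(f)$ on $\Omega_{L_-}^+$ for $\widetilde{{\rm O}}(L_-)^+$ of weight $c_f(0,0)/2$ and divisor $\sum c_f(\mu,m)H(\mu,m)$. I would choose the principal part of $f$ so as to be supported on those pairs $(\mu,m)$ for which $H(\mu,m)$ is a component of $R$, with Fourier coefficients bounded by some integer $\nu$, so that $\mathrm{div}(\Psi(f))\leq\nu R$. Taking a norm over the finite quotient ${\rm O}(L_-)^+/\widetilde{{\rm O}}(L_-)^+$ then produces a form $F_k$ for $\Gamma$. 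A canonical concrete source is the quasi-pullback of Borcherds' $\Phi_{12}$ on $II_{2,26}$ via a primitive embedding $L_-\hookrightarrow II_{2,26}$ (existing by Nikulin's embedding theorem since $\mathrm{rk}(L_-)=22-r\leq 9$), which yields a form of weight $12+N$ where $2N$ counts the $(-2)$-roots in the orthogonal complement.

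Finally I would verify Gritsenko's numerical condition $k\geq\nu n$: with $n\leq 7$ and $k$ of order at least $12$, the strict inequality $k>\nu n$ is easily ensured for small $\nu$, and $\kappa(\mathcal{F}_{L_-}(\Gamma))=-\infty$ follows from Theorem \ref{theorem:Gritsenko}. The main obstacle lies in the second step, namely guaranteeing that $\mathrm{div}(F_k)\leq\nu R$: the quasi-pullback of $\Phi_{12}$ may in general pick up Heegner divisor contributions from ``mixed'' $(-2)$-vectors of $II_{2,26}$ whose $L_-$-projection has norm strictly greater than $-2$, producing components outside $R$. Confining the divisor to $R$ requires either a careful choice of embedding (guided by the discriminant form of $L_-$) or a correction via further Borcherds lifts cancelling the unwanted components. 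Owing to the finiteness of Nikulin's classification for $13\leq r\leq 17$, the whole verification reduces to a finite, explicit, and tractable case analysis on the list of main invariants.
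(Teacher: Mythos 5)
Your overall framework is the right one --- Gritsenko's criterion applied to $\Gamma={\rm O}(L_-)^+$, with $F_k$ a Borcherds-type automorphic form whose divisor is controlled by the ramification divisor $R$ --- but the proposal does not actually produce the form $F_k$, and the step you yourself flag as ``the main obstacle'' is precisely where the proof lives. The paper sidesteps the quasi-pullback of $\Phi_{12}$ entirely: it invokes the automorphic form $\Psi_{L_-}$ already constructed in \cite[Th.\,8.1]{Yo3}, which is defined for ${\rm O}(L_-)^+$ (so no norm over ${\rm O}(L_-)^+/\widetilde{{\rm O}}(L_-)^+$ is needed), has explicitly known weight $k=(r-6)(2^g+1)$ with $g=11-\frac{1}{2}(r+a)$, and has divisor $D'_{L_-}+(2^g+1)D''_{L_-}$ supported exactly on the $(-2)$-discriminant divisor, hence inside $R$ by \cite{GHS07}. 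Taking $\nu=2^g+1$ then gives $k-\nu n=2\nu(r-13)\geq0$ and $\nu R-{\rm div}(\Psi_{L_-})\geq(\nu-1)D'_{L_-}\geq0$ with no case analysis at all. By contrast, for your quasi-pullback there is no a priori reason that every $(-2)$-root of $II_{2,26}$ with nonzero $L_-$-component projects to a reflective vector of $L_-$ (norm $-2$, or norm $-4$ with half the vector in $L_-^{\vee}$); you acknowledge this but defer it to an unperformed ``finite, explicit, and tractable case analysis,'' and you offer no verification for even one of the relevant lattices. That deferral is a genuine gap, not a detail.

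A second, smaller gap: you assert that the strict inequality $k>\nu n$ ``is easily ensured for small $\nu$,'' but the boundary case matters. In the paper's argument one has $k=\nu n$ exactly when $r=13$, and the criterion then requires $\nu R-{\rm div}(F_k)\neq0$; when moreover $D'_{L_-}=0$ (so $L_-=U(2)\oplus M_7$) this is not automatic from the divisor computation, and the paper supplies a separate argument using a $(-4)$-vector whose reflection lies in ${\rm O}(L_-)^+$ to show $R$ strictly exceeds the support of ${\rm div}(\Psi_{L_-})$. Your proposal contains no analogue of this borderline analysis, and since you have not pinned down $k$ and $\nu$ for your candidate form, you cannot certify that the strict inequality holds. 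To repair the proposal you would either need to carry out the divisor computation for an explicit embedding $L_-\hookrightarrow II_{2,26}$ for each of the finitely many $(r,a,\delta)$ with $13\leq r\leq17$, or, more efficiently, replace the quasi-pullback by a Borcherds lift of a vector-valued modular form whose principal part is chosen, lattice by lattice, to be supported on the reflective cosets --- which is in effect what \cite{Yo3} already did.
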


\begin{proof}
Let $L_-$ be the anti-invariant lattice of a $2$-elementary $K3$ surface of type $(r,a,\delta)$ with $r\geq11$. 
We denote $g=11-\frac{1}{2}(r+a)$. 
By \cite[Th.\,8.1]{Yo3}, there exists an automorphic form $\Psi_{L_-}$ for ${\rm O}(L_-)^+$ of weight
$k=(r-6)(2^g+1)$ with divisor
${\rm div}(\Psi_{L_{-}})=D'_{L_{-}}+(2^g+1)D''_{L_{-}}$,
where $D'_{L_{-}}$ and $D''_{L_{-}}$ are reduced divisors 
$$
D'_{L_{-}}
:=
\sum_{\lambda\in L_{-},\,\lambda^{2}=-2,\,\lambda/2\not\in L_{-}^{\lor}}
\lambda^{\perp},
\qquad
D''_{L_{-}}
:=
\sum_{\lambda\in L_{-},\,\lambda^{2}=-2,\,\lambda/2\in L_{-}^{\lor}}
\lambda^{\perp}.
$$
By definition, $D:=D'_{L_-}+D''_{L_-}$ is the discriminant divisor of $\Omega_{L_-}^+$.
Let $R\subset\Omega_{L_-}^+$ be the ramification divisor of the projection
$\Omega_{L_-}^+\to\mathcal{F}({\rm O}(L_-)^+)$. 
We set $\nu=2^g+1$ in Theorem~\ref{theorem:Gritsenko}. 
Since $n=20-r$ and $r\geq13$, we get
$k-\nu n=2\nu(r-13)\geq0$.
Since $R\geq D$ by \cite[Proof of Th.\,1.1.]{GHS07}, we get
$\nu\,R-{\rm div}(\Psi_{L_{-}}) \geq (\nu-1)D'_{L_{-}}\geq0$. 
When $r>13$ or $D'_{L_{-}}\not=0$, the result follows from 
Theorem~\ref{theorem:Gritsenko}. 
When $r=13$ and $D'_{L_-}=0$, then $L_-=U(2)\oplus M_7$. 
Let $r\in L_-$ be a vector with $r^2=-4$. 
Since the reflection with respect to $r$ is an element of ${\rm O}(L_-)^+$, 
we get $r^{\perp}\subset R$ and $r^{\perp}\not\subset D$, 
which implies $\nu\,R-{\rm div}(\Psi_{L_{-}})\not=0$. 
The result follows again from Theorem~\ref{theorem:Gritsenko}.
\end{proof}

\subsection{The case $r+a=22$ and $r\leq17$}
\par
We construct an automorphic form for ${\rm O}(L_-)^+$ 
satisfying the conditions in Theorem~\ref{theorem:Gritsenko} as a Borcherds product \cite{Borcherds98}. 
For this, we first construct a modular form of type $\rho_{L_-}$ 
with those properties required in \cite[Th.\,13.3]{Borcherds98}.
In what follows, we write
$r_-=r(L_-)$, $a_-=a(L_-)$, $\sigma_-=4-r_-$.
Let ${\rm Mp}_2({\Z})$ be the metaplectic double cover of ${\rm SL}_2({\Z})$,
which is generated by 
$S:=(\binom{0\,-1}{1\,\,\,\,0},\sqrt{\tau})$ and
$T:=(\binom{1\,1}{0\,1},1)$. 
See \cite[Sect.\,2]{Borcherds98} for more about ${\rm Mp}_2({\Z})$.
\par

\subsubsection
{Elliptic modular forms}
\par
We set $q=e^{2\pi i\tau}$ for $\tau\in{\mathbb H}$ and
$$
\eta(\tau)=q^{1/24}\prod_{n=1}^{\infty}(1-q^{n}),
\qquad
\theta_{\langle2\rangle}(\tau)=\sum_{n\in{\Z}}q^{n^2}, 
\qquad
\theta_{\langle2\rangle+1/2}(\tau)=\sum_{n\in{\Z}}q^{(n+\frac{1}{2})^2}.
$$
Set
${\rm M}\Gamma_{0}(4):=
\{(\binom{a\,b}{c\,d},\sqrt{c\tau+d})\in {\rm Mp}_2({\Z});\,c\equiv0\mod 4\}$.
By \cite[Lemma 5.2]{Borcherds00}, there exists
a character 
$\chi_{\theta}\colon {\rm M}\Gamma_{0}(4)\to\{\pm1,\pm i\}$ such that 
$\theta_{\langle2\rangle}(\tau)$ is a modular form for ${\rm M}\Gamma_{0}(4)$ 
of weight $1/2$ with character $\chi_{\theta}$.
\par
Set $\eta_{1^{-8}2^{8}4^{-8}}(\tau):=
\eta(\tau)^{-8}\eta(2\tau)^{8}\eta(4\tau)^{-8}$
and define 
$\psi_{m}(\tau)$, $m\in{\Z}$, by
$$
\psi_{m}(\tau)
:=
\eta_{1^{-8}2^{8}4^{-8}}(\tau)^{2}\,\theta_{\langle2\rangle}(\tau)^{8+m}
-
2(m+16)\,\eta_{1^{-8}2^{8}4^{-8}}(\tau)\,\theta_{\langle2\rangle}(\tau)^{m}.
$$
Since $\eta_{1^{-8}2^{8}4^{-8}}(\tau)$ is a modular form for ${\rm M}\Gamma_{0}(4)$ 
of weight $-4$ with trivial character, $\psi_{m}(\tau)$ is a modular form for 
${\rm M}\Gamma_{0}(4)$ of weight $\frac{m-8}{2}$ with character $\chi_{\theta}^{m}$.
Since $\eta_{1^{-8}2^{8}4^{-8}}(\tau)=q^{-1}+8+36q+O(q^{2})$ and
$\theta_{\langle2\rangle}(\tau)=1+2q+O(q^{4})$, we get 
$$
\psi_{m}(\tau)=q^{-2}+2(-m^{2}-9m+124)+O(q).
$$
Write $\psi_{m}(\tau)=\sum_{l\in{\Z}}d_{m}(l)\,q^{l}$ and define 
$h_{m}^{(i)}(\tau)$, $i\in{\Z}/4{\Z}$ 
as the series
$$
h_{m}^{(i)}(\tau):=
\sum_{l\equiv i\,\,{\rm mod}\,\,4}d_{m}(l)\,q^{l/4}.
$$
Then we have $\sum_{i\in{\Z}/4{\Z}}h^{(i)}_{m}(\tau)=\psi_{m}(\tau/4)$.

\subsubsection
{Vector-valued elliptic modular forms}
\par
Let ${\C}[D_{L_{-}}]$ be the group ring of the discriminant group $D_{L_{-}}$  
with the standard basis $\{{\bf e}_{\gamma}\}_{\gamma\in D_{L_{-}}}$. 
The Weil representation 
$\rho_{L_{-}}\colon {\rm Mp}_2({\Z})\to {\rm GL}({\C}[D_{L_{-}}])$ 
is defined as follows (cf. \cite[Sect.\,2]{Borcherds98}):
$$
\rho_{L_{-}}(T)\,{\bf e}_{\gamma}:=e^{\pi i\gamma^{2}}{\bf e}_{\gamma},
\qquad
\rho_{L_{-}}(S)\,{\bf e}_{\gamma}:=
\frac{i^{-\sigma_{-}/2}}{|D_{L_{-}}|^{1/2}}\sum_{\delta\in D_{L_{-}}}
e^{-2\pi i\langle\gamma,\delta\rangle}{\bf e}_{\delta}.
$$
We use the notion of modular forms of type $\rho_{L_{-}}$,
for which we refer to \cite[Sect.\,2]{Borcherds98}.
\par
Our construction is based on the following observation due to Borcherds.

\begin{proposition}
\label{proposition:Yoshikawa}
If $\phi(\tau)$ is a modular form for ${\rm M}\Gamma_{0}(4)$
with character $\chi_{\theta}^{\sigma_{-}}$, then
$$
{\mathcal B}_{L_{-}}[\phi](\tau)
:=
\sum_{g\in {\rm M}\Gamma_0(4)\backslash {\rm Mp}_2({\Z})}
\phi|_{g}(\tau)\,\rho_{L_{-}}(g^{-1})\,{\bf e}_{0}
$$ 
is a modular form for ${\rm Mp}_2({\Z})$ of type $\rho_{L_{-}}$ of 
the same weight as that of $\phi(\tau)$,
where $\phi|_{g}(\tau):=\phi(\frac{a\tau+b}{c\tau+d})\,(c\tau+d)^{-2l}$ for 
$g=\binom{a\,b}{c\,d}\in {\rm Mp}_2({\Z})$.
\end{proposition}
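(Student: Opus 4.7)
The plan is to reduce the proposition to a single semi-invariance property of the vector $\mathbf{e}_0 \in {\C}[D_{L_-}]$. Specifically, I would first establish the identity
$$\rho_{L_-}(h)\,\mathbf{e}_0 \,=\, \chi_\theta(h)^{\sigma_-}\,\mathbf{e}_0 \qquad (h \in {\rm M}\Gamma_0(4)),$$
i.e., the line ${\C}\mathbf{e}_0$ is ${\rm M}\Gamma_0(4)$-stable under $\rho_{L_-}$ with the action there given by the character $\chi_\theta^{\sigma_-}$. Granted this semi-invariance, both the coset-independence of the summands and the ${\rm Mp}_2({\Z})$-transformation law of $\mathcal{B}_{L_-}[\phi]$ will follow from formal manipulations of slash operators.

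To establish the semi-invariance, I would proceed on a set of generators. The group ${\rm M}\Gamma_0(4)$ is generated, modulo its center (which acts on both sides of the identity by the same fourth roots of unity), by lifts of $T$ and of $ST^{-4}S^{-1}$; the latter reduces to $\binom{1\,\,0}{4\,\,1}\in {\rm SL}_2({\Z})$, and together with $T$ generates $\Gamma_0(4)$ modulo $\{\pm I\}$. For $T$ the identity is immediate: $\rho_{L_-}(T)\mathbf{e}_0 = e^{\pi i\cdot 0}\mathbf{e}_0 = \mathbf{e}_0$ and $\chi_\theta(T)=1$, since $\theta_{\langle 2\rangle}(\tau+1)=\theta_{\langle 2\rangle}(\tau)$. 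For the second generator, the defining formulas for $\rho_{L_-}$ give
$$\rho_{L_-}(T^{-4})\rho_{L_-}(S^{-1})\,\mathbf{e}_0 \,=\, \frac{i^{\sigma_-/2}}{|D_{L_-}|^{1/2}}\sum_{\delta\in D_{L_-}} e^{-4\pi i\delta^2}\,\mathbf{e}_\delta,$$
and the crucial point is that since $L_-$ is even and $2$-elementary, every $\delta \in D_{L_-}$ satisfies $2\delta \in L_-$ and hence $4\delta^2=(2\delta,2\delta)\in 2{\Z}$, so each exponential collapses to $1$. Thus $\rho_{L_-}(T^{-4})$ acts trivially on $\rho_{L_-}(S^{-1})\mathbf{e}_0$, and one more application of $\rho_{L_-}(S)$ returns to $\mathbf{e}_0$ up to a product of Gauss-sum constants. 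The main technical obstacle is matching this product with $\chi_\theta(ST^{-4}S^{-1})^{\sigma_-}$: it amounts to a careful bookkeeping of eighth roots of unity, carried out via the classical Shimura evaluation of the theta multiplier on $\binom{1\,\,0}{4\,\,1}$.

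Granted the semi-invariance, the remaining steps are formal. For $h \in {\rm M}\Gamma_0(4)$ and $g \in {\rm Mp}_2({\Z})$, the relations $\phi|_{hg} = \chi_\theta(h)^{\sigma_-}\phi|_g$ and $\rho_{L_-}((hg)^{-1}) = \rho_{L_-}(g^{-1})\rho_{L_-}(h^{-1})$ combine with the semi-invariance to yield
$$\phi|_{hg}(\tau)\,\rho_{L_-}((hg)^{-1})\mathbf{e}_0 \,=\, \phi|_g(\tau)\,\rho_{L_-}(g^{-1})\mathbf{e}_0,$$
so each summand depends only on the right coset ${\rm M}\Gamma_0(4)\cdot g$; the sum is finite with $[{\rm SL}_2({\Z}):\Gamma_0(4)]=6$ well-defined terms. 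For the transformation law, I would substitute $g'=g\gamma$, which permutes the right cosets, and use $(\phi|_g)|_\gamma = \phi|_{g\gamma}$ to obtain
$$\mathcal{B}_{L_-}[\phi]|_\gamma(\tau) \,=\, \sum_{g'}\phi|_{g'}(\tau)\,\rho_{L_-}(\gamma (g')^{-1})\mathbf{e}_0 \,=\, \rho_{L_-}(\gamma)\,\mathcal{B}_{L_-}[\phi](\tau),$$
which is the defining modularity of type $\rho_{L_-}$. The weight is preserved by the slash action and holomorphy (including at the cusp) is inherited from $\phi$ through the finite sum, completing the argument.
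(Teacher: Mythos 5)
The paper itself gives no argument for this proposition: its entire ``proof'' is the citation to \cite[Prop.\,7.1]{Yo3}, so you are supplying content that the paper outsources, and what you supply is the standard (and, as far as I can tell, the cited reference's own) mechanism. Your reduction is correct: everything rests on the semi-invariance $\rho_{L_-}(h)\,\mathbf{e}_0=\chi_\theta(h)^{\sigma_-}\mathbf{e}_0$ for $h\in{\rm M}\Gamma_0(4)$, after which coset-independence of the summands and the law $\mathcal{B}_{L_-}[\phi]|_\gamma=\rho_{L_-}(\gamma)\mathcal{B}_{L_-}[\phi]$ are exactly the formal slash manipulations you describe, and the sum has the $[{\rm SL}_2(\Z):\Gamma_0(4)]=6$ terms listed in the paper. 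Your generator argument is also sound: $\Gamma_0(4)/\{\pm I\}$ is free on the two parabolics $T$ and $ST^{-4}S^{-1}=\binom{1\,\,0}{4\,\,1}$, so these together with the centre generate ${\rm M}\Gamma_0(4)$, and $2$-elementarity of $L_-$ (level dividing $4$, i.e.\ $4\delta^2\in2\Z$ for all $\delta\in D_{L_-}$) is precisely what makes $\rho_{L_-}(T^{\pm4})$ fix $\rho_{L_-}(S^{-1})\mathbf{e}_0$. Two small tightenings. First, the ``careful bookkeeping of eighth roots of unity'' you anticipate for the second generator is not actually needed: since $\rho_{L_-}$ is a homomorphism, $\rho_{L_-}(ST^{-4}S^{-1})\mathbf{e}_0=\rho_{L_-}(S)\rho_{L_-}(T)^{-4}\rho_{L_-}(S)^{-1}\mathbf{e}_0=\mathbf{e}_0$ on the nose (the two Gauss-sum constants from $S$ and $S^{-1}$ cancel), while on the other side $\chi_\theta\bigl(\binom{1\,\,0}{4\,\,1}\bigr)=\bigl(\tfrac{4}{1}\bigr)\varepsilon_1^{-1}=1$; the only place a nontrivial root of unity appears is the centre, where $\rho_{L_-}(Z)\mathbf{e}_0=i^{-\sigma_-}\mathbf{e}_0$ must be matched against $\chi_\theta(Z)^{\sigma_-}=i^{-\sigma_-}$, and you should record that check explicitly since $Z$ is one of your generators. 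Second, in the paper's application $\phi=\psi_{8+\sigma_-}$ has poles at the cusps, so ``modular form'' must be read in Borcherds' weak sense; your finite-sum argument still controls the principal parts of the components at $\infty$ by those of $\phi$ at the cusps of $\Gamma_0(4)$, so nothing breaks. Neither point is a gap.
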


\begin{proof}
See e.g. \cite[Prop.\,7.1]{Yo3}.
\end{proof}

\par
Set $V:=S^{-1}T^{2}S=(\binom{\,\,1\,\,\,\,0}{-2\,\,1},\sqrt{-2\tau+1})$.
The coset ${\rm M}\Gamma_0(4)\backslash {\rm Mp}_2({\Z})$ 
is represented by $\{1,S,ST,ST^2,ST^3,V\}$.  
We define 
${\bf v}_{k}:=
\sum_{\delta\in D_{L_{-}},\,\delta^{2}\equiv k/2\,\,{\rm mod}\,\,2}
{\bf e}_{\delta}\in{\C}[D_{L_{-}}]$ for $k\in{\Z}/4{\Z}$.
Let ${\bf 1}_{L_{-}}\in D_{L_{-}}$ be the unique element such that
$\langle{\bf 1}_{L_{-}},\gamma\rangle=\gamma^{2}\mod{\Z}$
for all $\gamma\in D_{L_{-}}$.
By \cite[Proof of Lemma 7.5]{Yo3}, we get the following relations
$$
\rho_{L_{-}}((ST^{l})^{-1})\,{\bf e}_{0}
=
i^{\frac{\sigma_{-}}{2}}2^{-\frac{a_{-}}{2}}
\sum_{k=0}^{3}i^{-lk}\,{\bf v}_{k},
\qquad
\rho_{L_{-}}(V^{-1})\,{\bf e}_{0}
=
{\bf e}_{{\bf 1}_{L_{-}}},
$$
$$
\eta_{1^{-8}2^{8}4^{-8}}|_{ST^{l}}(\tau)
=
2^{4}\eta_{1^{-8}2^{8}4^{-8}}\left(\frac{\tau+l}{4}\right),
\qquad
\eta_{1^{-8}2^{8}4^{-8}}|_{V}(\tau)=-16\eta(2\tau)^{-16}\eta(4\tau)^{8},
$$ 
$$
\theta_{\langle2\rangle}|_{ST^{l}}(\tau)
=
(2i)^{-\frac{1}{2}}\theta_{\langle2\rangle}\left(\frac{\tau+l}{4}\right),
\qquad
\theta_{\langle2\rangle}|_{V}(\tau)
=
\theta_{\langle2\rangle+1/2}(\tau).
$$
Then we get
$$
\psi_{m}|_{ST^l}(\tau) = 2^{\frac{8-m}{2}}i^{-\frac{m}{2}}\,\psi_{m}\left(\frac{\tau+l}{4}\right).
$$
Since $\eta(2\tau)^{-16}\eta(4\tau)^{8}=1+O(q)$ and
$\theta_{\langle2\rangle+1/2}(\tau)=2q^{1/4}+O(q^{5/4})$, we get
$$
\psi_{m}|_{V}(\tau)=O(q^{m/4}).
$$
\par
In what follows, we assume $r_{-}<12$ and $m=8+\sigma_-$. 
Then 
$$
\begin{aligned}
\sum_{l=0}^{3}\psi_{m}|_{ST^{l}}(\tau)\,
\rho_{L_{-}}\left((ST^{l})^{-1}\right)\,{\bf e}_{0}
&=
2^{-\frac{\sigma_{-}+a_{-}}{2}}
\sum_{j=0}^{3}\sum_{l=0}^{3}\sum_{s\in{\Z}/4{\Z}}
h_{m}^{(s)}(\tau+l)\,i^{-lj}\,{\bf v}_{j}
\\
&=
2^{\frac{r_{-}-a_{-}}{2}}\sum_{j=0}^{3}h_{m}^{(j)}(\tau)\,{\bf v}_{j}.
\end{aligned}
$$
By Proposition~\ref{proposition:Yoshikawa}, 
${\mathcal B}_{L_{-}}[\psi_{8+\sigma_{-}}]$ is a modular form of type
$\rho_{L_{-}}$ of weight $\sigma_{-}/2$.
By the definition of ${\mathcal B}_{L_{-}}[\psi_{8+\sigma_{-}}]$
and the expansion of $h^{(l)}_{m}(\tau)$, we get the expansion
$$
\begin{aligned}
{\mathcal B}_{L_{-}}[\psi_{8+\sigma_{-}}](\tau)
&=
\psi_{8+\sigma_{-}}(\tau)\,{\bf e}_{0}
+
2^{\frac{r_{-}-a_{-}}{2}}\,
\sum_{l=0}^{3}h^{(l)}_{8+\sigma_{-}}(\tau)\,{\bf v}_{l}
+
\psi_{8+\sigma_{-}}|_{V}(\tau)\,{\bf e}_{{\bf 1}_{L_{-}}}
\\
&=
\left\{q^{-2}+2(-m^{2}-9m+124)+O(q)\right\}\,{\bf e}_{0}
\\
&\quad
+2^{\frac{r_{-}-a_{-}}{2}}\left\{2(-m^{2}-9m+124)+O(q)\right\}\,{\bf v}_{0}
+O(q^{1/4})\,{\bf v}_{1}
\\
&\quad
+2^{\frac{r_{-}-a_{-}}{2}}\{q^{-1/2}+O(q^{1/2})\}\,{\bf v}_{2}
+O(q^{3/4})\,{\bf v}_{3}
+O(q^{m/4})\,{\bf e}_{{\bf 1}_{L_{-}}}.
\end{aligned}
$$
\par
From the first equality, we see that ${\rm O}(L_-)$ preserves
${\mathcal B}_{L_{-}}[\psi_{8+\sigma_{-}}]$ (cf. \cite[Th.\,7.7 (2)]{Yo3}).
By \cite[Th.\,13.3]{Borcherds98}, the Borcherds lift
$\Xi_{L_{-}}:=\Psi_{L_{-}}(\cdot,{\mathcal B}_{L_{-}}[\psi_{8+\sigma_{-}}])$ 
is a holomorphic automorphic form on $\Omega_{L_{-}}^{+}$ 
for ${\rm O}(L_-)^+$ of weight $(2^{\frac{r_{-}-a_{-}}{2}}+1)(-m^{2}-9m+124)$
with zero divisor
$$
{\rm div}(\Xi_{L_{-}})
=
\sum_{\lambda\in L_{-},\,\lambda^{2}=-4}\lambda^{\perp}
+
2^{\frac{r_{-}-a_{-}}{2}}\,
\sum_{\lambda\in L_{-}^{\lor},\,\lambda^{2}=-1}\lambda^{\perp}.
$$

\begin{theorem}
\label{theorem:case2}
If $r+a=22$ and $11\leq r\leq17$, then $\kappa({\mathcal M}_{(r,a,\delta)})=-\infty$.
\end{theorem}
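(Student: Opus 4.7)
The plan is to apply Gritsenko's criterion (Theorem~\ref{theorem:Gritsenko}) to the Borcherds product $\Xi_{L_-}$ constructed above, following the template of the proof of Theorem~\ref{theorem:case1}. Here $n=20-r$ and, with $m=8+\sigma_-=r-10$, the weight of $\Xi_{L_-}$ is $k=(2^{(r_--a_-)/2}+1)(-m^2-9m+124)$, while its divisor is $\sum_{\lambda\in L_-,\,\lambda^2=-4}\lambda^\perp+2^{(r_--a_-)/2}\sum_{\mu\in L_-^\vee,\,\mu^2=-1}\mu^\perp$.

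The decisive simplification comes from the hypothesis $r+a=22$: since $r_-=22-r$ and $a_-=a$, one has $r_-=a_-$. For an even 2-elementary lattice, ``discriminant length equals rank'' is equivalent to $L_-^\vee=\tfrac{1}{2}L_-$, because the inclusion $L_-^\vee\subset\tfrac{1}{2}L_-$ always holds and the orders match: $|L_-^\vee/L_-|=2^{a_-}=2^{r_-}=|\tfrac{1}{2}L_-/L_-|$. Hence every primitive $(-4)$-vector $\lambda\in L_-$ satisfies $\lambda/2\in L_-^\vee$, so the two sums in ${\rm div}(\Xi_{L_-})$ parametrize the same family of hyperplanes via $\lambda\leftrightarrow\mu=\lambda/2$; together with $2^{(r_--a_-)/2}=1$ this reduces the divisor to
$$
{\rm div}(\Xi_{L_-})\;=\;2\sum_{\mu\in L_-^\vee,\,\mu^2=-1}\mu^\perp.
$$

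Each such $\mu^\perp$ lies in the ramification divisor $R$: the reflection $s_\mu(x)=x+2(x,\mu)\mu$ preserves $L_-$, since $2\mu\in L_-$ (using $L_-^\vee=\tfrac{1}{2}L_-$) and $(x,\mu)\in\Z$, so $s_\mu\in{\rm O}(L_-)^+$. I then apply Theorem~\ref{theorem:Gritsenko} with $\nu=2$ and $F_k=\Xi_{L_-}$. Since $R\supseteq\sum_\mu\mu^\perp$, the difference $\nu R-{\rm div}(\Xi_{L_-})=2R-2\sum_\mu\mu^\perp$ is effective. A direct calculation with $m=r-10$ shows $k>\nu n=2(20-r)$ strictly for every $r\in\{11,\dots,17\}$ (the tightest case $r=17$ giving $k=24>6=\nu n$). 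By Theorem~\ref{theorem:Gritsenko} we conclude $\kappa(\mathcal{F}_{L_-}({\rm O}(L_-)^+))=-\infty$, and hence $\kappa(\mathcal{M}_{(r,a,\delta)})=-\infty$ because $\mathcal{M}_{(r,a,\delta)}$ is a Zariski open subset of $\mathcal{F}_{L_-}({\rm O}(L_-)^+)$.

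The only delicate point is the identity $L_-^\vee=\tfrac{1}{2}L_-$ and its consequence that no ``type-A'' hyperplane $\lambda^\perp$ (with $\lambda^2=-4$ but $\lambda/2\notin L_-^\vee$) appears in ${\rm div}(\Xi_{L_-})$. Without this cancellation such hyperplanes would contribute to ${\rm div}(\Xi_{L_-})$ while failing to lie in $R$, and Gritsenko's criterion could not be applied; everything else is an elementary check.
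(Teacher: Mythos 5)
Your proof is correct and follows essentially the same route as the paper: the identity $L_-^\vee=\tfrac12 L_-$ forced by $r_-=a_-$, the resulting collapse of ${\rm div}(\Xi_{L_-})$ to $2\mathcal{H}$ with $\mathcal{H}=\sum_{\lambda^2=-4}\lambda^\perp$, the inclusion $\mathcal{H}\subset R$ via $(-4)$-reflections, and the weight inequality $-m^2-8m+114>0$ for $m\le 7$. The only (harmless) difference is that you apply Gritsenko's criterion with $\nu=2$ and $F_k=\Xi_{L_-}$, whereas the paper takes $\nu=1$ and $F_k=\Xi_{L_-}^{1/2}$, using that the divisor $2\mathcal{H}$ is even; your choice avoids having to justify the square root.
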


\begin{proof}
By the conditions $r+a=22$ and $11\leq r\leq17$, we get $r_-=a_-$ and $5\leq r_-\leq11$. 
We have an explicit expression
$L_{-}=\langle2\rangle^{2}\oplus\langle-2\rangle^{r_{-}-2}$, from which
we get $L_{-}^{\lor}=\frac{1}{2}L_{-}$. 
We set
${\mathcal H}:=\sum_{\lambda\in L_{-},\,\lambda^{2}=-4}\lambda^{\perp}$.
Then
${\rm div}(\Xi_{L_{-}})=2{\mathcal H}$. 
If $\lambda\in L_{-}$ and $\lambda^{2}=-4$, then the reflection
with respect to $\lambda$ is an element of ${\rm O}(L_-)^+$. 
Hence we get the inclusion of divisors $R\supset{\mathcal H}$,
which implies $R-{\mathcal H}\geq0$.
\par
We set $\nu=1$, $k=-m^{2}-9m+124$ and $F_{k}=\Xi_{L_{-}}^{1/2}$ in 
Theorem~\ref{theorem:Gritsenko}.
Since $n=r_{-}-2$, we get $k-n=-m^{2}-8m+114>0$ when $r_{-}\geq5$, i.e, 
$m\leq7$. Since ${\rm div}(F_{k})={\mathcal H}$, 
we get $R-{\rm div}(F_{k})\geq0$.
Now the result follows from Theorem~\ref{theorem:Gritsenko}.
\end{proof}


\end{document}